\theoremstyle{plain}
\newtheorem{lemma}{Lemma}[section]
\newtheorem{theorem}[lemma]{Theorem}
\newtheorem{prop}[lemma]{Proposition}
\newtheorem{cor}[lemma]{Corollary}
\theoremstyle{definition}
\newtheorem{defi}[lemma]{Definition}
\newtheorem{prob}[lemma]{Problem}
\newtheorem{rem}[lemma]{Remark}
\newtheorem{example}[lemma]{Example}
\newtheorem{nota}[lemma]{Notation}
\newcommand\ctr{\sim_{{tr}}}
\newcommand\con{\sim_{{c}}}
\newcommand\cp{\sim_{{p}}}
\newcommand\co{\sim_{{\!o}}}
\newcommand{\inv}{^{-1}}                      
\newcommand{\by}[1]{\overset{\eqref{#1}}=}    
\newcommand{\Epi}{\mathrm{Epi}}
\DeclareMathOperator\dom{dom}
\DeclareMathOperator\ima{im}
\DeclareMathOperator\spa{span}
\DeclareMathOperator\id{id}
\DeclareMathOperator\fend{End}
\DeclareMathOperator\sym{Sym}
\newcommand{\sm}{\setminus}
\newcommand{\al}{\alpha}
\newcommand{\bt}{\beta}
\newcommand{\gam}{\gamma}
\newcommand{\del}{\delta}
\newcommand{\tet}{\theta}
\newcommand{\ups}{\upsilon}
\newcommand{\sig}{\sigma}
\newcommand{\lam}{\lambda}
\newcommand{\ome}{\omega}
\newcommand{\vep}{\varepsilon}
\newcommand{\ale}{\aleph}
\newcommand\pp{\mathbb{P}}
\newcommand\mz{\diamond}
\newcommand\lan{\langle}
\newcommand\ran{\rangle}
\newcommand\join{\bigsqcup}
\newcommand\jo{\sqcup}
\newcommand\gx{\mi^*(X)}
\newcommand\hph{h_{\phi}}
\newcommand\mi{\mathcal{I}}
\newcommand\da{\Delta_\al}
\newcommand\ta{\Theta_\al}
\newcommand\oa{\Omega_\al}
\newcommand\dka{\da^k}
\newcommand\tka{\ta^k}
\newcommand\ua{\Upsilon_{\!\al}}
\newcommand\la{\Lambda_\al}
\newcommand\db{\Delta_\bt}
\newcommand\tb{\Theta_\bt}
\newcommand\ob{\Omega_\bt}
\newcommand\ub{\Upsilon_{\!\bt}}
\newcommand\lb{\Lambda_\bt}
\newcommand\kal{k_\al}
\newcommand\mal{m_\al}
\newcommand\kbt{k_\bt}
\newcommand\mbt{m_\bt}
\newcommand\gd{\mathcal{D}}
\newcommand\gl{\mathcal{L}}
\newcommand\gr{\mathcal{R}}
\newcommand\gh{\mathcal{H}}
\newcommand\gj{\mathcal{J}}
\newcommand\gt{\mathcal{G}}
\newcommand\gll{\,\gl\,}
\newcommand\grr{\,\gr\,}
\newcommand\ghh{\,\gh\,}
\newcommand\gjj{\,\gj\,}
\newcommand{\vea}{\vep_{\!\mbox{\tiny $A$}}}
\title{Four Notions of Conjugacy for Abstract Semigroups}
\author{Jo\~ao Ara\'ujo, Michael Kinyon, Janusz Konieczny, Ant\'{o}nio Malheiro}
\date{}
\begin{document}

\maketitle

\begin{abstract}
The action of any group on itself by conjugation and the corresponding conjugacy
relation play an important role in group theory.
There have been many attempts to find notions of conjugacy in semigroups that
would be useful in special classes of semigroups
occurring in various areas of mathematics, such as semigroups of matrices,
operator and topological semigroups, free semigroups, transition monoids
for automata, semigroups given by presentations with prescribed properties,
monoids of graph endomorphisms, etc. In this paper we study four notions of conjugacy
for semigroups, their interconnections, similarities and dissimilarities. They appeared
originally in various different settings (automata, representation theory, presentations,
and transformation semigroups). Here we study them in full generality.
The paper ends with a large list of open problems.

\smallskip

\noindent\emph{$2010$ Mathematics Subject Classification\/}. 20M07, 20M20,
20M15.

\noindent\emph{Keywords\/}: conjugacy; symmetric inverse semigroups; epigroups.
\end{abstract}

\section{Introduction and preliminaries}\label{scon}
\setcounter{equation}{0}

By a notion of conjugacy for a class of semigroups we mean an equivalence
relation defined in the language of that class of semigroups  and coinciding
with the group theory notion of conjugacy whenever the semigroup is a group.
We study three notions of conjugacy in the most general setting
(that is, in the class of all semigroups) and, in view of its importance for representation theory,
we also study one notion that was originally only defined for finite semigroups.

When generalizing a concept, it is sometimes tempting to think that there should be one correct, or even preferred,
generalization. The view we take in this paper is that since semigroup theory is a vast subject, intersecting many areas
of pure and applied mathematics, it is probably not reasonable to expect a
one-size-fits-all notion of conjugacy suitable for all purposes. Searching
for the ``best'' notion of conjugacy is, from our point of view, akin to searching for, say,
the ``best'' topology. Instead, we think that the goal of studying conjugacy in semigroups is to determine
what different notions of conjugacy look like in various classes of semigroups, and how they interact with each other
and with other mathematical concepts.
It is thus incumbent upon individual mathematicians to decide, given their needs, which particular
notion fits best with the class of semigroups under consideration and within the particular context.

In this paper, we consider primarily four notions of conjugacy (and some variations) that we see as especially
interesting given their properties and generality. However, as happens throughout mathematics, stronger  notions
can be obtained by requiring additional properties. Adding to the general requirements in the first paragraph
above, one might require that the notion of conjugacy must be nontrivial, or
first order definable, or that a given set of results about conjugacy in groups carries to some class of
semigroups, etc. Therefore, the years to come will
certainly see the rise of many more systems of equivalence relations for semigroups based on notions of conjugacy.

Before introducing the notions of conjugacy that will occupy us in this paper,
we recall some standard definitions and notation (we generally follow \cite{Ho95}). Other needed
definitions will be given in context.

For a semigroup $S$, we denote by $E(S)$ the set of idempotents of $S$; $S^1$ is
the semigroup $S$ if $S$ is a monoid, or otherwise denotes the monoid
obtained from $S$ by adjoining an identity element $1$. The relation $\leq$
on $E(S)$ defined by $e\leq f$ if $ef=fe=e$ is a partial order on $E(S)$
\cite[p.~69]{Ho95}. A commutative semigroup of idempotents is said to be a
\emph{semilattice}.

An element $a$ of a semigroup $S$ is said to be \emph{regular} if there exists $b\in S$ such that $aba = a$.
Setting $c = bab$, we get $aca = a$ and $cac = c$, so $c$ is an \emph{inverse}
of $a$. Since $a$ is also an inverse of $c$, we often say that $a$ and $c$ are \emph{mutually inverse}.
A semigroup $S$ is \emph{regular} if all elements of $S$ are regular, and it is an
\emph{inverse semigroup} if every element of $S$ has a \emph{unique} inverse.

If $S$ is a semigroup and $a,b\in S$, we say that $a\gll b$ if $S^1a=S^1b$, $a\grr b$ if $aS^1=bS^1$,
and $a\gjj b$ if $S^1aS^1=S^1bS^1$. We define $\gh = \gl\cap \gr$, and $\gd = \gl\lor \gr$, that is,
$\gd$ is the smallest equivalence relation on $S$ containing both $\gl$ and $\gr$.
These five equivalence relations are known as \emph{Green's relations}
\cite[p.\,45]{Ho95}, and are among the most important tools in studying semigroups.

We now introduce the four notions of conjugacy that we will consider in this paper.
As noted, we expect any reasonable notion of semigroup conjugacy to coincide in groups
with the usual notion. For elements $a,b,g$ of a group $G$, if $a=g\inv bg$, then we say that $a$ and
$b$ are \emph{conjugate} and $g$ (or $g\inv$) is a \emph{conjugator} of $a$ and $b$. Conjugacy in
groups has several equivalent formulations that avoid inverses, and hence generalize syntactically to any
semigroup. For example, if $G$ is a group, then $a,b\in G$
satisfy $a=g\inv bg$ (for some $g\in G$) if and only if $a=uv$ and $b=vu$ for some $u,v\in G$
(namely $u=g\inv b$ and $v=g$).
This last formulation has been used to define the following relation on a free semigroup $S$
(see \cite{La79}):
\begin{equation}
\label{econ2}
a\cp b\iff\exists_{u,v\in S^1}\ a=uv\text{ and } b=vu\,.
\end{equation}

If $S$ is a free semigroup, then $\cp$ is an equivalence relation on $S$ \cite[Cor.~5.2]{La79},
and so it can be considered as a notion of conjugacy in $S$. In a general
semigroup $S$, the relation $\cp$ is reflexive and symmetric, but not transitive.
If $a\cp b$ in a semigroup, we say that $a$ and $b$ are \emph{primarily
related} \cite{KuMa09} (hence the subscript in $\cp$). The transitive  closure $\cp^*$ of $\cp$ has
been defined as a conjugacy relation in a general semigroup \cite{Hi06,KuMa07,KuMa09}. Lallement
credited the idea of the relation $\cp$ to Lyndon and Sch\"{u}tzenberger \cite{lyndon}.

Again looking to group conjugacy as a model, for $a,b$ in a group $G$, $a=g\inv bg$ for some $g\in G$
if and only if $ag=gb$ for some $g\in G$ if and only if $bh = ha$, for some $h\in G$ (namely $h=g\inv$).
A corresponding semigroup conjugacy is defined as follows:
\begin{equation}
\label{econ3}
a\co b\iff\exists_{g,h\in S^1}\ ag=gb\text{ and } bh=ha.
\end{equation}
This relation was defined by Otto for monoids presented by finite Thue systems \cite{Ot84},
and, unlike $\cp$,  it is an equivalence relation in any semigroup. However, $\co$ is the universal
relation in any semigroup $S$ with zero. Since it is generally believed \cite{gril,jd,rs} that
$\lim_{n\to \infty} \frac{z_n}{s_n} = 1$, where $s_n$ [$z_n$] is the number of
semigroups [with zero] of order $n$, it would follow that ``almost all'' finite semigroups have a zero
and hence this notion of conjugacy might be of interest only in particular classes of semigroups.

In \cite{AKM14} a new notion of conjugacy was introduced. This notion coincides
with Otto's concept for semigroups without zero, but does not reduce to the universal relation when
$S$ has a zero. The key idea was to restrict the set from which conjugators can be chosen.
For a semigroup $S$ with zero and $a\in S\setminus \{0\}$, let
$\pp(a)$ be the set of all elements $g\in S$ such that $(ma)g\ne0$ for all
$ma\in S^1a\setminus\{0\}$. We also define $\pp(0) = \{0\}$. If $S$ has no zero, we set $\pp(a)=S$
for every $a\in S$. Let $\pp^1(a) = \pp(a)\cup\{1\}$ where $1\in S^1$.
Define a relation $\con$ on any semigroup $S$ by
\begin{equation}
\label{e1dcon}
a\con b\iff\exists_{g\in\pp^1(a)}\exists_{h\in\pp^1(b)}\ ag=gb\textnormal{ and }bh=ha\,.
\end{equation}
(See \cite[\S2]{AKM14} for the motivation of using the sets $\pp^1(a)$.)
Restricting the choice of conjugators, as happens in the definition of $\con$,
is not unprecedented for semigroups. For example, if $S$ is a monoid and $G$ is the group of units
of $S$, we say that $a$ and $b$ in $S$ are \emph{$G$-conjugated} and write $a\sim_G b$
if there there exists $g\in G$ such that $b=g^{-1}ag$ \cite{KuMa07}.
The restrictions proposed in the definition of $\con$ are much less stringent. Their choice was motivated
by considerations in the context of semigroups of transformations. The translation of these considerations
into abstract semigroups resulted in the sets $\pp^1(a)$. (See \cite[\S2]{AKM14} for details.)
Roughly speaking, conjugators selected from $\pp^1(a)$ satisfy the minimal requirements needed to avoid
the pitfalls of~$\co$.

The relation $\con$ turns out to be an equivalence relation on an arbitrary semigroup $S$. Moreover, if
$S$ is a semigroup without zero, then $\con\,\,=\,\,\co$. If $S$ is a free semigroup, then
$\con\,\,=\,\,\co\,\,=\,\,\cp$. In the case where $S$ has a zero, the conjugacy class of $0$
with respect to $\con$ is $\{0\}$.

The last notion of conjugacy that we will consider has been inspired by
considerations in the representation theory of finite semigroups
(for details we refer the reader to Steinberg's book~\cite{Steinberg15}).
Let $M$ be a finite monoid and let $a,b\in M$. We say that $a\ctr b$ if there
exist $g,h\in M$ such that $ghg=g$, $hgh=h$, $hg=a^\omega$,
$gh=b^\omega$, and $ga^{\omega +1}h=b^{\omega +1}$, where, for $a\in M$,
$a^\omega$ denotes the unique idempotent in the monogenic semigroup generated by $a$ (see \cite[\S1.2]{Ho95})
and $a^{\omega + 1} = aa^{\omega}$.  The relation $\ctr$ is an equivalence relation in any finite monoid.

The same notion can be alternatively introduced (see, for example, Kudryavtseva
and Mazorchuk \cite{KuMa09}) via characters of finite-dimensional representations. Given a finite-dimensional complex
representation $\varphi:S\to \fend_{\,\mathbb{C}}(V)$ of a semigroup $S$,
the character of $\varphi$ is the
function $\chi_\varphi: S\to \mathbb{C}$ defined by $\chi_\varphi(s) = \mathrm{trace}(\varphi(s))$
for all $s\in S$. In a finite monoid $S$, $a\ctr b$ if and only if $\chi_\varphi(a)=\chi_\varphi(b)$
(\cite[Thm.~2.2]{McAl1972} or \cite[Prop.~8.9, 8.3 and Thm.~8.10]{Steinberg15})
This explains the subscript notation $\ctr$.

The relation $\ctr$, in its equational definition, can be naturally extended from the class of finite monoids
to the class of epigroups. We need some definitions first. Let $S$ be a semigroup.
An element $a\in S$ is an \emph{epigroup element} (or, more classically, a \emph{group-bound element}) if there exists
a positive integer $n$ such that $a^n$ belongs to a subgroup of $S$,
that is, the $\mathcal{H}$-class $H_{a^n}$ of $a^n$ is a group.
If this positive integer is $1$, then $a$ is said to be \emph{completely regular}. If we denote by $e$ the identity element
of $H_{a^n}$, then $ae$ is in $H_{a^n}$ and we define the \emph{pseudo-inverse} $a'$ of $a$
by $a'=(ae)^{-1}$, where $(ae)^{-1}$ denotes the inverse of $ae$ in the group $H_{a^n}$ \cite[(2.1)]{Shevrin}.
An \emph{epigroup} is a semigroup consisting entirely of epigroup elements, and a
\emph{completely regular semigroup} is a semigroup consisting entirely of
completely regular elements. Finite semigroups and completely regular semigroups are examples of epigroups.
Following Petrich and Reilly \cite{PeRe99} for completely regular semigroups
and Shevrin \cite{Shevrin} for epigroups, it is now customary to view an epigroup $(S,\cdot)$ as a \emph{unary}
semigroup $(S,\cdot,{}')$ where $x\mapsto x'$ is the map sending each element to its pseudo-inverse. In addition, the
${}^\omega$ notation introduced above for finite semigroups can be extended to an epigroup $S$ \cite[\S 2]{Shevrin},
where, for $a\in S$,  $a^\omega$ denotes the idempotent of the group to which some power of $a$ belongs.
(In the finite case, $a^{\omega}$ itself is a power of $a$.)  We can therefore extend
the definition of $\ctr$ from finite monoids to epigroups: for all $a,b$ in 
a epigroup $S$,
\begin{equation}
\label{e1dctr1}
a\ctr b\iff\exists_{g,h \in S^1}\ ghg=g,\, hgh=h,\, ga^{\omega +1}h=b^{\omega +1},\, hg=a^\ome,\mbox{ and } gh=b^\ome.
\end{equation}
In any epigroup, we have $a^\omega=aa'$ (\cite[\S\S2.2.]{Shevrin}), and therefore $a^{\omega+1}=aa'a=a''$.
Thus in epigroups, as is sometimes convenient, we can express the conjugacy relation $\ctr$ entirely in terms of
pseudo-inverses: for all $a,b\in S$,
\begin{equation}
\label{e1dctr2}
a\ctr b\iff\exists_{g,h \in S}\ ghg=g,\, hgh=h,\, ga''h=b'',\, hg=aa',\mbox{ and } gh=bb'.
\end{equation}

We will refer to $\cp$, $\cp^*$, $\co$, $\con$, and $\ctr$ as $p$-conjugacy, $p^*$-conjugacy, $o$-conjugacy,
$c$-conjugacy, and trace conjugacy, respectively. Of course, $\cp$ is a valid notion of conjugacy only in the
class of semigroups in which it is transitive, and trace conjugacy is only defined for epigroups.

For epigroups (and, in particular, for finite semigroups), we have the inclusions depicted in Figure \ref{fig0} 
(which will be justified later).
\begin{figure}[h]
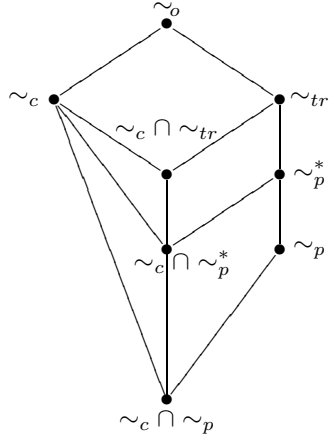

\[
\xy
(-4,40)*{\con};
(0,40)*{\bullet}="c";
(15,52)*{\co};
(15,50)*{\bullet}="o";
(15,36)*{\con\cap\ctr};
(15,30)*{\bullet}="ctr";
(17,18)*{\con\cap \cp^*};
(15,20)*{\bullet}="cp*";
(34,40)*{\ctr};
(30,40)*{\bullet}="tr";
(34,30)*{\cp^*};
(30,30)*{\bullet}="p*";
(34,20)*{\cp};
(30,20)*{\bullet}="p";
(15,-3)*{\con\cap \cp};
(15,0)*{\bullet}="cp";
"o";"c"**\dir{-};
"o";"tr"**\dir{-};
"p*";"tr"**\dir{-};
"p";"p*"**\dir{-};
"cp";"p"**\dir{-};
"cp";"c"**\dir{-};
"ctr";"c"**\dir{-};
"tr";"ctr"**\dir{-};
"cp*";"c"**\dir{-};
"p*";"cp*"**\dir{-};
"ctr";"cp"**\dir{-};
\endxy
\]
\caption{Inclusions between the four conjugacies}\label{fig0}
\end{figure}
The corresponding picture for arbitrary semigroups can be extracted from Figure~\ref{fig0} by removing $\ctr$.
The following semigroup $S$, which is \texttt{SmallSemigroup(7,542155)} of \cite{Smallsemi}, shows that all inclusions
in Figure~\ref{fig0} are strict:
 \[
 \begin{tabular}{c|ccccccc}
 $\cdot$ &0&1&2&3&4&5&6\\ \hline
 0&0&0&0&0&4&4&0\\
 1&0&0&0&0&4&4&0\\
 2&0&0&0&0&4&4&0\\
 3&0&0&0&0&4&4&0\\
 4&4&4&4&4&4&4&4\\
 5&4&4&4&4&4&4&4\\
 6&0&0&2&3&4&5&6
\end{tabular}
\]
Since $S$ has a zero (the element $4$) it follows that $\co\,\,= S\times S$; in addition, it is obvious from the table that
$\cp$ (viewed as a directed graph) consists of all loops together with the edges $0-2$, $0-3$, and $4-5$.
Therefore, the partition induced by $\cp^*$ is
$\{\{0,2,3\},\{4,5\},\{1\},\{6\}\}$. On the other hand, $\ctr$ induces the partition $\{\{0,1,2,3\},\{5,6\},\{4\}\}$.
Finally, we have  $\pp(0)=\pp(1)=\pp(2)=\pp(3)=\pp(6)=\{0,1,2,3,6\}$; $\pp(4)=\{4\}$, and $\pp(5)=\emptyset$. From that
we infer that $\con$ induces the partition $\{\{0,1,2,3,6\},\{4\},\{5\}\}$. Now, $\con\cap \cp$ consists of all loops and the edges $0-2$ and $0-3$;
$\con\cap \cp^*$ induces the partition $\{\{0,2,3\},\{4\},\{5\},\{1\},\{6\}\}$; finally, $\con\cap \ctr$ induces the partition
$\{\{0,1,2,3\},\{4\},\{5\},\{6\}\}$.

In \S\ref{ssym}, we study $c$-conjugacy, trace conjugacy, and $p$-conjugacy in one of the most
important classes of inverse semigroups with proper divisors of zero, namely symmetric inverse semigroups
(see \cite[Thm.~5.1.5]{Ho95}). We give a complete description of
the $c$-conjugacy classes, answering a question posed by the referee of \cite{AKM14}.
In the symmetric inverse semigroup $\mi(X)$ on a set
$X$, we find that $\con\,\,\subset\,\,\cp$ when $X$ is finite, and
$\cp$ and $\con$ are not comparable when $X$ is countably infinite.
Note that $\cp\,\,\subseteq\,\,\co$ in every semigroup $S$
\cite[Thm.~2.2]{AKM14}.
However, as $\mi(X)$ shows, the relation between $\con$ and $\cp$ is more
complex.

In \S\ref{sec:green}, we study the relationship between conjugacies and Green's
relations.
We find that, in general, Green's relations and the conjugacies under
consideration are not
comparable with respect to inclusion,
but there are some comparison results for some transformation semigroups. Our general perception,
however, is that conjugacies and Green's relations form two ``orthogonal'' systems of equivalence relations.

The bulk of our results is contained in \S\ref{sec:epigroups} and \S\ref{scom}. Roughly speaking, in the first we deal with conditions under which the conjugacies tend to be equal; in the second we deal with the opposite situation. Given the definition of  $\ctr$, epigroups form the largest class of semigroups in which all the notions are defined, and hence is the largest class in which all the relations could be equal; therefore  \S\ref{sec:epigroups} only deals with epigroups.
In particular, to have $\cp$ equal to one of the other notions of conjugacy, a necessary condition  is
the transitivity of $\cp$. A complete classification  of the semigroups in which $\cp$ is transitive is still
an open problem. Besides groups and free semigroups \cite[Cor.~5.2]{La79}, a recent result of
Kudryavtseva \cite[Cor.~4]{ganna}
shows that $p$-conjugacy is transitive in completely regular semigroups.
We generalize this result by introducing a wider class of epigroups that
contains completely regular semigroups and
their variants.

In \S\ref{scom}, we prove a number of properties and separation results of the
four notions of conjugacy. 
We conclude the
section by extending various results about
conjugacy in groups to conjugacy in semigroups. For example, if $\sim$ is any
of $\cp$, $\con$,  $\co$ to $\ctr$, then
$a\sim b$ implies $a^k\sim b^k$, just like in groups.

Finally, \S\ref{spro} lists open problems regarding the notions of conjugacy
under discussion, showing how wide open this topic is.

\section{Conjugacy in symmetric inverse semigroups}
\label{ssym}
\setcounter{equation}{0}

The \emph{symmetric inverse semigroup} on a non-empty set $X$ is the semigroup
$\mi(X)$ of partial injective transformations
on $X$ under composition \cite[p.~148]{Ho95}.
The aim of this section is to answer a question of the referee of \cite{AKM14}
regarding $c$-conjugacy in $\mi(X)$ for
a countable $X$, and also compare these results with the existing ones on
the other notions of conjugacy. For $\mi(X)$, with countable $X$,
$p$-conjugacy was described in \cite{GaKo93} (for $X$ finite) and \cite{KuMa07}
(for $X$ countably infinite). It will follow
from these descriptions and our result that in $\mi(X)$,
$\con\,\,\subset\,\,\cp$ if $X$ is finite,
and $\con$ and $\cp$ are not comparable (with respect to inclusion) if $X$ is
countably infinite. We note that since the semigroup
$\mi(X)$ has a zero, $o$-conjugacy in $\mi(X)$ is universal for every $X$.
Also, if $X$ is infinite, then $\mi(X)$ is not an epigroup, so trace conjugacy is only
defined for $\mi(X)$ if $X$ is finite. We will get back to this later.

The importance of symmetric inverse semigroups comes from the fact that every
inverse semigroup can be embedded in $\mi(X)$
for some $X$ \cite[Thm.~5.1.7]{Ho95}.
The role of $\mi(X)$ in the theory of inverse semigroups is analogous to that
of the symmetric group $\sym(X)$ of permutations on $X$ in group theory.

To describe $\con$ in $\mi(X)$, we will use the cycle-chain-ray decomposition
of a partial injective transformation \cite{Ko13}, which is an extension of the
cycle decomposition of a permutation.

We will write functions on the right and compose from left to right; that is,
for $f:A\to B$ and $g:B\to C$, we will
write $xf$, rather than $f(x)$, and $x(fg)$, rather than $g(f(x))$.
Let $\al\in \mi(X)$. We denote the domain of $\al$ by $\dom(\al)$ and the image
of $\al$ by $\ima(\al)$. The union $\dom(\al)\cup\ima(\al)$ will be called the \emph{span}
of $\al$ and denoted $\spa(\al)$.
We say that $\al$ and $\bt$ in $\mi(X)$ are \emph{completely disjoint} if
$\spa(\al)\cap\spa(\bt)=\emptyset$.

\begin{defi}
\label{djoi}
Let $M$ be a set of pairwise completely disjoint elements of $\mi(X)$. The
\emph{join} of the elements of $M$,
denoted $\join_{\gam\in M}\gam$, is the element of $\mi(X)$ whose domain is
$\bigcup_{\gam\in M}\dom(\gam)$
and whose values are defined by
\[
x(\join_{\gam\in M}\gam)=x\gam_0,
\]
where $\gam_0$ is the (unique) element of $M$ such that $x\in\dom(\gam_0)$. If
$M=\emptyset$,
we define $\join_{\gam\in M}\gam$ to be~$0$ (the zero in $\mi(X)$).
If $M=\{\gam_1,\gam_2,\ldots,\gam_k\}$ is finite, we may write the join as
$\gam_1\jo\gam_2\jo\cdots\jo\gam_k$.
\end{defi}

\begin{defi}
\label{dbas}
Let $\ldots,x_{-2},x_{-1},x_0,x_1,x_2,\ldots$ be pairwise distinct elements of
$X$. The following elements of $\mi(X)$
will be called \emph{basic} partial injective transformations on $X$.
\begin{itemize}
  \item A \emph{cycle} of length $k$ ($k\geq1$), written $(x_0\,x_1\ldots\,
x_{k-1})$,
is an element $\del\in\mi(X)$ with $\dom(\del)=\{x_0,x_1,\ldots,x_{k-1}\}$,
$x_i\del=x_{i+1}$ for all $0\leq i<k-1$,
and $x_{k-1}\del=x_0$.
  \item A \emph{chain} of length $k$ ($k\geq1$), written $[x_0\,x_1\ldots\,
x_k]$,
is an element $\tet\in\mi(X)$ with $\dom(\tet)=\{x_0,\ldots,x_{k-1}\}$ and
$x_i\tet=x_{i+1}$ for all $0\leq i\leq k-1$.
  \item A \emph{double ray}, written $\lan\ldots\,x_{-1}\,x_0\,x_1\ldots\ran$,
is an element $\ome\in\mi(X)$ with
$\dom(\omega)=\{\ldots,x_{-1},x_0,x_1,\ldots\}$ and $x_i\ome=x_{i+1}$ for all
$i$.
  \item A \emph{right ray}, written $[x_0\,x_1\,x_2\ldots\ran$,
is an element $\ups\in\mi(X)$ with $\dom(\ups)=\{x_0,x_1,x_2,\ldots\}$ and
$x_i\ups=x_{i+1}$ for all $i\geq0$.
  \item A \emph{left ray}, written $\lan\ldots\, x_2\,x_1\,x_0]$,
is an element $\lam\in\mi(X)$ with $\dom(\lam)=\{x_1,x_2,x_3,\ldots\}$ and
$x_i\lam=x_{i-1}$ for all $i>0$.
\end{itemize}
By a \emph{ray} we will mean a double, right, or left ray.
\end{defi}

We note the following.
\begin{itemize}
  \item The span of a basic partial injective transformation is exhibited by
the
notation. For example, the span
of the right ray $[1\,2\,3\ldots\ran$ is $\{1,2,3,\ldots\}$.
  \item The left bracket in ``$\eta=[x\ldots$'' indicates that
$x\notin\ima(\eta)$; while the right bracket
in ``$\eta=\ldots\,x]$'' indicates that $x\notin\dom(\eta)$. For example, for
the chain $\tet=[1\,2\,3\,4]$,
$\dom(\tet)=\{1,2,3\}$ and $\ima(\tet)=\{2,3,4\}$.
  \item A cycle $(x_0\,x_1\ldots\, x_{k-1})$ differs from the corresponding
cycle in the symmetric group
of permutations on $X$ in that the former is undefined for every $x\in
X\sm\{x_0,x_1,\ldots,x_{k-1}\}$, while the latter
fixes every such $x$.
\end{itemize}

The following decomposition result was proved in \cite[Prop.~2.4]{Ko13}.

\begin{prop}\label{pdec}
Let $\al\in\mi(X)$ with $\al\ne0$. Then there exist unique sets: $\da$ of
cycles,
$\ta$ of chains, $\oa$ of double rays, $\ua$ of right rays, and $\la$ of left
rays
such that
the transformations in $\da\cup\ta\cup\oa\cup\ua\cup\la$ are pairwise
completely
disjoint and
\begin{equation}
\al=\join_{\del\in\da}\!\!\del\jo\join_{\tet\in\ta}\!\!\tet\jo\join_{\ome\in\oa}
\!\!\ome\jo\join_{\ups\in\ua}\!\!\ups
\jo\join_{\lam\in\la}\!\!\lam.\label{edec}
\end{equation}
\end{prop}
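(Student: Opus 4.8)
The plan is to read off the decomposition from the \emph{functional digraph} of $\al$: take vertex set $\spa(\al)$ and draw an edge $x\to x\al$ for each $x\in\dom(\al)$. Since $\al$ is a partial \emph{injection}, every vertex has out-degree at most $1$ (as $\al$ is a function) and in-degree at most $1$ (as each $y\in\ima(\al)$ has a unique $\al$-preimage). Hence each weakly connected component of this digraph is a directed cycle or a directed simple path, finite or infinite. To make this precise I would let $\approx$ be the smallest equivalence relation on $\spa(\al)$ with $x\approx x\al$ for all $x\in\dom(\al)$, and call its classes the \emph{components} of $\al$; I will build one basic transformation per component.

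For \emph{existence}, fix a component $C$ and pick $x\in C$. Trace the forward orbit $x,x\al,x\al^2,\dots$ and the backward orbit $x,x\al\inv,x(\al\inv)^2,\dots$, each continued only while defined (using that $\al\inv\in\mi(X)$ is the reverse partial injection). A short injectivity argument shows the forward orbit can repeat only by closing on $x$: if $x\al^i=x\al^j$ with $0\le i<j$ minimal and $i>0$, then applying injectivity to $x\al^{i-1},x\al^{j-1}$ contradicts minimality, so $i=0$ and we obtain a cycle, which already exhausts $C$. If there is no repetition, the four remaining cases are decided by whether the backward orbit terminates (reaching a point of $\spa(\al)\sm\ima(\al)$) and whether the forward orbit terminates (reaching a point of $\spa(\al)\sm\dom(\al)$): both terminate gives a chain, only the forward terminates gives a left ray, only the backward terminates gives a right ray, and neither gives a double ray. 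In each case the orbit exhausts $C$ and $\al|_{C\cap\dom(\al)}$ is exactly the corresponding basic transformation on $C$. Collecting these pieces as $\da,\ta,\oa,\ua,\la$, their spans are the components, hence pairwise disjoint, so the pieces are pairwise completely disjoint; and since their domains partition $\dom(\al)$ with $\al$ agreeing with the relevant piece on each domain, the join \eqref{edec} equals $\al$.

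For \emph{uniqueness}, suppose $\al=\join_{\gam\in M}\gam$ with $M$ a set of pairwise completely disjoint basic transformations. Disjointness of spans together with $\dom(\al)=\bigcup_{\gam\in M}\dom(\gam)$ forces $\gam=\al|_{\spa(\gam)\cap\dom(\al)}$, so the adjacency $x\to x\gam$ inside $\spa(\gam)$ coincides with $x\to x\al$; since each basic transformation is a single orbit by Definition~\ref{dbas}, $\spa(\gam)$ lies inside one component. Conversely, whenever $x\in\dom(\al)$ the defining property of the join gives $x\al=x\gam_0$ with $x,x\al\in\spa(\gam_0)$ for the unique $\gam_0$ having $x$ in its domain, so no component is split across two members of $M$. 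Thus the spans of the members of $M$ are precisely the components of $\al$, and on each component $\gam$ is the forced restriction of $\al$; hence $M$, and its partition into the five types, is uniquely determined, giving uniqueness of $\da,\ta,\oa,\ua,\la$.

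The main obstacle is the case analysis in the existence step: showing rigorously that each component is \emph{exactly one} of the five types and that the orbit construction covers it without overlap. The delicate points are the injectivity argument ruling out a path that feeds into the middle of a cycle (impossible precisely because in-degrees are at most $1$), and the careful bookkeeping of the partial-function domains when an orbit terminates, so that endpoints land correctly in $\spa(\al)\sm\ima(\al)$ or $\spa(\al)\sm\dom(\al)$ and the bracket conventions of Definition~\ref{dbas} are matched.
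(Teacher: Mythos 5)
Your proof is correct, but note that the paper itself offers no proof of this proposition to compare against: it is quoted verbatim from Konieczny's paper \cite[Prop.~2.4]{Ko13}, and the authors simply cite that reference. Your argument is the natural self-contained one, and it is essentially the standard orbit/functional-digraph decomposition that underlies the cited result: in-degree and out-degree at most one force each weakly connected component of $\Gamma(\al)$ to be a cycle, a finite path, or a one- or two-sided infinite path, and these correspond exactly to the five basic types of Definition~\ref{dbas}. The two places where such proofs usually leak are handled by you correctly: (i) the first-repetition argument (if $x\al^i=x\al^j$ with $0\le i<j$ minimal, injectivity forces $i=0$), which simultaneously rules out a path feeding into the interior of a cycle; and (ii) the uniqueness step, where complete disjointness plus $\dom(\al)=\bigcup_{\gam\in M}\dom(\gam)$ forces each $\gam\in M$ to be the restriction of $\al$ to $\spa(\gam)\cap\dom(\al)$, while the fact that every edge $x\to x\al$ lies inside a single span shows no component of $\Gamma(\al)$ can be split between two members of $M$; since each span is itself connected, the spans are exactly the components and $M$ is forced. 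One micro-gap worth patching if you write this up in full: in the ``no repetition'' cases you should note explicitly that the backward orbit cannot re-enter the forward orbit (if $x\al^{-i}=x\al^{j}$ with $i,j>0$, then applying $\al^i$ gives $x=x\al^{i+j}$, a forward repetition), so the union of the two orbits really is a simple path; this is routine but it is what guarantees the bracket conventions of Definition~\ref{dbas} are matched at the endpoints.
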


We will call the join (\ref{edec}) the \emph{cycle-chain-ray decomposition} of
$\al$. If
$\eta\in\da\cup\ta\cup\oa\cup\ua\cup\la$, we will say that $\eta$ is
\emph{contained} in $\al$
(or that $\al$ \emph{contains} $\eta$).
We note the following.
\begin{itemize}
  \item If $\al\in\sym(X)$, then
$\al=\join_{\del\in\da}\!\del\jo\join_{\ome\in\oa}\!\ome$
(since $\ta=\ua=\la=\emptyset$), which corresponds to the usual cycle
decomposition
of a permutation \cite[1.3.4]{Sc64}.
  \item If $\dom(\al)=X$, then
$\al=\join_{\del\in\da}\!\del\jo\join_{\ome\in\oa}\!\ome\jo\join_{\ups\in\ua}
\!\ups$
(since $\ta=\la=\emptyset$), which corresponds to the decomposition given in
\cite{Le91}.
  \item If $X$ is finite, then
$\al=\join_{\del\in\da}\!\del\jo\join_{\tet\in\ta}\!\tet$
(since $\oa=\ua=\la=\emptyset$), which is the decomposition given in
\cite[Theorem~3.2]{Li96}.
\end{itemize}

For example, if $X=\{1,2,3,4,5,6,7,8,9\}$, then
\[
\al=\begin{pmatrix}1&2&3&4&5&6&7&8&9\\3&6&-&5&9&8&-&2&-\end{pmatrix}\in\mi(X)
\]
written in cycle-chain decomposition (no rays since $X$ is finite) is
$\al=(2\,6\,8)\jo[1\,3]\jo[4\,5\,9]$. The following $\bt$ is an example of an
element of $\mi(\mathbb Z)$ written
in cycle-chain-ray decomposition:
\[
\bt=(2\ 4)\jo[6\ 8\ {10}]\jo\lan\ldots-6\,-4\,-2\,-1\,-3\,-5\,\ldots\ran\jo[1\ 
5\ 
9\ {13}\,\ldots\ran\jo\lan\ldots {15}\ {11}\ 7\ 3].
\]

\begin{nota}\label{nzero}
We will fix an element $\mz\notin X$. For $\al\in\mi(X)$ and $x\in X$, we will
write $x\al=\mz$
if and only if $x\notin\dom(\al)$. We will also assume that $\mz\al=\mz$. With
this notation, it will make sense
to write $x\al=y\bt$ or $x\al\ne y\bt$ ($\al,\bt\in\mi(X)$, $x,y\in X$) even
when $x\notin\dom(\al)$ or $y\notin\dom(\bt)$.
\end{nota}

\begin{nota}\label{ndka}
For $0\ne\al\in\mi(X)$, let $\da$ be the set of cycles and $\ta$ be the set of
chains that occur in the
cycle-chain-ray decomposition of $\al$ (see (\ref{edec})). For $k\geq1$, we
denote by $\dka$ the set of cycles in $\da$ of length~$k$,
and by $\tka$ the set of chains in $\ta$ of length $k$.
\end{nota}

\begin{defi}\label{dtyp}
Let $\al\in\mi(X)$. The sequence of cardinalities
\[
\lan|\da^1|,|\da^2|,|\da^3|,\ldots;|\ta^1|,|\ta^2|,|\ta^3|,\ldots;|\oa|,|\ua|,
|\la|\ran
\]
(indexed by the elements of the ordinal $2\ome+3$)
will be called the \emph{cycle-chain-ray type} of $\al$.
This notion generalizes the cycle type of a permutation \cite[p.~126]{DuFo04}.
Suppose $\dom(\al)$ is finite.
Then $\al$ does not have any rays and its cycle-chain-ray type reduces to the
\emph{cycle-chain type}
\[
\lan|\da^1|,|\da^2|,|\da^3|,\ldots;|\ta^1|,|\ta^2|,|\ta^3|,\ldots\ran.
\]
\end{defi}

The cycle-chain-ray type of $\al$ is completely determined by the \emph{form}
of
the cycle-chain-ray
decomposition of $\al$. The form is obtained from the decomposition by omitting
each occurrence of the symbol ``$\jo$''
and replacing each element of $X$ by some generic symbol, say ``$*$.'' For
example,
$\al=(2\,6\,8)\jo[1\,3]\jo[4\,5\,9]$ has the form $(*\,*\,*)[*\,*][*\,*\,*]$,
and
\[
\bt=(2\ 4)\jo[6\ 8\ 10]\jo\lan\ldots-6\,-4\,-2\,-1\,-3\,-5\,\ldots\ran\jo[1\ 5\
9\ 13\,\ldots\ran\jo\lan\ldots15\ 11\ 7\ 3]
\]
has the form
$(*\,*)[*\,*\,*]\lan\ldots *\,*\,*\,\ldots\ran[*\,*\,*\,\ldots\ran\lan\ldots
*\,*\,\,*]$.

A \emph{directed graph} (or a \emph{digraph}) is a pair $\Gamma=(A,R)$ where
$A$
is a set (not necessarily finite and possibly empty)
and $R$ is a binary relation on $A$. Any element $x\in A$ is called a
\emph{vertex} of $\Gamma$,
and any pair $(x,y)\in R$ is called an \emph{arc} of $\Gamma$. We will call a
vertex $y$ \emph{terminal}
if there is no $x\in A$ such that $(x,y)\in R$.

Let $\Gamma_1=(A_1, R_1)$ and $\Gamma_2=(A_2, R_2)$ be digraphs.
A mapping $\phi:A_1\to A_2$
is called a \emph{homomorphism} from $\Gamma_1$ to $\Gamma_2$ if for all
$x,y\in
A_1$,
if $(x,y)\in R_1$, then $(x\phi,y\phi)\in R_2$ \cite{HeNe04}.

\begin{defi}\label{drh}
Let $\Gamma_1=(A_1, R_1)$ and $\Gamma_2=(A_2, R_2)$ be digraphs. A homomorphism
$\phi:A_1\to A_2$
is called a \emph{restrictive homomorphism} (or an \emph{r-homomorphism}) from
$\Gamma_1$ to $\Gamma_2$ if
for every terminal vertex $x$ of $\Gamma_1$, $x\phi$ is a terminal vertex of
$\Gamma_2$.
\end{defi}

Any partial transformation $\al$ on a set $X$ (injective or not) can be
represented by the digraph
$\Gamma(\al)=(A_\al,R_\al)$, where $A_\al=\spa(\al)$
and for all $x,y\in A_\al$, $(x,y)\in R_\al$ if and only if $x\in\dom(\al)$ and
$x\al=y$.

The following proposition is a special case of \cite[Thm.~3.8]{AKM14}.

\begin{prop}
\label{pjal1}
For all $\al,\bt\in\mi(X)$, $\al\con\bt$ if and only if there are
$\phi,\psi\in\mi(X)$
such that $\phi$ is an r-homomorphism from $\Gamma(\al)$ to $\Gamma(\bt)$
and $\psi$ is an r-homomorphism from $\Gamma(\bt)$ to $\Gamma(\al)$.
\end{prop}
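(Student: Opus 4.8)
The plan is to deduce the statement from the general description of $\con$ in transformation semigroups (\cite[Thm.~3.8]{AKM14}) by specializing to $\mi(X)$, but since the specialization is short I would instead argue directly in two symmetric halves, translating the two intertwining equations hidden in $\con$ into the two r-homomorphisms. The first step is to pin down $\pp(\al)$ inside $\mi(X)$: for $\al\neq 0$ I claim that $g\in\pp(\al)$ if and only if $\ima(\al)\subseteq\dom(g)$. For the forward implication, given $z\in\ima(\al)$ I would choose $x$ with $x\al=z$ and take $m$ to be the restriction of the identity to $\{x\}$, so that $m\al$ is a nonzero element of $S^1\al$ with image $\{z\}$; then $(m\al)g\neq 0$ forces $z\in\dom(g)$. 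The reverse implication is immediate, since $\ima(m\al)\subseteq\ima(\al)\subseteq\dom(g)$ keeps $(m\al)g$ nonzero. As $\id_X\in\pp(\al)$ always, $\pp^1(\al)=\pp(\al)$; and since the $\con$-class of $0$ is $\{0\}$ while $\Gamma(0)$ is the empty digraph, it remains to prove, for $\al,\bt\neq 0$, that the existence of $g$ with $\ima(\al)\subseteq\dom(g)$ and $\al g=g\bt$ is equivalent to the existence of an r-homomorphism $\Gamma(\al)\to\Gamma(\bt)$; applying this equivalence to the pair $(\al,\bt)$ and to $(\bt,\al)$ then yields the proposition.

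For the forward half I would start from such a $g$ and observe that $\ima(\al)\subseteq\dom(g)$ together with $\al g=g\bt$ in fact forces $\dom(\al)\subseteq\dom(g)$: if $x\in\dom(\al)$ then $x\al\in\ima(\al)$, so $(x\al)g\neq 0$, and since $(x\al)g=(xg)\bt$ this is impossible unless $x\in\dom(g)$. Hence $\spa(\al)\subseteq\dom(g)$ and $\phi:=g|_{\spa(\al)}$ is a well-defined injective map on the whole vertex set $A_\al$. Reading $\al g=g\bt$ along each arc $x\to x\al$ of $\Gamma(\al)$ shows simultaneously that $x\phi\in\spa(\bt)$ and that $\bigl(x\phi,\,(x\al)\phi\bigr)\in R_\bt$, so $\phi$ is a digraph homomorphism into $\Gamma(\bt)$. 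For the converse half I would take a given r-homomorphism $\phi$, regard it as the element of $\mi(X)$ with domain exactly $\spa(\al)$, set $g=\phi$, and check $\al g=g\bt$ pointwise: on $\dom(\al)$ the homomorphism property delivers the equation, while at the remaining (boundary) vertices of $\spa(\al)$ both sides vanish because the restrictive condition keeps their images on the boundary of $\Gamma(\bt)$; finally $\ima(\al)\subseteq\spa(\al)=\dom(g)$ gives $g\in\pp^1(\al)$.

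The step I expect to be the genuine obstacle is the terminal (restrictive) condition, in both directions. It is not a formal consequence of a single intertwining equation: one can map $\al$ into $\bt$ by a conjugator that carries a terminal vertex into the interior of a chain of $\bt$ (for instance by embedding a short chain of $\al$ into a longer chain of $\bt$), and the equation $\al g=g\bt$ alone does not forbid this. What rules it out is the simultaneous existence of a conjugator in the opposite direction, and I would exploit this interlocking explicitly: from $\al g=g\bt$ and $\bt h=h\al$ one gets $\al(gh)=(gh)\al$, so the injective map $gh$ commutes with $\al$, which constrains how $g$ may act on the boundary of each component. Concretely I would invoke the cycle-chain-ray decomposition of Proposition \ref{pdec}: $\Gamma(\al)$ and $\Gamma(\bt)$ are disjoint unions of cycles, chains, and rays, an injective arc-preserving map sends each connected component into a single component of the target, and the bracket data records which targets can receive a given chain or ray. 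Evaluating the intertwining equations at the boundary vertices—those outside $\dom(\al)$, respectively outside $\ima(\al)$—then forces starts of chains and rays to starts and ends to ends, which is exactly the terminal condition; the injectivity of the conjugators (inherited from $\mi(X)$) and the self-duality of the cycle-chain-ray digraphs under arc reversal, i.e.\ under $\al\mapsto\al\inv$, are what make this bookkeeping close up.
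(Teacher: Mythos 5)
Your skeleton is the right one, and since the paper offers no proof of this proposition (it is quoted as a special case of \cite[Thm.~3.8]{AKM14}), a self-contained verification is worth attempting: the identification $\pp(\al)=\{g\in\mi(X):\ima(\al)\subseteq\dom(g)\}$ for $\al\ne 0$ is correct, so is the disposal of the zero case, and so is the reduction to a one-directional equivalence (existence of a conjugator from $\al$ to $\bt$ versus existence of an r-homomorphism $\Gamma(\al)\to\Gamma(\bt)$) applied to the two ordered pairs, with the two constructions $\phi:=g|_{\spa(\al)}$ and $g:=\phi$.

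The genuine gap is your third paragraph, i.e.\ exactly the step you flag as the obstacle, and it comes from attaching the restrictive condition to the wrong end of the chains and rays. The vertices at which the single equation $\al g=g\bt$ (together with $\ima(\al)\subseteq\dom(g)$) says something beyond arc-preservation are those of $\spa(\al)\setminus\dom(\al)$, namely the \emph{final} vertex of each chain and each left ray: for such $x$ write $x=y\al$; then $xg=(y\al)g=(yg)\bt\in\ima(\bt)$, while $(xg)\bt=(x\al)g=\mz$, so $xg\in\ima(\bt)\setminus\dom(\bt)$. Thus ``ends go to ends'' is a one-line consequence of one intertwining equation; no interplay with the reverse conjugator, no commutation of $gh$ with $\al$, and no cycle-chain-ray bookkeeping is needed. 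This end-to-end reading of ``terminal'' is also the only one compatible with Proposition~\ref{pjal2}, whose conditions say that chains of $\al$ land on terminal (final) segments of chains or left rays of $\bt$. Your paragraph instead takes ``terminal'' to mean ``no incoming arc'' (the initial vertex of a chain), and under that reading the statement you are trying to prove is false, so no strengthening of your interlocking argument can rescue it: in the paper's own example following Theorem~\ref{tcha} (the one used for Proposition~\ref{pcpi}), $\al$ is one chain joined with infinitely many left rays and $\bt$ is infinitely many left rays; these are $\con$-related (map the chain end-aligned into a left ray of $\bt$ and left rays to left rays, and conversely left rays to left rays), yet $\Gamma(\al)$ has a vertex with no incoming arc while $\Gamma(\bt)$ has none, so no initial-vertex-preserving homomorphism $\Gamma(\al)\to\Gamma(\bt)$ exists at all. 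For the same reason the ``self-duality under arc reversal'' you invoke is unavailable: this example shows $\al\con\bt$ does not imply $\al\inv\con\bt\inv$ in $\mi(X)$. Note, finally, that your converse half already uses the correct reading tacitly, since the vanishing you need occurs at the vertices of $\spa(\al)$ outside $\dom(\al)$, and what must be preserved there is ``outside the domain'', not ``outside the image''; so your two halves are silently working with different notions. Once the end-to-end reading is adopted consistently in both halves, your one-directional equivalence is true as stated, both implications are short direct verifications, and the proposition follows exactly along the lines of your first two paragraphs.
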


\begin{defi}\label{dtas}
Let $\ldots,x_{-1},x_0,x_1,\ldots$ be pairwise distinct elements of $X$.
Let $\del=(x_0\ldots x_{k-1})$, $\tet=[x_0\,x_1\,\ldots\,x_k]$,
$\ome=\lan\ldots x_{-1}\, x_0\, x_1\ldots\ran$, $\ups=[x_0\, x_1\,
x_2\ldots\ran$, and
$\lam=\lan\ldots x_2\,x_1\,x_0]$. For any $\eta\in\{\del,\tet,\ome,\ups,\lam\}$
and any $\phi\in\mi(X)$ such that $\spa(\eta)\subseteq\dom(\phi)$, we define
$\eta\phi^*$ to be $\eta$
in which each $x_i$ has been replaced with $x_i\phi$. For example,
\[
\del\phi^*=(x_0\phi\,\,x_1\phi\ldots x_{k-1}\phi)\mbox{ and
}\lam\phi^*=\lan\ldots x_2\phi\,\,x_1\phi\,\,x_0\phi].
\]
Consider $\tet=[x_0\,x_1\ldots\, x_k]$,
$\ome=\lan\ldots\,x_{-1}\,x_0\,x_1\ldots\ran$, $\ups=[x_0\,x_1\,x_2\ldots\ran$,
and $\lam=\lan\ldots\, x_2\,x_1\,x_0]$ in $\mi(X)$. Then
any $[x_i\,x_{i+1}\ldots\, x_k]$ ($0\leq i<k$) is a \emph{terminal segment} of
$\tet$;
any $[x_i\,x_{i+1}\,x_{i+2}\ldots\ran$ is a terminal segment of $\ome$;
any $[x_i\,x_{i+1}\,x_{i+2}\ldots\ran$ ($i\geq0$) is a terminal segment of
$\ups$;
and any $[x_i\,x_{i-1}\ldots\, x_0]$ ($i\geq1$) is a terminal segment of $\lam$.
\end{defi}

The following proposition follows easily from more general results proved in
\cite{AKM14}
(see \cite[Prop.~4.18 and Prop.~7.3]{AKM14}).

\begin{prop}\label{pjal2}
Let $\al,\bt,\phi\in\mi(X)$. Then $\phi$ is an $r$-homomorphism from
$\Gamma(\al)$ to $\Gamma(\bt)$ if and only if for all
$k\geq1$, $\del\in\da^k$,
$\tet\in\ta^k$, $\ome\in\oa$, $\ups\in\ua$, and $\lam\in\la$:
\begin{enumerate}[label=\textup{(\arabic*)}]
  \item $\del\phi^*\in\db^k$, $\ome\phi^*\in\ob$, and
$\lam\phi^*\in\lb$;
  \item either there is a unique $\tet_1\in\tb^m$ with $m\geq k$
such that $\tet\phi^*$ is a terminal segment of $\tet_1$
or there is a unique $\lam_1\in\lb$ such that $\tet\phi^*$ is a terminal
segment
of $\lam_1$;
  \item either there is a unique $\ups_1\in\ub$ such that
$\ups\phi^*$ is a terminal segment of $\ups_1$
or there is a unique $\ome_1\in\ob$ such that $\ups\phi^*$ is a terminal
segment
of $\ome_1$.
\end{enumerate}
\end{prop}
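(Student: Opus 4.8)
The plan is to prove both implications by analysing, component by component, what $\phi$ does to the cycles, chains, double rays, right rays and left rays occurring in the cycle-chain-ray decomposition~(\ref{edec}) of $\al$. Throughout, $\phi$ is defined on $\spa(\al)$ (so that each $\eta\phi^*$ makes sense), and I use the arc-by-arc reformulation of the homomorphism condition: $\phi$ is a homomorphism from $\Gamma(\al)$ to $\Gamma(\bt)$ precisely when, for every $x\in\dom(\al)$, one has $x\phi\in\dom(\bt)$ and $x\al\phi=x\phi\bt$, i.e.\ the arc $(x,x\al)$ is sent to the arc $(x\phi,(x\al)\phi)$. Recall also that the terminal vertices of $\Gamma(\al)$ are exactly the elements of $\spa(\al)\setminus\dom(\al)$, that is, the final vertices of the chains and left rays of $\al$. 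Three facts drive every case: $\phi$ is injective, so distinct vertices have distinct images; $\bt\in\mi(X)$ is a partial injection, so every vertex of $\Gamma(\bt)$ has in-degree and out-degree at most $1$; and the decomposition of $\bt$ is unique by Proposition~\ref{pdec}, so a set of image vertices carrying the arc-pattern of a cycle, of a bi-infinite path, etc., must literally be a component of $\bt$, or a terminal segment of one.

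For the forward implication, assume $\phi$ is an r-homomorphism and fix a component $\eta$ of $\al$. The bookkeeping is uniform: a vertex of $\eta$ lying in $\dom(\al)$ has image of out-degree $1$, a vertex in $\ima(\al)$ has image of in-degree $1$, and each terminal vertex of $\eta$ has, by Definition~\ref{drh}, an image with no outgoing arc. For a cycle $\del\in\dka$ all vertices are interior, so $\del\phi^*$ is a directed cycle on $k$ distinct vertices, whence $\del\phi^*\in\dkb$; the same count makes $\ome\phi^*$ a bi-infinite directed path, so $\ome\phi^*\in\ob$. For a left ray $\lam\in\la$ every vertex is interior except its final one, whose image has no outgoing arc; reconstructing predecessors via the injectivity of $\bt$ identifies $\lam\phi^*$ with a full left ray, so $\lam\phi^*\in\lb$, which proves~(1). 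A chain $\tet\in\tka$ yields a finite directed path $\tet\phi^*$ whose last vertex has no outgoing arc; that vertex is the final vertex of a unique chain $\tet_1$ or left ray $\lam_1$ of $\bt$, and reading off its predecessors shows $\tet\phi^*$ is a terminal segment of $\tet_1$ (forcing its length $m\ge k$) or of $\lam_1$, which is~(2). Finally a right ray $\ups\in\ua$ has all vertices in $\dom(\al)$ and none terminal, so $\ups\phi^*$ is a forward-infinite path of out-degree-$1$ vertices, hence a terminal segment of a unique right ray $\ups_1$ or double ray $\ome_1$ of $\bt$, which is~(3).

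For the converse, assume (1)--(3). Arc-preservation is verified componentwise: if $x\in\dom(\al)$, then $x$ and its successor $x\al$ are consecutive vertices of some component $\eta$, and in each of (1)--(3) the object $\eta\phi^*$ is either a full component of $\bt$ or a terminal segment of one, so $x\phi$ and $(x\al)\phi$ are consecutive in $\Gamma(\bt)$; thus $x\phi\in\dom(\bt)$ and $x\phi\bt=x\al\phi$, and $\phi$ is a homomorphism mapping $\spa(\al)$ into $\spa(\bt)$. Terminal-preservation is then immediate: a terminal vertex of $\al$ is the final vertex of some chain or left ray, and by (1)--(2) its image is the final vertex of $\lam\phi^*$, or of the terminal segment appearing in~(2), which has no outgoing arc in $\Gamma(\bt)$. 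Hence $\phi$ is an r-homomorphism.

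I expect the genuinely delicate point to be the infinite components, and in particular separating the left-ray case from the double-ray case (and, for chains, a chain image from a left-ray image). Degree counting alone only places a left ray's image inside some infinite directed path of $\Gamma(\bt)$; without the terminal-vertex condition this could be a proper terminal segment of a double ray rather than a left ray, and it is exactly the requirement that the final vertex map to a vertex with no outgoing arc that excludes this and forces $\lam\phi^*\in\lb$. Turning this dichotomy into a proof --- combining the in-/out-degree bounds from $\bt\in\mi(X)$ with the uniqueness in Proposition~\ref{pdec} to pin down both the precise component of $\bt$ and the precise terminal segment within it --- is the main technical burden.
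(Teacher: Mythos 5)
Your proof is correct, but it is not the paper's argument: the paper gives no proof of Proposition~\ref{pjal2}, deducing it instead from more general results of \cite{AKM14} (Propositions 4.18 and 7.3 there), which concern the path structure of arbitrary, not necessarily injective, partial transformations. Your route is self-contained and genuinely different: you exploit the two features special to $\mi(X)$ --- injectivity of $\phi$, and the fact that every vertex of $\Gamma(\bt)$ has in-degree and out-degree at most one --- together with the uniqueness in Proposition~\ref{pdec}, to pin each image $\eta\phi^*$ down as a full component of $\bt$ (cycles, double rays, left rays) or a terminal segment of one (chains, right rays). This buys a short elementary argument whose case analysis mirrors the statement itself; what it gives up is the generality of the cited machinery, which \cite{AKM14} needs anyway for non-injective semigroups such as $T(X)$. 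Your closing diagnosis is also accurate: for cycles and double rays the homomorphism condition alone suffices, and the terminal-vertex clause is exactly what prevents the image of a chain or left ray from sitting as a non-terminal segment inside a longer chain, a left ray, or a double ray.

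One point deserves explicit flagging, though it is a defect of the paper rather than of your argument. You assert that the terminal vertices of $\Gamma(\al)$ are the elements of $\spa(\al)\setminus\dom(\al)$, i.e.\ the sinks (final vertices of chains and left rays). Under the paper's printed definitions this is false: arcs of $\Gamma(\al)$ are the pairs $(x,x\al)$, and a vertex $y$ is declared terminal when there is no arc $(x,y)$, which makes the terminal vertices the sources $\spa(\al)\setminus\ima(\al)$. The proposition is true only under your (sink) reading. For instance, with $\al=[1\,2]$, $\bt=[3\,4\,5]$ and $\phi\colon 1\mapsto 3$, $2\mapsto 4$, the map $\phi$ sends the unique source to the unique source, so it is an $r$-homomorphism in the printed sense, yet $\tet\phi^*=[3\,4]$ is an initial, not terminal, segment of $[3\,4\,5]$, violating (2); by contrast, the sink-to-sink condition is precisely what the conjugation equations $\al g=g\bt$ with $g\in\pp^1(\al)$ underlying Proposition~\ref{pjal1} actually force. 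So you have silently adopted the intended convention, which is mathematically the right thing to do, but a complete write-up should say explicitly that ``terminal'' in the definition preceding Definition~\ref{drh} must be read as ``no outgoing arc,'' since otherwise your identification of the terminal vertices contradicts the definition as printed.
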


\begin{defi}
\label{djal3}
Let $\al,\bt,\phi\in\mi(X)$ such that $\phi$ is an $r$-homomorphism from
$\Gamma(\al)$ to $\Gamma(\bt)$.
We define a mapping
$\hph:\da\cup\ta\cup\oa\cup\ua\cup\la\to\db\cup\tb\cup\ob\cup\ub\cup\lb$ by:
\[
\eta\hph =
\begin{cases}
\eta\phi^* &\text{ if }\eta\in\da\cup\oa\cup\la\,, \\
\tet_1 & \text{ if }\eta\in\ta\text{ and }\eta\phi^*\text{ is a terminal
segment
of }\tet_1\in\tb\,,\\
\lam_1 & \text{ if }\eta\in\ta\text{ and }\eta\phi^*\text{ is a terminal
segment
of }\lam_1\in\lb\,,\\
\ups_1 & \text{ if }\eta\in\ua\text{ and }\eta\phi^*\text{ is a terminal
segment
of }\ups_1\in\ub\,,\\
\ome_1 & \text{ if }\eta\in\ua\text{ and }\eta\phi^*\text{ is a terminal
segment
of }\ome_1\in\ob\,.
\end{cases}
\]
Note that $\hph$ is well defined (by Proposition~\ref{pjal2}) and injective
(since $\phi$ is injective).
\end{defi}

For a countable set $X$, we define two cardinal numbers that will be crucial in
our characterization of $c$-conjugacy in
the semigroup $\mi(X)$. We denote by $\mathbb Z_+$ the set of positive integers
and
by $\mathbb{N}$ the set $\mathbb{Z}_+\cup\{0\}$.

\begin{defi}
\label{dkm}
Let $X$ be countable and suppose $\al\in\mi(X)$.
We define $\kal\in\mathbb N\cup\{\ale_0\}$ by
\[
\kal=\sup\{k\in\mathbb Z_+:\ta^k\ne\emptyset\}.
\]
If $\ta^k=\emptyset$ for every $k\in\mathbb Z_+$, we define $\kal$ to be $0$.

Suppose $\kal\in\mathbb Z_+$, that is, $\kal$ is the largest positive integer
$k$ such that $\ta^k\ne\emptyset$.
We define $\mal\in\mathbb N$ by
\[
\mal=\max\{m\in\{1,2,\ldots,\kal\}:|\ta^m|=\ale_0\}.
\]
If $\ta^m$ is finite for every $m\in\{1,2,\ldots,\kal\}$, we define $\mal$ to
be
$0$.
\end{defi}

For any chain $\tet$ in $\mi(X)$, we denote the length of $\tet$ by $l(\tet)$.
For example, if $\tet=[1\,2\,3]$ then $l(\tet)=2$.

\begin{lemma}\label{lkal}
Let $X$ be countably infinite and let $\al,\bt\in\mi(X)$.
Suppose that $\kal=\kbt=\ale_0$. Then there exists an injective mapping
$p:\ta\to\tb$
such that for every $\tet\in\ta$, if $\tet\in\ta^k$ and $\tet p\in\tb^m$, then
$m\geq k$.
\end{lemma}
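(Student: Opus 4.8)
The plan is to construct $p$ greedily, one chain at a time, exploiting the fact that the hypothesis $\kbt=\ale_0$ forces $\bt$ to contain infinitely many chains of length at least $k$, for every $k\in\mathbb Z_+$.

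First I would record the key counting fact. Since $\kbt=\sup\{k\in\mathbb Z_+:\tkb\ne\emptyset\}=\ale_0$, the set $\{k\in\mathbb Z_+:\tkb\ne\emptyset\}$ is unbounded in $\mathbb Z_+$. Hence for each fixed $k$ there are infinitely many $m\ge k$ with $\tb^m\ne\emptyset$, so the set $T_k=\bigcup_{m\ge k}\tb^m$ of all chains of $\bt$ of length at least $k$ is countably infinite (it is at most countable because $X$ is countable).

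Next, since $X$ is countable the set $\ta$ is countable, so I would fix an enumeration $\ta=\{\tet_1,\tet_2,\ldots\}$ (if $\ta$ is finite the argument is only easier). I define $p$ recursively: supposing $\tet_1 p,\ldots,\tet_{i-1}p$ have already been chosen, let $k$ be the length of $\tet_i$ and let $\tet_i p$ be any element of $T_k$ different from the finitely many chains $\tet_1 p,\ldots,\tet_{i-1}p$ already used. Such an element exists because $T_k$ is infinite whereas only finitely many of its members can have been used so far.

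By construction $p$ is injective, since at each step we avoid all previously chosen images, and it satisfies the required monotonicity: if $\tet_i\in\tka$ then $\tet_i p\in T_k$, that is, $\tet_i p\in\tb^m$ for some $m\ge k$. The only real content is the counting observation of the second paragraph; once each $T_k$ is known to be infinite, the greedy construction never stalls and injectivity is automatic, so no delicate (e.g.\ infinite Hall-type matching) argument is needed. I note that this direction uses only $\kbt=\ale_0$ together with the countability of $X$; the symmetric hypothesis $\kal=\ale_0$ is not required here, and will presumably be invoked when the companion map from $\tb$ to $\ta$ is constructed.
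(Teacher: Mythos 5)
Your proof is correct and takes essentially the same route as the paper's: enumerate $\ta$ and send each chain to an at-least-as-long chain of $\tb$, which exists because $\kbt=\ale_0$ makes the chain lengths occurring in $\tb$ unbounded. If anything, your greedy step of explicitly avoiding previously used images makes injectivity slightly more transparent than the paper's version, which picks images from a pre-selected sequence of chains of strictly increasing lengths and leaves the distinctness of the chosen indices implicit.
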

\begin{proof}
Since $\kbt=\ale_0$, the set $\{k\in\mathbb Z_+:\tb^k\ne\emptyset\}$ is
unbounded, which implies that
there is a sequence $\eta_1,\eta_2,\eta_3,\ldots$ of chains in $\tb$ such that
$l(\eta_1)<l(\eta_2)<l(\eta_3)<\ldots$.
Since $\kal=\ale_0$, $\ta$ is countably infinite. Let
$\ta=\{\tet_1,\tet_2,\tet_3,\ldots\}$. For every $i\in\mathbb Z_+$,
select $n_i\in\mathbb Z_+$ such that
$l(\tet_i)\leq l(\eta_{n_i})$. Then $p:\ta\to\tb$ defined by
$\tet_i p=\eta_{n_i}$
is a desired injective mapping.
\end{proof}

\begin{theorem}\label{tcha}
Suppose that $X$ is countable.
Let $\al,\bt\in\mi(X)$. Then $\al\con\bt$ if and only if the following
conditions are satisfied:
\begin{enumerate}[label=\textup{(\arabic*)}]
  \item $|\da^k|=|\db^k|$ for every $k\in\mathbb Z_+$,
$|\oa|=|\ob|$, and $|\la|=|\lb|$;
  \item if $\oa$ is finite, then $|\ua|=|\ub|$; and
  \item if $\la$ is finite, then
\begin{enumerate}[label=\textup{(\roman*)}]
  \item $\kal=\kbt$; and
  \item if $\kal\in\mathbb Z_+$, then $\mal=\mbt$ and for every
$k\in\{\mal+1,\ldots,\kal\}$, $|\ta^k|=|\tb^k|$.
\end{enumerate}
\end{enumerate}
\end{theorem}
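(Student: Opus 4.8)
The plan is to run everything through the criterion of Proposition~\ref{pjal1}: $\al\con\bt$ holds exactly when there exist $r$-homomorphisms $\phi$ from $\Gamma(\al)$ to $\Gamma(\bt)$ and $\psi$ from $\Gamma(\bt)$ to $\Gamma(\al)$. Conditions (1)--(3) are symmetric in $\al$ and $\bt$ (since $|\oa|=|\ob|$ forces $\oa,\ob$ to be finite simultaneously, and likewise $\la,\lb$), so for the ``if'' direction it suffices to build a single $r$-homomorphism $\phi$ from the conditions, the reverse map being produced by the same recipe with $\al,\bt$ interchanged. Throughout I would use the combinatorial description of Proposition~\ref{pjal2} together with the induced injective map $\hph$ of Definition~\ref{djal3}, which sends each cycle of length $k$ to a cycle of length $k$, each double ray to a double ray, each left ray to a left ray, each chain to a chain of length at least its own or to a left ray, and each right ray to a right ray or a double ray.

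\textbf{Necessity.} Assume $\al\con\bt$ and fix $\phi,\psi$ with injective maps $\hph,h_\psi$. Condition~(1) is immediate: restricting $\hph$ and $h_\psi$ to cycles of each fixed length, to double rays, and to left rays yields injections in both directions between $\da^k$ and $\db^k$, between $\oa$ and $\ob$, and between $\la$ and $\lb$, whence equality (all cardinals being at most $\ale_0$). For~(2), if $\oa$ is finite then $\hph$ restricted to $\oa$ is a bijection onto $\ob$, so by injectivity no right ray can map to a double ray; hence $\hph$ sends $\ua$ into $\ub$, giving $|\ua|\le|\ub|$, and $h_\psi$ gives the reverse. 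The same argument with $\la$ forces, when $\la$ is finite, every chain to map to a chain. Writing $T_\al(k)=\sum_{j\ge k}|\ta^j|$ (and $T_\bt(k)$ similarly), the length-non-decreasing injections restrict to injections between chains of length $\ge k$ in both directions, so $T_\al(k)=T_\bt(k)$ for all $k\ge1$. From this single identity: $\kal=\kbt$, since the common value is the supremum of those $k$ with $T(k)\ne0$; and when $\kal\in\mathbb Z_+$, for $k>\mal$ the sums are finite, so $|\ta^k|=T_\al(k)-T_\al(k+1)=T_\bt(k)-T_\bt(k+1)=|\tb^k|$, while $T_\al(\mal)=\ale_0=T_\bt(\mal)$ together with the finiteness of $|\ta^k|=|\tb^k|$ for $k>\mal$ forces $|\tb^{\mal}|=\ale_0$ and hence $\mbt=\mal$ (the case $\mal=0$ being covered by finiteness of all $T_\al(k)$).

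\textbf{Sufficiency.} Assume (1)--(3) and define $\phi$ component by component on each span $\spa(\eta)$ so that $\eta\phi^*$ is a prescribed image; injectivity of $\phi$ is automatic because the chosen targets in $\bt$ are pairwise completely disjoint. Cycles are matched by length and double and left rays one-to-one, all by~(1). For right rays: if $\oa$ is finite, every double ray of $\bt$ is already used, so I inject $\ua$ onto right rays of $\bt$, which~(2) permits since $|\ua|=|\ub|$; if $\oa$ is infinite I split $\ob$ into two countably infinite halves, match $\oa$ into the first and absorb each right ray of $\al$ as a terminal segment of a distinct double ray from the second. Chains are treated dually against left rays: if $\la$ is infinite I reserve infinitely many left rays of $\bt$ and realise each chain as a terminal segment of one of them. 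The delicate case is $\la$ finite, where each chain must go into a genuinely longer-or-equal chain; here I obtain the required length-non-decreasing injection $p\colon\ta\to\tb$ from Lemma~\ref{lkal} when $\kal=\kbt=\ale_0$, and when $\kal=\kbt$ is finite I build $p$ using~(3): match $\ta^k$ bijectively to $\tb^k$ for $k>\mal$ (equal finite cardinalities) and inject the countably many chains of length $\le\mal$ into the $\ale_0$ chains of $\bt$ of length exactly $\mal$, each of which is long enough. Assembling the component maps gives $\phi$, and symmetry yields $\psi$.

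\textbf{Main obstacle.} I expect the hard part to be the $\la$-finite, $\kal$-finite subcase of sufficiency: recognising that the constraints on the ``tail'' lengths above $\mal$ are exactly what is needed, and that \emph{no} constraint is required below $\mal$ precisely because the $\ale_0$ chains of $\bt$ of length $\mal$ can absorb all shorter chains of $\al$. Constructing $p$ correctly — rather than naively matching each $\ta^k$ to $\tb^k$, which fails below $\mal$ — and dovetailing it with Lemma~\ref{lkal} in the unbounded case is where the bookkeeping must be done with care.
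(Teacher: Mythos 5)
Your proof is correct and follows essentially the same route as the paper's: both reduce to Proposition~\ref{pjal1}, extract the cardinality constraints for necessity from the injective map $\hph$ of Definition~\ref{djal3} (cycles to equal-length cycles, rays to rays, and — once $\la$ is finite — chains to longer-or-equal chains), and for sufficiency build $\phi$ component by component, invoking Lemma~\ref{lkal} when $\kal=\kbt=\ale_0$ and, in the bounded case, matching $\ta^k$ to $\tb^k$ bijectively for $k>\mal$ while absorbing all chains of length at most $\mal$ into the $\ale_0$ chains of $\bt$ of length exactly $\mal$. The only notable difference is presentational: your tail-sum invariant $T_\al(k)=\sum_{j\ge k}|\ta^j|$ packages the paper's three separate contradiction arguments for (3)(i)--(ii) into a single arithmetic deduction — a tidy simplification, but not a different method.
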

\begin{proof}
Suppose $\al\con\bt$. By Proposition~\ref{pjal1}, there exists
$\phi\in\mi(X)$ such that $\phi$ is an $r$-homomorphism from $\Gamma(\al)$ to
$\Gamma(\bt)$. Let $k\in\mathbb Z_+$.
Define $f_k:\da^k\to\db^k$ by $\del f_k=\del\hph$, $g:\oa\to\ob$ by $\ome
g=\ome\hph$, and
$d:\la\to\lb$ by $\lam d=\lam\hph$.
Each of the mappings $f_k$, $g$, and $d$ is injective since
$\hph$ is injective. Thus
$|\da^k|\leq|\db^k|$, $|\oa|\leq|\ob|$, and $|\la|\leq|\lb|$.
By symmetry, $|\db^k|\leq|\da^k|$, $|\ob|\leq|\oa|$, and $|\lb|\leq|\la|$.
Hence (1) holds.

Suppose $\oa$ is finite. Then $g:\oa\to\ob$ defined above is a bijection (since
$g$ is injective
and $|\oa|=|\ob|$). Thus for every $\ome_1\in\ob$, there is $\ome\in\oa$
such that $\ome\hph=\ome g=\ome_1$. Since $\hph$ is injective, it follows that
for every $\ups\in\ua$, $\ups\hph\in\ub$ (since $v\hph$ can not belong to $\ob$),
which implies $|\ua|\leq|\ub|$. By symmetry, $|\ub|\leq|\ua|$. Hence (2) holds.

Suppose $\la$ is finite. Then, by the foregoing argument for $\oa$ and $\ua$
applied to $\la$ and $\ta$,
we conclude that $|\ta|=|\tb|$ and that for every $\tet\in\ta$,
$\tet\hph\in\tb$.
Suppose to the contrary that $\kal\ne\kbt$.
We may assume that $\kal>\kbt$. Then there exists $k\in\mathbb Z_+$ such that
$\kbt<k\leq\kal$ and $\ta^k\ne\emptyset$.
Select some $\tet\in\ta^k$. Then $\tet\hph$ is a terminal segment of some
$\tet_1\in\tb$. But this is a contradiction
since $k>\kbt$ and $\tb^m=\emptyset$ for every
$m>\kbt$. Thus $\kal=\kbt$.

Let $\kal\in\mathbb Z_+$. Suppose to the contrary that $\mal\ne\mbt$.
We may assume that $\mal>\mbt$.
By definition, $|\ta^{\mal}|=\ale_0$.
For every $\tet\in\ta^{\mal}$, $\tet\hph$ is a terminal segment of some
$\tet_1\in\tb$, so $\tet\hph\in\tb^l$ for some $l$ with
$\kbt\geq l\geq \mal>\mbt$. But this is a contradiction since
$\hph$ is injective, the set $\{\tet\hph:\tet\in\ta^{\mal}\}$
is infinite,
and the set $\tb^{\mal}\cup\ldots\cup\tb^{\kbt}$ is finite.
Thus $\mal=\mbt$.

Finally, suppose to the contrary that there exists $k\in\{\mal+1,\ldots,\kal\}$
such that $|\ta^k|\ne|\tb^k|$.
Select the largest such $k$. We may assume that $|\ta^k|>|\tb^k|$. Then
$|\ta^k\cup\ldots\cup\ta^{\kal}|>|\tb^k\cup\ldots\cup\tb^{\kal}|$
and $\hph$ maps $\ta^k\cup\ldots\cup\ta^{\kal}$ to
$\tb^k\cup\ldots\cup\tb^{\kal}$, which is a contradiction since
$\hph$ is injective. Hence $|\ta^k|=|\tb^k|$ for every
$k\in\{\mal+1,\ldots,\kal\}$.
We have proved (3), which concludes the direct part of the proof.

Conversely, suppose that conditions (1), (2) and (3) are satisfied.
We will define an injective homomorphism $\phi$ from $\Gamma(\al)$ to
$\Gamma(\bt)$. By (1), for every $k\in\mathbb Z_+$,
there is an injective mapping $f_k:\da^k\to\db^k$.

Suppose that both $\oa$ and $\la$ are infinite. Then
$|\oa\cup\ua|=|\ob|$ and
$|\la\cup\ta|=|\lb|$,
and so there are injective mappings $g:\oa\cup\ua\to\ob$ and
$d:\la\cup\ta\to\lb$.
For all $k\geq1$, $\del\in\da^k$, $\ome\in\oa$, $\lam\in\la$, $\ups\in\ua$, and
$\tet\in\ta^k$,
we define $\phi$ on
$\spa(\del)\cup\spa(\ome)\cup\spa(\lam)\cup\spa(\ups)\cup\spa(\tet)$
in such a way that $\del\phi^*=\del f_k$, $\ome\phi^*=\ome g$, $\lam\phi^*=\lam d$,
$\ups\phi^*$ is a terminal segment of $\ups g$, and $\tet\phi^*$ is a terminal
segment of $\tet d$.
Note that this defines $\phi$ for every vertex $x$ in $\Gamma(\al)$. By the
definition of $\phi$ and Proposition~\ref{pjal2},
$\phi\in\mi(X)$ and $\phi$ is an $r$-homomorphism from $\Gamma(\al)$ to
$\Gamma(\bt)$.

Suppose that $\oa$ is finite and $\la$ is infinite. Then $|\ua|=|\ub|$ by (2),
and so there exists an injective mapping
$j:\ua\to\ub$. Let $f_k:\da^k\to\db^k$ ($k\in\mathbb Z_+$) and
$d:\la\cup\ta\to\lb$ be the injective mappings defined in
the previous paragraph.
Since $|\oa|=|\ob|$, there exists an injective mapping $g:\oa\to\ob$.
We define $\phi$ as in the previous paragraph, except that
$\ups\phi^*=\ups j$ for every $\ups\in\ua$. Again,
$\phi\in\mi(X)$ and $\phi$ is an $r$-homomorphism from $\Gamma(\al)$ to
$\Gamma(\bt)$.

Suppose that $\oa$ is infinite and $\la$ is finite. Then $\kal=\kbt$ by (3)(i).
Let $f_k:\da^k\to\db^k$ ($k\in\mathbb Z_+$) and $g:\oa\cup\ua\to\ob$ be the
injective mappings defined in
the case when both $\oa$ and $\la$ are infinite.
Since $|\la|=|\lb|$, there exists an injective mapping
$d:\la\to\lb$.

Suppose that $\kal=\ale_0$. Then by Lemma~\ref{lkal}, there is an injective
mapping $p:\ta\to\tb$
such that for every $\tet\in\ta$, if $\tet\in\ta^k$ and $\tet p\in\tb^m$, then
$m\geq k$.
We define $\phi$ as in the case when both $\oa$ and $\la$ are infinite, except
that
$\tet\phi^*$ is a terminal segment of $\tet p$ for every $\tet\in\ta$. Again,
$\phi\in\mi(X)$ and $\phi$ is an $r$-homomorphism from $\Gamma(\al)$ to
$\Gamma(\bt)$.

Suppose that $\kal<\ale_0$. If $\kal=0$ then $\ta=\tb=\emptyset$. Suppose that
$\kal\in\mathbb Z_+$. Then by (3)(ii),
$\mal=\mbt$ and for every $k\in\{\mal+1,\ldots,\kal\}$, $|\ta^k|=|\tb^k|$. Let
$m=\mal$.
We have $|\ta^1\cup\ldots\cup\ta^m|=|\tb^m|=\ale_0$ and $|\ta^k|=|\tb^k|$ for
every $k>m$.
Thus, there are injective mappings $s:\ta^1\cup\ldots\cup\ta^m\to\tb^m$ and
$t_k:\ta^k\to\tb^k$ for every $k>m$.
We define $\phi$ (whether $\kal$ is $0$ or not) as in the case when both $\oa$
and $\la$ are infinite, except that
for every $\tet\in\ta$,
$\tet\phi^*$ is a terminal segment of $\tet s$ if $\tet\in\ta^k$ with $1\leq
k\leq m$,
and $\tet\phi^*$ is a terminal segment of $\tet t_k$ if $\tet\in\ta^k$ with
$k>m$. As in the previous cases,
$\phi\in\mi(X)$ and $\phi$ is an $r$-homomorphism from $\Gamma(\al)$ to
$\Gamma(\bt)$.

Finally, if both $\oa$ and $\la$ are finite, we define an injective
$r$-homomorphism $\phi$ from $\Gamma(\al)$ to $\Gamma(\bt)$
as in the case when $\oa$ is infinite and $\la$ is finite, except that
$\ups\phi^*=\ups j$ for every $\ups\in\ua$, where
$j:\ua\to\ub$ is an injective mapping from the case when $\oa$ is finite and
$\la$ is infinite.

We have proved that there exists an injective $r$-homomorphism $\phi$ from
$\Gamma(\al)$ to $\Gamma(\bt)$.
By symmetry, there exists an injective $r$-homomorphism $\psi$ from
$\Gamma(\bt)$ to $\Gamma(\al)$.
Hence $\al\con\bt$ by Proposition~\ref{pjal1}.
\end{proof}

Suppose that $X$ is finite. Then for every $\al\in\mi(X)$,
$\oa=\ua=\la=\emptyset$, $\kal\ne\ale_0$,
and $\mal=0$ if $\kal\in\mathbb Z_+$. Thus Theorem~\ref{tcha} implies the
following corollary,
which generalizes the result for the symmetric group $\sym(X)$
\cite[Proposition~11, page~126]{DuFo04}.

\begin{cor}\label{ccha}
Suppose that $X$ is finite. Then for all
$\al,\bt\in\mi(X)$, $\al\con\bt$ if and only if $\al$ and $\bt$ have the same
cycle-chain type.
\end{cor}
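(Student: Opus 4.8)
The plan is to obtain Corollary~\ref{ccha} as a direct specialization of Theorem~\ref{tcha} to the finite case, where the entire ray apparatus collapses. First I would record the structural simplification: if $X$ is finite, then every $\al\in\mi(X)$ has finite span, so its cycle-chain-ray decomposition (Proposition~\ref{pdec}) contains no rays at all. Concretely, $\oa=\ua=\la=\emptyset$ and likewise $\ob=\ub=\lb=\emptyset$, so all ray cardinalities vanish. Moreover, since there are only finitely many chains, each of bounded length, $\kal$ and $\kbt$ are finite (never $\ale_0$), and each $\ta^m$ is finite, so by the convention in Definition~\ref{dkm} we have $\mal=\mbt=0$.

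Next I would feed these facts into the three conditions of Theorem~\ref{tcha} and see exactly what survives. In condition~(1), the clauses $|\oa|=|\ob|$ and $|\la|=|\lb|$ become $0=0$, leaving only $|\da^k|=|\db^k|$ for every $k$. Condition~(2) has hypothesis ``$\oa$ is finite,'' which holds since $\oa=\emptyset$, and its conclusion $|\ua|=|\ub|$ reads $0=0$. Condition~(3) has hypothesis ``$\la$ is finite,'' again true since $\la=\emptyset$; part~(i) gives $\kal=\kbt$, and since $\kal$ is a nonnegative integer and $\mal=\mbt=0$, part~(ii) reduces to the requirement that $|\ta^k|=|\tb^k|$ for every $k\in\{1,\ldots,\kal\}$.

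Finally I would combine these surviving requirements and match them against the definition of cycle-chain type (Definition~\ref{dtyp}). The only bookkeeping point is to check that ``$\kal=\kbt$ together with $|\ta^k|=|\tb^k|$ for all $k\le\kal$'' is equivalent to ``$|\ta^k|=|\tb^k|$ for every $k\in\mathbb Z_+$'': the forward direction holds because for $k>\kal=\kbt$ both $\ta^k$ and $\tb^k$ are empty, and the converse is immediate since $\kal=\sup\{k:\ta^k\ne\emptyset\}$ is determined by the sequence $(|\ta^k|)_k$. Hence, for finite $X$, Theorem~\ref{tcha} reduces to the statement that $\al\con\bt$ if and only if $|\da^k|=|\db^k|$ and $|\ta^k|=|\tb^k|$ for every $k\in\mathbb Z_+$, which is precisely the condition that $\al$ and $\bt$ have the same cycle-chain type.

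As for difficulty, there is essentially no obstacle here, since everything follows by substituting the finite-span simplifications into an already proved theorem. The only place demanding a little care is the last matching step, ensuring that the pair of conditions coming from Theorem~\ref{tcha}(3) (an equality of the extremal indices $\kal,\kbt$ plus agreement of chain counts below them) is genuinely equivalent to the coordinatewise equality of all chain-length cardinalities appearing in the cycle-chain type, including the vacuous agreement in lengths exceeding $\kal$.
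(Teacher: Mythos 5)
Your proposal is correct and follows exactly the paper's route: the paper likewise obtains Corollary~\ref{ccha} by specializing Theorem~\ref{tcha} to finite $X$, observing that $\oa=\ua=\la=\emptyset$, $\kal\ne\ale_0$, and $\mal=0$ whenever $\kal\in\mathbb Z_+$, so that the surviving conditions are precisely equality of the cycle-chain types. Your extra bookkeeping step (that $\kal=\kbt$ together with $|\ta^k|=|\tb^k|$ for $k\le\kal$ is equivalent to $|\ta^k|=|\tb^k|$ for all $k\in\mathbb Z_+$) is a correct and slightly more explicit rendering of what the paper leaves implicit.
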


\begin{rem} \label{remarkSymInvFiniteOtto}
 By Corollary~\ref{ccha}, for a finite set $X$, the relation $\con$ on
$\mi(X)$ can also be
characterized by: $\al\con\bt$ if and only if there exists a permutation
$\sigma$ on the set $X$ such that   $\al= \sigma^{-1}\bt \sigma$.
\end{rem}

Corollary~\ref{ccha} implies that if $X$ is finite, then in $\mi(X)$, $\con$ is
strictly included in $\cp$.

\begin{prop}
\label{pcpf}
Suppose that $X$ is finite with $|X|\geq 2$. Then $\con\,\,\subset\,\,\cp$ in
$\mi(X)$.
\end{prop}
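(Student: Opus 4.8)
The statement requires two things: the inclusion $\con\subseteq\cp$ and its properness. The plan is to obtain the inclusion directly from the characterization of $\con$ already established for finite $X$, and to establish strictness by exhibiting a single explicit pair.

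For the inclusion I would invoke Remark~\ref{remarkSymInvFiniteOtto}: since $X$ is finite, $\al\con\bt$ holds if and only if $\al=\sigma\inv\bt\sigma$ for some permutation $\sigma\in\sym(X)$. The key observation is that, for finite $X$, the permutations are exactly the units of the monoid $\mi(X)$, so $\sigma\sigma\inv=\sigma\inv\sigma=\id_X$. Given such a $\sigma$, I would set $u=\sigma\inv\bt$ and $v=\sigma$; then, composing left to right, $uv=\sigma\inv\bt\sigma=\al$ while $vu=\sigma\sigma\inv\bt=\bt$, so $\al\cp\bt$ by \eqref{econ2}. This yields $\con\subseteq\cp$ with essentially no computation.

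For strictness I would produce $\al,\bt$ with $\al\cp\bt$ but $\al\not\con\bt$. Using $|X|\geq2$, fix two distinct points $1,2\in X$ and set $u=(1)$ (the partial identity on $\{1\}$, i.e.\ the length-$1$ cycle) and $v=[1\,2]$ (the length-$1$ chain $1\mapsto2$). A direct check of domains gives $uv=[1\,2]$ and $vu=0$, since $2\notin\dom(u)$ forces $\dom(vu)=\emptyset$. Hence $[1\,2]\cp0$. On the other hand, $[1\,2]$ and $0$ have different cycle-chain types (the former contains a single chain, the latter contains nothing), so $[1\,2]\not\con0$ by Corollary~\ref{ccha}; equivalently, the $\con$-class of the zero is $\{0\}$. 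Therefore $\cp\not\subseteq\con$, and combined with the inclusion above this gives $\con\subsetneq\cp$.

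There is no substantial obstacle here: the inclusion is a one-line translation of $\sigma$-conjugacy into a primary-conjugacy factorization, and strictness is witnessed by the pair $([1\,2],0)$. The only points demanding care are the left-to-right composition convention (so that $uv$ and $vu$ are computed in the correct order) and the observation that, because $\mi(X)$ has a zero, primary conjugacy can link a nonzero element to $0$ whereas $\con$ keeps $0$ isolated --- which is precisely what forces the inclusion to be proper.
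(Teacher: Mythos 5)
Your proposal is correct and follows essentially the same route as the paper's proof: the inclusion $\con\,\subseteq\,\cp$ is read off from Remark~\ref{remarkSymInvFiniteOtto} by splitting the permutation conjugation $\sigma\inv\bt\sigma$ into two factors multiplied in both orders, and strictness is witnessed by the pair $([x\,y],0)$, which is $\cp$-related but not $\con$-related by Corollary~\ref{ccha}. The paper's choices ($\mu=\al\sig$, $\nu=\sig\inv$ for the inclusion, and $u=[x\,y]$, $v=(y)$ for the witness) differ from yours only in trivial bookkeeping.
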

\begin{proof}
Let $\al,\bt\in\mi(X)$ and suppose that $\al\con\bt$. By
Remark~\ref{remarkSymInvFiniteOtto},
there exists
$\sig\in\sym(X)$
such that $\sig\inv \al\sig=\bt$. For $\mu=\al\sig$ and $\nu=\sig\inv$ in
$\mi(X)$,
we have $\mu\nu=\al$ and $\nu\mu=\bt$, and so $\al\cp\bt$.

We have proved that $\con\,\,\subseteq\,\,\cp$.
The inclusion is strict. Select $x,y\in X$ with $x\ne y$. Then for $\al=[x\,y]$
and $\bt=0$ in $\mi(X)$,
$\al\cp\bt$ (since $\al=\al(y)$ and $\bt=(y)\al$) but $(\al,\bt)\notin\,\,\con$
by Corollary~\ref{ccha}.
\end{proof}

Since $\cp\,\,\subseteq\,\,\cp^*$ in any semigroup, we also have
$\con\,\,\subset\,\,\cp^*$ in $\mi(X)$ when $X$ is finite.
The relation $\cp^*$ in a finite $\mi(X)$ was characterized by Ganyushkin and
Kormysheva
\cite{GaKo93} (see also \cite[Thm.~1]{KuMa07}): for all $\al,\bt\in\mi(X)$,
$\al\cp^*\bt$ if and only if $\al$ and $\bt$ have the same cycle type (while
there are no restrictions
on the chain type of $\al$ and $\bt$).

Regarding $\ctr$ in $\mi(X)$, for a finite $X$, we have $\al\ctr\bt$ if and only if
$\al$ and $\bt$ have the same cycle type \cite[Ex. 8.4]{Steinberg15}.
Therefore, in these semigroups, $\ctr\,\,=\,\,\cp^*$. Thus, in $\mi(X)$ and  for
finite $X$, we have the following chain:
\[
\xy
(3,10)*{\con};
(0,10)*{\bullet}="c&p";
(3,20)*{\cp};
(0,20)*{\bullet}="p";
(8,30)*{\cp^*\,=\,\ctr};
(0,30)*{\bullet}="p*";
(10,40)*{\co\,=\mi(X)^2};
(0,40)*{\bullet}="o";
"o";"p*"**\dir{-};
"p*";"p"**\dir{-};
"p";"c&p"**\dir{-};
\endxy
\]

Proposition~\ref{pcpf} does not extend to the infinite case. Suppose that $X$
is
countably infinite.
Consider the following transformations in $\mi(X)$:
\begin{align*}
\al&=[y_0\,y_1\,y_3]\jo\lan\ldots\,x^1_2\,x^1_1\,x^1_0]\jo\lan\ldots\,x^2_2\,
x^2_1\,x^2_0]\jo\lan\ldots\,x^3_2\,x^3_1\,x^3_0]\jo\ldots,\\
\bt&=\lan\ldots\,z^1_2\,z^1_1\,z^1_0]\jo\lan\ldots\,z^2_2\,z^2_1\,z^2_0]
\jo\lan\ldots\,z^3_2\,z^3_1\,z^3_0]\jo\ldots.
\end{align*}
Then $\da=\db=\oa=\ob=\ua=\ub=\emptyset$ and $\la=\lb=\ale_0$.
Thus $\al\con\bt$ by Theorem~\ref{tcha}. By \cite[Lem.~4]{KuMa07}, if $\al$ and
$\bt$ were $p$-conjugate, then there would exist
an injective mapping $j:\ta^2\to\tb^1\cup\tb^2\cup\tb^3$. Since
$\ta^2=\{[y_0\,y_1\,y_2]\}$ and $\tb^1\cup\tb^2\cup\tb^3=\emptyset$, such a
mapping does not exist, and so
$(\al,\bt)\notin\,\,\cp$.

Now consider $\al=[y_0\,y_1\,y_2]$ and $\bt=[z_0\,z_1]$ in $\mi(X)$. Then
$\al\cp\bt$ by \cite[Lemma~4]{KuMa07}, but
$\al$ and $\bt$ are not $c$-conjugate by Theorem~\ref{tcha}
(since $\la=\emptyset$, $\kal=2$, and $\kbt=1$). Thus $(\al,\bt)\notin\,\,\con$.

The foregoing examples prove the following proposition.

\begin{prop}\label{pcpi}
Suppose that $X$ is countably infinite. Then, with respect to inclusion, $\cp$
and $\con$ are not comparable in $\mi(X)$.
\end{prop}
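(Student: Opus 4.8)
The plan is to establish non-comparability by exhibiting one pair witnessing $\con\not\subseteq\cp$ and another witnessing $\cp\not\subseteq\con$; both pairs are precisely the two examples constructed immediately above the statement, so the work reduces to reading off the relevant structural data and invoking Theorem~\ref{tcha} together with the $p$-conjugacy characterization of \cite[Lemma~4]{KuMa07}.

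For the non-inclusion $\con\not\subseteq\cp$, I would take the first pair $\al,\bt$. Here the only nonempty pieces of the cycle-chain-ray decompositions are the left rays, with $|\la|=|\lb|=\ale_0$, together with the single chain $[y_0\,y_1\,y_2]\in\ta^2$ sitting inside $\al$. Since $\la$ is infinite, condition~(3) of Theorem~\ref{tcha} is vacuous, and conditions~(1)--(2) hold trivially because all cycle, double-ray, and right-ray components are empty while $|\la|=|\lb|$; hence $\al\con\bt$. To see that $(\al,\bt)\notin\cp$, I would appeal to \cite[Lemma~4]{KuMa07}: $p$-conjugacy of $\al$ and $\bt$ would force an injection of $\ta^2$ into $\tb^1\cup\tb^2\cup\tb^3$, which is impossible since $\ta^2=\{[y_0\,y_1\,y_2]\}\ne\emptyset$ whereas $\tb^1\cup\tb^2\cup\tb^3=\emptyset$.

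For the reverse non-inclusion $\cp\not\subseteq\con$, I would take the second pair $\al=[y_0\,y_1\,y_2]$ and $\bt=[z_0\,z_1]$. Any two chains are $p$-related by \cite[Lemma~4]{KuMa07}, so $\al\cp\bt$. On the other hand, each has empty left-ray set, so $\la=\emptyset$ is finite and condition~(3)(i) of Theorem~\ref{tcha} applies; but $\kal=2\ne1=\kbt$, so $(\al,\bt)\notin\con$. Combining the two witnesses yields $\con\not\subseteq\cp$ and $\cp\not\subseteq\con$, which is exactly the asserted incomparability.

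The only delicate point, and the place where the contrast between the two notions really lives, is the first example: one might expect the extra chain $[y_0\,y_1\,y_2]$ present in $\al$ but absent from $\bt$ to obstruct $c$-conjugacy as well. The resolution is that when infinitely many left rays are available, a chain may be sent to a terminal segment of a left ray, so Theorem~\ref{tcha} imposes no chain-length bookkeeping in that regime; by contrast, the $p$-conjugacy criterion of \cite[Lemma~4]{KuMa07} always enforces a finite-length matching among chains (the injection into $\tb^1\cup\tb^2\cup\tb^3$), and it is this rigidity that $\con$ relaxes. The substance of the argument is thus just ensuring the two characterizations are applied in the correct regimes, namely infinite $\la$ for the first pair and finite $\la$ for the second.
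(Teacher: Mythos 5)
Your proposal is correct and follows the paper's proof essentially verbatim: the paper proves this proposition with exactly these two witness pairs (the displayed examples immediately preceding the statement), deducing $\al\con\bt$ and $(\al,\bt)\notin\,\cp$ for the first pair from Theorem~\ref{tcha} and \cite[Lem.~4]{KuMa07}, and $\al\cp\bt$ and $(\al,\bt)\notin\,\con$ for the second pair from the same two results.

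One sentence, however, is false as stated, even though it is harmless here: ``any two chains are $p$-related by \cite[Lemma~4]{KuMa07}.'' Chains whose lengths differ by more than one are never $p$-conjugate. Indeed, if $\al=\mu\nu$ and $\bt=\nu\mu$, then $\al^3=\mu(\nu\mu)^2\nu=\mu\bt^2\nu$, so $\bt^2=0$ forces $\al^3=0$; hence a single chain of length $k\geq 3$ (where $\al^3\ne 0$) cannot be $p$-conjugate to a single chain of length $1$ (where $\bt^2=0$). This general claim also contradicts the injection criterion you yourself invoke for the first pair: if $\al$ is a single chain of length $5$ and $\bt$ a single chain of length $1$, then $\ta^5=\{\al\}$ while $\tb^4\cup\tb^5\cup\tb^6=\emptyset$, so the required injection cannot exist. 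What your second example actually needs is only the instance at hand, namely that chains of lengths $2$ and $1$ are $p$-conjugate, and this does hold: taking $\mu\colon y_0\mapsto z_0,\ y_1\mapsto z_1$ and $\nu\colon z_0\mapsto y_1,\ z_1\mapsto y_2$ gives $\mu\nu=[y_0\,y_1\,y_2]$ and $\nu\mu=[z_0\,z_1]$, which is precisely the content of the paper's citation of \cite[Lemma~4]{KuMa07} at that point. With that sentence narrowed accordingly, your argument is exactly the paper's.
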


Since $\cp^*$ is the transitive closure of $\cp$ and $\con$ is an equivalence
relation, it follows
from Proposition~\ref{pcpi} that if $X$ is infinitely countable, then
$\cp^*$ and $\con$ are not comparable in $\mi(X)$ either.
For a countably infinite set $X$, the relation $\cp^*$ in $\mi(X)$ was
characterized by Kudryavtseva and Mazorchuk
\cite[Thm.~2]{KuMa07}.

Therefore, in $\mi(X)$, for a countably infinite $X$, we have the following
diamond:
\[
\xy
(-3,10)*{\con};
(0,10)*{\bullet}="c";
(10,-3)*{\con\cap \cp};
(10,0)*{\bullet}="c&p";
(23.5,5)*{\cp};
(20,5)*{\bullet}="p";
(23.5,15)*{\cp^*};
(20,15)*{\bullet}="p*";
 (17,23)*{\co\,=\mi(X)^2};
(10,20)*{\bullet}="o";
"o";"c"**\dir{-};
"o";"p*"**\dir{-};
"p*";"p"**\dir{-};
"p";"c&p"**\dir{-};
"c";"c&p"**\dir{-};
\endxy
\]

If $X$ is infinite, the semigroup $\mi(X)$ is not an epigroup, and hence $\ctr$ is not defined in $\mi(X)$.
However, in \S\ref{sec:epigroups}, we show that $\ctr$ can be defined, and is an equivalence relation, on the set
of epigroup elements of an arbitrary semigroup. We then characterize $\ctr$ as the relation on the set of epigroup elements
of $\mi(X)$ for a countably infinite $X$ (Theorem~\ref{ttrix}).

\section{Conjugacy and Green's relations}
\label{sec:green}
\setcounter{equation}{0}

Green's relations play an important role in studying semigroups. In a
group, any two elements are $\gt$-related, for any Green
relation $\gt$. Thus any two conjugate group elements are $\gt$-related. The
general situation for semigroups is quite different. In this section, we will
show that Green's relations and our four conjugacies are not comparable
in
general,
but there are some inclusion results for the symmetric inverse semigroup
$\mi(X)$ and its subsemigroup
consisting of full injective transformations on $X$.

Fixing some terminology, for a set $X$ and $\al:X\to X$,
the \emph{kernel} of $\al$ is the equivalence relation on $X$ defined by
$\ker(\al)=\{(x,y)\in X\times X: x\al=y\al\}$.

\begin{theorem}\label{tgra}
Let $\gt$ be any Green relation and let $\sim\,\,\in\{\cp,\cp^*,\ctr,\con,\co\}$. Then
there exists a semigroup $S$ such that
$\gt\,\not\subseteq\,\,\sim$ and $\sim\,\,\not\subseteq\,\gt$ in $S$.
\end{theorem}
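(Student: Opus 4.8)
The plan is to collapse the twenty‑five comparisons into essentially two constructions by exploiting that the Green relations form a lattice with $\gh$ at the bottom and $\gj$ at the top: for every $\gt\in\{\gl,\gr,\gh,\gd,\gj\}$ one has $\gh\subseteq\gt\subseteq\gj$. Consequently, for a \emph{fixed} conjugacy $\sim$ it suffices to produce a single semigroup $S$ containing
(i) two $\ghh$-related elements $a,b$ with $a\not\sim b$, and
(ii) two $\sim$-related elements $c,d$ lying in distinct $\gj$-classes.
From (i), since $a\ghh b$ implies $a\mathrel{\gt}b$, we obtain $\gt\not\subseteq\sim$ for \emph{every} Green relation $\gt$; from (ii), since $c\mathrel{\gt}d$ would force $c\gjj d$, we obtain $\sim\not\subseteq\gt$ for every $\gt$. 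Thus one semigroup per conjugacy settles all five Green relations at once. In each construction I realize (i) inside the group of units (a single $\gh$-class of a monoid) and (ii) by elements of different rank (hence in different $\gj$-classes of a transformation monoid).

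For $\sim\in\{\cp,\cp^*,\ctr\}$ I would take $S=\mi(\{1,2\})$. For (i), let $a=\id$ and $b=(1\,2)$; these are the units, so $a\ghh b$, and they have different cycle types, so by the cycle-type characterizations of $\cp^*$ and $\ctr$ in a finite $\mi(X)$ recalled after Proposition~\ref{pcpf}, together with $\cp\subseteq\cp^*$, none of $\cp,\cp^*,\ctr$ relates them. For (ii), let $c=[1\,2]$ and $d=0$; exactly as in the proof of Proposition~\ref{pcpf}, with $u=[1\,2]$ and $v=(2)$ one has $uv=c$ and $vu=d$, so $c\cp d$, whence $c\cp^* d$ and $c\ctr d$ by $\cp\subseteq\cp^*\subseteq\ctr$; as $c$ has rank $1$ and $d$ rank $0$, they lie in distinct $\gj$-classes.

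For $\sim\in\{\co,\con\}$ the relation $\co$ is universal whenever there is a zero, so I must use a zero-free semigroup; I would take the full transformation monoid $T_2$ on $\{1,2\}$, which has no zero and therefore satisfies $\con=\co$ (as noted in \S\ref{scon}). Write $c_i$ for the constant map with image $\{i\}$. For (i) take again $a=\id$, $b=(1\,2)$ (the units, so $a\ghh b$): no $g\in T_2$ can satisfy $\id\,g=g\,(1\,2)$, because every transformation of $\{1,2\}$ is total and $(1\,2)$ has no fixed point, so $a\not\co b$ and hence $a\not\con b$. For (ii) take $c=\id$, $d=c_1$: choosing $g=h=c_1$ gives $\id\,c_1=c_1=c_1c_1$ and $c_1c_1=c_1=c_1\,\id$, so $\id\co c_1$ and hence $\id\con c_1$, while $\id$ has rank $2$ and $c_1$ rank $1$, placing them in distinct $\gj$-classes.

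The main obstacle is the verification of (i) and (ii) for $\co$ and $\con$. Unlike $\cp^*$ and $\ctr$, the relation $\co$ has no cycle-type shortcut, so the crux is to check directly that in $T_2$ the relation $\co$ is genuinely non-universal, relating $\id$ to a constant but not to the transposition, and then to invoke $\con=\co$ for the zero-free case. The lattice reduction itself and the computations in $\mi(\{1,2\})$ are routine once the characterizations of $\cp^*$ and $\ctr$ on finite symmetric inverse semigroups are in hand.
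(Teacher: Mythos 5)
Your proof is correct, and its skeleton is exactly the paper's: both arguments reduce all five Green relations to the two extremes via $\gh\,\subseteq\,\gt\,\subseteq\,\gj$, so that one $\gh$-related pair outside $\sim$ kills $\gt\not\subseteq\,\,\sim$ for every $\gt$, and one $\sim$-related pair outside $\gj$ kills $\sim\,\,\not\subseteq\,\gt$ for every $\gt$. For $\cp$, $\cp^*$ and $\ctr$ you use the same semigroup $\mi(\{1,2\})$ and the same witnesses as the paper ($[1\,2]\cp 0$ across distinct $\gj$-classes; $\id$ versus $(1\,2)$ inside the group of units, separated by cycle type). The only genuine divergence is the $\con$/$\co$ case: the paper works in $T(\{1,2,3\})$ and invokes the characterization of $\con$ in full transformation semigroups from \cite[Cor.~6.3]{AKM14} both for the positive pair (a rank-$1$ and a rank-$2$ map) and for the negative pair ($\id_X$ versus the $3$-cycle), whereas you work in the four-element monoid $T(\{1,2\})$ and verify everything by hand: $\id\,\co\,c_1$ with conjugator the constant map $c_1$, and $(\id,(1\,2))\notin\,\,\co$ because $\id g=g(1\,2)$ forces every value of $g$ to be a fixed point of the transposition, of which there are none (note this argument depends on the paper's left-to-right composition; with the opposite convention it is the second defining equation of $\co$ that fails, so the conclusion stands either way). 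Your variant buys a minimal, self-contained example that does not lean on the external characterization of $\con$ in $T(X)$, at the cost of doing the non-universality check for $\co$ directly; the transfer to $\con$ via $\con\,\,=\,\,\co$ in zero-free semigroups is the same closing step as in the paper.
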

\begin{proof}
Suppose that {$\sim\ \in\{\cp,\cp^*,\ctr\}$} and consider $S=\mi(X)$, where $X=\{1,2\}$.
In any $\mi(X)$, we have $\al\gjj\bt\iff|\dom(\al)|=|\dom(\bt)|$ and
$\al\ghh\bt\iff(\dom(\al)=\dom(\bt)\mbox{ and }\ima(\al)=\ima(\bt))$.
In any semigroup, $\gj$ is the largest and $\gh$ is the smallest Green relation
with respect to inclusion.
Let $\al=[1\,2]$ and $\bt=0$ in $\mi(X)$. Then $\al\cp\bt$ since $\al=\al(2)$
and $\bt=(2)\al$, but
$(\al,\bt)\notin\,\,\gj$ since $|\dom(\al)|=1$ and $|\dom(\bt)|=0$. Hence
$\cp\,\,\not\subseteq\,\gj$, and so $\cp\,\,\not\subseteq\,\gt$.
{It follows that $\cp^*,\ctr\,\,\not\subseteq\,\gt$
since $\cp\,\,\subseteq\,\,\cp^*\,\,\subseteq\,\,\ctr$ in any finite semigroup (see Figure~\ref{fig0}).}
Now let $\gam=(1)\jo(2)=\id_X$ and $\del=(1\,2)$ in $\mi(X)$. Then
$\gam\ghh\del$, but {$(\gam,\del)\notin\,\ctr$ since,
by \cite[Ex.~8.4]{Steinberg15}, for $X$ finite, $\gam\ctr\del$ in $\mi(X)$ if and only if
$\gam$ and $\del$ have the same cycle type.}
Hence $\gh\,\not\subseteq\,\,{\ctr}$, and so
$\gt\,\not\subseteq\,\,{\ctr}$.
{It follows that $\gt\,\not\subseteq\,\,\cp,\cp^*$
since $\cp\,\,\subseteq\,\,\cp^*\,\,\subseteq\,\,\ctr$.}

Suppose that $\sim\,\,=\,\,\con$ and consider $S=T(X)$, where $X=\{1,2,3\}$ and
$T(X)$ is the semigroup of all
full transformations on $X$.
In any $T(X)$, we have $\al\gjj\bt\iff|\ima(\al)|=|\ima(\bt)|$ and
$\al\ghh\bt\iff(\ker(\al)=\ker(\bt)\mbox{ and }\ima(\al)=\ima(\bt))$.
Let $\al=\begin{pmatrix}1&2&3\\3&3&3\end{pmatrix}$ and
$\bt=\begin{pmatrix}1&2&3\\2&3&3\end{pmatrix}$ in $T(X)$.
Then $\al\con\bt$ by \cite[Cor.~6.3]{AKM14}, but
$(\al,\bt)\notin\,\,\gj$ since $|\ima(\al)|=1$ and $|\ima(\bt)|=2$. Hence
$\con\,\,\not\subseteq\gj$, and so $\con\,\,\not\subseteq\gt$.
Now let $\gam=(1)\jo(2)\jo(3)=\id_X$ and $\del=(1\,2\,3)$ in $T(X)$. Then
$\gam\ghh\del$, but $(\gam,\del)\notin\,\con$
by \cite[Cor.~6.3]{AKM14}. Hence $\gh\,\not\subseteq\,\,\con$, and so
$\gt\,\not\subseteq\,\,\con$.
Since $T(X)$ does not have a zero, we have $\con\,\,=\,\,\co$ in $T(X)$. Thus
the foregoing argument can be applied to $\co$,
which concludes the proof.
\end{proof}

Although $c$-conjugacy is not comparable with Green's relations in general, it
is strictly included in Green's relation $\gj$
in the symmetric inverse semigroup on a countable set.

\begin{prop}
\label{pgrb}
Suppose that $X$ is countable with $|X|\geq2$. Then $\con\,\,\subset\,\gj$ in
$\mi(X)$.
\end{prop}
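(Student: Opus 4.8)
The plan is to combine the description of Green's relation $\gj$ on $\mi(X)$ with the characterization of $\con$ in Theorem~\ref{tcha}, and then exhibit a separating pair. Recall that in any $\mi(X)$ one has $\al\gjj\bt$ if and only if $|\dom(\al)|=|\dom(\bt)|$ (as already used in the proof of Theorem~\ref{tgra}). Since $X$ is countable, each of these cardinals lies in $\mathbb N\cup\{\ale_0\}$, and reading off the cycle-chain-ray decomposition of $\al$ gives, in cardinal arithmetic,
\[
|\dom(\al)|=\sum_{k\geq 1}k\,|\da^k|+\sum_{k\geq 1}k\,|\ta^k|+\ale_0\cdot\big(|\oa|+|\ua|+|\la|\big),
\]
since a cycle or chain of length $k$ contributes $k$ points to the domain while each ray contributes $\ale_0$. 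The inclusion $\con\subseteq\gj$ will follow once I show that $\al\con\bt$ forces this quantity to equal the corresponding one for $\bt$, and strictness will follow from a one-point example.

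For the inclusion, assume $\al\con\bt$ and apply Theorem~\ref{tcha}. First I dispose of rays: if $\al$ has a double ray or a left ray then condition~(1) gives $\bt$ a ray of the same kind, while if $\al$ has a right ray but no double or left ray then $\oa$ is finite and condition~(2) gives $\bt$ a right ray; in each case both domains equal $\ale_0$, and by the symmetry of $\con$ the same holds with $\al,\bt$ interchanged. I may therefore assume that neither $\al$ nor $\bt$ has a ray, so that $|\dom(\al)|$ and $|\dom(\bt)|$ reduce to their cycle and chain sums. The cycle sums agree by condition~(1). For the chains, $\la=\emptyset$ is finite, so condition~(3) applies: if $\kal=\kbt=\ale_0$ both chain sums equal $\ale_0$; if $\kal=\kbt\in\mathbb Z_+$ then (3)(ii) gives $\mal=\mbt=:m$ together with $|\ta^k|=|\tb^k|$ for $m<k\leq\kal$, and I split on whether $m\geq 1$ (so $|\ta^m|=|\tb^m|=\ale_0$ makes both chain sums $\ale_0$) or $m=0$ (so each $|\ta^k|=|\tb^k|$ with $1\le k\le\kal$ is finite and the finite chain sums agree); the case $\kal=\kbt=0$ is trivial. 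In every case the chain sums agree, whence $|\dom(\al)|=|\dom(\bt)|$ and $\al\gjj\bt$.

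The step I expect to require the most care is the chain contribution, precisely because Theorem~\ref{tcha}(3) constrains the chains only when $\la$ is finite; the point to make explicit is that this is exactly the regime in which chains can be the sole source of an infinite domain, whereas when $\la$ is infinite the left rays already force both domains to equal $\ale_0$ and the (unconstrained) chains are irrelevant. The finite subcase $m=0$ also deserves attention, since there one needs the exact matching $|\ta^k|=|\tb^k|$ over the full range $1\le k\le\kal$ supplied by (3)(ii).

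Finally, for strictness pick distinct $x,y\in X$ (possible as $|X|\geq 2$) and set $\al=(x)$ and $\bt=[x\,y]$. Both have a one-point domain, so $\al\gjj\bt$, but $|\da^1|=1\neq 0=|\db^1|$, so condition~(1) of Theorem~\ref{tcha} fails and $(\al,\bt)\notin\,\con$. Hence $\con\,\subsetneq\,\gj$ in $\mi(X)$.
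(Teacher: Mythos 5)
Your proof is correct and follows essentially the same route as the paper's: both reduce $\gj$-relatedness to equality of domain cardinalities, deduce that equality from Theorem~\ref{tcha}, and finish with a small separating pair. The only differences are that you spell out the case analysis (rays, then $\kal$ and $\mal$) that the paper's proof leaves implicit, and you use $(x)$ versus $[x\,y]$ instead of the paper's $(x)\jo(y)$ versus $(x\,y)$ as the witness of strictness.
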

\begin{proof}
Let $\al,\bt\in\mi(X)$ with $\al\con\bt$. Suppose that $\dom(\al)$ is infinite.
Then $\dom(\bt)$ is also
infinite by Theorem~\ref{tcha}. Thus $|\dom(\al)|=|\dom(\bt)|=\ale_0$, which
implies $\al\gjj\bt$.
Suppose that $\dom(\al)$ is finite.
Then, by Theorem~\ref{tcha}, $\al$ and $\bt$
have the same cycle-chain decomposition, which implies
$|\dom(\al)|=|\dom(\bt)|$. Thus $\al\gjj\bt$ in this case also.
We have proved that $\con\,\,\subseteq\,\gj$. The inclusion is strict since for
$x,y\in X$ with $x\ne y$,
$\al=(x)\jo(y)$ and $\bt=(x\,y)$ in $\mi (X)$ are $\gj$-related but not
$c$-conjugate.
\end{proof}

By the proof of Theorem~\ref{tgra}, $\cp\,\,\not\subseteq\,\gj$ in $\mi(X)$
when
$|X|\geq2$.
However, $\cp$ is strictly included in $\gj$ in the semigroup of \emph{full}
injective transformations
on a countably infinite set $X$.

Denote by $\gx$ the subsemigroup of $\mi(X)$ consisting of all transformations
$\al\in\mi(X)$ with $\dom(\al)=X$.
If $X$ is finite, then $\gx=\sym(X)$ but this is not the case for an infinite
$X$.
The semigroup $\gx$ is universal for right cancellative semigroups with no
idempotents
(except possibly the identity), that is, any such semigroup can be embedded in
$\gx$ for some $X$ \cite[Lemma~1.0]{ClPr64}.

If $\al\in\gx$, then there are no chains or left rays in the cycle-chain-ray
decomposition of $\al$, that is,
$\ta=\la=\emptyset$. By \cite[Thm.~2.3]{Ko10}, for all $\al,\bt\in\gx$,
$\al\gjj\bt$ if and only if $|X\setminus\ima(\al)|=|X\setminus\ima(\bt)|$.
For every $\al\in\gx$, the set $X\setminus\ima(\al)$ consists of the initial
points of the right rays on $\al$,
so $|X\setminus\ima(\al)|=|\ua|$. Thus, for all $\al,\bt\in\gx$,
\begin{equation}
\label{egra}
  \al\gjj\bt\mbox{ in $\gx$}\iff|\ua|=|\ub|.
\end{equation}
\begin{lemma}\label{lgra}
For all $\al,\bt\in\gx$, $\al\cp\bt$ in $\gx$ if and only if $\al\cp\bt$ in
$\mi(X)$.
\end{lemma}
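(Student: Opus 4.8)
The forward implication is immediate and I would dispose of it first. Since $\id_X\in\gx$, the subsemigroup $\gx$ is in fact a submonoid of $\mi(X)$, so $(\gx)^1=\gx\subseteq\mi(X)=\mi(X)^1$. Hence any pair of witnesses $u,v\in(\gx)^1$ to $\al\cp\bt$ in $\gx$ is automatically a pair of witnesses to $\al\cp\bt$ in $\mi(X)$.

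For the converse, suppose $\al\cp\bt$ in $\mi(X)$, so that there exist $u,v\in\mi(X)^1=\mi(X)$ with $\al=uv$ and $\bt=vu$. The plan is to show that the very same $u,v$ already lie in $\gx$, after which the equations $\al=uv$ and $\bt=vu$ witness $\al\cp\bt$ in $\gx$ with no further work. The tool is the hypothesis $\al,\bt\in\gx$, i.e. $\dom(\al)=\dom(\bt)=X$, combined with the left-to-right composition convention: for $x\in X$, one has $x\in\dom(uv)$ if and only if $x\in\dom(u)$ and $xu\in\dom(v)$.

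Applying this to $\al=uv$: since $\dom(\al)=X$, every $x\in X$ must satisfy $x\in\dom(u)$, forcing $\dom(u)=X$, i.e. $u\in\gx$. Symmetrically, applying the same observation to $\bt=vu$ and using $\dom(\bt)=X$ yields $\dom(v)=X$, i.e. $v\in\gx$. Thus $u,v\in\gx=(\gx)^1$, and the factorizations $\al=uv$, $\bt=vu$ exhibit $\al\cp\bt$ in $\gx$, completing the equivalence.

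The only genuine content is the domain bookkeeping of the previous paragraph, so I do not expect a real obstacle here. The crux is simply the observation that fullness of a product of partial injective maps forces fullness of its left factor; once this is noted, both directions collapse to the statement that, when $\al$ and $\bt$ are themselves full, passing from $\mi(X)$ to the submonoid $\gx$ does not shrink the supply of available $p$-conjugating pairs.
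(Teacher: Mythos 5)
Your proof is correct and follows essentially the same route as the paper's: the forward direction is immediate from $\gx\subseteq\mi(X)$, and for the converse the paper likewise observes that $\dom(\al)=X$ with $\al=\mu\nu$ forces $\dom(\mu)=X$, and symmetrically $\dom(\bt)=X$ with $\bt=\nu\mu$ forces $\dom(\nu)=X$, so the witnesses already lie in $\gx$. Your explicit handling of the $S^1$ issue (both $\mi(X)$ and $\gx$ are monoids containing $\id_X$) is a minor tidying of a point the paper leaves implicit.
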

\begin{proof}
Let $\al,\bt\in\gx$. If $\al\cp\bt$ in $\gx$, then $\al\cp\bt$ in $\mi(X)$
since
$\gx\subseteq\mi(X)$. Conversely, suppose
that $\al\cp\bt$ in $\mi(X)$. Then $\al=\mu\nu$ and $\bt=\nu\mu$ for some
$\mu,\nu\in\mi(X)$.
Since $\dom(\al)=X$ and $\al=\mu\nu$, we have $\dom(\mu)=X$. Similarly,
$\dom(\nu)=X$. Thus $\mu,\nu\in\gx$,
and so $\al\cp\bt$ in $\gx$.
\end{proof}

Let $\al,\bt\in\gx$, where $X$ is countably infinite. By
\cite[Lem.~4]{KuMa07}, $\al\cp\bt$ in $\mi(X)$ if and only if $|\da^k|=|\db^k|$
for all $k\in\mathbb Z_+$,
$|\oa|=|\ob|$, and $|\ua|=|\ub|$. Thus, by Lemma~\ref{lgra}, for all
$\al,\bt\in\gx$,
\begin{equation}
\label{egrb}
  \al\cp\bt\mbox{ in $\gx$}\iff\mbox{$\forall_{k\in\mathbb
Z_+}|\da^k(\al)|=|\db^k(\bt)|$, $|\oa|=|\ob|$, and $|\ua|=|\ub|$}.
\end{equation}
For c-conjugacy, we have the following results for an arbitrary set $X$
\cite[Thm.~7.6]{AKM14}:
\begin{equation}
\label{egrc}
  \al\con\bt\mbox{ in $\gx$}\iff\mbox{$\forall_{k\in\mathbb
Z_+}|\da^k(\al)|=|\db^k(\bt)|$, $|\oa|=|\ob|$,
and $|\ua|+|\oa|=|\ub|+|\ob|$}.
\end{equation}
Now, when $X$ is countably infinite, $p$-conjugacy is strictly included in
$\gj$ in $\gx$. In fact, we have
an even stronger result.

\begin{theorem}
\label{tgrd}
Suppose that $X$ is countably infinite. Then $\cp\,\,=\,\,\con\,\cap\,\,\gj$ in
$\gx$. Moreover, $\cp\,\,\subset\,\,\con$ and $\cp\,\,\subset\,\gj$.
\end{theorem}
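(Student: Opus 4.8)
The plan is to establish the identity $\cp\,\,=\,\,\con\,\cap\,\,\gj$ by combining the three characterizations already available in $\gx$, namely \eqref{egra}, \eqref{egrb}, and \eqref{egrc}, and then to read off the two strict inclusions as easy consequences. Throughout I use that for $\al\in\gx$ one has $\ta=\la=\emptyset$, so the only cycle-chain-ray data are the cycles $\da^k$, the double rays $\oa$, and the right rays $\ua$. First I would prove the inclusion $\cp\,\,\subseteq\,\,\con\,\cap\,\,\gj$. Suppose $\al\cp\bt$ in $\gx$. By \eqref{egrb} we have $|\da^k|=|\db^k|$ for all $k$, $|\oa|=|\ob|$, and $|\ua|=|\ub|$. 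The condition $|\ua|=|\ub|$ together with \eqref{egra} gives $\al\gjj\bt$ immediately. Moreover, adding the equal quantities $|\oa|=|\ob|$ to $|\ua|=|\ub|$ yields $|\ua|+|\oa|=|\ub|+|\ob|$, so the three clauses of \eqref{egrc} hold and hence $\al\con\bt$. Thus $\cp\,\,\subseteq\,\,\con\,\cap\,\,\gj$.

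For the reverse inclusion $\con\,\cap\,\,\gj\,\,\subseteq\,\,\cp$, I would take $\al,\bt\in\gx$ with $\al\con\bt$ and $\al\gjj\bt$ and verify the three conditions of \eqref{egrb}. From $\al\con\bt$ and \eqref{egrc} we directly obtain $|\da^k|=|\db^k|$ for all $k$ and $|\oa|=|\ob|$; these are two of the three requirements of \eqref{egrb}. The remaining requirement, $|\ua|=|\ub|$, is exactly the content of $\al\gjj\bt$ by \eqref{egra}. Hence all three clauses of \eqref{egrb} are met and $\al\cp\bt$. Combining the two inclusions proves $\cp\,\,=\,\,\con\,\cap\,\,\gj$ in $\gx$.

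The moreover clause follows cheaply. Since $\cp\,\,=\,\,\con\,\cap\,\,\gj$, both $\cp\,\,\subseteq\,\,\con$ and $\cp\,\,\subseteq\,\,\gj$ hold automatically; it remains to show each inclusion is strict. For $\cp\,\,\subset\,\,\con$, I would exhibit $\al,\bt\in\gx$ that are $c$-conjugate but not $p$-conjugate by arranging $|\ua|+|\oa|=|\ub|+|\ob|$ while $|\ua|\ne|\ub|$; the natural choice is to trade a right ray for a double ray, for instance taking $\al$ to consist of countably many right rays (so $|\oa|=0$, $|\ua|=\ale_0$) and $\bt$ to consist of countably many double rays together with the appropriate count so that $|\ub|+|\ob|=\ale_0$ but $|\ub|\ne|\ua|$. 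For $\cp\,\,\subset\,\,\gj$, I would use \eqref{egra} and \eqref{egrb}: a $\gj$-relation in $\gx$ controls only $|\ua|=|\ub|$, so any pair with equal right-ray counts but differing cycle counts (say a single double ray versus a single cycle of length one composed with a right ray, chosen to keep $|\ua|=|\ub|$) is $\gj$-related but not $p$-conjugate.

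The only mild subtlety, and the step I would treat most carefully, is the construction of the witnessing elements in $\gx$ for the two strict inclusions: each witness must genuinely lie in $\gx$, i.e.\ have full domain $X$, and must realize the prescribed cardinalities of cycles, double rays, and right rays on the countably infinite set $X$. This is a matter of bookkeeping with the decomposition \eqref{edec}, partitioning $X$ into the spans of the desired basic transformations, rather than any real difficulty; the cardinal arithmetic $\ale_0+\ale_0=\ale_0$ makes the trade between rays and double rays work, and once the witnesses are written down the failures of $p$-conjugacy are immediate from \eqref{egrb}.
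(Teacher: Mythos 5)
Your proof of the equality $\cp\,\,=\,\,\con\,\cap\,\gj$ is correct and is exactly the paper's argument: the paper dispatches the equality as an immediate consequence of \eqref{egra}, \eqref{egrb}, and \eqref{egrc}, and your cardinal bookkeeping is precisely the intended verification. The genuine gap is in the ``moreover'' part: both of your witnessing pairs fail to witness what they are supposed to. For $\cp\,\,\subset\,\,\con$, you propose $\al$ consisting of countably many right rays (so $|\oa|=0$, $|\ua|=\ale_0$) and $\bt$ containing countably many double rays (so $|\ob|=\ale_0$). But \eqref{egrc} requires $|\oa|=|\ob|$ \emph{in addition to} $|\ua|+|\oa|=|\ub|+|\ob|$ --- a clause you yourself extract when proving the reverse inclusion --- so your $\al$ and $\bt$ are not $c$-conjugate at all. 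Worse, the omission cannot be patched by a small adjustment of the same idea: if $|\oa|=|\ob|$ is finite, then $|\ua|+|\oa|=|\ub|+|\ob|$ forces $|\ua|=|\ub|$ (in countable cardinal arithmetic), so any witness must have $|\oa|=|\ob|=\ale_0$ on \emph{both} sides and then exploit absorption, e.g.\ the paper's choice $|\oa|=|\ob|=\ale_0$, $|\ua|=1$, $|\ub|=0$.

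For $\cp\,\,\subset\,\gj$, your proposed pair --- a single double ray versus a $1$-cycle joined with a right ray --- has right-ray counts $0$ and $1$ respectively, so by \eqref{egra} the two elements are not $\gj$-related, and again the pair witnesses nothing. The correct recipe, which your surrounding prose does state, is to keep $|\ua|=|\ub|$ and vary the cycle data; a working instance is the paper's $\gam=(x\,y)\jo[z_1\,z_2\,z_3\ldots\ran$ versus $\del=(x)\jo(y)\jo[z_1\,z_2\,z_3\ldots\ran$ (one right ray each, different cycle types), or, closer to your phrasing, a single double ray versus a $1$-cycle joined with a \emph{double} ray. In short: the equality half is sound and identical to the paper's, but as written neither strict inclusion is established, and the first example reflects a substantive misapplication of \eqref{egrc} rather than mere bookkeeping.
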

\begin{proof}
The equality $\cp\,\,=\,\,\con\,\cap\,\,\gj$ follows immediately from
\eqref{egra}, \eqref{egrb}, and \eqref{egrc}. Thus
$\cp\,\,\subseteq\,\,\con$ and $\cp\,\,\subseteq\,\gj$.
Let $X=\{x^i_j:i,j\in\mathbb Z_+\mbox{ with $i\geq1$}\}\cup\{y_j:j\in\mathbb
Z_+\}$.
Consider
\begin{align*}
\al&=[y_0\,y_{-1}\,y_1\,y_{-2}\,y_2\ldots\ran\jo\lan\ldots\,x^1_{-1}\,x^1_0\,
x^1_1\ldots\ran\jo
\lan\ldots\,x^2_{-1}\,x^2_0\,x^2_1\ldots\ran\jo\lan\ldots\,x^3_{-1}\,x^3_0\,
x^3_1\ldots\ran\jo\ldots,\\
\bt&=\lan\ldots\,y_{-1}\,y_0\,y_1\ldots\ran\jo\lan\ldots\,x^1_{-1}\,x^1_0\,
x^1_1\ldots\ran\jo
\lan\ldots\,x^2_{-1}\,x^2_0\,x^2_1\ldots\ran\jo\lan\ldots\,x^3_{-1}\,x^3_0\,
x^3_1\ldots\ran\jo\ldots
\end{align*}
in $\gx$.
Then $\da=\db=\emptyset$, $|\oa|=|\ob|=\ale_0$, $|\ua|=1$, and $|\ub|=0$. Thus
$\al\con\bt$ by \eqref{egrc},
but $(\al,\bt)\notin\,\cp$ by \eqref{egrb}. Hence $\cp\,\,\subset\,\,\con$.
Now, let $X=\{x,y\}\cup\{z_1,z_2,z_3,\ldots\}$ and consider
\[
\gam=(x\,y)\jo[z_1\,z_1\,z_3\ldots\ran\,\mbox{ and
}\,\del=(x)\jo(y)\jo[z_1\,z_1\,z_3\ldots\ran
\]
in $\gx$. Then $\gam\gjj\del$ by \eqref{egra},
but $(\gam,\del)\notin\,\cp$ by \eqref{egrb}. Hence $\cp\,\,\subset\,\gj$.
\end{proof}

Transformations $\al$ and $\bt$ from the proof of Theorem~\ref{tgrd} are
$c$-conjugate but not $\gj$-related.
Thus in $\gx$, where $|X|=\ale_0$, $\con$ is not included in $\gj$. However,
the following result holds for an arbitrary infinite set $X$.

\begin{prop}
\label{pgrc}
Suppose that $X$ is infinite. Let $\al,\bt\in\gx$ be transformations such that
$\al$ has finitely many double chains.
If $\al\con\bt$ then $\al\gjj\bt$.
\end{prop}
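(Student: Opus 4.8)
The plan is to reduce the whole statement to a cardinal cancellation carried out on the cardinalities appearing in the cycle-chain-ray decompositions. Since $\al,\bt\in\gx$, their decompositions contain only cycles, double rays, and right rays (there are no chains or left rays), and by \eqref{egra} the conclusion $\al\gjj\bt$ is equivalent to $|\ua|=|\ub|$. So it suffices to prove that $\al$ and $\bt$ have the same number of right rays, and the hypothesis ``$\al$ has finitely many double rays'' is to be read as: the set $\oa$ is finite.

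First I would unpack the assumption $\al\con\bt$ via the characterization \eqref{egrc}, which holds for an arbitrary set $X$. It gives $|\da^k|=|\db^k|$ for all $k$, $|\oa|=|\ob|$, and crucially the equation $|\ua|+|\oa|=|\ub|+|\ob|$. Writing $n=|\oa|=|\ob|$, the finiteness hypothesis says $n$ is a nonnegative integer, and the governing equation collapses to $|\ua|+n=|\ub|+n$. The remaining task is purely to cancel the finite summand $n$.

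To finish I would run a short case analysis on whether $|\ua|$ is finite. If $|\ua|$ is finite, then $|\ub|+n=|\ua|+n$ is finite, so $|\ub|$ is finite, and ordinary cancellation of natural numbers yields $|\ua|=|\ub|$. If $|\ua|$ is infinite, then $|\ua|+n=|\ua|$ because $n$ is finite, so $|\ub|+n=|\ua|$ is infinite; this forces $|\ub|$ to be infinite as well (a sum of two finite cardinals is finite), whence $|\ub|+n=|\ub|$ and again $|\ua|=|\ub|$. In either case $|\ua|=|\ub|$, so $\al\gjj\bt$ by \eqref{egra}.

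There is no real obstacle here; the only subtlety is that cancellation of a cardinal from an equation $\kappa+\lambda=\mu+\lambda$ is valid precisely when $\lambda$ is finite, and this is exactly what the finiteness of the double-ray set $\oa$ supplies. The hypothesis is genuinely necessary: the example preceding the proposition (with $|\oa|=|\ob|=\ale_0$, $|\ua|=1$, $|\ub|=0$) witnesses $\al\con\bt$ while $|\ua|\ne|\ub|$, so that without control on the number of double rays one has $\con\,\,\not\subseteq\,\gj$ in $\gx$.
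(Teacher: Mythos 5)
Your proof is correct and follows essentially the same route as the paper: extract $|\oa|=|\ob|$ and $|\ua|+|\oa|=|\ub|+|\ob|$ from the characterization of $c$-conjugacy in $\gx$, cancel the finite cardinal $|\oa|$, and conclude via \eqref{egra}. In fact you cite the right equation, \eqref{egrc} (valid for arbitrary $X$), whereas the paper's own proof mistakenly refers to \eqref{egrb} (the $p$-conjugacy description, stated only for countably infinite $X$); your explicit finite/infinite case analysis is simply a spelled-out version of the paper's one-line cancellation step.
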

\begin{proof}
Suppose that $\al\con\bt$. Then $|\oa|=|\ob|$
and $|\ua|+|\oa|=|\ub|+|\ob|$ by \eqref{egrb}. Since $|\oa|$ is finite, it
follows that $|\ua|=|\ub|$, and so
$\al\gjj\bt$ by \eqref{egra}.
\end{proof}

Since the semigroup $\gx$ does not have a zero, $\con\,\,=\,\,\co$ in $\gx$, so
Theorem~\ref{tgrd} and Proposition~\ref{pgrc}
also hold for $o$-conjugacy. The symmetric inverse semigroup $\mi(X)$ does have
a zero,
so $o$-conjugacy is the universal relation in any $\mi(X)$.
Since $\con$ and $\gj$ are equivalence relations in any semigroup,
it follows from Theorem~\ref{tgrd} that $\cp$ is transitive in $\mi^*(X)$ for a countably infinite $X$.
Thus Theorem~\ref{tgrd} also holds for $\cp^*$. Trace conjugacy is not defined in $\mi(X)$ or $\mi^*(X)$
when $X$ is infinite.

\section{Conjugacy in epigroups and epigroup elements}
\label{sec:epigroups}
\setcounter{equation}{0}
The principal aim of this section is to explore the relations between the four conjugacies
in epigroups, the largest class for which all four notions can be defined.  We will prove that in any epigroup,
$\cp\ \subseteq\ \cp^*\ \subseteq\ \ctr\ \subseteq\ \co$ (see Figure~\ref{fig0}).
We will also investigate when and which conjugacies coincide in a variety of epigroups that contains all variants
of completely regular semigroups.
For background information on epigroups, we refer the reader to the survey paper of Shevrin \cite{Shevrin}.

Let $S$ be a semigroup. As noted in the introduction, an element $a\in S$ is an \emph{epigroup element}
(or a \emph{group-bound element}) if there exists a positive integer $n$ such that $a^n$ is
contained in a subgroup of $S$. The smallest $n$ for which this is satisfied is the \emph{index} of $a$,
and for all $k\geq n$, $a^k$ is contained in the group $\gh$-class of $a^n$.
Let $\Epi(S)$ denote the set of all epigroup elements of $S$ and let $\Epi_n(S)$ denote the subset of
$\Epi(S)$ consisting of elements of index no more than $n$. Thus $\Epi_m(S)\subseteq \Epi_n(S)$ for $m\leq n$ and
$\Epi(S) = \bigcup_{n\geq 1} \Epi_n(S)$. The elements of $\Epi_1(S)$ are more commonly called
\emph{completely regular} (or \emph{group elements}).

For $a\in \Epi_n(S)$, the maximum subgroup of $S$ containing $a^n$ is its $\mathcal{H}$-class $H$.
Let $e$ denote the identity element of $H$. Then $ae = ea$ is in $H$ and we define the \emph{pseudo-inverse}
$a'$ of $a$ by $a'=(ae)^{-1}$, the inverse of $ae$ in the group $H$ \cite[(2.1)]{Shevrin}. This
leads to a characterization: $a\in \Epi(S)$ if and only if there exists a positive
integer $n$ and a (necessarily unique) $a'\in S$ such that the following hold (\cite[Section 2]{Shevrin}):
\begin{equation}\label{etfh}
a'aa' = a'\,,\quad aa'=a'a\,,\quad a^{n+1} a' = a^n\,.
\end{equation}
If $a$ is an epigroup element, then so is $a'$ with $a'' = aa'a$. The element $a''$ is
always completely regular and $a''' = a'$. Borrowing finite semigroups standard notation (\cite{rs,Steinberg15}),
for an epigroup element $a$, we set $a^{\omega} = aa'$. We also have  
$a^\omega=a''a'=a'a''$,   $(a')^{\omega} = (a'')^{\omega} = a^{\omega}$, and 
more generally $a^\omega = (aa')^m=(a')^m a^m = a^m(a')^m$, for all $m>0$. 

A semigroup $S$ is said to be an epigroup if $\Epi(S) = S$. If $\Epi_1(S) = S$ (that is, if $S$ is
a union of groups), then $S$ is called a \emph{completely regular} semigroup. For $n>0$, the class
$\mathcal{E}_n$ consists of all epigroups $S$ such that $S = \Epi_n(S)$; thus $\mathcal{E}_1$ is
the class of completely regular semigroups.

The conclusion of the following lemma is an identity in epigroups, but here we need a version for
epigroup elements. The lemma seems to be a folk result, but we include a brief proof for completeness.

\begin{lemma}
\label{lem:xyyx}
Let $S$ be a semigroup and suppose $xy,yx\in \Epi(S)$ for some $x,y\in S$. Then $(xy)'x=x(yx)'$.
\end{lemma}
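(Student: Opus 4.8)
The plan is to set $a=xy$ and $b=yx$ and to exploit the fact that $x$ intertwines $a$ and $b$: since $ax=xyx=xb$, an immediate induction gives $a^{k}x=xb^{k}$ for all $k\ge 1$. Fix $n\ge 2$ exceeding the indices of both $a$ and $b$, so that $a^{n}$ and $b^{n}$ lie in their maximal subgroups (group $\gh$-classes), with identities $a^{\omega}$ and $b^{\omega}$; here the group inverse of $a^{n}$ is $(a')^{n}$, since $(a')^{n}a^{n}=a^{n}(a')^{n}=a^{\omega}$ by the identities recorded after \eqref{etfh}. A short computation inside the group then yields the closed form $a'=a^{n-1}(a^{n})^{-1}$ (and likewise $b'=b^{n-1}(b^{n})^{-1}$), where $(a^{n})^{-1}$ denotes the inverse in the maximal subgroup. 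Granting these preliminaries, the whole statement will reduce to moving the group inverse past $x$, namely to proving the key identity $(a^{n})^{-1}x=x(b^{n})^{-1}$.

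The hard part will be the idempotent version of this, $a^{\omega}x=xb^{\omega}$, which is where I expect the real content to sit; I would prove it by an associativity trick. Writing $g=a^{n}$, $h=b^{n}$, $e=a^{\omega}$, $f=b^{\omega}$, the identities after \eqref{etfh} together with $a^{n}x=xb^{n}$ give the two representations
\[
ex=(a')^{n}a^{n}x=(a')^{n}xb^{n}=g^{-1}xh,\qquad
xf=xb^{n}(b')^{n}=a^{n}x(b')^{n}=gxh^{-1}.
\]
Since $hf=h$ and $eg=g$, these show that $f$ is a right identity for $ex$ and $e$ is a left identity for $xf$; that is, $(ex)f=ex$ and $e(xf)=xf$. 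But $(ex)f=e(xf)$ by associativity, whence $ex=xf$, as desired.

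With $a^{\omega}x=xb^{\omega}$ in hand the remaining steps are routine. From $gx=xh$ one gets unconditionally $gxh^{-1}=xhh^{-1}=xf$; left-multiplying by $g^{-1}$ yields $exh^{-1}=g^{-1}xf$. Using the crux $ex=xf$ (together with $fh^{-1}=h^{-1}$ and $g^{-1}e=g^{-1}$), the left-hand side simplifies to $xh^{-1}$ and the right-hand side to $g^{-1}x$, so $(a^{n})^{-1}x=x(b^{n})^{-1}$. Finally I would assemble everything:
\[
a'x=a^{n-1}(a^{n})^{-1}x=a^{n-1}x(b^{n})^{-1}=xb^{n-1}(b^{n})^{-1}=xb',
\]
using the closed forms for $a'$ and $b'$, the identity just proved, and $a^{n-1}x=xb^{n-1}$. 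The only genuine obstacle is the crux $a^{\omega}x=xb^{\omega}$; once the representations $ex=g^{-1}xh$ and $xf=gxh^{-1}$ are written down, the associativity argument closes it cleanly.
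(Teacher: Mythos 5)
Your proof is correct, and it takes a genuinely different route from the paper's, in a way that turns out to matter. Both arguments pivot on the same intermediate identity $(xy)^{\omega}x=x(yx)^{\omega}$, namely \eqref{eqn:xyxy_tmp}, and both then descend from it to the statement about pseudo-inverses; the difference lies in how the pivot is established and how the endgame is run. The paper stays entirely at the level of pseudo-inverse identities: it evaluates $(xy)^{\omega}x$ and $x(yx)^{\omega}$ to a claimed common expression and then finishes with the chain $(xy)'x=(xy)'(xy)^{\omega}x=\cdots=x(yx)'$. You instead work inside the maximal subgroups containing $(xy)^{n}$ and $(yx)^{n}$, recognize $((xy)')^{n}$ and $((yx)')^{n}$ as the group inverses of $(xy)^{n}$ and $(yx)^{n}$, prove the pivot by the associativity trick ($(ex)f=ex$, $e(xf)=xf$, and $(ex)f=e(xf)$), transport the group inverse across $x$, and conclude via the closed form $a'=a^{n-1}(a^{n})^{-1}$ (valid for $n\geq 2$; the verification $a^{n-1}(a')^{n}=a^{\omega}a'=a'$ is correct). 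What your approach buys is robustness: the paper's printed derivation of the pivot is actually flawed. Its third equality replaces $(yx)^{n+1}$ by $(yx)^{n}(yx)'$, which is false in general (the correct substitution is $(yx)^{n+1}=(yx)^{n}(yx)''$), and consequently the claimed common value $(xy)'x(yx)'$ cannot be right: already in a group, taking $y=1$, it would assert $x=x^{-1}xx^{-1}=x^{-1}$. After repairing the substitution, the two computations yield the distinct expressions $(xy)'x(yx)''$ and $(xy)''x(yx)'$, and one must still observe that both equal $(xy)^{\omega}x(yx)^{\omega}$ before \eqref{eqn:xyxy_tmp} follows. Your associativity argument sidesteps all of this, so your proposal is not only correct but can serve as a clean replacement for the defective half of the paper's proof; the paper's final chain from \eqref{eqn:xyxy_tmp} to $(xy)'x=x(yx)'$ remains valid and is essentially interchangeable with your closing computation.
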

\begin{proof}
Let $n$ denote the larger of the indices of $xy$ and $yx$. Then
\begin{align*}
(xy)^{\omega} x &= ((xy)')^{n+1}(xy)^{n+1} x = ((xy)')^{n+1} x (yx)^{n+1}  = ((xy)')^{n+1} x (yx)^n (yx)' \\
&= ((xy)')^{n+1} (xy)^n x (yx)' = (xy)'x(yx)'\,.
\end{align*}
By a dual calculation, we also have $x(yx)^{\omega} = (xy)'x(yx)'$, and thus
\begin{equation}
\label{eqn:xyxy_tmp}
(xy)^{\omega}x = x(yx)^{\omega}\,.
\end{equation}
Now we compute
\[
(xy)'x = (xy)'(xy)^{\omega}x \by{eqn:xyxy_tmp} (xy)'x(yx)^{\omega} = (xy)'xyx(yx)'
= (xy)^{\omega} x (yx)' \by{eqn:xyxy_tmp} x(yx)^{\omega} (yx)' = x(yx)'\,,
\]
as claimed.
\end{proof}

Throughout the rest of the section, the condition $gh = a^{\omega}$, $hg = b^{\omega}$ for some $a,b\in \Epi(S)$,
some $g,h\in S^1$, will recur frequently (as, for example, in the definition of $\ctr$). We record two obvious
consequences of this for later use:
\begin{equation}
\label{eqn:ghgh}
a^{\omega} g = g b^{\omega} \qquad\text{and}\qquad b^{\omega} h = h a^{\omega}\,.
\end{equation}
Indeed, both sides of the first equation are equal to $ghg$ and both sides of the second are equal to $hgh$.

The relation $\ctr$ is not, in general, well-defined for an arbitrary semigroup $S$, but it is a
well-defined relation on $\Epi(S)$: for $a,b\in \Epi(S)$, we set
\begin{equation}
\label{ectr3}
a\ctr b \iff \exists_{g,h\in S^1}\ ghg=g,\ hgh=h,\ ha''g=b'',\ gh = a^{\omega},\ hg = b^{\omega}.
\end{equation}
In fact many of the results on $\ctr$ do not require the whole semigroup to be an epigroup, rather only the involved elements must be epigroup elements;  as an illustration, the next  eight results will be proved  on $\ctr$ restricted to epigroup elements.

We start by observing that the asymmetry in our definition of $\ctr$, which 
follows \cite{Steinberg15},
is only for the sake of brevity.

\begin{lemma}
\label{lem:tr_sym}
Let $S$ be a semigroup, let $a,b\in \Epi(S)$, and suppose there exist $g,h\in S^1$ such that
$gh = a^{\omega}$ and $hg = b^{\omega}$. The following are equivalent.
\begin{align*}
  \textup{(1)} &\quad ha''g = b'';   & \textup{(2)} &\quad gb''h = a'';
& \textup{(3)} &\quad a'' g = g b''; & \textup{(4)} &\quad b'' h = h a''; \\
  \textup{(5)} &\quad ha'g = b';     & \textup{(6)} &\quad gb'h = a';
& \textup{(7)} &\quad a' g = g b';   & \textup{(8)} &\quad b' h = h a'\,.
\end{align*}
\end{lemma}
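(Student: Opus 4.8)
The plan is to rely on two families of facts. The first is the pair of transport relations $a^\omega g = g b^\omega$ and $b^\omega h = h a^\omega$ recorded in \eqref{eqn:ghgh}. The second is the group structure built into the pseudo-inverse: since $a'$ and $a''$ lie in the group $\gh$-class of $a^\omega$, whose identity is $a^\omega$, we have $a^\omega a' = a' a^\omega = a'$ and $a^\omega a'' = a'' a^\omega = a''$, together with the relations $a'a'' = a''a' = a^\omega$ recorded after \eqref{etfh}; the analogous identities hold with $b$, $b^\omega$, $b'$, $b''$. These are exactly what is needed to multiply an equation by $g$ or $h$ and then absorb any resulting factor of $a^\omega$ or $b^\omega$.

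I would organize the eight statements into the double-prime block $\{(1),(2),(3),(4)\}$ and the single-prime block $\{(5),(6),(7),(8)\}$, prove each block internally equivalent, and then link them with a single bridge equivalence $(3)\Leftrightarrow(7)$. The lemma also carries an obvious symmetry: interchanging $a\leftrightarrow b$ and $g\leftrightarrow h$ preserves the hypotheses $gh=a^\omega$, $hg=b^\omega$ and swaps $(1)\leftrightarrow(2)$, $(3)\leftrightarrow(4)$, $(5)\leftrightarrow(6)$, $(7)\leftrightarrow(8)$, so it suffices to prove one representative from each such pair.

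Within the double-prime block I would first establish $(1)\Leftrightarrow(3)$: from $a''g = gb''$, left-multiplying by $h$ and using $hg = b^\omega$ with $b^\omega b'' = b''$ yields $ha''g = b''$; conversely, from $ha''g = b''$, left-multiplying by $g$ and using $gh = a^\omega$ with $a^\omega a'' = a''$ recovers $a''g = gb''$. Then $(2)\Leftrightarrow(4)$ follows by the symmetry above. For $(3)\Leftrightarrow(4)$ I would sandwich: from $a''g = gb''$, multiply on the left by $h$ and on the right by $h$, collapsing $gh = a^\omega$ on the left (with $a''a^\omega = a''$) and $hg = b^\omega$ on the right (with $b^\omega b'' = b''$) to obtain $ha'' = b''h$. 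Replacing $a''$ by $a'$ and $b''$ by $b'$ throughout, the identical computations give $(5)\Leftrightarrow(7)$, $(6)\Leftrightarrow(8)$, and $(7)\Leftrightarrow(8)$, so each block is internally equivalent.

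The bridge $(3)\Leftrightarrow(7)$ is the step I expect to be the crux, since it must convert between single and double primes. Assuming $(7)$, that is $a'g = gb'$, I would left-multiply by $a''$ to turn $a''a'$ into $a^\omega$, apply the transport relation $a^\omega g = g b^\omega$, and then right-multiply by $b''$, using $b'b'' = b^\omega$ and $b^\omega b'' = b''$; this produces $gb'' = a''gb^\omega$. Finally I rewrite the trailing $b^\omega$ as $hg$ and collapse $gh = a^\omega$ together with $a''a^\omega = a''$ to reach $gb'' = a''g$, which is $(3)$. The reverse implication is the mirror image with primes and double primes interchanged (using $a'a'' = a^\omega$ and $b''b' = b^\omega$). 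The only real care needed is to apply the inverse relation $a'a'' = a^\omega$ and the reabsorption steps $b^\omega = hg$, $gh = a^\omega$ in the correct order; everything else is routine bookkeeping.
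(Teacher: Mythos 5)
Your proof is correct and takes essentially the same approach as the paper's: both split the eight statements into the double-prime block (1)--(4) and the single-prime block (5)--(8), establish each block's internal equivalences by one-line multiplications by $g$ or $h$ absorbed via the group identities and the transport relations \eqref{eqn:ghgh}, transfer to the primed block by swapping double primes for single primes, and conclude with the bridge $(3)\iff(7)$. The differences --- your $(1)\iff(3)$ plus the sandwich $(3)\implies(4)$ versus the paper's cycle $(1)\implies(2)\implies(3)\implies(1)$, and the precise order of multiplications in the bridge --- are purely cosmetic.
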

\begin{proof}
(1)$\implies$(2): $gb''h = gha''gh = a^{\omega} a'' a^{\omega} = a''$.

(2)$\implies$(3): $a''g = gb''hg = gb''b^{\omega} = gb''$.

(3)$\implies$(1): $ha''g = hgb'' = b^{\omega}b'' = b''$.

(1)$\implies$(4)$\implies$(2) follows by an obvious symmetry.

To get (5)$\implies$(6)$\implies$(7)$\implies$(5) and (5)$\implies$(8)$\implies$(6),
we just repeat the same calculations with $a'$ in place of $a$ and $b'$ in place of $b$. Here we use
$a''' = a'$, $b''' = b'$, $(a')^{\omega} = a^{\omega}$ and $(b')^{\omega} = b^{\omega}$.

Showing $(3)\iff(7)$ will conclude the proof. Assume (3). Then
\[
a' g = a' a^{\omega} g \by{eqn:ghgh} a' g b^{\omega} = a' g b'' b' = a' a'' g b' = a^{\omega} g b' \by{eqn:ghgh} g b^{\omega} b' = g b'\,.
\]
This establishes (7). Conversely, if (7) holds, then since $a''' = a'$, $b''' = b'$, we may repeat the same calculation,
replacing $a$ with $a'$ and $b$ with $b'$ to get (3).
\end{proof}

\begin{prop}
\label{pctrainv}
Let $S$ be a semigroup and let $a,b\in \Epi(S)$. Then $a\ctr b$ if and only if $a'\ctr b'$.
\end{prop}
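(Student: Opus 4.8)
The plan is to show that, once the pseudo-inverse of $a'$ and its associated idempotent are computed, the defining conditions for $a'\ctr b'$ become formally identical to those for $a\ctr b$ apart from the single ``trace'' equation, and that Lemma~\ref{lem:tr_sym} identifies these two trace equations whenever the remaining conditions hold.

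First I would record the elementary facts about iterating the pseudo-inverse. Since $(a')'=a''$ and $a'''=a'$, we get $(a')''=(a'')'=a'''=a'$, and likewise $(b')''=b'$. Moreover $(a')^{\omega}=a^{\omega}$ and $(b')^{\omega}=b^{\omega}$, as recorded in the preliminaries. Substituting these into the definition \eqref{ectr3} applied to the pair $(a',b')$ shows that $a'\ctr b'$ holds if and only if there exist $g,h\in S^1$ with
\[
ghg = g,\quad hgh = h,\quad ha'g = b',\quad gh = a^{\omega},\quad hg = b^{\omega}.
\]

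Next I would place this side by side with the definition of $a\ctr b$, which asks for $g,h\in S^1$ satisfying the same equations $ghg=g$, $hgh=h$, $gh=a^{\omega}$, $hg=b^{\omega}$, but with the trace equation $ha''g=b''$ in place of $ha'g=b'$. The two descriptions therefore differ only in whether the witnessing pair $(g,h)$ is required to satisfy condition~(1) or condition~(5) of Lemma~\ref{lem:tr_sym}. Since both descriptions impose $gh=a^{\omega}$ and $hg=b^{\omega}$, Lemma~\ref{lem:tr_sym} applies to any such pair and yields $ha''g=b''\iff ha'g=b'$. Hence a pair $(g,h)$ witnesses $a\ctr b$ precisely when it witnesses $a'\ctr b'$, and the equivalence follows at once.

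The only point requiring care is the bookkeeping in the first step: correctly iterating the unary operation to obtain $(a')''=a'$ (rather than $a''$) and confirming $(a')^{\omega}=a^{\omega}$. After that the statement is an immediate corollary of Lemma~\ref{lem:tr_sym}, with no further computation needed, and in particular no separate symmetry argument is required since the equivalence of the witness sets is obtained in a single step.
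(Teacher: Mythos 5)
Your proof is correct and follows essentially the same route as the paper: both rest on the identities $(a')''=a'''=a'$, $(b')''=b'$, $(a')^{\omega}=a^{\omega}$, $(b')^{\omega}=b^{\omega}$, and then invoke the equivalence of conditions (1) and (5) in Lemma~\ref{lem:tr_sym} to identify the two trace equations. Your write-up merely makes explicit the bookkeeping that the paper's one-line proof leaves to the reader.
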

\begin{proof}
This follows from Lemma~\ref{lem:tr_sym} together with $a''' = a'$, $b''' = b'$, 
{$a^{\omega}=(a')^{\omega}$, and $b^{\ome}=(b')^{\omega}$.}
\end{proof}

One theme of this section is to discuss when various notions of conjugacy coincide.
The following lemma will be useful later when we discuss epigroups in which all notions
on the right side of Figure~\ref{fig0} coincide. Although we will not use it right
away, we state it here because it is a lemma about epigroup elements (in fact, idempotents)
in arbitrary semigroups.

\begin{lemma}
\label{lem:epi-ef}
Let $S$ be a semigroup. Suppose $e,f\in E(S)$ satisfy $e\leq f$ and $e\ctr f$. Then $e = f$.
\end{lemma}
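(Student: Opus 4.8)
The plan is to reduce the hypothesis $e\ctr f$ to a concrete mutually inverse pair and then exploit $e\le f$. First I would specialize the pseudo-inverse calculus to idempotents: for any $e\in E(S)$ one has $e'=e$, hence $e''=e$ and $e^{\omega}=ee'=e$. Feeding this into the defining condition \eqref{ectr3}, $e\ctr f$ supplies $g,h\in S^1$ with $ghg=g$, $hgh=h$, $gh=e^{\omega}=e$, $hg=f^{\omega}=f$, and $h\,e''g=f''$, i.e. $heg=f$. I would observe that this last equation is automatic, since $heg=h(gh)g=(hg)(hg)=f^2=f$. Thus, for idempotents, $e\ctr f$ says exactly that $e$ and $f$ are linked by a mutually inverse pair $g,h$ with $gh=e$ and $hg=f$ (so in particular $e\gdd f$). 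If convenient I would also invoke Lemma~\ref{lem:tr_sym} to record the symmetric forms of the defining identity, which for idempotents read $eg=gf$ and $fh=he$.

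Second, I would write down the standard one-sided identities attached to such an inverse pair: from $ghg=g$ we get $eg=g$ and $gf=g$, and from $hgh=h$ we get $fh=h$ and $he=h$. These encode that $e$ is a left identity and $f$ a right identity for $g$, and dually for $h$.

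Third — and this is where $e\le f$ finally enters — I would use $ef=fe=e$. Since $e=fe$ and $eg=g$, left-multiplying gives $fg=f(eg)=(fe)g=eg=g$; dually, since $e=ef$ and $he=h$, we get $hf=(he)f=h(ef)=he=h$. Hence $f$ now acts as a two-sided identity on both $g$ and $h$. The goal is then to assemble the collected relations into $gh=hg$, i.e. $e=f$.

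I expect this last assembly to be the main obstacle. The relations gathered so far are symmetric under the interchange $(e,g)\leftrightarrow(f,h)$, so in isolation they do not separate $e$ from $f$; the equality has to be squeezed out by using the idempotency of $e$ and $f$ simultaneously with $e\le f$, rewriting $f=hg$ through the derived identities until it collapses to $gh=e$. Pinning down the correct short chain of substitutions is the crux of the argument. As a conceptual sanity check that the statement should hold, the representation-theoretic reading of $\ctr$ is illuminating: a character sends an idempotent $e$ to the rank of the projection $\varphi(e)$, and $e\le f$ forces $\varphi(e)\le\varphi(f)$, so equality of traces forces $\varphi(e)=\varphi(f)$ for all $\varphi$ and hence $e=f$; this is the content one is trying to reproduce purely equationally.
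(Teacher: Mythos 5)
Your preparatory reductions are all correct: for idempotents $e'=e''=e^{\omega}=e$, so \eqref{ectr3} produces mutually inverse $g,h\in S^1$ with $gh=e$, $hg=f$; your observation that $heg=f$ is then automatic is right; the one-sided identities $eg=gf=g$, $he=fh=h$ follow; and $e\le f$ does upgrade these to $fg=g$, $hf=h$. But the write-up stops exactly where the proof has to happen, and the gap is worse than a missing trick: no chain of substitutions can finish your argument, because the relations you have assembled do not entail $e=f$. Concretely, in the bicyclic monoid $B=\langle q,p\mid pq=1\rangle$ put $g=q$, $h=p$, $e=qp$, $f=1$. Every identity on your list holds ($ghg=g$, $hgh=h$, $gh=e$, $hg=f$, $heg=f$, $eg=gf=fg=g$, $he=fh=hf=h$, and $ef=fe=e$, so $e\le f$), yet $e\ne f$. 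Indeed, as you noted, on idempotents the conditions of \eqref{ectr3} say precisely that $e$ and $f$ are $\mathcal{D}$-related (the conjugators range over all of $S^1$, and here $q,p\notin\Epi(B)$ even though $e,f\in E(B)$); so the lemma, read for arbitrary semigroups, asserts that $\mathcal{D}$-comparable idempotents coincide, and the bicyclic monoid is the standard counterexample to that. Your character-theoretic sanity check does not rescue the plan: it is valid only for finite monoids, and $B$ is not group-bound, which is exactly where the obstruction lives.

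Comparing with the paper: its proof is the chain $he=h(gh)=(hg)h=fh$ followed by $e=fe=f(hg)=(fh)g=heg=f$. The step $fe=f(hg)$ substitutes $hg$ for $e$; but $hg=f$, so this step presupposes the very equality being proved, and in the example above it fails ($fe=qp$ while $f(hg)=1$). So the obstacle you sensed is genuine: the statement in this generality is false, and any correct proof must use group-boundedness, which is the setting (epigroups) in which the paper actually applies the lemma (Proposition~\ref{lem:epi-oc} and Lemma~\ref{lem:c_anti}). A correct argument there runs as follows. From your relations, $h^kg^k=f$ for all $k\ge1$ (induction: $h^{k+1}g^{k+1}=h(h^kg^k)g=hfg=hg=f$). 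Now suppose $g\in\Epi(S)$, say $g^n$ lies in a subgroup with identity $\iota=g^{\omega}$ and with $w$ the group inverse of $g^n$ there. Then $f\iota=h^n(g^n\iota)=h^ng^n=f$, while also $f\iota=(fg^n)w=g^nw=\iota$; hence $f=g^{\omega}$. Consequently $gg'=g'g=f$, so $h=hf=h(gg')=(hg)g'=fg'=g'$, and therefore $e=gh=gg'=f$. In short, the fix is not a cleverer rewriting of your identities but an additional hypothesis ($g$ group-bound, e.g.\ $S$ an epigroup) that both your attempt and the paper's stated proof leave unused.
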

\begin{proof}
Since $e\ctr f$, there exist $g,h\in S^1$ such that $ghg=g$, $hgh=h$, $gh=e$, $hg=f$ and
$heg = f$ (using $e'' = e^{\omega} = e$ and $f'' = f^{\omega} = f$). We have $he = h(gh) =
(hg)h = fh$, and so $e = fe = f(hg) = (fh)g = heg = f$.
\end{proof}

We now provide alternative definitions of $\ctr$ and compare trace conjugacy to $p$-conjugacy.
In particular, we show that the requirement that $g$ and $h$ be mutually inverses
can be omitted from the definition of $\ctr$ (see \eqref{ectr3}).

\begin{theorem}
\label{thm:trace}
Let $S$ be a semigroup. For $a,b\in \Epi(S)$, the following are equivalent:
\begin{enumerate}[label=\textup{(\arabic*)}]
\item $a\ctr b$;
\item $\exists_{g,h\in S^1}\ ha''g=b'',\ gh = a^{\omega},\ hg = b^{\omega}$
\item $\exists_{g, h\in S^1}\ a''g = gb'',\ gh = a^{\omega},\ hg = b^{\omega}$;
\item $\exists_{g, h\in S^1}\ ag = gb,\ bh = ha,\ gh = a^{\omega},\ hg = b^{\omega}$;
\item $\exists_{g, h\in S^1}\ hgh=h,\ ha''g = b'',\ gb''h = a''$;
\item $a'' \cp b''$.
\end{enumerate}
\end{theorem}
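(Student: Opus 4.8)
The plan is to prove the six statements equivalent through a short cycle of implications, treating (2) and (3) as interchangeable throughout. Under the standing hypotheses $gh=a^{\omega}$ and $hg=b^{\omega}$, conditions (2) and (3) are exactly conditions (1) and (3) of Lemma~\ref{lem:tr_sym}, so I may pass between them at will; Lemma~\ref{lem:tr_sym} will also supply the reverse equation $gb''h=a''$ from $ha''g=b''$ whenever convenient. Two implications are then essentially free: (1)$\implies$(2) holds by discarding the clauses $ghg=g$ and $hgh=h$, and (1)$\implies$(5) holds because $hgh=h$ and $ha''g=b''$ are already part of (1) while $gb''h=a''$ is condition (2) of Lemma~\ref{lem:tr_sym}. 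The skeleton I aim for is therefore (1)$\implies$(2)$\iff$(3), (1)$\implies$(4)$\implies$(3), (1)$\implies$(5)$\implies$(6), (2)$\implies$(6), and the single hard step (6)$\implies$(1); this closes the loop, since each of (2)--(6) reaches (6) and hence (1), while (1) reaches all of them.

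For the routine calculations I would proceed as follows. To get (1)$\implies$(4), use $g=ghg=a^{\omega}g$ to rewrite $ag=a\,a^{\omega}g=a''g$, then apply (3) together with $gb''=gb^{\omega}b$ and $gb^{\omega}=g(hg)=(gh)g=a^{\omega}g=g$ to conclude $ag=gb$; the equation $bh=ha$ follows symmetrically. For (4)$\implies$(3), write $a''g=a^{\omega}ag$, substitute $ag=gb$ from (4), and use $a^{\omega}g=gb^{\omega}$ from \eqref{eqn:ghgh} to obtain $a''g=gb^{\omega}b=gb''$. For (2)$\implies$(6), the pair $(a''g,\,h)$ witnesses $a''\cp b''$, since $(a''g)h=a''a^{\omega}=a''$ and $h(a''g)=b''$. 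For (5)$\implies$(6), I first note that $hgh=h$ forces both $gh$ and $hg$ to be idempotent, and then check that $(gb'',\,h)$ witnesses $a''\cp b''$: here $(gb'')h=a''$ is immediate from (5), while $h(gb'')=hg(ha''g)=(hgh)a''g=ha''g=b''$.

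The main obstacle is the remaining implication (6)$\implies$(1), which must manufacture mutually inverse conjugators from a bare $p$-conjugacy factorization. Suppose $a''=uv$ and $b''=vu$ with $u,v\in S^1$. Since $uv$ and $vu$ are completely regular epigroup elements, Lemma~\ref{lem:xyyx} applies to the pairs $(u,v)$ and $(v,u)$ and, using $(uv)'=a'$ and $(vu)'=b'$, yields the intertwining relations $a'u=ub'$ and $va'=b'v$; multiplying suitably by $a''$ or $b''$ upgrades these to $a^{\omega}u=ub^{\omega}$ and $va^{\omega}=b^{\omega}v$. With these identities in hand I would set $g=a'u$ and $h=b^{\omega}v$ and verify the five defining clauses of $\ctr$ directly. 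For instance $gh=a'(ub^{\omega})v=a'(a^{\omega}u)v=a'\,a^{\omega}(uv)=a'\,a^{\omega}a''=a'a''=a^{\omega}$, and, symmetrically, $hg=b^{\omega}(va')u=b^{\omega}(b'v)u=b^{\omega}(b'\,vu)=b^{\omega}b^{\omega}=b^{\omega}$; the clauses $ghg=g$, $hgh=h$, and $ha''g=b''$ then follow by the same bookkeeping, each collapsing through the intertwining relations and the group identities $a'a''=a^{\omega}$, $a^{\omega}a''=a''$. Carrying out these five verifications without slips is the only delicate point; once it is done, (6)$\implies$(1) closes the cycle and all six conditions are equivalent.
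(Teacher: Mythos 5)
Your proposal is correct: I checked each claimed computation and also the three clauses you left as ``bookkeeping'' in (6)$\implies$(1), and they all go through. Since your witnesses $g=a'u$, $h=b^{\omega}v$ satisfy $a^{\omega}g=g$ and $b^{\omega}h=h$ automatically, the extra clauses collapse at once: $ghg=(gh)g=a^{\omega}a'u=a'u=g$, $hgh=(hg)h=b^{\omega}b^{\omega}v=h$, and $ha''g=b^{\omega}va''a'u=b^{\omega}(va^{\omega})u=b^{\omega}b^{\omega}vu=b^{\omega}b''=b''$. The toolkit is the same as the paper's --- Lemma~\ref{lem:tr_sym} for (2)$\iff$(3) and (1)$\implies$(5), the pair $(gb'',h)$ for (5)$\implies$(6), and Lemma~\ref{lem:xyyx} with exactly the witnesses $g=a'u$, $h=b^{\omega}v$ to exit from (6) --- but your implication graph is arranged differently. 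The paper closes the loop between (1) and (2) directly: from (2) it normalizes the conjugators to $\bar{g}=a^{\omega}g$, $\bar{h}=b^{\omega}h$ and verifies that these are mutually inverse, after which the step out of (6) only needs to reach (3). You dispense with that normalization argument entirely, instead adding the (easy, and not in the paper) implication (2)$\implies$(6) via the pair $(a''g,h)$, and strengthening the paper's (6)$\implies$(3) to a full (6)$\implies$(1); this works precisely because $a'u$ and $b^{\omega}v$ are already ``normalized'' in the paper's sense, so the mutual-inverse clauses come for free. The trade-off is a matter of where the work sits: the paper isolates the passage from bare conjugators to mutually inverse ones in a single short construction and keeps the (6)-step minimal, whereas your route concentrates all verification in the final step but needs one less ad hoc construction. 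A small stylistic caveat: your (1)$\implies$(4) quietly reuses the \emph{same} witnesses $g,h$ when invoking (3), which is legitimate because Lemma~\ref{lem:tr_sym} is an equivalence for a fixed pair $(g,h)$ satisfying $gh=a^{\omega}$, $hg=b^{\omega}$; it is worth saying this explicitly, since the theorem's conditions (2)--(4) are existential statements and the equivalence you need is witness-preserving.
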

(The asymmetries in the statements of the theorem are explained by Lemma~\ref{lem:tr_sym}.)
\begin{proof}
The implication  (1)$\implies$(2) is trivial. Assume (2)
and set $\bar{g} = a^{\omega} g$ and $\bar{h} = b^{\omega} h$. Then
\begin{align*}
 \bar{h}a''\bar{g} &=b^\ome h a'' a^{\omega} g = b^\ome h a'' g = b^{\omega} b'' 
= b''\,,\\
 \bar{g}\bar{h} &= a^{\omega} g b^{\omega} h \by{eqn:ghgh} a^{\omega} gh a^{\omega} = a^{\omega} a^\ome a^{\omega}=a^{\omega}\,, \\
 \bar{h}\bar{g} &= b^{\omega} h a^{\omega} g \by{eqn:ghgh} b^{\omega} hg b^{\omega} = b^{\omega} b^\ome b^{\omega} =b^{\omega}\,, \\
 \bar{g}\bar{h}\bar{g} &= a^{\omega} a^{\omega} g = a^{\omega} g = \bar{g}\,,\\
 \bar{h}\bar{g}\bar{h} &= b^{\omega} b^{\omega} h = b^{\omega} h = \bar{h}\,.
\end{align*}
This proves (1). The equivalence (2)$\iff$(3) follows from Lemma~\ref{lem:tr_sym}.

Assume (3). Since we have already proved that (3) implies (1), we can conclude by Lemma~\ref{lem:tr_sym} that there are $g,h\in S^1$
such that $ghg=g$, $hgh=h$, $a''g = gb''$, $gh = a^{\omega}$, and $hg = b^{\omega}$. Thus
\[
a g = a ghg = a a^{\omega} g = a'' g = g b'' = g b^{\omega} b = g hg b = gb\,.
\]
This proves half of (4) and the proof of the other part is similar.
Assume (4). Then $a''g = a^{\omega} ag = a^{\omega} gb = ghgb = g b^{\omega} b = gb''$,
which proves (3).

So far, we have proved (1)$\iff$(2)$\iff$(3)$\iff$(4). In view of Lemma~\ref{lem:tr_sym}, (1) clearly implies (5).

Assume (5). Set $u = gb''$, $v = h$. Then $uv = gb''h = a''$ and
$vu = hgb'' = hgha''g= ha''g=b''$. Thus $a''\cp b''$, which proves (6).

Finally, assume (6). Then $a'' = uv$, $b'' = vu$ for some $u,v\in S^1$, which implies
\begin{equation}
\label{eqn:tr_tmp0}
a'' u = u b''\quad\text{and}\quad b''v=va''\,.
\end{equation}
Since $a' = a''' = (uv)'$ and $b' = b''' = (vu)'$, Lemma~\ref{lem:xyyx} implies
\begin{equation}
\label{eqn:tr_tmp1}
a'u = ub' \quad\text{and}\quad b'v = va'\,.
\end{equation}

Now set $g = a'u$ and $h = b^{\omega} v$. Then
\begin{align*}
 gh &= a'ub^\ome v=a'ub'b''v\by{eqn:tr_tmp0}a'ub'va''\by{eqn:tr_tmp1}a'uva'a''=a' u v a^{\omega} = a' a'' a^{\omega} = a^\ome a^{\omega}=a^\ome\,,\\
 hg &= b^{\omega} v a' u \by{eqn:tr_tmp1} b^{\omega} v u b' = b^{\omega} b'' b' = b^{\omega}\,,\\
 a'' g &= a'' a' u \by{eqn:tr_tmp1} a'' u b' \by{eqn:tr_tmp0} u b'' b'  = u b' b'' \by{eqn:tr_tmp1}a'ub''= g b''\,.
\end{align*}
This proves (3) and completes the proof of the theorem.
\end{proof}

The equivalence of (5) and (6) in Theorem~\ref{thm:trace} was proved for regular semigroups by Kudryavtseva
\cite[Cor.~6 and Thm.~2]{ganna}. The equivalence of (1) and (6) for finite semigroups
can also be extracted from the literature since each is equivalent to the notion of conjugacy defined by having all
characters coincide \cite[Thm.~2.2]{McAl1972}, \cite{Steinberg15}. A direct proof of the equivalence in the finite case
is also straightforward \cite{StEmail}.

If we specialize (1)$\iff$(3)$\iff$(5)$\iff$(6) of Theorem~\ref{thm:trace} to 
completely regular elements, we obtain the following.

\begin{cor}
\label{thm:ganna}
Let $S$ be a semigroup and let $a,b\in \Epi_1(S)$. The following are equivalent:
\begin{enumerate}[label=\textup{(\arabic*)}]
\item $a\ctr b$;
\item $\exists_{g,h\in S^1}\ ag = gb,\ gh = a^{\omega},\ hg = b^{\omega}$;
\item $\exists_{g,h\in S^1}\ ghg=g,\ hgh=h,\ hag = b,\ gbh = a$;
\item $a\cp b$.
\end{enumerate}
\end{cor}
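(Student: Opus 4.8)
The plan is to derive everything by specializing Theorem~\ref{thm:trace}, resting on the single crucial observation that a completely regular element equals its own double pseudo-inverse. First I would record this fact: if $a\in\Epi_1(S)$, then $a$ lies in a subgroup $H$ with identity $a^{\omega}$, the pseudo-inverse $a'$ is simply the group inverse of $a$ in $H$ (since $ae=a$ for the identity $e=a^{\omega}$), and hence $a''=aa'a=a$; likewise $b''=b$. With this in hand, the conditions in the corollary become literal substitutions of the conditions in Theorem~\ref{thm:trace}: condition~(2) here is condition~(3) there with $a''$, $b''$ replaced by $a$, $b$, and condition~(4) here is condition~(6) there, namely $a''\cp b''$, after the same replacement. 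Thus the equivalences (1)$\iff$(2) and (1)$\iff$(4) follow immediately from Theorem~\ref{thm:trace}.

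It remains to fold condition~(3) into the chain. For (1)$\implies$(3) I would unwind the definition~\eqref{ectr3} of $\ctr$ (again using $a''=a$, $b''=b$): it supplies $g,h\in S^1$ with $ghg=g$, $hgh=h$, $hag=b$, $gh=a^{\omega}$, $hg=b^{\omega}$. The only relation of~(3) not already present is $gbh=a$, obtained by the short computation $gbh=g(hag)h=(gh)a(gh)=a^{\omega}aa^{\omega}=a$, where the last equality is the defining property of the group identity $a^{\omega}$ acting on $a$. For the converse (3)$\implies$(1), the key point is that condition~(3) is formally stronger than condition~(5) of Theorem~\ref{thm:trace} after substitution: discarding the extra requirement $ghg=g$, the remaining relations $hgh=h$, $hag=b$, $gbh=a$ are exactly statement~(5) with $a''=a$, $b''=b$. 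Hence (3) implies (5), and Theorem~\ref{thm:trace} yields (1).

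I do not anticipate a genuine obstacle, since all the substantive work is already done in Theorem~\ref{thm:trace}; the argument is essentially a dictionary translation once $a''=a$ is established. The only points demanding care are checking that the asymmetric forms of the conjugacy conditions line up correctly (which is guaranteed by Lemma~\ref{lem:tr_sym}) and verifying the single extra relation $gbh=a$ in~(3) via the $a^{\omega}aa^{\omega}=a$ computation. As an alternative to routing (3)$\implies$(1) through Theorem~\ref{thm:trace}(5), one could instead prove (3)$\implies$(4) directly: from $ghg=g$ one computes $bhg=(hag)hg=ha(ghg)=hag=b$, so setting $u=g$, $v=bh$ gives $uv=gbh=a$ and $vu=bhg=b$, hence $a\cp b$; then (4)$\implies$(1) is already available. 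Either route closes the cycle of implications.
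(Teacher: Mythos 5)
Your proposal is correct and follows essentially the same route as the paper, which proves the corollary in one line by specializing the equivalences (1)$\iff$(3)$\iff$(5)$\iff$(6) of Theorem~\ref{thm:trace} to completely regular elements via $a''=a$, $b''=b$. Your additional details --- deriving $gbh=a$ from the definition \eqref{ectr3} using $a^{\omega}aa^{\omega}=a$, and noting that condition (3) implies Theorem~\ref{thm:trace}(5) simply by discarding $ghg=g$ --- are precisely the bookkeeping the paper's terse proof leaves to the reader, and they are carried out correctly.
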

The equivalence of (3) and (4) in Corollary~\ref{thm:ganna} was proved by Kudryavtseva \cite[Thm.~1(1)]{ganna}.

\begin{theorem}
\label{thm:steinbergExtensionEpigroups}
Let $S$ be a semigroup. Then:
\begin{enumerate}[label=\textup{(\arabic*)}]
\item $\ctr$ is an equivalence relation on $\Epi(S)$;
\item for all $x\in \Epi(S)$, $x\ctr x''$;
\item for all $x,y\in S$ such that $xy,yx\in \Epi(S)$, $xy\ctr yx$;
\item $\ctr$ is the smallest equivalence relation on $\Epi(S)$ such that {\rm(2)} and {\rm(3)} hold.
\end{enumerate}
\end{theorem}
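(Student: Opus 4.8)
The plan is to reduce everything to the equivalent formulations of $\ctr$ collected in Theorem~\ref{thm:trace}, exploiting in particular the symmetric description (4) [$ag=gb$, $bh=ha$, $gh=a^{\omega}$, $hg=b^{\omega}$] and the reduction (6) [$a\ctr b \iff a''\cp b''$]. These turn all four parts into short manipulations using the pseudo-inverse identities \eqref{etfh}, the consequences \eqref{eqn:ghgh}, and the commuting relation $(xy)'x=x(yx)'$ of Lemma~\ref{lem:xyyx}.

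For (1) I would check the three properties in turn. Reflexivity is immediate with $g=h=a^{\omega}$, since $a^{\omega}$ is idempotent and $a^{\omega}a''a^{\omega}=a''$. Symmetry is equally quick: if $g,h$ witness $a\ctr b$ through \eqref{ectr3}, then $h,g$ witness $b\ctr a$, using the equivalence of $ha''g=b''$ and $gb''h=a''$ supplied by Lemma~\ref{lem:tr_sym}. The only genuine obstacle is transitivity, and here characterization (4) is decisive. Given $g_1,h_1$ for $a\ctr b$ and $g_2,h_2$ for $b\ctr c$, I set $g=g_1g_2$ and $h=h_2h_1$. Then $ag=g_1bg_2=gc$ and $ch=h_2bh_1=ha$ are immediate, while the idempotent conditions follow by pushing $b^{\omega}=h_1g_1=g_2h_2$ through the relations $g_1b^{\omega}=a^{\omega}g_1$ and $h_2b^{\omega}=c^{\omega}h_2$, giving $gh=a^{\omega}g_1h_1=a^{\omega}$ and $hg=c^{\omega}h_2g_2=c^{\omega}$. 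Hence $a\ctr c$ by (4).

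Parts (2) and (3) are then cleanest through (6). Since $x''$ is completely regular we have $(x'')''=x''$ (directly, $(x'')''=x''a'a''=a^{\omega}x''=x''$), so $x\ctr x''$ reduces to $x''\cp x''$, which holds by reflexivity of $\cp$; this gives (2). For (3), with $xy,yx\in\Epi(S)$, I would take the witnesses $u=x$ and $v=y(xy)'xy$. Then $uv=(xy)''$, and applying Lemma~\ref{lem:xyyx} in the form $(xy)'x=x(yx)'$ yields $vu=yx(yx)'yx=(yx)''$; thus $(xy)''\cp(yx)''$ and so $xy\ctr yx$ by (6).

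For the minimality assertion (4), parts (1)--(3) already exhibit $\ctr$ as an equivalence relation on $\Epi(S)$ satisfying (2) and (3), so it remains to show any other such relation contains it. Let $\rho$ be an equivalence relation on $\Epi(S)$ satisfying (2) and (3), and suppose $a\ctr b$. By (6) there are $u,v\in S^1$ with $a''=uv$ and $b''=vu$. When $u,v\in S$, property (3) applied to the epigroup elements $uv=a''$ and $vu=b''$ gives $a''\,\rho\,b''$, while the degenerate cases $u=1$ or $v=1$ force $a''=b''$; so $a''\,\rho\,b''$ in all cases. Combining with $a\,\rho\,a''$ and $b\,\rho\,b''$ from (2), the transitivity and symmetry of $\rho$ give $a\,\rho\,b$, whence $\ctr\,\subseteq\,\rho$. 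The one point requiring care throughout is keeping all witnesses inside $S^1$ and dispatching the identity-adjunction edge cases in (4), but these are routine.
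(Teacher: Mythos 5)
Your proposal is correct, and parts (3) and (4) essentially coincide with the paper's proof (same witnesses $u=x$, $v=y(xy)'xy$ via Lemma~\ref{lem:xyyx}, same reduction through Theorem~\ref{thm:trace}(6); in (4) you are in fact slightly more careful than the paper, which silently passes over the case $u=1$ or $v=1$ when invoking property (3) for elements of $S^1$ rather than $S$). The genuine difference is in part (1): the paper simply asserts that the proof of Steinberg's Proposition~8.2 ``can be repeated verbatim in this setting,'' whereas you give a self-contained argument --- reflexivity with $g=h=a^{\omega}$, symmetry by swapping $(g,h)$ and invoking Lemma~\ref{lem:tr_sym}, and transitivity by composing conjugators $g=g_1g_2$, $h=h_2h_1$ and pushing $b^{\omega}$ across via \eqref{eqn:ghgh} to get $gh=a^{\omega}$, $hg=c^{\omega}$, then concluding by Theorem~\ref{thm:trace}(4). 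This is the natural group-style argument; it costs a few lines but buys independence from the external reference and makes visible exactly which ingredients (the symmetric characterization (4) and the identities \eqref{eqn:ghgh}) drive transitivity, with no circularity since Theorem~\ref{thm:trace} precedes this theorem and its proof nowhere uses transitivity of $\ctr$. Your part (2) is also a mild variant: the paper exhibits direct witnesses $g=x''$, $h=x'$ in \eqref{ectr3}, while you reduce via (6) to reflexivity of $\cp$ together with $(x'')''=x''$; both are fine, though your parenthetical computation of $(x'')''=x''$ has stray $a$'s where $x$'s are meant --- the intended chain is $(x'')''=x''x'''x''=x''x'x''=x^{\omega}x''=x''$.
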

\begin{proof}
For (1): The proof of \cite[Prop.~8.2]{Steinberg15} can be repeated verbatim in
this setting.

For (2): Setting $g = x''$, $h = x'$, we have $ghg=g$, $hgh = h$,
$hx''g = x'x''x' = x'' = (x'')''$ and $gh = hg = (x'')^{\omega}=x^{\omega}$.

For (3): Since $(xy)'' = xy(xy)'xy = x\cdot y(xy)'xy$ and
$(yx)'' = yx(yx)'yx = y(xy)'xy\cdot x$ using Lemma~\ref{lem:xyyx}, we have
$(xy)''\cp (yx)''$, and so $xy\ctr yx$ by Theorem~\ref{thm:trace}.

For (4): Suppose $\theta$ is an equivalence relation on $\Epi(S)$ such that $x\ \theta\ x''$ for all $x\in\Epi(S)$
and $xy\ \theta\ yx$ for all $x,y\in S$ such that $xy,yx\in\Epi(S)$. If $a\ctr b$ for some $a,b\in \Epi(S)$, then by
Theorem~\ref{thm:trace}, there exist $u,v\in S^1$ such that $a'' = uv$, $b'' = vu$.
Thus $a\ \theta\ a'' = uv\ \theta\ vu = b''\ \theta\ b$. Therefore $\ctr\,\,\subseteq\,\,\theta$,
as claimed.
\end{proof}

Now we have reached one of our goals of this section, which is to verify the
inclusions on the right side of Figure~\ref{fig0}.

\begin{theorem}
\label{thm:steinbergInclusion}
 Let $S$ be a semigroup. As relations on $\Epi(S)$, the following inclusions hold:
\[
 \cp\ \subseteq\  \cp^*\ \subseteq\ \ctr\ \subseteq\ \co\,.
\]
\end{theorem}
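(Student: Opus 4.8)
The plan is to prove the three inclusions $\cp\subseteq\cp^*$, $\cp^*\subseteq\ctr$, and $\ctr\subseteq\co$ one at a time, each reducing almost immediately to a result already established in this section. The first inclusion $\cp\subseteq\cp^*$ requires no work: by definition $\cp^*$ is the transitive closure of $\cp$, so it contains $\cp$ in every semigroup, and in particular as relations on $\Epi(S)$.

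For $\cp^*\subseteq\ctr$, I would first observe that $\ctr$ is an equivalence relation on $\Epi(S)$ by Theorem~\ref{thm:steinbergExtensionEpigroups}(1), and in particular transitive; it therefore suffices to show $\cp\subseteq\ctr$, and the inclusion of the transitive closure $\cp^*$ then follows automatically. To establish $\cp\subseteq\ctr$, take $a,b\in\Epi(S)$ with $a\cp b$, so that $a=uv$ and $b=vu$ for some $u,v\in S^1$. If $u=1$ or $v=1$, then $a=b$ and $a\ctr b$ by reflexivity. Otherwise $u,v\in S$, and since $uv=a\in\Epi(S)$ and $vu=b\in\Epi(S)$, Theorem~\ref{thm:steinbergExtensionEpigroups}(3) yields $uv\ctr vu$, that is, $a\ctr b$.

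For the last inclusion $\ctr\subseteq\co$, I would invoke the equivalence (1)$\iff$(4) of Theorem~\ref{thm:trace}. If $a\ctr b$ with $a,b\in\Epi(S)$, then there exist $g,h\in S^1$ with $ag=gb$ and $bh=ha$ (together with the extra conditions $gh=a^{\omega}$ and $hg=b^{\omega}$, which we simply discard). The two surviving equations $ag=gb$ and $bh=ha$ are precisely the conditions defining $a\co b$ in \eqref{econ3}, so $a\co b$, completing the chain.

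None of these steps presents a genuine obstacle, since the substantive content has already been packaged into Theorem~\ref{thm:trace} and Theorem~\ref{thm:steinbergExtensionEpigroups}. The only point needing any care is the bookkeeping between $S$ and $S^1$ in the middle step: the definition of $\cp$ permits the factors $u,v$ to be the adjoined identity, whereas Theorem~\ref{thm:steinbergExtensionEpigroups}(3) is phrased for $u,v\in S$, so one must dispose of the degenerate cases $u=1$ and $v=1$ separately (both of which force $a=b$) before applying~(3).
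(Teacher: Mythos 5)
Your proposal is correct and takes essentially the same route as the paper: the published proof simply cites Theorem~\ref{thm:steinbergExtensionEpigroups} for $\cp^*\ \subseteq\ \ctr$ and Theorem~\ref{thm:trace} for $\ctr\ \subseteq\ \co$, and your argument fills in exactly those details (reflexivity and transitivity from part (1), $xy\ctr yx$ from part (3), and the equivalence $(1)\iff(4)$ of Theorem~\ref{thm:trace}). The care you take with the degenerate cases $u=1$ or $v=1$ is a bookkeeping point the paper leaves implicit, but the underlying proof is the same.
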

\begin{proof}
The second inclusion follows from Theorem~\ref{thm:steinbergExtensionEpigroups}. The third inclusion
follows from Theorem~\ref{thm:trace}.
\end{proof}

The transitivity of $\cp$ on completely regular elements, a result first obtained by
Kudryavtseva \cite[Cor.~4]{ganna}, now follows easily. We interpret it here as the equality
of certain notions of conjugacy.

\begin{cor}
\label{old4.6}
Let $S$ be a semigroup. As relations on $\Epi_1(S)$, we have $\cp\ =\ \cp^*\ =\ \ctr$.
In particular, $\cp$ is transitive on completely regular semigroups.
\end{cor}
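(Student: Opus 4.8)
The plan is to derive Corollary~\ref{old4.6} directly from the results already established, specializing them to completely regular elements (that is, elements of $\Epi_1(S)$). The key observation is that for a completely regular element $a$, the index is $1$, so $a$ itself lies in a subgroup of $S$; consequently $a'' = aa'a = a$ (since $a'' = a^{\omega+1} = aa'$-times-$a$ collapses to $a$ when $a$ is already a group element). First I would make this identity $a'' = a$ explicit for all $a \in \Epi_1(S)$, as it is the engine that converts statements about $a''$ into statements about $a$.

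Granting $a'' = a$ and $b'' = b$, the equivalence (1)$\iff$(6) of Theorem~\ref{thm:trace} reads $a \ctr b \iff a'' \cp b''$, which becomes simply $a \ctr b \iff a \cp b$. Thus on $\Epi_1(S)$ the relations $\cp$ and $\ctr$ literally coincide as relations. This is the heart of the argument, and it is already packaged for me in the cited specialization (1)$\iff$(4) of Corollary~\ref{thm:ganna}, whose condition (4) is exactly $a \cp b$.

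Next I would invoke transitivity. By Theorem~\ref{thm:steinbergExtensionEpigroups}(1), $\ctr$ is an equivalence relation on $\Epi(S)$, hence in particular transitive on the subset $\Epi_1(S)$. Since $\cp$ and $\ctr$ agree on $\Epi_1(S)$, the relation $\cp$ is transitive there as well. Now I chain the inclusions: on $\Epi_1(S)$ we have $\cp \subseteq \cp^* \subseteq \ctr$ from Theorem~\ref{thm:steinbergInclusion}, while $\cp^*$ is by definition the transitive closure of $\cp$. Because $\cp$ is already transitive on $\Epi_1(S)$, its transitive closure equals itself, so $\cp = \cp^*$; combined with $\cp = \ctr$ this forces all three relations to be equal, giving $\cp = \cp^* = \ctr$ on $\Epi_1(S)$.

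The final sentence, that $\cp$ is transitive on a completely regular semigroup $S$, is immediate: such an $S$ satisfies $\Epi_1(S) = S$, so the equality $\cp = \cp^*$ holds on all of $S$, which is precisely the assertion that $\cp$ is transitive. I do not anticipate any genuine obstacle here, since all the substantive work has been done in the preceding theorems; the only point requiring a moment's care is verifying cleanly that $a'' = a$ for completely regular elements (one should cite the earlier formulas $a^\omega = aa'$ and $a'' = aa'a$ together with the fact that a group element is its own double pseudo-inverse), and ensuring that the transitive-closure step is phrased as a relation on the set $\Epi_1(S)$ rather than on all of $S$, since $\cp^*$ taken in $S$ could in principle involve intermediate elements outside $\Epi_1(S)$ — but the inclusions of Theorem~\ref{thm:steinbergInclusion} are stated as relations on $\Epi(S)$, so restricting to $\Epi_1(S)$ is safe.
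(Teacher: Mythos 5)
Your proposal is correct and follows essentially the same route as the paper: the paper's proof simply cites Corollary~\ref{thm:ganna} (which gives $\cp\ =\ \ctr$ on $\Epi_1(S)$, resting on $a''=a$ for completely regular elements) together with Theorem~\ref{thm:steinbergExtensionEpigroups}(1) (transitivity of $\ctr$), exactly the two ingredients you combine. Your additional care about where the transitive closure is taken and the explicit verification of $a''=a$ are just expansions of steps the paper leaves implicit.
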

\begin{proof}
This follows from Corollary~\ref{thm:ganna} and Theorem~\ref{thm:steinbergExtensionEpigroups}(1).
\end{proof}

In Corollary~\ref{old4.6}, we cannot include $\co$ among the notions of conjugacy
which coincide. To see this, consider an abelian group with
a zero adjoined. In such a semigroup, $\cp\,\,=\,\,\cp^*\,\,=\,\,\ctr$ is the identity relation,
but $\co$ is the universal relation.

We pointed out in \S\ref{ssym} that for an infinite set $X$, the symmetric inverse semigroup $\mi(X)$ is not an epigroup,
so trace conjugacy is not defined in $\mi(X)$. However, by Theorem~\ref{thm:steinbergExtensionEpigroups},
$\ctr$ is an equivalence relation on $\Epi(\mi(X))$. Using the results of this section,
we can characterize $\ctr$ on $\Epi(\mi(X))$ for
a countably infinite $X$. The following lemma shows that the elements of $\Epi(\mi(X)$ are precisely the transformations
in $\mi(X))$ that don't have any rays and whose lengths of chains are uniformly bounded.
The lemma follows immediately from the fact that $\bt\in\mi(X)$ is an element
of a subgroup of $\mi(X)$ if and only if~$\bt$ is a join of cycles.

\begin{lemma}
\label{lixa}
Let $\al\in\mi(X)$. Then $\al$ is an epigroup element if and only if $\oa=\ua=\la=\emptyset$
and there is a positive integer $n$ such that $\ta^k=\emptyset$ for all $k>n$.
\end{lemma}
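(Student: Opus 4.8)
The plan is to reduce everything to the fact quoted just before the statement: an element of $\mi(X)$ lies in a subgroup if and only if it is a join of cycles. Since $\al$ is an epigroup element precisely when some power $\al^n$ lies in a subgroup, the lemma amounts to determining for which $\al$ some power $\al^n$ is a join of cycles. First I would record how powers interact with the decomposition \eqref{edec}. Because the transformations in $\da\cup\ta\cup\oa\cup\ua\cup\la$ are pairwise completely disjoint and each basic transformation maps its span into itself, the powers distribute over the join:
\[
\al^n=\join_{\del\in\da}\!\del^n\jo\join_{\tet\in\ta}\!\tet^n\jo\join_{\ome\in\oa}\!\ome^n\jo\join_{\ups\in\ua}\!\ups^n\jo\join_{\lam\in\la}\!\lam^n,
\]
and, these factors being again completely disjoint, the uniqueness in Proposition~\ref{pdec} shows that the cycle-chain-ray decomposition of $\al^n$ is obtained by concatenating those of the individual pieces $\eta^n$.

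Next I would compute the powers of each basic type, which is routine from Definition~\ref{dbas}. A power of a cycle is again a join of cycles. For a chain $\tet$, the $n$-th power sends $x_i\mapsto x_{i+n}$, so $\tet^n$ is a (nonzero) join of chains when $n\le l(\tet)$ and equals $0$ once $n>l(\tet)$. A power of a double ray is a join of double rays, a power of a right ray is a join of right rays, and a power of a left ray is a join of left rays; in particular each of these powers is a nonzero join of rays and hence, by the uniqueness in Proposition~\ref{pdec}, is not a join of cycles. The main technical point, though an easy one, is precisely this bookkeeping: chains are the only non-cycle pieces whose powers eventually vanish, whereas any ray survives in every power.

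With these facts the two implications fall out. For the converse direction, assume $\oa=\ua=\la=\emptyset$ and $\ta^k=\emptyset$ for all $k>n$. Then every chain of $\al$ has length at most $n$, so in the displayed formula each $\tet^{n+1}=0$ while the ray-factors are already absent; hence $\al^{n+1}=\join_{\del\in\da}\del^{n+1}$ is a join of cycles, so $\al^{n+1}$ lies in a subgroup and $\al$ is an epigroup element. For the direct direction, suppose $\al$ is an epigroup element, say $\al^n$ is a join of cycles. By the uniqueness observation, $\al^n$ has no chains and no rays. If $\al$ contained any ray, its $n$-th power would contribute a ray to $\al^n$, a contradiction, so $\oa=\ua=\la=\emptyset$. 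And if the chain lengths of $\al$ were unbounded, some $\tet\in\ta$ would satisfy $l(\tet)\ge n$, whence $\tet^n\ne0$ would contribute a chain to $\al^n$, again a contradiction. Thus $\ta^k=\emptyset$ for all $k\ge n$, and in particular for all $k>n$, completing the proof.
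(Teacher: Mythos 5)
Your proof is correct and takes essentially the same approach as the paper: the paper's entire proof is the one-line observation that $\bt\in\mi(X)$ lies in a subgroup if and only if $\bt$ is a join of cycles, from which it declares the lemma immediate. Your argument simply fills in the bookkeeping (powers distribute over the completely disjoint pieces, chains of length $<n$ die in $\al^n$ while rays never do, and uniqueness of the decomposition identifies the pieces of $\al^n$) that the paper leaves tacit.
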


Recall that an idempotent $\vep\in\mi(X)$ is completely determined by its domain: for every $x\in\dom(\vep)$,
$x\vep=x$. For $A\subseteq X$, we will denote the idempotent in $\mi(X)$ with domain $A$ by $\vea$.

\begin{lemma}
\label{lixb}
Let $\al\in\Epi(\mi(X))$. Then $\al$ and $\al''$ have the same cycle type.
\end{lemma}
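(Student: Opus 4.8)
The plan is to show that the $''$-operation, applied to an epigroup element of $\mi(X)$, simply deletes the chains while keeping the cycles intact; the equality of cycle types is then immediate. First I would apply Lemma~\ref{lixa} to record that $\al$ has no rays, so that its cycle-chain decomposition reads $\al=\join_{\del\in\da}\del\jo\join_{\tet\in\ta}\tet$, and that there is an $N$ with $\ta^k=\emptyset$ for all $k>N$, i.e.\ every chain contained in $\al$ has length at most $N$.

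The heart of the argument is to identify the idempotent $\al^\omega$. Since the summands are pairwise completely disjoint, $\al^{N+1}=\join_{\del\in\da}\del^{N+1}\jo\join_{\tet\in\ta}\tet^{N+1}$. Each cycle $\del$ is a permutation of the finite set $\spa(\del)$, so $\del^{N+1}$ is again a permutation of $\spa(\del)$; each chain $\tet$ of length at most $N$ satisfies $\tet^{N+1}=0$. Hence $\al^{N+1}$ is a join of cycles with $\spa(\al^{N+1})=C$, where $C:=\bigcup_{\del\in\da}\spa(\del)$. By the characterization quoted before Lemma~\ref{lixa}, $\al^{N+1}$ lies in a subgroup of $\mi(X)$; and since idempotents of $\mi(X)$ are determined by their domains (the remark preceding Lemma~\ref{lixb}), the identity of that subgroup is the restriction of $\id_X$ to $C$. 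Therefore $\al^\omega=\vep_C$, the idempotent with domain $C$.

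Finally I would use the identity $\al''=\al^{\omega+1}=\al^\omega\al$ recorded after \eqref{etfh} (the $\omega$-power commutes with $\al$). As $\al^\omega=\vep_C$ is the identity on $C$ and undefined off $C$, the product $\al^\omega\al$ is exactly $\al$ restricted to $C$; and since each cycle span $\spa(\del)\subseteq C$ is $\al$-invariant while the chain spans lie outside $C$, this restriction equals $\join_{\del\in\da}\del$. Thus $\al''=\join_{\del\in\da}\del$ has no chains, and its set of cycles is precisely $\da$. In particular $\Delta_{\al''}^k=\da^k$ for every $k\in\mathbb Z_+$, so $\al$ and $\al''$ have the same cycle type.

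The computation is routine once $\al^\omega$ is pinned down; the only point requiring a little care is that powers and the $\omega$-idempotent respect the join decomposition, which is immediate from complete disjointness of the summands (distinct summands have disjoint spans, so they never interact under composition). I do not anticipate any genuine obstacle.
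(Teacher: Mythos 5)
Your proof is correct and takes essentially the same approach as the paper's: both invoke Lemma~\ref{lixa}, identify $\al^\omega$ as the idempotent whose domain is the union of the spans of the cycles of $\al$, and conclude that $\al''=\al\al^\omega=\join_{\del\in\da}\del$, so that the cycles of $\al''$ are exactly $\da$. The only cosmetic differences are that the paper reaches $\al''$ via $(\al')^{-1}=((\al\al^\omega)^{-1})^{-1}$ rather than via the identity $\al''=\al^{\omega+1}$, and it asserts the identification of $\al^\omega$ where you justify it by the explicit computation of $\al^{N+1}$.
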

\begin{proof}
By Lemma~\ref{lixa}, $\al$ does not contain any rays and there is a positive integer $n$ such that $\ta^k=\emptyset$ for all $k>n$.
Thus $\al=\join_{\del\in\da}\!\del\jo\join_{\tet\in\ta}\!\tet$ and its cycle-chain type is
\[
\lan|\da^1|,|\da^2|,|\da^3|,\ldots;|\ta^1|,|\ta^2|,\ldots,|\ta^n|\ran.
\]
Then $\al^n$ is in a group $\gh$-class of $\mi(X)$ whose identity is the idempotent
$\al^\ome=\vea$, where $A=\bigcup\{\dom(\del):\del\in\da\}$. Thus
\[
\al''=(\al')^{-1}=((\al\al^\ome)^{-1})^{-1}=\al\vea=\join_{\del\in\da}\!\!\del,
\]
and the result follows.
\end{proof}

\begin{theorem}
\label{ttrix}
Let $X$ be a countably infinite set. Then for all $\al,\bt\in\Epi(\mi(X))$,
$\al\ctr\bt$ if and only if $\al$ and $\bt$ have the same cycle type.
\end{theorem}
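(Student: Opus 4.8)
The plan is to reduce the statement, via the abstract theory of $\ctr$, to a transparent fact about joins of cycles. By the equivalence of (1) and (6) in Theorem~\ref{thm:trace}, for $\al,\bt\in\Epi(\mi(X))$ we have $\al\ctr\bt$ if and only if $\al''\cp\bt''$. By Lemma~\ref{lixb}, $\al$ and $\al''$ have the same cycle type, and likewise $\bt$ and $\bt''$; hence $\al$ and $\bt$ have the same cycle type precisely when $\al''$ and $\bt''$ do. Finally, the computation in the proof of Lemma~\ref{lixb} gives $\al''=\join_{\del\in\da}\del$, so $\al''$ is a join of cycles whose set of length-$k$ cycles is exactly $\dka$ (and similarly $\bt''$ has length-$k$ cycle set $\dkb$). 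It therefore suffices to prove the following claim: for joins of cycles $\al'',\bt''\in\mi(X)$, one has $\al''\cp\bt''$ if and only if $|\dka|=|\dkb|$ for every $k\geq1$.

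For the ``if'' direction I would exhibit an explicit primary relation. Assuming $|\dka|=|\dkb|$ for all $k$, choose a length-preserving bijection between the cycles in $\da$ and those in $\db$; for each matched pair $C=(x_0\,\ldots\,x_{k-1})\in\da$ and $D=(z_0\,\ldots\,z_{k-1})\in\db$, let $\sig_C$ be the partial injection $x_i\mapsto z_i$. Since the cycles in $\da$ (respectively $\db$) are pairwise completely disjoint, the join $\sig=\join_C\sig_C$ is a well-defined partial injection in $\mi(X)$ with $\dom(\sig)=\spa(\al'')$ and $\ima(\sig)=\spa(\bt'')$. Putting $\mu=\al''\sig$ and $\nu=\sig\inv$, and using that $\ima(\al'')=\dom(\al'')=\spa(\al'')$ because $\al''$ is a join of cycles, a short check yields $\mu\nu=\al''$ and $\nu\mu=\sig\inv\al''\sig=\bt''$, so $\al''\cp\bt''$.

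For the ``only if'' direction, write $\al''=\mu\nu$ and $\bt''=\nu\mu$ with $\mu,\nu\in\mi(X)$. The crux is that $\mu$ transports cycles to cycles of equal length: if $C=(x_0\,\ldots\,x_{k-1})\in\dka$, then every $x_i\in\dom(\al'')\subseteq\dom(\mu)$, the images $y_i:=x_i\mu$ are distinct, $y_i\nu=x_i\al''=x_{i+1}$, and hence $y_i\bt''=y_i\nu\mu=x_{i+1}\mu=y_{i+1}$ (indices mod $k$), so $\{y_0,\ldots,y_{k-1}\}$ is a cycle of $\bt''$ of length exactly $k$, i.e.\ a member of $\dkb$. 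Because $\mu$ is injective and distinct cycles of $\al''$ have disjoint spans, $C\mapsto\{y_0,\ldots,y_{k-1}\}$ is an injection $\dka\to\dkb$, whence $|\dka|\leq|\dkb|$; the symmetric argument with $\nu$ gives $|\dkb|\leq|\dka|$, and the claim follows.

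The main obstacle is the ``only if'' direction, where the natural first attempt is to use a point-counting invariant. Indeed $\cp$ does preserve the number of fixed points of each power (if $a\cp b$ then $a^k\cp b^k$, and $p$-conjugate elements have equally many fixed points), but for a countably infinite $X$ this is too weak: for instance infinitely many $1$-cycles together with a single $2$-cycle, versus infinitely many $1$-cycles together with two $2$-cycles, have the same fixed-point count for every power yet different cycle types, since cardinal addition is not cancellative. The remedy, carried out above, is to count cycles rather than points by following the image of one cycle at a time under the witness $\mu$; this survives arbitrary (including infinite) multiplicities and is what makes the forward implication correct.
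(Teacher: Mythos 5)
Your proposal is correct, and its skeleton coincides with the paper's: both reduce $\al\ctr\bt$ to $\al''\cp\bt''$ via Theorem~\ref{thm:trace} (equivalence of (1) and (6)), and both transfer cycle types between $\al$ and $\al''$ via Lemma~\ref{lixb}. The one genuine difference is the remaining step. The paper disposes of it by citing Kudryavtseva and Mazorchuk's general characterization of $\cp$ in $\mi(X)$ for countably infinite $X$ \cite[Lem.~4]{KuMa07}, which for elements whose decomposition consists only of cycles reduces to equality of cycle types; you instead prove exactly that special case from scratch. Your direct argument checks out in both directions: the partial injection $\sig$ assembled from a length-preserving matching of cycles satisfies $(\al''\sig)\sig\inv=\al''$ and $\sig\inv(\al''\sig)=\bt''$ (the verification uses $\dom(\al'')=\ima(\al'')=\spa(\al'')$, which holds precisely because $\al''$ is a join of cycles), and conversely a witness $\mu$ for $\al''=\mu\nu$, $\bt''=\nu\mu$ carries each $k$-cycle of $\al''$ onto a $k$-cycle of $\bt''$, injectively on the set of cycles since distinct cycles have disjoint spans and $\mu$ is injective; the two resulting inequalities give $|\dka|=|\dkb|$ for all $k$ by Cantor--Schr\"oder--Bernstein. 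What the paper's citation buys is brevity and a pointer to the full description of $\cp$ in $\mi(X)$ (where chains and rays make the answer genuinely more delicate); what your version buys is self-containedness, and your closing observation is apt: for infinite $X$ one must count cycles rather than points, since cardinal arithmetic is not cancellative, and this is exactly the subtlety that the appeal to \cite{KuMa07} hides.
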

\begin{proof}
Let $\al,\bt\in\Epi(\mi(X))$. The following statements are true:
\begin{itemize}
  \item [(a)] $\al\ctr\bt$ if and only if $\al''\cp\bt''$ (by Theorem~\ref{thm:trace});
  \item [(b)] $\al''\cp\bt''$ if and only if $\al''$ and $\bt''$ have the same cycle type (by \cite[Lem.~4]{KuMa07});
  \item [(c)] $\al$ and $\al''$ have the same cycle type, and the same is true for $\bt$ and $\bt''$ (by Lemma~\ref{lixb}).
\end{itemize}
The result clearly follows from  (a)--(c).
\end{proof}

Now we would like to exhibit a larger class of semigroups in which $\cp\ =\ \cp^*\ \subset\ \ctr$,
where the last inclusion is proper. At this point we will no longer work with epigroup elements in
arbitrary semigroups, but rather with epigroups. In particular, this means we will
change our point of view about the role of pseudo-inverses.

Following Petrich and Reilly \cite{PeRe99} for completely regular semigroups and Shevrin \cite{Shevrin} for epigroups,
it is now customary to view an epigroup $(S,\cdot)$ as a \emph{unary} semigroup
$(S,\cdot,{}')$ where $x\mapsto x'$ is the map sending each element to its pseudo-inverse. By a \emph{variety of
epigroups}, we will mean a class of epigroups viewed as a variety of unary semigroups in the usual sense: closed under unary
subsemigroups, homomorphic images and direct products. The class of all epigroups is not a variety because it is not closed
under arbitrary direct products, but the following identities, all of which we have already seen, hold in epigroups:
\begin{align}
x'xx' &= x'\,,      \label{eq:epi1} \\
xx' &= x'x\,,       \label{eq:epi2} \\
xx'x &= x''\,,      \label{eq:epi3} \\
(xy)'x &= x(yx)'\,, \label{eq:epi4} \\
x''' &= x'\,.       \label{eq:epi5}
\end{align}
We note that the class $\mathcal{E}_n$ (that is, the epigroups $S$ such that $S = \Epi_n(S)$) is a variety of epigroups
axiomatized \cite[Prop.~2.10]{Shevrin} by associativity, \eqref{eq:epi1}, \eqref{eq:epi2}, and
\begin{equation}
\label{eq:epi8}
x^{n+1} x' = x^n\,.
\end{equation}

\medskip

Let $\mathcal{W}$ be the class of semigroups $S$ such that the subsemigroup $S^2:=\{ab\mid a,b\in S\}$
is completely regular. This class contains all completely regular semigroups, all null semigroups
(semigroups satisfying the identity $xy=uv$) and, more generally, all variants of completely regular
semigroups. (We will recall the definition of a variant of a semigroup later in the section.)
We first prove that $\mathcal{W}$ is a variety of epigroups.


\begin{prop}
\label{prp:E1_W_E2}
Any semigroup in $\mathcal{W}$ is an epigroup. The following proper inclusions 
of epigroup varieties hold:
$\mathcal{E}_1\subset \mathcal{W}\subset \mathcal{E}_2$.
\end{prop}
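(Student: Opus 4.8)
The plan is to obtain the two inclusions from a single index computation, to establish that $\mathcal{W}$ is a variety by recasting its defining condition as one unary-semigroup identity relative to $\mathcal{E}_2$, and then to separate the three classes with two small examples.

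First I would show $\mathcal{W}\subseteq\mathcal{E}_2$, which simultaneously proves that every semigroup in $\mathcal{W}$ is an epigroup. Let $S\in\mathcal{W}$ and $a\in S$. Then $a^2=a\cdot a\in S^2$, and since $S^2$ is completely regular, $a^2$ lies in a subgroup of $S^2$, hence in a subgroup of $S$. Therefore $a$ is an epigroup element of index at most $2$, so $a\in\Epi_2(S)$; as $a$ was arbitrary, $S=\Epi_2(S)$, i.e.\ $S\in\mathcal{E}_2$. For $\mathcal{E}_1\subseteq\mathcal{W}$, observe that if $S$ is completely regular and $a\in S$ lies in the maximal subgroup with identity $e$, then $a=ea\in S^2$; hence $S=S^2$, and since $S$ is completely regular so is $S^2$, giving $S\in\mathcal{W}$.

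Next I would prove that $\mathcal{W}$ is a variety of epigroups by identifying it with the subclass of $\mathcal{E}_2$ satisfying the identity $(xy)''=xy$. The key observation is that an epigroup element $z$ is completely regular if and only if $z=z''$: by \eqref{eq:epi3} we have $z''=zz'z$, and together with \eqref{eq:epi2} this says precisely that $z$ is completely regular exactly when $z=zz'z=z''$. Consequently $S^2$ is completely regular if and only if every $xy\in S^2$ satisfies $(xy)''=xy$; here one checks the harmless point that complete regularity of $z\in S^2$ measured in $S$ agrees with that measured in $S^2$, because the group inverse satisfies $z^{-1}=ez^{-1}\in S^2$ for $e=z^{-1}z$. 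Since $\mathcal{E}_2$ is the variety of unary semigroups axiomatized by associativity, \eqref{eq:epi1}, \eqref{eq:epi2}, and \eqref{eq:epi8} with $n=2$, and since $(xy)''=xy$ is again a unary-semigroup identity, the class $\mathcal{W}=\{S\in\mathcal{E}_2: S\models (xy)''=xy\}$ is automatically closed under unary subsemigroups, homomorphic images, and direct products; that is, it is a subvariety of $\mathcal{E}_2$.

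It remains to verify that both inclusions are proper, and to flag where the difficulty lies. For $\mathcal{E}_1\subsetneq\mathcal{W}$, take the two-element null semigroup $N=\{0,a\}$ with $a^2=0$: then $N^2=\{0\}$ is the trivial group, so $N\in\mathcal{W}$, whereas $a$ is nilpotent of index $2$ (so $a''=0\neq a$) and $N\notin\mathcal{E}_1$. For $\mathcal{W}\subsetneq\mathcal{E}_2$, I would exhibit an epigroup of index $2$ that has a product which is itself nilpotent of index $2$; such a product lies in $S^2$ but is not completely regular, so $S^2$ is not completely regular. A convenient witness is $\mi(\{1,2\})$, in which every element has index at most $2$ while the chain $[1\,2]=\vep_{\{1\}}[1\,2]\in S^2$ satisfies $[1\,2]^2=0$; an even more elementary witness is the three-element semigroup $\{e,a,0\}$ with $e^2=e$, $ea=a$, $ae=a^2=0$, and zero $0$, where $a=ea\in S^2$ has index $2$. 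I expect the main obstacle to be the varietal step: the definition of $\mathcal{W}$ speaks a priori about the \emph{derived} subsemigroup $S^2$ rather than about an identity on $S$ itself, so the crux is to convert ``$S^2$ is completely regular'' into the genuine unary identity $(xy)''=xy$ and to confirm that complete regularity is insensitive to whether it is measured in $S$ or in $S^2$.
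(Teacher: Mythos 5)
Your proposal is correct, and its core coincides with the paper's proof: you get $\mathcal{W}\subseteq\mathcal{E}_2$ (and hence the epigroup claim) from $a^2\in S^2$ lying in a subgroup, you get $\mathcal{E}_1\subseteq\mathcal{W}$ from $a=ea$, so that $S^2=S$, and you use null semigroups to make the first inclusion proper. There are two genuine differences worth recording. First, you prove inline that $\mathcal{W}$ is a variety, by converting ``$S^2$ is completely regular'' into the unary identity $(xy)''=xy$ relative to the axiomatization of $\mathcal{E}_2$; the paper's proof of this proposition does not address the variety claim at all, deferring it to the immediately following Proposition~\ref{prp:W}, whose proof is essentially your argument (completely regular elements of an epigroup are exactly those with $a''=a$, via \eqref{eq:epi3}). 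Your ``harmless point'' --- that complete regularity of $z\in S^2$ is the same whether measured in $S$ or in $S^2$, since $e=ee$ and $z^{-1}=ez^{-1}$ lie in $S^2$ --- is a detail the paper skips silently, and it is worth having. Second, your witnesses for the properness of $\mathcal{W}\subset\mathcal{E}_2$ differ from the paper's: the paper uses the $3$-element monoid $\{e,a,b\}$ with $\{a,b\}$ null and $xy=a$, and asserts that ``$ea=a$ is not completely regular,'' which is actually a small slip, since there $a^2=a$ is idempotent; the intended non-completely-regular element is $b=eb$. Your $3$-element semigroup $\{e,a,0\}$ with $ea=a$, $ae=a^2=0$ (and likewise $\mi(\{1,2\})$, where the chain $[1\,2]\in S^2$ is nilpotent) is a correct witness of the same kind and avoids that slip.
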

\begin{proof}
For $S\in \mathcal{W}$, every $a\in S$ satisfies $a^2\in \Epi_1(S)$, that is, $a^2$ lies in a subgroup
of $S$. Thus $S\in \mathcal{E}_2$, which both verifies the first assertion and the second inclusion.
The second inclusion is also proper, as can be seen by considering
a $3$-element monoid $S = \{e,a,b\}$ where $e$ is the identity element and $\{a,b\}$ is a null
subsemigroup with $xy = a$ for all $x,y\in\{a,b\}$. Then $S$ is clearly in $\mathcal{E}_2$, but
$ea = a$ is not completely regular, so $S$ is not in $\mathcal{W}$.

Finally, the first inclusion is obvious from the definition of $\mathcal{W}$, and since every null semigroup
is in $\mathcal{W}$, so the inclusion is also proper.
\end{proof}

The following result characterizes $\mathcal{W}$ in terms of pseudo-inverses.

\begin{prop}
\label{prp:W}
Let $S$ be a semigroup. Then $S$ is in $\mathcal{W}$ if and only if
$S$ is an epigroup in $\mathcal{E}_2$ satisfying the additional identity
\begin{equation}
\label{eq:xy''}
(xy)'' = xy\,.
\end{equation}
\end{prop}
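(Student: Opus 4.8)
The plan is to turn the ``external'' condition ``$S^2$ is completely regular'' into the ``internal'' unary identity $(xy)''=xy$. The pivot is a single elementary observation about epigroup elements: for any $a\in\Epi(S)$, the double pseudo-inverse $a''$ is always completely regular (as recorded after \eqref{etfh}), and conversely, if $a$ is itself completely regular, then $a$ lies in its group $\gh$-class $H$ with identity $a^{\omega}=aa'$, on which $a'$ is the group inverse, so $a''=aa'a=a$. Hence an epigroup element $a$ is completely regular if and only if $a=a''$. Everything reduces to applying this equivalence to the elements of $S^2$.

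For the forward implication, suppose $S\in\mathcal{W}$. By Proposition~\ref{prp:E1_W_E2}, $S$ is an epigroup lying in $\mathcal{E}_2$, so the pseudo-inverse map is everywhere defined and the identity $(xy)''=xy$ is meaningful. Fixing $x,y\in S$, the element $xy$ lies in $S^2$, which is completely regular by hypothesis; thus $xy$ is a completely regular element of $S$, and the equivalence above yields $(xy)''=xy$. This gives both the membership $S\in\mathcal{E}_2$ and the identity, completing this direction.

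For the converse, suppose $S$ is an epigroup in $\mathcal{E}_2$ satisfying $(xy)''=xy$; I must show $S^2$ is completely regular. First I would note that $S^2$ is a two-sided ideal of $S$, since $S\cdot S^2,\ S^2\cdot S\subseteq S^2$; this absorption property is exactly what will keep the auxiliary elements inside $S^2$. Now fix $w\in S^2$, say $w=xy$. The identity gives $w=w''$, and since $w''$ is always completely regular, $w$ lies in a subgroup of $S$, with idempotent identity $w^{\omega}=ww'$ and pseudo-inverse $w'$. Using \eqref{eq:epi1} we have $w'=w'ww'$, and since $w\in S^2$ sits in the middle, the ideal property forces $w'\in S^2$; likewise $w^{\omega}=ww'\in S^2$. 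To finish, I would take $G=H\cap S^2$, where $H$ is the maximal subgroup of $S$ with identity $w^{\omega}$. Then $G$ contains $w^{\omega}$ and $w$, is closed under multiplication (products of elements of $H$ stay in $H$, and $S^2$ is a subsemigroup), and is closed under group inverses, because for $g\in G$ the inverse $g^{-1}=g'$ satisfies $g^{-1}=g'gg'\in S^2$ by the ideal property. Thus $G$ is a subgroup of the semigroup $S^2$ containing $w$, so $w$ is completely regular in $S^2$; as $w$ was arbitrary, $S^2$ is completely regular and $S\in\mathcal{W}$.

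The main obstacle is the final step of the converse: the identity $(xy)''=xy$ only tells us each $w\in S^2$ lies in a subgroup of $S$, whereas complete regularity of $S^2$ requires a subgroup contained in $S^2$ itself. The resolution is entirely carried by the ideal property $S\cdot S^2,\ S^2\cdot S\subseteq S^2$, which guarantees that the idempotent $w^{\omega}$, the pseudo-inverse $w'$, and all the relevant group inverses descend from $S$ into $S^2$; everything else is formal.
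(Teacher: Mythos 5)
Your proof is correct and takes essentially the same route as the paper's: the forward direction rests on the characterization of completely regular epigroup elements by $a''=a$, and the converse combines that characterization (equivalently, $xx'x=x''$ together with $(xy)''=xy$) to place each $xy$ in a subgroup. The only addition is your careful descent step showing the subgroup containing $w=xy$ may be taken inside $S^2$ (via $w'=(w'w)w'\in S^2$ and the ideal property); the paper treats this as implicit, since it makes the two readings of ``$S^2$ is completely regular'' coincide, but your verification is sound.
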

\begin{proof}
If $S$ is in $\mathcal{W}$, then $S$ is in $\mathcal{E}_2$ by
Proposition~\ref{prp:E1_W_E2}. We have already noted that the completely
regular elements $a$ in an epigroup are characterized by the equation $a'' = a$,
so \eqref{eq:xy''} holds by definition of $\mathcal{W}$.

Conversely, if $S$ is an epigroup in $\mathcal{E}_2$ satisfying \eqref{eq:xy''}, then
combining \eqref{eq:epi3} and \eqref{eq:xy''} shows that each $xy$ lies in a subgroup
of $S$.
\end{proof}

\begin{theorem}
\label{thm:W}
Let $S$ be an epigroup in $\mathcal{W}$. Then $\cp\ =\ \cp^*\ \subset\ \ctr$.
\end{theorem}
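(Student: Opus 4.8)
The plan is to reduce everything to results already in hand. Since $S$ is an epigroup, $\Epi(S)=S$, so Theorem~\ref{thm:steinbergInclusion} already gives $\cp\ \subseteq\ \cp^*\ \subseteq\ \ctr$ on all of $S$. It therefore remains to prove two things: (i) that $\cp$ is transitive, so that $\cp\ =\ \cp^*$; and (ii) that the inclusion $\cp\ \subseteq\ \ctr$ is in general strict within $\mathcal{W}$, which I would establish by exhibiting a suitable member of the class.

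For (i), the key structural fact is that in $\mathcal{W}$ every product of two elements is completely regular: by Proposition~\ref{prp:W}, $S\in\mathcal{E}_2$ and satisfies $(xy)''=xy$, so $S^2\subseteq\Epi_1(S)$. Suppose $a\cp b$, say $a=uv$ and $b=vu$ with $u,v\in S^1$. If $u=1$ or $v=1$ then $a=b$; otherwise $u,v\in S$ and both $a=uv$ and $b=vu$ lie in $S^2$, hence are completely regular. Thus every non-diagonal $\cp$-pair lies entirely in $\Epi_1(S)$. Now, given $a\cp b$ and $b\cp c$, the cases $a=b$ and $b=c$ are immediate; in the remaining case $a,b,c\in\Epi_1(S)$, and Corollary~\ref{old4.6}, which states that $\cp\ =\ \cp^*$ as relations on $\Epi_1(S)$ (equivalently, that $\cp$ is transitive on completely regular elements), yields $a\cp c$. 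Hence $\cp$ is transitive and $\cp\ =\ \cp^*$.

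For (ii), I would use a null semigroup, the simplest non-completely-regular member of $\mathcal{W}$. Let $N=\{0,x\}$ with all products equal to $0$ (so $x^2=0$); then $N^2=\{0\}$ is completely regular, so $N\in\mathcal{W}$. Computing pseudo-inverses, $0''=0$ and, since $x'=0$, also $x''=xx'x=0$; thus $a''=0$ for every $a\in N$. By the equivalence $a\ctr b\iff a''\cp b''$ of Theorem~\ref{thm:trace}, we obtain $a\ctr b\iff 0\cp 0$, which always holds, so $\ctr$ is the universal relation on $N$. On the other hand, if $a\cp b$ via $u,v$, then either $u=1$ or $v=1$ (forcing $a=b$) or $u,v\in N$ (forcing $a=uv=0=vu=b$); in every case $a=b$, so $\cp$ is the identity relation. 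Hence $\cp\ \subsetneq\ \ctr$ on $N$, witnessing the strictness.

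The main obstacle is the transitivity step, and in particular the observation that the defining condition of $\mathcal{W}$ forces every non-diagonal $\cp$-pair into the completely regular subsemigroup $S^2\subseteq\Epi_1(S)$, where transitivity of $\cp$ is already available from Corollary~\ref{old4.6}; the only care required is in dispatching the degenerate cases coming from the adjoined identity $1\in S^1$. The strictness step is then routine once the null-semigroup example is identified, the single computational point being that $a''=0$ for all $a$ collapses $\ctr$ to the universal relation while $\cp$ remains trivial.
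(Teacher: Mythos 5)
Your proof is correct and takes essentially the same route as the paper's: transitivity follows by noting that any non-diagonal $\cp$-pair lies in $S^2\subseteq\Epi_1(S)$ and then invoking Corollary~\ref{old4.6}, and strictness is witnessed by the two-element null semigroup together with Theorem~\ref{thm:trace}. The only difference is presentational: you spell out the degenerate cases involving the adjoined identity and the computation that all double pseudo-inverses collapse to the zero, which the paper dismisses as evident.
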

\begin{proof}
Suppose $a\cp b$ and $b\cp c$, that is, $a=uv$, $b=vu=xy$, and $c=yx$ for some $u,v,x,y\in S^1$.
If $a=b$ or $b=c$, then clearly $a\cp c$. Otherwise, $a,b,c\in S^2\subseteq \Epi_1(S)$,
so $a\cp c$ by Corollary~\ref{old4.6}. Thus $\cp$ is transitive, and so $\cp\ =\ \cp^*$.

To see that the inclusion $\cp^*\ \subset\ \ctr$ is proper, consider a $2$-element
null semigroup $S = \{a,b\}$ with $xy = a$ for all $x,y\in S$. Then $a' = b' = a$.
As already noted, null semigroups are in $\mathcal{W}$. Since $a'' = b''$, we have
$a\ctr b$ (by Theorem~\ref{thm:trace}), but $a$ and $b$ are evidently not $p$-conjugate.
\end{proof}

To show that the variety $\mathcal{W}$ is of more than just formal interest,
we will now show that it contains all
variants of completely regular semigroups. First, we recall the notion of variant.

Let $S$ be a semigroup and let $a\in S$. Then the pair $(S,\circ)$, where
$\circ$ is a binary operation on $S$ defined by $x\circ y=xay$, is called the
\emph{variant} of $S$ at $a$.
Variants of semigroups are semigroups. Besides giving a construction of new
semigroups from old ones, variants
also provide an interesting interpretation of Nambooripad's natural partial
order on regular semigroups
\cite{Na80}. (See \cite{hickey,hickey2} and also \cite{lawson,ganna2}).

Since $\mathcal{W}$ can be viewed as a variety of unary semigroups, we will
also find it helpful to introduce \emph{unary variants}.
Let $(S,\cdot,{}')$ be a unary semigroup, and fix $a\in S$. Then the unary
semigroup $(S,\circ,^*)$,
where $(S,\circ)$ is the variant of $S$ at $a$ and $x^*=(xa)' x(ax)'$, is
called the \emph{unary variant} of $S$ at $a$.
Since it will always be clear from the context when we mean a unary variant, we
will usually drop the word ``unary'' when referring to variants.

Variants of completely regular semigroups are not, in general, completely regular.
\begin{example}
Let $S=\{0,1\}$ be the $2$-chain. Since $S$ is a semilattice, it is certainly
completely regular. However, its variant at $0$ is the null semigroup, which is not even regular.
\end{example}

\begin{theorem}
\label{thm:variants}
Let $(S,\cdot,{}')$ be a completely regular semigroup, and fix $a\in S$. Let
$(S,\circ,{}^*)$ be the variant of $S$ at $a$, that is,
\[
x\circ y = xay \qquad \text{and}\qquad x^* = (xa)' x (ax)'
\]
for all $x,y\in S$. Then $(S,\circ,{}^*)$ is in $\mathcal{W}$.
\end{theorem}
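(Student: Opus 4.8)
The plan is to apply Proposition~\ref{prp:W}: I will show that $(S,\circ,{}^*)$ is an epigroup in $\mathcal{E}_2$ whose pseudo-inverse is precisely the given unary operation $x\mapsto x^*$, and then establish the extra identity $(x\circ y)^{**}=x\circ y$. Throughout I use that $S$ is completely regular, so $z''=z$, $zz'=z'z$ and $zz'z=z$ for all $z\in S$, together with \eqref{eq:epi4}. The computational backbone is the pair of identities $x^*a=(xa)'$ and $ax^*=(ax)'$, which I would derive from \eqref{eq:epi4}: for instance
\[
ax^*=a(xa)'x(ax)'=(ax)'a\,x(ax)'=(ax)'(ax)(ax)'=(ax)',
\]
using the instance $(ax)'a=a(xa)'$ of \eqref{eq:epi4}, and dually $x^*a=(xa)'$. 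Equivalently, $x\mapsto ax$ and $x\mapsto xa$ are unary homomorphisms $(S,\circ,{}^*)\to(S,\cdot,{}')$.

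First I would check that $x^*$ is the pseudo-inverse of $x$ in the variant, i.e.\ that \eqref{eq:epi1}, \eqref{eq:epi2} and \eqref{eq:epi8} (with $n=2$) hold for $\circ$. From the backbone identities, $x\circ x^*=x(ax)'$ and $x^*\circ x=(xa)'x$, and these agree by \eqref{eq:epi4}, giving \eqref{eq:epi2}; then $x^*\circ x\circ x^*=x^*a\,x(ax)'=(xa)'x(ax)'=x^*$ gives \eqref{eq:epi1}. For \eqref{eq:epi8} I would compute, with $x^{\circ3}=xaxax$,
\[
x^{\circ3}\circ x^*=xaxax\,(ax)'=xaxa\,(xa)'x=(xa)(xa)(xa)'x=(xa)x=x^{\circ2},
\]
so every element of $(S,\circ)$ has index at most $2$; hence $(S,\circ,{}^*)\in\mathcal{E}_2$.

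It remains to prove the crux, $(x\circ y)^{**}=x\circ y$. Writing $z=x\circ y=xay$, the backbone identities with $z''=z$ give $(z^*a)'=(za)''=za$ and $(az^*)'=(az)''=az$, so
\[
z^{**}=(z^*a)'z^*(az^*)'=za\,(za)'z(az)'\,az=(za)^{\omega}z(az)^{\omega},
\]
which by \eqref{eq:epi4} equals $(za)^{\omega}z$ (and also $z(az)^{\omega}$). Thus $z^{**}=z$ is precisely the statement that $(za)^{\omega}$ is a left identity for $z$; since $za\leq_{\gr}z$ while $(za)^{\omega}\grr za$, this is equivalent to the single relation $z\grr za$ in $S$. \emph{This is the main obstacle.} To handle it I would use the unary homomorphism $\Phi\colon(S,\circ,{}^*)\to(S\times S,\cdot,{}')$, $\Phi(z)=(za,az)$ (unary because $\Phi(z^*)=((za)',(az)')=\Phi(z)'$, with $z^*=[(za)'x]\,a\,[y(az)']\in SaS$). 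As $S\times S$ is completely regular we have $\Phi(z)''=\Phi(z)$, so $\Phi(z^{**})=\Phi(z)''=\Phi(z)$; hence $z^{**}=z$ will follow once $\Phi$ is injective on $SaS=(S,\circ)^2$ — equivalently, once $z\grr za$ is proved. I expect to obtain this from the structure theory of completely regular semigroups: writing $S$ as a semilattice of completely simple semigroups $M[G;I,\Lambda;P]$, every $z\in SaS$ lies in the same principal factor as $za$ and $az$, where right (resp.\ left) translation preserves $\gr$-classes (resp.\ $\gl$-classes), so $(za,az)$ determines the two Rees coordinates and the group coordinate of $z$. With $(S,\circ,{}^*)\in\mathcal{E}_2$ and $(x\circ y)^{**}=x\circ y$ established, Proposition~\ref{prp:W} gives $(S,\circ,{}^*)\in\mathcal{W}$.
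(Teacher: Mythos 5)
Your proposal is correct, and its first half (the backbone identities $x^*a=(xa)'$, $ax^*=(ax)'$ and the verification that $(S,\circ,{}^*)$ satisfies \eqref{eq:epi1}, \eqref{eq:epi2} and \eqref{eq:epi8} with $n=2$) is essentially the same computation the paper performs, just organized more cleanly through the observation that $x\mapsto xa$ and $x\mapsto ax$ are unary homomorphisms. Where you genuinely diverge is the crux. The paper never leaves the equational calculus: it proves directly that $(x\circ y)\circ(x\circ y)\circ(x\circ y)^*=x\circ y$ by an eight-step rewriting chain using \eqref{eq:epi2}, \eqref{eq:epi4}, \eqref{eq:epi8} and complete regularity of $(S,\cdot,{}')$, which is self-contained and stays entirely inside the unary-semigroup identities. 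You instead reduce $(x\circ y)^{**}=x\circ y$ to the single Green's-relation statement $z\,\gr\,za$ for $z\in SaS$ (via $z^{**}=(za)^{\omega}z$, so that the issue is whether the idempotent $(za)^{\omega}$ acts as a left identity on $z$), and you propose to get that from Clifford's decomposition of $S$ as a semilattice $Y$ of completely simple semigroups. That reduction is valid, and the structural fact is true: if $z=xay$ with $x\in S_\beta$, $a\in S_\gamma$, $y\in S_\delta$, then $z$, $za$, $az$ all lie in $S_{\beta\gamma\delta}$, and since $za=z(z'za)$ with $z'za\in S_{\beta\gamma\delta}$, the Rees coordinates give $z\,\gr\,za$ (dually $z\,\gl\,az$), whence $(za)^{\omega}z=z$. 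Two caveats on your sketch: (i) your parenthetical ``right (resp.\ left) translation preserves $\gr$-classes (resp.\ $\gl$-classes)'' is backwards as a statement about general semigroups ($\gr$ is a \emph{left} congruence); it is correct only in the Rees-matrix reading you intend, and even there one must note that $a$ itself need not lie in the component of $z$, which is why the factorization $za=z(z'za)$ (or an appeal to stability) is needed; (ii) the detour through injectivity of $\Phi(z)=(za,az)$ on $SaS$ is an unnecessary complication, since $z\,\gr\,za$ already yields $z^{**}=(za)^{\omega}z=z$ directly. In exchange for importing structure theory, your route explains \emph{why} elements of $S\circ S$ are completely regular in the variant, while the paper's purely syntactic proof has the advantage of being elementary and of using nothing beyond the defining identities.
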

\begin{proof}
All we need to show is that $S\circ S$ is a subsemigroup of $(S,\circ ,{}^*)$
that is completely regular. We will first prove that $(S,\circ ,{}^*)$ is an
epigroup in  $\mathcal{E}_2$, which implies $S\circ S$ is also an epigroup,
and then show that $S\circ S$ satisfies the identity (\ref{eq:epi8}).

We begin proving that $x^*\circ x\circ x^*=x^*$. Indeed, we have
\begin{align*}
x^*\circ x\circ x^* & =  (xa)'x(ax)' ax\underbrace{a(xa)'}x(ax)'\\
        & \by{eq:epi4}  (xa)'x(ax)'ax\underbrace{(ax)'ax(ax)'} \\
        & \by{eq:epi1}  (xa)'x\underbrace{(ax)'ax(ax)'} \\
        & \by{eq:epi1}  (xa)'x(ax)' = x^*.
\end{align*}
Then we also have $x\circ x^*= x^* \circ x$ since
\begin{align*}
x\circ x^* & = \underbrace{xa(xa)'}x(ax)'\\
        & \by{eq:epi2} (xa)'x\underbrace{ax(ax)'} \\
        & \by{eq:epi2} (xa)'x(ax)'ax = x^*\circ x\,.
\end{align*}
Finally, $x^3\circ x^k=x^2$ since
\begin{align*}
x\circ x\circ x\circ  x^* & = xa\underbrace{xaxa(xa)'}x(ax)'\\
        & \by{eq:epi8} x\underbrace{axax(ax)'}\qquad
\text{($(S,\cdot,{}')$ is completely regular)} \\
        & \by{eq:epi8} xax = x\circ x\qquad
\text{($(S,\cdot,{}')$ is completely regular)},
\end{align*}
and so $(S,\circ,{}^*)$ is an epigroup of $\mathcal{E}_2$.

Given an element $x\circ y$ of $S\circ S$ we will show that $(x\circ y)^2\circ
(x\circ y)^* = x\circ y$. Indeed,
\begin{align*}
 (x^*)^* & =(x\circ y)\circ (x\circ y)\circ (x\circ y)^* \\
        & = \underbrace{xayaxaya(xaya)'}xay(axay)'\\
        & \by{eq:epi8}  xay\underbrace{axay(axay)'} \\
        & \by{eq:epi2}  \underbrace{xa}y(axay)'axay \\
        & \by{eq:epi8}  \underbrace{xa(xa)'}xay(axay)'axay\qquad
\text{($(S,\cdot,{}')$ is completely regular)}, \\
        & \by{eq:epi2}  (xa)'x\underbrace{axay(axay)'axay} \\
        & \by{eq:epi8}  \underbrace{(xa)'xa}xay\qquad
\text{($(S,\cdot,{}')$ is completely regular)}, \\
	& \by{eq:epi2} \underbrace{xa(xa)'xa}y \\
	& \by{eq:epi8} xay=x\circ y   \qquad
\text{($(S,\cdot,{}')$ is completely regular)}. \qedhere
\end{align*}
\end{proof}

\begin{cor}
\label{tcr}
The relation $\cp$ is transitive in every variant of a completely regular semigroup.
\end{cor}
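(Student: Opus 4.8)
The plan is to chain together the two main results just established, since the corollary is an immediate consequence of them. Throughout, I would be careful to note that the statement concerns $\cp$ computed with respect to the \emph{variant} operation $\circ$ on $S$; that is, ``$\cp$'' here refers to the relation on the unary semigroup $(S,\circ,{}^*)$ obtained by forming the variant, not to $\cp$ on the original completely regular semigroup $(S,\cdot,{}')$.

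The first step is to invoke Theorem~\ref{thm:variants}: given any completely regular semigroup $(S,\cdot,{}')$ and any fixed $a\in S$, the variant $(S,\circ,{}^*)$ at $a$ lies in the variety $\mathcal{W}$. This is exactly the content of that theorem, so no additional computation is required at this stage. The second step is to observe, via Proposition~\ref{prp:E1_W_E2}, that every semigroup in $\mathcal{W}$ is an epigroup. Consequently $(S,\circ,{}^*)$ is an epigroup lying in $\mathcal{W}$, which is precisely the hypothesis of Theorem~\ref{thm:W}.

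Finally, applying Theorem~\ref{thm:W} to $(S,\circ,{}^*)$ yields $\cp\ =\ \cp^*$ in the variant. Since $\cp^*$ is by definition the transitive closure of $\cp$, the equality $\cp\ =\ \cp^*$ is equivalent to the transitivity of $\cp$, which is the desired conclusion.

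As for the main obstacle, there really is none of substance at this point: all the difficulty has already been absorbed into Theorem~\ref{thm:variants} (the identity-by-identity verification that the variant satisfies \eqref{eq:epi8} on $S\circ S$ and hence lands in $\mathcal{W}$) and into Theorem~\ref{thm:W} (the transitivity of $\cp$ for epigroups in $\mathcal{W}$, which in turn rests on Corollary~\ref{old4.6}). The only point that warrants a moment's attention is ensuring that $\cp$ is interpreted in the variant structure $(S,\circ)^1$ rather than in $(S,\cdot)^1$, but once the notation is fixed this is automatic, and the corollary follows in a single line.
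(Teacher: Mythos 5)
Your proposal is correct and matches the paper's own (implicit) argument exactly: the corollary is stated as an immediate consequence of Theorem~\ref{thm:variants} (the variant lies in $\mathcal{W}$) combined with Theorem~\ref{thm:W} ($\cp\ =\ \cp^*$ for epigroups in $\mathcal{W}$), with Proposition~\ref{prp:E1_W_E2} supplying the epigroup hypothesis. Your added remark that $\cp$ must be read in the variant structure $(S,\circ)$ is a sensible clarification but does not change the argument.
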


From the preceding result, it is natural to conjecture that if $p$-conjugacy is transitive
in some epigroup, then perhaps the relation is transitive in all of the epigroup's
variants. The following example shows this is not true even for regular epigroups from
$\mathcal{E}_2$.

\begin{example}
\label{4.14}
Let $S$ be the following semigroup, which is both regular and in
$\mathcal{E}_2$:
\[
\begin{array}{r|cccccc}
\cdot & 0 & 1 & 2 & 3 & 4 \\
\hline
    0 & 0 & 0 & 0 & 0 & 0  \\
    1 & 0 & 0 & 0 & 1 & 2  \\
    2 & 0 & 1 & 2 & 1 & 2  \\
    3 & 0 & 0 & 0 & 3 & 4  \\
    4 & 0 & 3 & 4 & 3 & 4
\end{array}
\]
Let $T$ be the variant of $S$ at $1$:
\[
\begin{array}{r|cccccc}
\circ & 0 & 1 & 2 & 3 & 4 \\
\hline
    0 & 0 & 0 & 0 & 0 & 0  \\
    1 & 0 & 0 & 0 & 0 & 0  \\
    2 & 0 & 0 & 0 & 1 & 2  \\
    3 & 0 & 0 & 0 & 0 & 0  \\
    4 & 0 & 0 & 0 & 3 & 4
\end{array}
\]
In $S$, $p$-conjugacy is an equivalence relation that induces the partition
$\{\{0,1\},\{2,3,4\}\}$. However, in $T$, $p$-conjugacy is not transitive
because $2\cp0$ and $0\cp1$, but $(2,1)\notin\,\,\cp$.
\end{example}

Next we will consider epigroups in which all notions of conjugacy on the right side of
Figure~\ref{fig0} coincide. An obvious necessary condition is that $\cp^*\ =\
\cp$, that is,
that $\cp$ must be transitive. Another necessary condition follows from just the assumed
equality of $\ctr$ and $\co$.

\begin{prop}
\label{lem:epi-oc}
Let $S$ be an epigroup in which $\ctr\ =\ \co$. Then $E(S)$ is an antichain.
\end{prop}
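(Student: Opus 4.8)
The plan is to reduce the statement to Lemma~\ref{lem:epi-ef}, which already does the substantive work: it asserts that $e\leq f$ together with $e\ctr f$ forces $e=f$. Since we are given the hypothesis $\ctr\ =\ \co$, all that remains is to show that \emph{comparability of idempotents always implies $o$-conjugacy}. Recall that $E(S)$ being an antichain means precisely that whenever $e,f\in E(S)$ satisfy $e\leq f$, we must have $e=f$. So I would fix $e,f\in E(S)$ with $e\leq f$ (that is, $ef=fe=e$) and aim to deduce $e=f$.

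First I would verify the elementary fact that $e\co f$. By the definition \eqref{econ3} of $o$-conjugacy, it suffices to produce $g,h\in S^1$ with $eg=gf$ and $fh=he$. The natural choice is $g=h=e$, using $e$ itself as the conjugator. Indeed, $eg=ee=e$ and $gf=ef=e$, so $eg=gf$; dually $fh=fe=e$ and $he=ee=e$, so $fh=he$. Hence $e\co f$. This step is the only computation, and it is immediate from $e\leq f$; there is no real obstacle here, since the order relation $ef=fe=e$ hands us exactly the two conjugation equations we need.

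Next I would invoke the hypothesis: because $\ctr\ =\ \co$ in $S$, the relation $e\co f$ yields $e\ctr f$. Combining $e\leq f$ with $e\ctr f$, Lemma~\ref{lem:epi-ef} applies directly and gives $e=f$. Since $e$ and $f$ were arbitrary comparable idempotents, this shows that no two \emph{distinct} idempotents of $S$ are comparable under $\leq$, i.e.\ $E(S)$ is an antichain, completing the proof.

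The argument is short precisely because the hard content has been isolated into Lemma~\ref{lem:epi-ef}; the contribution of this proposition is the (trivial but essential) observation that order-comparability of idempotents is a special case of $o$-conjugacy, so that the collapse $\ctr\ =\ \co$ transfers comparability into trace conjugacy where the lemma can bite. If any care is needed, it is only in confirming that the conjugators $g=h=e$ indeed lie in $S^1$ (they do, as $e\in S$) and that we are using the correct orientation of the defining equations in \eqref{econ3}; neither presents a genuine difficulty.
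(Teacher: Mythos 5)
Your proof is correct and matches the paper's own argument essentially verbatim: both take comparable idempotents $e\leq f$, witness $e\co f$ with the conjugators $g=h=e$, use the hypothesis $\ctr\ =\ \co$ to upgrade this to $e\ctr f$, and then conclude $e=f$ from Lemma~\ref{lem:epi-ef}. Nothing further is needed.
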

\begin{proof}
Suppose $e,f\in E(S)$ satisfy $e \leq f$. Setting $g=h=e$, we have
$eg = ee = e = ef = gf$ and $fh = fe = e = ee = he$. Thus $e \co f$.
Since $\ctr\ =\ \co$, we have $e\ctr f$, and so $e=f$ by Lemma
\ref{lem:epi-ef}. It follows that $E(S)$ is an antichain.
\end{proof}

A natural class of semigroups in which $\cp$ is transitive and idempotents form an antichain
is the class of completely simple semigroups. A semigroup $S$ is \emph{simple} if it has
no proper ideals \cite[p.~66]{Ho95}. A simple semigroup $S$ is called \emph{completely simple}
if it has a primitive idempotent (that is, an idempotent that is minimal with respect to
the partial order $\leq$) \cite[p.~77]{Ho95}. This turns out to be equivalent to
\emph{every} idempotent in $S$ being primitive, that is, the idempotents in $S$ forming an antichain.

A completely simple semigroup can be identified with its Rees matrix 
representation
$\mathcal{M}(G;I,J;P)$, with elements from $I\times G\times \Lambda$, where 
$I$ and $\Lambda$ are nonempty sets, $G$ is a group,
and multiplication is defined by
\begin{equation}
\label{eree}
(i,a,\lam)(j,b,\mu)=(i,ap_{\lam j}b,\mu),
\end{equation}
where $P=(p_{\lam j})$ is a $\Lambda\times I$ matrix with entries in $G$
\cite[Theorem~3.3.1]{Ho95}. From this characterization, it is clear that every element of
a completely simple semigroup is contained in a subgroup, that is, completely simple semigroups
are completely regular.

\begin{theorem}
\label{pcs}
In completely simple semigroups, we have $\cp\ =\ \cp^*\ =\ \ctr\ =\ \co$.
\end{theorem}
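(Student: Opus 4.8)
The plan is to reduce the whole theorem to the single inclusion $\co\,\,\subseteq\,\,\cp$, since the rest of the chain is already available. A completely simple semigroup is completely regular (as the text already notes, and as is visible from the Rees coordinates, where every element lies in the group $\gh$-class determined by its outer coordinates), so $S=\Epi(S)=\Epi_1(S)$. Hence Corollary~\ref{old4.6} gives $\cp\,\,=\,\,\cp^*\,\,=\,\,\ctr$ on all of $S$, and Theorem~\ref{thm:steinbergInclusion} gives $\ctr\,\,\subseteq\,\,\co$. Thus once I establish $\co\,\,\subseteq\,\,\cp$, the four relations collapse to a single one.

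To prove $\co\,\,\subseteq\,\,\cp$, I would pass to the Rees matrix representation $S=\mathcal{M}(G;I,\Lambda;P)$ with multiplication \eqref{eree}, and write $x=(i,a,\lambda)$ and $y=(j,b,\mu)$. Suppose $x\co y$; in particular there is $g\in S^1$ with $xg=gy$. If $g=1$ then $x=y$, and trivially $x\cp y$, so assume $g=(k,c,\nu)\in S$. Computing $xg=(i,\,ap_{\lambda k}c,\,\nu)$ and $gy=(k,\,cp_{\nu j}b,\,\mu)$ and comparing the outer coordinates forces $k=i$ and $\nu=\mu$, so that $g=(i,c,\mu)$; comparing the middle coordinates then yields the single group equation $a\,p_{\lambda i}\,c=c\,p_{\mu j}\,b$.

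The key step is to read $p$-conjugacy witnesses directly off this equation. I would set $u=(i,c,\mu)$ and $v=(j,d,\lambda)$ with $d:=p_{\mu j}^{-1}c^{-1}a$. A short computation with \eqref{eree} gives $uv=(i,\,c\,p_{\mu j}\,d,\,\lambda)=(i,a,\lambda)=x$, while $vu=(j,\,d\,p_{\lambda i}\,c,\,\mu)$; substituting $d$ and then applying $a\,p_{\lambda i}\,c=c\,p_{\mu j}\,b$ shows the middle coordinate of $vu$ equals $b$, so $vu=y$. Hence $x=uv$ and $y=vu$, i.e.\ $x\cp y$. This establishes $\co\,\,\subseteq\,\,\cp$ and closes the loop, giving $\cp\,\,=\,\,\cp^*\,\,=\,\,\ctr\,\,=\,\,\co$.

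I do not expect a genuine obstacle here. The only mild subtlety worth flagging is that merely the first half $xg=gy$ of the definition of $\co$ already delivers the group equation that powers the construction, so the full symmetric definition of $\co$ is not actually needed for the inclusion. Everything else is coordinate bookkeeping in the Rees representation, for which no normalization of the sandwich matrix $P$ is required.
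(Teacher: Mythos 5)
Your proposal is correct and follows essentially the same route as the paper: reduce everything to the single inclusion $\co\ \subseteq\ \cp$ via the chain $\cp\ \subseteq\ \cp^*\ \subseteq\ \ctr\ \subseteq\ \co$, then construct explicit $p$-conjugacy witnesses in Rees coordinates using inverses in $G$ and the sandwich entries. The only (cosmetic) difference is that the paper builds its witnesses $\bigl((dp_{\lam i})\inv b$ and $d\bigr)$ from the second conjugation equation $bp_{\mu j}d=dp_{\lam i}a$, whereas you build yours from the first equation $ap_{\lam i}c=cp_{\mu j}b$, which is also why you can note that half of the definition of $\co$ suffices.
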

\begin{proof}
By Theorem~\ref{thm:steinbergInclusion}, it suffices to prove that $\co\ \subseteq\ \cp$.
To do this, we identify $S$ with its Rees
matrix representation $S=\mathcal{M}(G;I,J;P)$. Let $(i,a,\lam),(j,b,\mu)\in 
S$ and suppose
$(i,a,\lam)\co(j,b,\mu)$. Then, by \eqref{eree}, there exist $(i,c,\mu),(j,d,\lam)\in S$ such that
\[
(i,a,\lam)(i,c,\mu)=(i,c,\mu)(j,b,\mu)\,\,\mbox{ and
}\,\,(j,b,\mu)(j,d,\lam)=(j,d,\lam)(i,a,\lam),
\]
which implies
\begin{equation}
\label{jana}
ap_{\lam i}c=cp_{\mu j}b\,\,\mbox{ and }\,\,bp_{\mu j}d=dp_{\lam i}a.
\end{equation}
Consider $x = (d p_{\lam i})\inv b$ and $y = d$. Then, by \eqref{jana},
\begin{alignat*}{2}
(i,x,\mu)(j,y,\lam)&=(i,xp_{\mu j}y,\lam)=(i,(d p_{\lam i})\inv bp_{\mu
j}d,\lam)=(i,(d p_{\lam i})\inv dp_{\lam i}a,\lam)=(i,a,\lam),\\
(j,y,\lam)(i,x,\mu)&=(j,yp_{\lam i}x,\mu)=(j,dp_{\lam i}(d p_{\lam i})\inv
b,\mu)=(j,b,\mu),
\end{alignat*}
which implies $(i,a,\lam)\cp (j,b,\mu)$.
\end{proof}

\begin{theorem}
\label{main7}
Let $S$ be a regular epigroup without zero. The following are equivalent:
\begin{enumerate}[label=\textup{(\arabic*)}]
\item $\cp\,\,=\,\,\co$ in $S$;
\item $S$ is completely simple.
\end{enumerate}
\end{theorem}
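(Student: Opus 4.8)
The plan is to treat the two implications separately. The implication $(2)\Rightarrow(1)$ is already in hand: Theorem~\ref{pcs} asserts that $\cp\,=\,\cp^*\,=\,\ctr\,=\,\co$ in every completely simple semigroup, so in particular $\cp\,=\,\co$. Thus all the work lies in $(1)\Rightarrow(2)$, and the strategy there is to reduce the hypothesis $\cp\,=\,\co$ to a purely structural statement about idempotents, and then to recognize the resulting semigroup as completely simple. Concretely, I would aim to prove the following: a regular epigroup with no zero whose idempotents form an antichain is completely simple.

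First I would extract the antichain condition. Since $S$ is an epigroup, every element of $S$ is an epigroup element, so Theorem~\ref{thm:steinbergInclusion} gives the chain $\cp\,\subseteq\,\cp^*\,\subseteq\,\ctr\,\subseteq\,\co$ on all of $S$. The hypothesis $\cp\,=\,\co$ therefore forces equality throughout, and in particular $\ctr\,=\,\co$. Proposition~\ref{lem:epi-oc} then applies and yields that $E(S)$ is an antichain; equivalently, every idempotent of $S$ is primitive.

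The heart of the argument is to deduce that $S$ is simple, and I would argue this by contradiction. Assume $S$ is not simple and fix a proper nonempty ideal $I\subsetneq S$. Using regularity, $I$ contains an idempotent: if $x\in I$ and $xx'x=x$, then $e:=xx'\in E(S)\cap I$. Likewise, picking $y\in S\setminus I$ with $yy'y=y$, the idempotent $f:=yy'$ cannot lie in $I$, since $f\in I$ would give $y=fy\in I$. Now set $w=fef$. As $I$ is an ideal and $e\in I$, we have $w\in I$; as $f$ is idempotent, $fw=w=wf$, and hence $fw^n=w^n=w^nf$ for all $n$. Because $S$ is an epigroup, $w$ has an associated idempotent $w^{\omega}=w^n(w')^n$ for a suitable $n$, so that $e_0:=w^{\omega}\in E(S)\cap I$ (the membership in $I$ follows since $w^n\in I$). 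Moreover $fe_0=fw^n(w')^n=w^n(w')^n=e_0$ and, symmetrically, $e_0f=e_0$, which is precisely $e_0\le f$. Since $e_0\in I$ while $f\notin I$, we have $e_0\ne f$, contradicting the fact that $E(S)$ is an antichain. Hence $S$ is simple. Finally, $S$ is a nonempty epigroup, so $E(S)\ne\emptyset$ and every idempotent is primitive; a simple semigroup possessing a primitive idempotent is, by definition, completely simple, which completes the proof.

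I expect the simplicity step to be the main obstacle: converting the combinatorial antichain condition into genuine simplicity requires manufacturing a comparable pair of idempotents straddling a hypothetical proper ideal, and this is exactly where both regularity (to locate idempotents inside and outside $I$) and the epigroup hypothesis (to pass from $w$ to its idempotent $w^{\omega}\in I$) are essential. Everything else---the squeezing of the conjugacy relations down to $\ctr\,=\,\co$, and the final recognition of completely simple semigroups---is routine given the results already established.
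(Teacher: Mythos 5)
Your proof is correct, and its first half is exactly the paper's: the direction (2)$\implies$(1) via Theorem~\ref{pcs}, and the reduction of $\cp\,=\,\co$ to $\ctr\,=\,\co$ (using the chain of inclusions of Theorem~\ref{thm:steinbergInclusion}) followed by Proposition~\ref{lem:epi-oc} to conclude that $E(S)$ is an antichain. Where you genuinely diverge is the final step. The paper finishes in one line by citing the classical structure theorem \cite[Thm.~3.3.3]{Ho95}: a regular semigroup without zero all of whose idempotents are primitive is completely simple. You instead prove the special case needed here from scratch: assuming a proper ideal $I$, regularity yields idempotents $e\in I$ and $f\notin I$, and then --- this is where the epigroup hypothesis earns its keep --- you pass from $w=fef\in I$ to the idempotent $w^{\omega}=w^{n}(w')^{n}=(w')^{n}w^{n}\in I$, which satisfies $w^{\omega}\le f$ and $w^{\omega}\ne f$, contradicting the antichain property; simplicity together with the existence of a primitive idempotent then gives complete simplicity by definition. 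Both routes are sound: the paper's is shorter and invokes a theorem valid for arbitrary regular semigroups, while yours is self-contained and shows that, for epigroups, Howie's structure theorem can be replaced by an elementary ideal argument, the map $w\mapsto w^{\omega}$ supplying inside the ideal the comparable idempotent that the general regular case must work harder to produce. One cosmetic point: you use $x'$ both for an inverse of a regular element (in $xx'x=x$) and for the epigroup pseudo-inverse (in $w^{\omega}=w^{n}(w')^{n}$); in the paper's notation these differ, since for a regular but not completely regular $x$ one has $xx'x=x''\ne x$, so it would be cleaner to write the regularity witnesses as, say, $xbx=x$ with $e=xb$.
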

\begin{proof}
Suppose $\co\,\,=\,\,\cp$ in $S$.
Since $S$ is an epigroup, we also have $\ctr\,=\,\co$, and thus $E(S)$ is an antichain
by Proposition \ref{lem:epi-oc}, that is, every idempotent in $S$ is primitive.
Since $S$ is also regular, we conclude that $S$ is completely simple \cite[Thm.~3.3.3]{Ho95}.

The converse follows from Theorem~\ref{pcs}.
\end{proof}

\begin{theorem}
\label{main6}
Let $S$ be an epigroup in $\mathcal{W}$ without zero. The following are
equivalent:
\begin{enumerate}[label=\textup{(\arabic*)}]
\item $\cp\,\,=\,\,\co$ in $S$;
\item $S$ is completely simple.
\end{enumerate}
\end{theorem}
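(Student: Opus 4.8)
The plan is to mirror the proof of the preceding Theorem~\ref{main7}, with the r\^ole played there by regularity now taken over by membership in $\mathcal{W}$. The implication (2)$\implies$(1) should be immediate: a completely simple semigroup is completely regular, so Theorem~\ref{pcs} applies and yields $\cp\,=\,\cp^*\,=\,\ctr\,=\,\co$, in particular $\cp\,=\,\co$. (Note this is consistent with the standing hypothesis, since completely regular semigroups lie in $\mathcal{W}$ by Proposition~\ref{prp:E1_W_E2}.)

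For (1)$\implies$(2), I would assume $\cp\,=\,\co$ and first reproduce the opening move of Theorem~\ref{main7}. Since $S\in\mathcal{W}$ is an epigroup (Proposition~\ref{prp:E1_W_E2}), Theorem~\ref{thm:steinbergInclusion} gives $\cp\,\subseteq\,\ctr\,\subseteq\,\co$; together with $\cp\,=\,\co$ this collapses to $\cp\,=\,\ctr\,=\,\co$. From $\ctr\,=\,\co$, Proposition~\ref{lem:epi-oc} tells us that $E(S)$ is an antichain.

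The decisive step, and the one place where the $\mathcal{W}$ hypothesis must substitute for regularity, is to upgrade this to: $S$ is completely regular. Here I would exploit that $a\ctr a''$ holds for every $a$ by Theorem~\ref{thm:steinbergExtensionEpigroups}(2), hence $a\cp a''$ since $\cp\,=\,\ctr$. Writing this as $a=uv$, $a''=vu$ with $u,v\in S^1$, a short case analysis finishes the job: if $u=1$ or $v=1$ then $a=a''$ at once, while if both $u,v\in S$ then $a=uv\in S^2\subseteq\Epi_1(S)$ is completely regular, so again $a=a''$. Thus $a=a''$ for all $a$, i.e.\ $S=\Epi_1(S)$ is completely regular, in particular regular. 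With $S$ regular and $E(S)$ an antichain, I would conclude that $S$ is completely simple exactly as in Theorem~\ref{main7}, by \cite[Thm.~3.3.3]{Ho95}.

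I expect the only genuine obstacle to be this middle step, the passage to complete regularity. The tempting shortcut---that $\cp\,=\,\co$ alone forces each $a$ to factor as a product of two elements of $S$---is blocked by the possibility that one $\cp$-witness is the adjoined identity $1$; the clean resolution is that this boundary case delivers $a=a''$ directly, whereas the interior case deposits $a$ in the completely regular set $S^2$, so the two cases jointly give $a=a''$ with no extra hypotheses. Once complete regularity is in hand, the reduction to \cite[Thm.~3.3.3]{Ho95} is routine and the argument runs parallel to Theorem~\ref{main7}.
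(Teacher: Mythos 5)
Your proposal is correct and takes essentially the same route as the paper's proof: Theorem~\ref{pcs} gives (2)$\implies$(1), and for (1)$\implies$(2) you obtain the antichain property from Proposition~\ref{lem:epi-oc}, deduce complete regularity from $x\cp x''$ together with the $\mathcal{W}$ hypothesis, and finish with \cite[Thm.~3.3.3]{Ho95}, exactly as in the paper. Your explicit case split on whether a $\cp$-witness is the adjoined identity $1$ is a boundary detail the paper passes over silently (it applies the identity $(xy)''=xy$ directly to the factorization), so your treatment is, if anything, slightly more careful while remaining the same argument.
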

\begin{proof}
Suppose $\co\,\,=\,\,\cp$ in $S$. Arguing as in the preceding proof, we have
that every idempotent in $S$ is primitive.
Next,  since $\co\,\,=\,\,\cp$,  we have $x\cp x''$, by
Theorem~\ref{thm:steinbergExtensionEpigroups} and
Theorem~\ref{thm:steinbergInclusion}. Hence there exist $u,v\in S^1$
such that $x'' = uv$ and $x = vu$. But then $x'' = (vu)'' = vu = x$, using
\eqref{eq:xy''} (since $S$ is in $\mathcal{W}$).
Therefore $x$ is completely regular. It follows that $S$ is completely regular.
Finally, since $S$ is completely regular and every idempotent is primitive in
$S$, it follows that $S$
is completely simple \cite[Thm.~3.3.3]{Ho95}.

The converse follows from Theorem~\ref{pcs}.
\end{proof}

We now give two examples of \emph{inverse epigroups} (epigroups that are
also inverse semigroups) to illustrate some possible relations between the conjugacies in the variety $\mathcal{E}_2$.

\begin{example}
\label{ex:E2_a}
In a semigroup from the epigroup variety $\mathcal{E}_2$, we can have
$\cp\ \subset\ \cp^*\ =\ \ctr\ \subset\ \con\ =\ \co$, where the inclusions are
strict. (In particular, $\cp$ need not be transitive in a semigroup from $\mathcal{E}_2$.)
Consider, for example, the inverse semigroup $S$ given by the following
multiplication table.
\[
\begin{array}{r|cccccc}
\cdot & 0 & 1 & 2 & 3 & 4 & 5\\
\hline
    0 & 0 & 0 & 0 & 3 & 3 & 3 \\
    1 & 0 & 1 & 0 & 3 & 4 & 3 \\
    2 & 0 & 0 & 2 & 3 & 3 & 5 \\
    3 & 3 & 3 & 3 & 0 & 0 & 0 \\
    4 & 3 & 3 & 4 & 0 & 0 & 1 \\
    5 & 3 & 5 & 3 & 0 & 2 & 0
\end{array}
\]
This is an $E$-unitary inverse semigroup.
(An inverse semigroup $S$ is \emph{$E$-unitary} if for all $e,a\in S$, if $e$
and $ea$ are idempotents, then $a$ is an idempotent.)
This semigroup is in $\mathcal{E}_2$ since every entry on the main diagonal of the
table is an idempotent, but it is not Clifford (that is both completely regular and inverse),
not even in $\mathcal{W}$, which
can be checked directly, but also
follows because $p$-conjugacy in $S$ is not transitive. Indeed,
we have $4\cp 3$ (since $4 = 1\cdot 4$ and $3=4\cdot 1$) and $3\cp 5$ (since
$3=1\cdot5$ and $5=5\cdot 1$), but there are no
$x,y$ such that $4=xy$ and $5=yx$. It is straightforward to check that
$\cp$ is the symmetric and reflexive closure of $\{(1,2),(3,4),(3,5)\}$, that
$\cp^*\ =\ \ctr$, and that $\con\ =\ \co$ has equivalence classes $\{0,1,2\}$ and
$\{3,4,5\}$. Thus we have the claimed strict inclusions.
\end{example}

\begin{example}
\label{ex:E2_b}
There are epigroups in $\mathcal{E}_2$ but not $\mathcal{W}$ in which
$p$-conjugacy is transitive.
Consider, for example, the following inverse semigroup $S$,
which is an ideal extension of the group $\{1,a\}$ by the Brandt semigroup
$\{0,b,c,e,f\}$  \cite[p.~152]{Ho95}:
\[
\begin{array}{c|ccccccc}
\cdot & 1 & a & 0 & b & c & e & f\\
\hline
    1 & 1 & a & 0 & b & c & e & f \\
    a & a & 1 & 0 & e & f & b & c \\
    0 & 0 & 0 & 0 & 0 & 0 & 0 & 0 \\
    b & b & f & 0 & 0 & f & b & 0 \\
    c & c & e & 0 & e & 0 & 0 & c \\
    e & e & c & 0 & 0 & c & e & 0 \\
    f & f & b & 0 & b & 0 & 0 & f
\end{array}
\]
The semigroup $S$ is an $E^*$-unitary inverse monoid.
(An inverse semigroup $S$ with zero is \emph{$E^*$-unitary} if for all $e,a\in
S$, if $e$ and $ea$ are nonzero idempotents, then $a$ is an idempotent.)
Again, $S$ is in $\mathcal{E}_2$ since
every entry on the main diagonal of the table is an idempotent, but it
is not Clifford because neither $b$ nor $c$ are completely regular, not even in
$\mathcal{W}$ because, for instance, $a\cdot e = b$.

However, this time, $\cp$ is an equivalence relation, with the equivalence classes
$\{1\}$, $\{a\}$, $\{0,b,c\}$, and $\{e,f\}$. Also $\cp\ =\ \ctr$.
This semigroup, incidentally,
is the smallest example of an inverse semigroup that is not completely regular
but in which $p$-conjugacy is transitive.
Note that $\con$ has equivalence classes  $\{1\}$, $\{a\}$, $\{0\}$, $\{b,c\}$,
and $\{e,f\}$, and therefore $\con\ \subset \ \cp$.
\end{example}

Let us now turn our attention to semigroups with zero.
A semigroup $S$ with zero is $0$-\emph{simple} if $S^2\ne\{0\}$ and $\{0\}$
and $S$ are the only ideals of $S$ \cite[p.~66]{Ho95}.
A $0$-simple semigroup $S$ is called \emph{completely $0$-simple} if it
contains a primitive idempotent \cite[p.~70]{Ho95}.
A completely $0$-simple semigroup $S$ can be identified with its Rees matrix
representation
$\mathcal{M}^0(G;I,\Lambda;P)$, with elements from $(I\times 
G\times\Lambda)\cup\{0\}$, where $I$ and $\Lambda$ are nonempty
sets, $G$ is a group, and multiplication is defined by
$(i,a,\lam)(j,b,\mu)=(i,ap_{\lam j}b,\mu)$ if $p_{\lam j}\ne 0$,
$(i,a,\lam)(j,b,\mu)=0$ if $p_{\lam j}=0$,
and $(i,a,\lam)0=0(i,a,\lam)=0$,
where $P=(p_{\lam j})$ is a $\Lambda\times I$ matrix with entries in
$G\cup\{0\}$ such that no row or column of $P$ consists entirely of zeros
\cite[Theorem~3.2.3]{Ho95}.

Theorem~\ref{main6} does not remain true if $\co$ is replaced with $\con$ and
``completely simple''
with ``completely $0$-simple.'' Indeed, suppose that in the matrix $P$, we have
$p_{\lam j}\ne0$ and $p_{\mu i}=0$. Let $a,b\in G$.
Then $(i,a,\lam)(j,b,\mu)=(i,ap_{\lam j}b,\mu)\ne0$ and $(j,b,\mu)(i,a,\lam)=0$.
Thus $(i,ap_{\lam j}b,\mu)\cp0$, while $(i,ap_{\lam j}b,\mu)$ and $0$ are not
$\con$-related
since in every semigroup with zero, the $c$-conjugacy  class of $0$ is $\{0\}$
\cite[Lemma~2.3]{AKM14}. Hence
$\con\,\,\ne\,\,\cp$ in completely $0$-simple semigroups.

We have, however, the following results.

\begin{prop}
\label{pcs0}
 For a completely $0$-simple semigroup $\mathcal{M}^0(G;I,\Lambda;P)$,
we have $\con\ \subseteq\ \cp$.
Moreover, $\con\ =\ \cp$ if and only if the sandwich matrix $P$ has only
nonzero elements.
\end{prop}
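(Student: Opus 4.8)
The plan is to argue entirely inside the Rees matrix representation $S=\mathcal{M}^0(G;I,\Lambda;P)$, reducing everything to coordinate arithmetic as in the completely simple case (Theorem~\ref{pcs}). For the inclusion $\con\,\subseteq\,\cp$, suppose $x\con y$. If $x=0$ then $y=0$, since the $\con$-class of $0$ is $\{0\}$ \cite[Lemma~2.3]{AKM14}; the same fact, with symmetry of $\con$, shows that $x\neq0$ forces $y\neq0$. So I may assume $x=(i,a,\lam)$ and $y=(j,b,\mu)$ are nonzero, and also that $x\neq y$. By the definition of $\con$ there is an $h\in\pp^1(y)$ with $yh=hx$; since $h=1$ would give $x=y$, in fact $h\in\pp(y)$.

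The decisive step is to extract the form of $h$ from the multiplication rule. Because $h\in\pp(y)$ and $y\in S^1y\setminus\{0\}$, the product $yh$ is nonzero, and comparing the three coordinates of $yh$ and $hx$ then forces $h=(j,d,\lam)$ for some $d\in G$, together with the non-vanishing conditions $p_{\mu j}\neq0$ and $p_{\lam i}\neq0$ and the group identity $bp_{\mu j}d=dp_{\lam i}a$. With these in hand I set $u=(i,(dp_{\lam i})\inv b,\mu)$ and $v=(j,d,\lam)$; a short computation using the displayed identity and $p_{\mu j},p_{\lam i}\neq0$ yields $uv=(i,a,\lam)=x$ and $vu=(j,b,\mu)=y$, so $x\cp y$. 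This is exactly the witness construction of Theorem~\ref{pcs}, and the only genuine point is that membership of $h$ in $\pp(y)$ is what supplies the two nonzero sandwich entries needed to run it.

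For the ``moreover'', I treat the two directions separately. If every entry of $P$ is nonzero, then a product of nonzero elements is never zero, so $\pp(x)=S\setminus\{0\}$ for each nonzero $x$. Given $x\cp y$, say $x=uv$ and $y=vu$, the cases $x=0$, $u=1$, and $v=1$ are immediate; otherwise $u,v$ are nonzero, hence $u\in\pp(x)$ and $v\in\pp(y)$, and the identities $xu=uvu=uy$ and $yv=vuv=vx$ present $u$ and $v$ as the conjugators required for $x\con y$. This gives $\cp\,\subseteq\,\con$, so $\con\,=\,\cp$ by the first part. Conversely, if $P$ has a zero entry $p_{\mu i}=0$, I pick any nonzero entry $p_{\lam j}$ (one exists because no row or column of $P$ vanishes) and reuse the computation preceding the proposition: with $u=(i,a,\lam)$ and $v=(j,b,\mu)$ one has $uv\neq0$ but $vu=0$, so the nonzero element $uv$ is $\cp$-related to $0$ while, by the description of the $\con$-class of $0$, it cannot be $\con$-related to $0$. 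Hence $\con\neq\cp$.

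The hard part is the coordinate analysis of the middle paragraph: showing that a conjugator $h$ taken from $\pp(y)$ has its outer indices locked to $(j,\cdot,\lam)$ and forces $p_{\mu j},p_{\lam i}\neq0$. Once this is secured, the remainder is routine arithmetic in $G$ together with bookkeeping of the three Rees coordinates, and the non-vanishing of the relevant sandwich entries is precisely what lets the completely simple witness construction survive the passage to the $0$-simple setting.
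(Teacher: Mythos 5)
Your proof is correct, and its core follows the paper's own route: the coordinate analysis pinning the conjugator down to the form $(j,d,\lam)$ with $p_{\mu j},p_{\lam i}\neq 0$ and $bp_{\mu j}d=dp_{\lam i}a$, followed by the witness $u=(i,(dp_{\lam i})\inv b,\mu)$, $v=(j,d,\lam)$, is exactly the argument of Theorem~\ref{pcs} that the paper invokes (the paper merely asserts the form of the conjugators; you derive it, and you also note, correctly, that only the conjugator in $\pp^1(b)$ is actually needed). The necessity of a zero-free sandwich matrix is likewise the paper's argument: a zero entry together with a nonzero entry yields a nonzero element that is $\cp$-related to $0$, which is impossible for $\con$ since the $\con$-class of $0$ is $\{0\}$. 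The one place you genuinely diverge is the sufficiency direction. The paper observes that $S\cong T^0$ for the completely simple semigroup $T=\mathcal{M}(G;I,\Lambda;P)$, quotes $\cp^T=\co^T$ from Theorem~\ref{pcs}, and identifies $\con^S$ with $\{(0,0)\}\cup\co^T$; you instead verify $\cp\,\subseteq\,\con$ directly: the absence of zero divisors gives $\pp(x)=S\setminus\{0\}$ for nonzero $x$, so for $x=uv$, $y=vu$ the factors $u,v$ themselves serve as $c$-conjugators via $xu=uy$ and $yv=vx$. Your version is more self-contained, avoiding the paper's tersely justified identification of $\con^S$ on nonzero elements with $\co^T$, while the paper's reduction buys immediate reuse of Theorem~\ref{pcs} with no further computation.
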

\begin{proof}
Let $(i,a,\lambda),(j,b,\mu)$ be non zero elements of 
$S=\mathcal{M}^0(G;I,\Lambda;P)$ such that
$(i,a,\lambda)\con (j,b,\mu)$. By (\ref{e1dcon}) there exist  nonzero elements
$(i,c,\mu),(j,d,\lam)$ with $p_{\lam i},\neq 0$, $p_{\mu j}\neq 0$ such that
\[
(i,a,\lam)(i,c,\mu)=(i,c,\mu)(j,b,\mu)\,\,\mbox{ and
}\,\,(j,b,\mu)(j,d,\lam)=(j,d,\lam)(i,a,\lam).
\] Using the
same arguments as in the proof of Theorem~\ref{pcs}, we obtain
$(i,a,\lam)\cp(j,b,\mu)$.

Now, suppose first that $\con\ = \ \cp$. By the argument showing that $\con\ \ne\ \cp$ in completely $0$-simple
semigroups (see the paragraph above this proposition),
we can conclude that whenever
$p_{\mu i}=0$, for some $i\in I$ and $\mu\in \Lambda$, then $p_{\lambda j}=0$,
for all $j\in I$ and $\lambda \in \Lambda$.

Conversely, suppose that the sandwich matrix $P$ has only nonzero elements.
Then $S$ is isomorphic to $T^0$ where $T$ is the completely simple semigroup
$\mathcal{M}(G;I,\Lambda;P)$. But then, by Theorem~\ref{pcs}, $\cp^T\ = \ \co^T$ in
$T$. Since $S$ has no zero divisors we have in  $\con^S\ = \{(0,0)\}\ \cup \co^T$
and $\{0\}$ is one of the $p$-conjugacy classes of $S$. Therefore,
$\con^S\ = \{(0,0)\}\ \cup \cp^T\ = \ \cp^S$.
\end{proof}

\begin{lemma}
\label{lem:c_anti}
Let $S$ be an epigroup with zero and suppose $\con\ \subseteq\ \ctr$.
Then $E(S)\backslash \{0\}$ is an antichain.
\end{lemma}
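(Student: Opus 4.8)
The plan is to mirror the proof of Proposition~\ref{lem:epi-oc}, where the analogous antichain statement was obtained under the hypothesis $\ctr\ =\ \co$. It suffices to show that any two comparable nonzero idempotents coincide, so I take $e,f\in E(S)\setminus\{0\}$ with $e\leq f$ and aim to prove $e=f$. The intended chain of implications is: first establish $e\con f$; then the standing hypothesis $\con\ \subseteq\ \ctr$ upgrades this to $e\ctr f$; and finally Lemma~\ref{lem:epi-ef}, applied to $e\leq f$ together with $e\ctr f$, forces $e=f$. The whole difficulty is therefore concentrated in producing the $c$-conjugacy $e\con f$ with conjugators drawn from the restricted sets $\pp^1(e)$ and $\pp^1(f)$.

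For the defining equations of $\con$ I would try the same conjugators that worked for $\co$ in Proposition~\ref{lem:epi-oc}, namely $g=h=e$. Using $ef=fe=e$ one checks immediately that $eg=ee=e=ef=gf$ and $fh=fe=e=ee=he$, so the two equations $ag=gb$ and $bh=ha$ (with $a=e$, $b=f$) hold. What remains, and this is the real content, are the membership conditions $g\in\pp^1(e)$ and $h\in\pp^1(f)$. The first is routine: for any nonzero $me\in S^1e$ we have $(me)e=me^2=me\neq 0$ by idempotency of $e$, so $e\in\pp(e)\subseteq\pp^1(e)$.

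The step I expect to be the main obstacle is the second membership, $e\in\pp^1(f)$. Unwinding the definition, this asks that $(mf)e\neq 0$ for every nonzero $mf\in S^1f$; since $fe=e$, the product $(mf)e$ equals $me$, so the requirement reduces to ruling out elements $m\in S^1$ with $mf\neq 0$ but $me=0$, that is, elements \emph{orthogonal to $e$ but not to $f$}. This is precisely the place where the strict comparability $e<f$ must be used decisively, and where I anticipate the argument to be delicate: membership of $e$ in $\pp^1(f)$ is not a purely formal consequence of $e\leq f$. I therefore expect the proof either to extract it from additional structure forced by the hypothesis $\con\ \subseteq\ \ctr$, or else to abandon the direct construction of $e\con f$ in favour of the contrapositive, exhibiting from a strict pair $e<f$ some witness pair $a\con b$ with $(a,b)\notin\ \ctr$ and thereby contradicting $\con\ \subseteq\ \ctr$. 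Either way, controlling these orthogonal elements is the crux of the lemma.
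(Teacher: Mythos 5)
Your proposal follows exactly the same route as the paper's own proof: take $0\ne e\le f$, use $g=h=e$ as the conjugating pair, deduce $e\con f$, upgrade to $e\ctr f$ via the hypothesis $\con\ \subseteq\ \ctr$, and conclude $e=f$ from Lemma~\ref{lem:epi-ef}. The paper's proof consists of precisely these steps, and it disposes of the membership requirements in a single unproved clause (``since $e$ is in both $\pp^1(e)$ and $\pp^1(f)$''). So the step you isolate as the main obstacle, namely $e\in\pp^1(f)$, is exactly the step the paper never justifies.

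Your suspicion about that step is not only well founded; it is fatal rather than fixable. As you observe, $e\in\pp(f)$ requires $me\ne0$ whenever $mf\ne0$, and any element orthogonal to $e$ but not to $f$ violates this. Such elements occur under the lemma's hypotheses: consider the semilattice $S=\{0,x_1,x_2,1\}$ of Theorem~\ref{tach} and Figure~\ref{f2} (the case $n=2$), where $x_1x_2=0$ and $1$ is the identity. The proof of Theorem~\ref{tach} computes $\pp^1(1)=\{1\}$; hence for $e=x_1$ and $f=1$ we have $0\ne e<f$ but $e\notin\pp^1(f)$ (indeed $x_2f=x_2\ne0$ while $(x_2f)e=x_2x_1=0$), and in fact $e\not\con f$, since the only admissible conjugator $h=1\in\pp^1(f)$ would force $f=fh=he=e$. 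Worse, Theorem~\ref{tach} asserts that $\con$ is the identity relation on this $S$, so the hypothesis $\con\ \subseteq\ \ctr$ holds trivially ($\ctr$ being reflexive), while $E(S)\setminus\{0\}=\{x_1,x_2,1\}$ is not an antichain since $x_i<1$. Consequently neither of your proposed rescue routes can succeed --- the hypothesis does not force the missing membership, and the contrapositive cannot be run either --- because the lemma as stated is contradicted by the paper's own Theorem~\ref{tach} (and with it the implication (2)$\implies$(3) of Theorem~\ref{thm:c0s}, which invokes this lemma, fails for the same example). In short, your attempt stalls exactly where the paper's proof is erroneous; the gap you declined to paper over is a genuine error in the paper, not a defect of your strategy.
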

\begin{proof}
Suppose $e, f \in E(S)$ with $0 \neq e\leq f$. Since $e$ is in both
$\mathbb{P}^1(e)$ and $\mathbb{P}^1(f)$,
$ee=e=ef$ and $fe=e=ee$, we have $e \con f$.
Since $\con\ \subseteq\ \ctr$, we have $e\ctr f$, and so $e=f$ by
Lemma \ref{lem:epi-ef}. It follows that $E(S)\backslash \{0\}$ is an antichain.
\end{proof}

A semigroup $S$ with zero is called a \emph{$0$-direct union} of completely
$0$-simple semigroups if $S=\bigcup_{i\in I} S_i$,
where each $S_i$ is a completely $0$-simple semigroup and $S_i\cap
S_j=S_iS_j=\{0\}$ if $i\ne j$ \cite[pp.~79--80]{Ho95}.

\begin{theorem}
\label{thm:c0s}
Let $S$ be a regular epigroup with zero.
The following are equivalent:
\begin{enumerate}[label=\textup{(\arabic*)}]
\item $\con\ \subseteq\ \cp$;
\item $\con\ \subseteq\ \ctr$;
\item $S$ is a $0$-direct union of completely $0$-simple semigroups.
\end{enumerate}
\end{theorem}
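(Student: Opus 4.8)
The plan is to prove the cycle $(1)\Rightarrow(2)\Rightarrow(3)\Rightarrow(1)$. The implication $(1)\Rightarrow(2)$ is immediate: since $S$ is an epigroup, $\Epi(S)=S$ and Theorem~\ref{thm:steinbergInclusion} gives $\cp\subseteq\cp^*\subseteq\ctr$ on all of $S$, so $\con\subseteq\cp$ forces $\con\subseteq\ctr$. For $(2)\Rightarrow(3)$ I would first invoke Lemma~\ref{lem:c_anti}: the hypothesis $\con\subseteq\ctr$ yields that $E(S)\setminus\{0\}$ is an antichain, i.e.\ every nonzero idempotent of $S$ is primitive. Thus $S$ is a regular semigroup with zero all of whose nonzero idempotents are primitive, and the conclusion is exactly the classical structure theorem that such a (primitive regular) semigroup is a $0$-direct union of completely $0$-simple semigroups (in the spirit of \cite[Thm.~3.3.3]{Ho95}; see \cite[\S3.3]{Ho95}). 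This step is essentially a pointer to the literature.

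The substantive direction is $(3)\Rightarrow(1)$. Write $S=\bigcup_{i\in I}S_i$ as a $0$-direct union, so each $S_i$ is a subsemigroup of $S$ sharing the common zero, with $S_iS_j=\{0\}$ for $i\ne j$, and products are computed identically in $S_i$ and in $S$. Let $a\con b$ in $S$. The $c$-conjugacy class of $0$ is $\{0\}$ \cite[Lem.~2.3]{AKM14}, so we may assume $a,b\ne 0$, and trivially we may assume $a\ne b$. The first step is to confine $a$ and $b$ to a single component. From $\con$ there is $g\in\pp^1(a)$ with $ag=gb$; since $a\ne b$ we cannot have $g=1$ (else $a=ag=gb=b$), hence $g\in\pp(a)$, and because $a\in S^1a\setminus\{0\}$ the defining property of $\pp(a)$ gives $ag\ne 0$. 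If $a\in S_i$, then $ag\ne 0$ forces $g\in S_i$, and then $gb=ag\ne 0$ forces $b\in S_i$. Thus $a,b\in S_i$ for a single $i$.

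The second step is to push the $\con$-relation down into $S_i$. With $g$ as above and $h\in\pp^1(b)$ the conjugator with $bh=ha$, the same argument shows $h\in S_i$. One checks routinely that $g\in\pp^1_{S_i}(a)$ and $h\in\pp^1_{S_i}(b)$: indeed $S_i^1a\setminus\{0\}\subseteq S^1a\setminus\{0\}$ and multiplication agrees in $S_i$ and $S$, so $g\in\pp_S(a)\cap S_i\subseteq\pp_{S_i}(a)$, and similarly for $h$. The equations $ag=gb$ and $bh=ha$ hold verbatim in $S_i$, so $a\con b$ in $S_i$. Since $S_i$ is completely $0$-simple, Proposition~\ref{pcs0} gives $a\cp b$ in $S_i$, i.e.\ $a=uv$ and $b=vu$ for some $u,v\in S_i^1\subseteq S^1$, whence $a\cp b$ in $S$. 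This proves $\con\subseteq\cp$ and closes the cycle.

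The main obstacle is organizational rather than computational. The only genuine content in $(3)\Rightarrow(1)$ is the localization argument showing that a $c$-conjugacy in a $0$-direct union cannot cross between components; this is precisely where the restriction of conjugators to $\pp^1(a)$ (as opposed to the arbitrary conjugators of $\co$) is essential, since it forces $ag\ne 0$ and hence $g\in S_i$ — and indeed the analogous statement fails for $\co$, which is universal here. Once confinement is established, Proposition~\ref{pcs0} does the work component by component. For $(2)\Rightarrow(3)$ the only care needed is to cite the correct form of the primitive-regular structure theorem.
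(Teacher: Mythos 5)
Your proposal is correct and follows essentially the same route as the paper: (1)$\Rightarrow$(2) via $\cp\,\subseteq\,\ctr$ on epigroups, (2)$\Rightarrow$(3) via Lemma~\ref{lem:c_anti} together with the primitive-idempotent structure theorem for regular semigroups with zero, and (3)$\Rightarrow$(1) by confining a $c$-conjugate pair to a single component $S_i$ and applying Proposition~\ref{pcs0} there. Your write-up is in fact slightly more careful than the paper's on two minor points --- ruling out the conjugator $g=1$ by assuming $a\ne b$, and verifying explicitly that $g\in\pp^1_{S_i}(a)$ so that the relation really descends to $S_i$ --- but these are refinements of the same argument, not a different one.
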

\begin{proof}
(1)$\implies$(2) is true because $\cp\ \subseteq\ \ctr$ in any epigroup.

Assume (2). By Lemma \ref{lem:c_anti}, every nonzero idempotent is primitive.
Since $S$ is also regular, then by \cite[Thm.~3.3.4]{Ho95}, we obtain (3).

Now assume (3), that is,
$S=\bigcup_{i\in I} S_i$, where each $S_i$ is a completely $0$-simple semigroup
and $S_i\cap S_j=S_iS_j=\{0\}$ if $i\ne j$.

We will show that if $a\con b$ in $S$, then both $a$ and $b$ belong to
the same subsemigroup $S_i$, for some $i\in I$, and that $a\con^{S_i} b$ in
$S_i$. Since the $c$-conjugacy class of $0$ is $\{0\}$, we may assume that $a,b\ne0$.
Suppose $a\con b$ in $S$. By (\ref{e1dcon}) there exist nonzero
elements $g,h\in S$, with $ag\neq 0$ and $bh\neq 0$, satisfying
$ ag=gb$ and $bh=ha$. Thus, since $S_iS_j=\{0\}$, for all $i,j\in I$, we
conclude that $a,b,g,h\in S_i$, for some $i\in I$. But then $a\con^{S_i} b$ in
$S_i$.

So any two $c$-conjugate elements in $S$ are $c$-conjugate elements in a
completely $0$-simple semigroups $S_i$. Hence, by Proposition~\ref{pcs0}, any
two $c$-conjugate elements are also $p$-conjugate in $S_i$, for some $i\in I$.
Since $p$-conjugate elements in $S_i$
are also $p$-conjugate in $S$, we have $\con\ \subseteq\ \cp$
in $S$.
\end{proof}

\medskip

In the last part of this section, we will examine $o$-conjugacy in all
epigroups and $c$-conjugacy in the variety $\mathcal{W}$.

If two elements $a,b$ with $a\neq b$ of a semigroup are $o$-conjugate, say, $ag = gb$ and
$bh = ha$, then in general, there is no apparent connection between $g$ and $h$ beyond these two equations.
In a group, of course, one may assume without loss of generality that $h = g\inv$. The next result shows that
in epigroups, we may similarly restrict the choice of conjugators for $\co$ without loss of generality.

\begin{theorem}
\label{thm:strong-o}
Let $S$ be an epigroup and suppose $a\sim_o b$ for some $a,b\in S$. Then there exist mutually inverse
$g,h\in S^1$ such that $ag = gb$ and $bh = ha$.
\end{theorem}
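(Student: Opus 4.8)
The plan is to start from conjugators $g,h\in S^1$ with $ag=gb$ and $bh=ha$ and to manufacture from them a \emph{mutually inverse} pair that still conjugates $a$ to $b$. The first step is the observation that $gh$ commutes with $a$ and $hg$ commutes with $b$: indeed $a(gh)=(ag)h=(gb)h=g(bh)=g(ha)=(gh)a$, and symmetrically $b(hg)=(hg)b$. Since $S$ is an epigroup, all of $g,h,gh,hg$ are epigroup elements, so $(gh)^\omega,(gh)',(hg)^\omega,(hg)'$ are available, and Lemma~\ref{lem:xyyx} applies to give $(gh)'g=g(hg)'$ together with its $\omega$-analogues $(gh)^\omega g=g(hg)^\omega$ and $(hg)^\omega h=h(gh)^\omega$ (each obtained by writing $(gh)^\omega=(gh)(gh)'$ and pushing the factor $g$, resp.\ $h$, across via Lemma~\ref{lem:xyyx}).

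Next I would set $G=(gh)'g=g(hg)'$ and $H=h(gh)^\omega=(hg)^\omega h$ and check that these are mutually inverse. Using only the definition $(gh)^\omega=(gh)'(gh)=(gh)(gh)'$, the identity \eqref{eq:epi1} (which gives $(gh)^\omega(gh)'=(gh)'(gh)(gh)'=(gh)'$), and the three identities of the previous paragraph, one computes $GH=(gh)'(gh)(gh)^\omega=(gh)^\omega$ and $HG=h\big[(gh)^\omega(gh)'\big]g=h(gh)'g=hg(hg)'=(hg)^\omega$, both idempotent; hence $GHG=(gh)^\omega G=G$ and $HGH=(hg)^\omega H=H$. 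Thus $G$ and $H$ are mutually inverse. It is worth noting that this half of the argument is purely formal and uses nothing about $a$ and $b$.

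It remains to verify the conjugacy relations $aG=Gb$ and $bH=Ha$, and this is where the only genuine difficulty lies. Writing $G=g(hg)'$ we get $aG=(ag)(hg)'=(gb)(hg)'$, so $aG=Gb$ will follow provided $b$ commutes with $(hg)'$; similarly, writing $H=h(gh)^\omega$ we get $bH=(bh)(gh)^\omega=(ha)(gh)^\omega$, so $bH=Ha$ will follow provided $a$ commutes with $(gh)^\omega$. Both are instances of the \emph{commutant property of the pseudo-inverse}: in an epigroup, any element commuting with $x$ also commutes with $x'$ (and hence with $x^\omega=xx'$). Since we have already shown that $b$ commutes with $hg$ and $a$ commutes with $gh$, this property delivers exactly the two commutations required, and the computations above then exhibit $G,H$ as a mutually inverse pair of conjugators, finishing the proof.

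I expect this commutant property to be the main obstacle, since it is not among the listed identities \eqref{eq:epi1}--\eqref{eq:epi5} and does not reduce to a one-line rewriting. My plan is to isolate it as a preliminary lemma: for $x$ of index $n$ one has $x'=(x^{n+1})^{-1}x^{n}$, where $(x^{n+1})^{-1}$ is the group inverse inside the maximal subgroup $H_{x^\omega}$, so the claim reduces to showing that an element commuting with all powers of $x$ also commutes with $x^\omega$ and with that group inverse; alternatively I would cite the corresponding standard fact for epigroups from Shevrin's survey \cite{Shevrin}. (In the periodic case the point is transparent, since then $x^\omega$ and $x'$ are themselves powers of $x$.)
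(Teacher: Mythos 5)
Your proposal is correct, but it follows a genuinely different route from the paper's. The paper never leaves the original conjugators: from $ac=cb$ and $bd=da$ it extracts the relation $acd=cda$, sets $h=da(cda)'$ and $g=chc$, observes $hch=h$ by \eqref{eq:epi1} (so that $g=chc$ is an inverse of $h$), and then verifies $bh=ha$ and $ag=gb$ in two lines using \eqref{eq:epi4} and $acd=cda$; this is entirely self-contained in the listed epigroup identities, and the explicit pair \eqref{eq:gh} is reused verbatim in the proof of Theorem~\ref{thm:strong-c}, where membership of the conjugators in $\mathbb{P}^1(a)$ and $\mathbb{P}^1(b)$ must also be checked. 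Your construction $G=(gh)'g=g(hg)'$, $H=h(gh)^{\omega}=(hg)^{\omega}h$ is different, and all of its formal steps (via Lemma~\ref{lem:xyyx}, \eqref{eq:epi1}, \eqref{eq:epi2}) check out; its one genuine dependency is the bicommutant property you flag: if $y$ commutes with an epigroup element $x$, then $y$ commutes with $x'$ (hence with $x^{\omega}=xx'$). You are right that this is the crux, and right that it is standard --- it is Drazin's classical theorem that the pseudo-inverse lies in the double commutant of the element, so citing it is legitimate --- but note that your sketched reduction (to commutation with $x^{\omega}$ and with the group inverse of $x^{n+1}$) merely restates the difficulty rather than resolving it. If you prefer to keep the proof self-contained, the lemma does follow from \eqref{etfh} in a few lines: with $n$ the index of $x$, induction gives $x'=(x')^{k+1}x^k=x^k(x')^{k+1}$, and then, for $y$ commuting with $x$,
\[
x'y=(x')^{n+1}x^ny=(x')^{n+1}yx^n=(x')^{n+1}yx^{n+1}x'=(x')^{n+1}x^{n+1}yx'=x^{\omega}yx',
\qquad
yx'=x^ny(x')^{n+1}=x'x^{n+1}y(x')^{n+1}=x'yx^{n+1}(x')^{n+1}=x'yx^{\omega},
\]
whence $x'y=x^{\omega}yx'=x^{\omega}x'yx^{\omega}=x'yx^{\omega}=yx'$, using $x^{\omega}x'=x'$. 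In exchange for importing (or proving) this lemma, your argument buys something the paper's does not have: a canonical recipe converting an arbitrary conjugating pair $(g,h)$ into a mutually inverse one, with the mutual-inverse half purely formal and independent of $a$ and $b$; the paper's proof is shorter and needs no outside input, but its choice of conjugators is more ad hoc.
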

\begin{proof}
Since $a\co b$, there exist $c,d\in S^1$ such that $ac = cb$ and $bd = da$.
These imply $acd = cda$, a fact we will use without comment in what follows. Set
\begin{equation}
\label{eq:gh}
h = da(cda)' \qquad\text{and}\qquad g = chc\,.
\end{equation}
Then $hch = da(cda)' cda (cda)' \by{eq:epi1} da(cda)' = h$. Thus $h$ is regular and so an inverse of $h$ is given by
$chc = g$, that is, $g$ and $h$ are mutually inverse as claimed.

Now we check that $g$ and $h$ are conjugators of $a$ and $b$. First, we have
\[
bh = \underbrace{bd}a(cda)' = da\underbrace{a(cda)'} \by{eq:epi4} da(acd)'a  = 
ha\,.
\]
Then we use this in the third step of the following calculation:
\[
ag = \underbrace{ac}hc = c\underbrace{bh}c = ch\underbrace{ac} = chcb = gb\,.
\]
This completes the proof
\end{proof}

\begin{example}
In the completely regular case, it is not possible, in general, to choose the
mutually inverse $g$ and $h$ of Theorem~\ref{thm:strong-o} to be $g$ and $g' = g\inv$,
the commuting inverse of $g$. To see this, consider a
$2$-element left zero semigroup $\{a,b\}$. Since $aba = a$, $bab=b$, $a$ and
$b$ are mutually inverse. We
also have $aa = ab$ and $bb = ba$, so $a\co b$. However, $a'=a$ and $b'=b$, so
we cannot have both $ax=xb$ and $bx'=x'a$ for either $x=a$ or $x=b$.
\end{example}

Now we consider $c$-conjugacy. We do not know if there is a full analog of
Theorem~\ref{thm:strong-o} for all epigroups, but there is one for our variety $\mathcal{W}$.
First we need the following result.

\begin{lemma}
\label{lem:W_middle}
Let $S$ be an epigroup with zero in $\mathcal{W}$. If $st = 0$ for some $s,t\in
S^1$, then $sxt = 0$ for all $x\in S^1$.
\end{lemma}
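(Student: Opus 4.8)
The plan is to show that both $ts$ and the target product $sxt$ are \emph{nilpotent} elements of the completely regular subsemigroup $S^2$, and then to use the fact that a completely regular semigroup has no nonzero nilpotents. Before anything else I would dispose of the degenerate cases where one of $s,t,x$ is the adjoined identity $1\in S^1$: if $s=1$ then $t=st=0$, if $t=1$ then $s=st=0$, and if $x=1$ then $sxt=st=0$; in all three cases $sxt=0$. So I may assume $s,x,t\in S$, and then both $ts=t\cdot s$ and $sxt=(sx)\cdot t$ lie in $S^2$, which is completely regular since $S\in\mathcal{W}$ (Proposition~\ref{prp:W}).

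I would next record the elementary principle that a completely regular element $w$ with $w^2=0$ must vanish. Using the identity $a^{\omega}=a^{m}(a')^{m}$ (valid for epigroup elements; take $m=2$) we get $w^{\omega}=w^2(w')^2=0$, and since for a completely regular $w$ we have $w''=w$ and $w^{\omega}w=w''$, it follows that $w=w^{\omega}w=0$. This principle will be applied twice.

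The crucial step—and the one I expect to be the only real obstacle—is to promote the hypothesis $st=0$ to the symmetric statement $ts=0$. In a general semigroup $st=0$ does not force $ts=0$, so the structure of $\mathcal{W}$ must be used here. The key observation is that $ts\in S^2$ is completely regular and satisfies $(ts)^2=t(st)s=t\,0\,s=0$, whence $ts=0$ by the principle above. With $ts=0$ in hand the conclusion is immediate, since $(sxt)^2=sx(ts)xt=0$; as $sxt\in S^2$ is again completely regular, the same principle yields $sxt=0$.

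Thus the whole argument hinges on the single creative move $(ts)^2=t(st)s$, which collapses $ts$ to $0$; after that, the identical nilpotency trick applied to $sxt$ finishes the proof. No appeal to Green's relations or to the semilattice decomposition of $S^2$ is needed: the proof uses only the defining property of $\mathcal{W}$ together with the basic epigroup identities already recorded in this section.
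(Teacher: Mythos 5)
Your proof is correct and follows essentially the same route as the paper's: both arguments rest on the observation that $ts$ and $sxt$ are completely regular (by the defining property of $\mathcal{W}$, equivalently the identity $(xy)''=xy$), that $(ts)^2=t(st)s=0$ and $(sxt)^2=sx(ts)xt=0$, and that a completely regular element whose square is zero must itself be zero. The paper merely inlines your ``principle'' as the chain $ts=(ts)''=ts(ts)'ts=(ts)^2(ts)'=0$ (and likewise for $sxt$), whereas you route it through $w^{\omega}=w^2(w')^2$ and also handle the degenerate cases $s,t,x=1$ explicitly; these are presentational differences only.
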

\begin{proof}
First,
\[
ts \by{eq:xy''} (ts)'' \by{eq:epi3} ts\underbrace{(ts)'ts} \by{eq:epi2}
t\underbrace{st}s(ts)' = 0\,.
\]
Then
\[
sxt \by{eq:xy''} (sxt)'' \by{eq:epi3} sxt\underbrace{(sxt)'sxt} \by{eq:epi2}
sx\underbrace{ts}xt(sxt)' = 0\,. \qedhere
\]
\end{proof}

\begin{theorem}
\label{thm:strong-c}
Let $S$ be an epigroup with zero in $\mathcal{W}$ and suppose $a\con b$ for
some
$a,b\in S$. Then there exist mutually inverse
$g\in \mathbb{P}^1(a)$, $h\in \mathbb{P}^1(b)$ such that $ag = gb$ and $bh =
ha$.
\end{theorem}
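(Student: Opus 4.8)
The plan is to imitate the construction behind Theorem~\ref{thm:strong-o}, but to build the conjugators directly out of the given $\con$-witnesses so that membership in the sets $\pp^1$ is inherited. First I would dispose of the degenerate cases: if $a=0$ then $b=0$ (the $\con$-class of $0$ is $\{0\}$), and if $a=b$, so in either case $g=h=1\in\pp^1(a)\cap\pp^1(b)$ works. Hence assume $a\neq b$ and $a,b\neq0$ and fix $c\in\pp(a)$, $d\in\pp(b)$ with $ac=cb$ and $bd=da$. Since $a\neq b$ forces $c,d\in S$, the products $cd,dc\in S^2$ are completely regular because $S\in\mathcal W$, so $(cd)''=cd$ and $(dc)''=dc$ by \eqref{eq:xy''}. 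From $ac=cb$ and $bd=da$ one reads off the commuting relations $a(cd)=(cd)a$ and $b(dc)=(dc)b$, and hence the two intertwinings $(ac)(dc)=(cd)(ac)$ and $(bd)(cd)=(dc)(bd)$.

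Next I would set
\[
g=c(dc)'\qquad\text{and}\qquad h=(dc)^{\omega}d\ \bigl(=d(cd)^{\omega}\bigr),
\]
and check the easy algebraic facts, all short computations with \eqref{eq:epi1}--\eqref{eq:epi4}. Because $dc$ commutes with $b$, so does $(dc)'$, whence $ag=(ac)(dc)'=cb(dc)'=c(dc)'b=gb$, and dually $bh=ha$. One computes $hg=(dc)^{\omega}$ and $gh=(cd)^{\omega}$ (the latter using $(dc)'d=d(cd)'$ from \eqref{eq:epi4}), so that $ghg=g\,(hg)=c(dc)'(dc)^{\omega}=c(dc)'=g$ and similarly $hgh=h$; thus $g,h$ are mutually inverse conjugators.

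The heart of the matter, and the main obstacle, is proving $g\in\pp(a)$ and $h\in\pp(b)$. The key reduction is that for a completely regular element $z$ (applied to $z=dc$ and $z=cd$) and any $w$, the equation $wz'=0$ forces $wz=0$: indeed $wz'=0\Rightarrow wz^{\omega}=wz'z=0\Rightarrow wz=wz^{\omega}z=0$, where the last step uses $z^{\omega}z=z''=z$, which is exactly where $\mathcal W$ enters (the tool behind this is the complete regularity of $S^{2}$, as also exploited in Lemma~\ref{lem:W_middle}). Consequently, to get $g\in\pp(a)$ it suffices to show $(mac)(dc)\neq0$ whenever $ma\neq0$, and to get $h\in\pp(b)$ it suffices to show $(mbd)(cd)\neq0$ whenever $mb\neq0$.

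I would establish these by a ``ping-pong'' that alternates the defining property of $c\in\pp(a)$ (namely $ma\neq0\Rightarrow mac\neq0$) with that of $d\in\pp(b)$, threading the equalities $ac=cb$ and $bd=da$. For $g$: from $ma\neq0$ one obtains $mac=mcb\neq0$; applying $d\in\pp(b)$ to $mc$ gives $mcbd=mcda\neq0$; applying $c\in\pp(a)$ to $mcd$ gives $mcdac\neq0$; and finally $(mac)(dc)=m(ac)(dc)=m(cd)(ac)=mcdac\neq0$ by the intertwining. The argument for $h$ is the mirror image, alternating in the opposite order. The real difficulty is orchestrating this alternation so that each intermediate word can be presented in the form $(\text{something})a$ or $(\text{something})b$ to which the $\pp$-conditions apply, and then recognizing that the intertwinings together with the reduction above are precisely what close the loop back to $(mac)(dc)$ and $(mbd)(cd)$; once that is in hand, the reduction yields $(ma)g=(mac)(dc)'\neq0$ and $(mb)h=(mbd)(cd)^{\omega}\neq0$, so $g\in\pp^1(a)$ and $h\in\pp^1(b)$, completing the proof.
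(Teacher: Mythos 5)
Your proof is correct, and it takes a genuinely different route from the paper's. The paper simply reuses the conjugators of Theorem~\ref{thm:strong-o}, namely $h=da(cda)'$ and $g=chc$ from \eqref{eq:gh} (note these involve $a$ itself), and then establishes $g\in\pp^1(a)$, $h\in\pp^1(b)$ by contraposition: assuming $(mb)h=0$, it pushes zeros backwards through the defining equations using Lemma~\ref{lem:W_middle} and \eqref{eq:xy''} until it reaches $mb=0$, and similarly for $g$. You instead build symmetric conjugators $g=c(dc)'$, $h=d(cd)^{\omega}=(dc)^{\omega}d$ out of the witnesses $c,d$ alone, and prove membership in the $\pp$-sets in the positive direction, propagating non-vanishing forward through three alternating applications of $c\in\pp(a)$ and $d\in\pp(b)$, then closing the loop with the intertwinings $acdc=cdac$, $bdcd=dcbd$ and the observation that for completely regular $z$ one has $wz\neq0\Rightarrow wz'\neq0$ and $wz\neq 0\Rightarrow wz^{\omega}\neq 0$; this is where \eqref{eq:xy''} enters for you, and it lets you bypass Lemma~\ref{lem:W_middle} entirely. (I checked your ping-pong chains: $ma\neq0\Rightarrow mcb\neq0\Rightarrow mcda\neq0\Rightarrow mcdac=macdc\neq0$, and its mirror $mb\neq0\Rightarrow mda\neq0\Rightarrow mdcb\neq0\Rightarrow mdcbd=mbdcd\neq0$, do close correctly.) The one step you should write out rather than assert is that $b$ commutes with $(dc)'$ and $(dc)^{\omega}$ because it commutes with $dc$: this double-commutant property of pseudo-inverses is not among the listed identities \eqref{eq:epi1}--\eqref{eq:epi5}, but for the completely regular element $dc$ it follows in a few lines (first $b(dc)^{\omega}=(dc)^{\omega}b$, then conjugation by the group inverse inside $H_{(dc)^{\omega}}$), so this is an expository gap only. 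As for what each approach buys: the paper's argument stresses that the very same conjugators serve for $\co$ in all epigroups (Theorem~\ref{thm:strong-o}) and, once in $\mathcal{W}$, for $\con$; yours is self-contained, handles the degenerate cases $a=0$ and $a=b$ explicitly (the paper glosses these), and makes visible exactly where and how often the hypotheses $c\in\pp(a)$, $d\in\pp(b)$ are actually invoked.
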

\begin{proof}
We may assume $a,b\neq 0$.
Since $a\con b$, there exist $c\in \mathbb{P}^1(a)$, $d\in \mathbb{P}^1(b)$
such that $ac = cb$ and $bd = da$.
As before, we will use $acd = cda$ without comment.

Define $h$ and $g$ by \eqref{eq:gh}. By the proof of
Theorem~\ref{thm:strong-o},
we have that $g,h$ are mutually
inverse and satisfy $ag=gb$, $bh=ha$. What remains is to show $h\in
\mathbb{P}^1(b)$ and $g\in \mathbb{P}^1(a)$.

Suppose $(mb)h = 0$ for some $m\in S^1$. We wish to prove $mb=0$. By
Lemma~\ref{lem:W_middle}, $mxbh=0$ for all $x\in S^1$,
and so in particular, we have $mcbh = 0$. Thus $0 = mc\underbrace{bh} = mcha =
mcda(cda)'a$. Multiply both sides on the
right by $cd$ to get
\[
0 = mcda(cda)'\underbrace{acd} = mcda(cda)'cda \by{eq:epi3} m(cda)''
\by{eq:xy''} mc\underbrace{da} = mcbd\,.
\]
Now since $d\in \mathbb{P}^1(b)$, the result of this last calculation implies
$mcb = 0$. Thus $0 = mcb = mac$.
Since $c\in \mathbb{P}^1(a)$, we conclude that $ma = 0$. Using
Lemma~\ref{lem:W_middle} once again, $mxa = 0$ for all $x\in S^1$.
In particular, we have $0 = mda = mbd$. Since $d\in \mathbb{P}^1(b)$, we obtain
$mb=0$ as claimed.

Finally suppose $(ma)g = 0$ for some $m\in S^1$. We wish to prove $ma = 0$. Thus
\[
0 = mag = m\underbrace{ac}hc = mc\underbrace{bh}c = mchac\,.
\]
Since $c\in \mathbb{P}^1(a)$, we have $mcha = 0$, that is, $mcbh = 0$. Since
$h\in \mathbb{P}^1(b)$,
$0 = mcb = mac$. Using $c\in \mathbb{P}^1(a)$ one last time, we get $ma = 0$ as
claimed.
\end{proof}

\begin{example}
The proof of Theorem~\ref{thm:strong-c} depends heavily on the epigroup $S$
being in the variety $\mathcal{W}$, and
indeed the method of proof does not work for all epigroups in general. For
example, consider the commutative monoid
$S$ with zero defined by the following multiplication table:
\[
\begin{array}{r|cccccc}
\cdot & 0 & 1 & 2 & 3 & 4 & 5\\
\hline
    0 & 0 & 0 & 0 & 0 & 0 & 0 \\
    1 & 0 & 1 & 2 & 3 & 4 & 5 \\
    2 & 0 & 2 & 0 & 0 & 2 & 2 \\
    3 & 0 & 3 & 0 & 0 & 2 & 2 \\
    4 & 0 & 4 & 2 & 2 & 5 & 5 \\
    5 & 0 & 5 & 2 & 2 & 5 & 5
\end{array}
\]
This is an epigroup with pseudo-inverse given by $0'=2'=3' = 0$, $1'=1$,
$4'=5'=5$. It is easy to see that $S$ is
in $\mathcal{E}_2$ since every element on the diagonal is an idempotent. $S$ is
not in $\mathcal{W}$ because,
for instance, $(2\cdot 4)'' = 2'' = 0\neq 2\cdot 4$. If $a = 2$, $b=3$,
$c=d=4$,
then $ac=cb$, $bd=da$,
$c\in \mathbb{P}^1(a)$ and $d\in \mathbb{P}^1(b)$. Thus $a\con 3$. Note that
$c$
is not regular, but if
we try to define $g,h$ by \eqref{eq:gh}, we get $g = h = 0$. Thus the proof of
Theorem~\ref{thm:strong-c} does
not apply here. However, note that by setting $g=h=5$, we do obtain mutually
inverse $g,h$ which will suffice.
Therefore in this example, the conclusion of Theorem~\ref{thm:strong-c} is
still
correct.
\end{example}

\section{Comparison results}
\label{scom}
\setcounter{table}{0}
\setcounter{equation}{0}

In this section, we compare the four notions of conjugacy under
discussion in various settings.
In every semigroup,
$\cp\,\,\subseteq\,\,\cp^*\,\,\subseteq \,\, \ctr \,\,\subseteq\,\,\co$ and $\con\,\,\subseteq\,\,\co$.

Regarding $\cp$ and $\con$, we have the following result.

\begin{theorem}
For each of the following conditions:
\begin{enumerate}[label=\textup{(\alph*)}]
 \item $\con\,\,\subset\,\,\cp$,
 \item $\cp\,\,\subset\,\,\con$,
 \item $\cp$ and $\con$ are not comparable with respect to
inclusion,
\end{enumerate}
there exists a semigroup with zero in which the condition holds.
\end{theorem}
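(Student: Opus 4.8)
The plan is to handle the three conditions separately, in each case exhibiting a semigroup with zero that has already been analyzed in the paper and invoking the results proved for it; no new construction is needed.

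For condition (a) I would take $S=\mi(X)$ with $X$ finite and $|X|\geq 2$. This semigroup has a zero, and Proposition~\ref{pcpf} establishes precisely $\con\,\,\subset\,\,\cp$ in $\mi(X)$, so (a) holds immediately. For condition (c) I would take $S=\mi(X)$ with $X$ countably infinite, which again has a zero; Proposition~\ref{pcpi} asserts that $\cp$ and $\con$ are not comparable with respect to inclusion in this $\mi(X)$, giving (c) at once.

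The genuinely interesting case is (b), because in many natural classes of semigroups with zero the containment runs the opposite way---for instance, $\con\,\,\subseteq\,\,\cp$ holds in every completely $0$-simple semigroup by Proposition~\ref{pcs0}. For (b) I would use the $E$-unitary inverse semigroup $S$ of Example~\ref{ex:E2_a}, which has a zero. There the chain $\cp\,\,\subset\,\,\cp^*\,\,=\,\,\ctr\,\,\subset\,\,\con\,\,=\,\,\co$ is computed explicitly; combining the always-valid inclusion $\cp\,\,\subseteq\,\,\cp^*$ with the strict $\cp^*\,\,\subset\,\,\con$ yields $\cp\,\,\subset\,\,\con$, so (b) holds.

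The main obstacle is conceptual rather than computational: one must resist the intuition that restricting the admissible conjugators, as in the definition of $\con$, can only make the relation smaller than $\cp$. Example~\ref{ex:E2_a} shows that $\con$ can in fact be strictly larger, and since that example's conjugacy relations are already worked out in full, the proof reduces to assembling these three citations.
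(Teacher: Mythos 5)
Your treatments of (a) and (c) are exactly the paper's: $\mi(X)$ for finite $X$ with $|X|\geq 2$ together with Proposition~\ref{pcpf}, and $\mi(X)$ for countably infinite $X$ together with Proposition~\ref{pcpi}; both semigroups have a zero (the empty transformation), so those two cases are complete.

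Part (b), however, contains a genuine gap: the semigroup $S$ of Example~\ref{ex:E2_a} does \emph{not} have a zero. The element labelled $0$ in its multiplication table is merely a label, not a zero element (for instance $0\cdot 3=3$), and in fact $S$ cannot have a zero at all: in any semigroup with zero, $\co$ is the universal relation, whereas in $S$ the relation $\co$ has the two classes $\{0,1,2\}$ and $\{3,4,5\}$. (This absence of a zero is precisely what allows $\con\,=\,\co$ to hold there, and the paper's own proof explicitly describes $S$ as a semigroup \emph{without} zero.) Since the theorem demands a semigroup with zero, citing Example~\ref{ex:E2_a} alone does not establish (b). The missing step, which is how the paper completes the argument, is to pass to $S^0$, the semigroup obtained from $S$ by adjoining an extra element acting as a zero, and to check that both relations extend only by the pair $(0,0)$: a product $uv$ with $u,v\in S^1$ never equals the adjoined zero, so the $\cp$-class of the new zero is a singleton, and the $c$-conjugacy class of a zero is always a singleton; hence $\cp^{S^0}=\cp^{S}\cup\{(0,0)\}$ and $\con^{S^0}=\con^{S}\cup\{(0,0)\}$, so the strict inclusion $\cp\,\subset\,\con$ survives in $S^0$. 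With that one additional (easy) verification your argument coincides with the paper's proof; without it, the exhibited semigroup fails the hypothesis of the statement.
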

\begin{proof}
Proposition~\ref{pcpf} shows that $\con\,\,\subset\,\,\cp$ in any symmetric
inverse semigroup $\mi(X)$ where $2\leq |X| < \infty$.

Example~\ref{ex:E2_a} provides an example of a semigroup $S$ without zero in
which $\cp\ \subset\ \cp^*\ \subset\ \con\,\,=\,\,\co$. Let $S^0$ denote the
semigroup obtained from $S$ by
adding an extra element $0$ acting as a zero.
Then $\cp^{S^0}\,\,=\,\, \cp^S\cup\,\,\{(0,0)\}$ and
$\con^{S^0}\,\,=\,\,\con^S\cup\,\,\{(0,0)\}$. Thus,
$S^0$ is a semigroup with zero in which $\cp\,\,\subset\,\,\con$ as claimed.

Finally, by Proposition~\ref{pcpi}, relations $\cp$ and $\con$ are not
comparable with respect to
inclusion in the symmetric inverse semigroup $\mi(X)$ on a countably infinite set
$X$. There are also
finite semigroups in which $\cp$ and $\con$ are not comparable. Indeed, let $S
=
\{0,1,2,3,4\}$ be the
monoid given by the following multiplication table:
\[
\begin{array}{r|cccccc}
\cdot & 0 & 1 & 2 & 3 & 4 \\
\hline
    0 & 0 & 0 & 0 & 0 & 0  \\
    1 & 0 & 1 & 2 & 3 & 4  \\
    2 & 0 & 2 & 0 & 2 & 0  \\
    3 & 0 & 3 & 4 & 3 & 4  \\
    4 & 0 & 4 & 0 & 4 & 0
\end{array}
\]
It is straightforward to check that $1\con 3$ and all other $\con$-classes are
trivial, while $2\cp 4$ and all other $\cp$-classes are trivial.
\end{proof}

Regarding $\ctr$ and $\con$, we have the following result.
\begin{theorem}
For each of the following conditions:
\begin{enumerate}[label=\textup{(\alph*)}]
 \item $\con\,\,\subset\,\,\ctr$,
 \item $\ctr\,\,\subset\,\,\con$,
 \item $\ctr$ and $\con$ are not comparable with respect to
inclusion,
\end{enumerate}
there exists a semigroup with zero in which the condition holds.
\end{theorem}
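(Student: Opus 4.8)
The plan is to settle all three parts with small finite semigroups with zero (hence epigroups, so that $\ctr$ is defined), reusing the inverse semigroups already built in \S\ref{sec:epigroups} and reading off the trace-conjugacy classes from the characterization $a\ctr b\iff a''\cp b''$ of Theorem~\ref{thm:trace}(6), which makes $\ctr$ easy to compute once the $\cp$-classes are known. For part (a) I would take the $E^*$-unitary inverse monoid of Example~\ref{ex:E2_b}, which already possesses a genuine zero. There $\cp$ is transitive with $\cp\,=\,\ctr$, whose classes are $\{1\},\{a\},\{0,b,c\},\{e,f\}$, while $\con$ splits these further into $\{1\},\{a\},\{0\},\{b,c\},\{e,f\}$. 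Hence $\con\,\subset\,\ctr$, the inclusion being strict since, e.g., $0\ctr b$ but $(0,b)\notin\,\con$.

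For part (c) I would use the five-element monoid $S=\{0,1,2,3,4\}$ displayed in the proof of the preceding theorem, in which $1$ is the identity and $0$ a zero, the only nontrivial $\con$-class is $\{1,3\}$, and the only nontrivial $\cp$-class is $\{2,4\}$. Computing pseudo-inverses gives $1''=1$, $3''=3$, and $2''=4''=0$, so Theorem~\ref{thm:trace}(6) yields $2\ctr 4$ (because $0\cp 0$) and $1\not\ctr 3$ (because $1\not\cp 3$). Thus $(2,4)\in\,\ctr\setminus\con$ while $(1,3)\in\,\con\setminus\ctr$, showing that $\ctr$ and $\con$ are not comparable.

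For part (b) I would begin with the $E$-unitary inverse semigroup $S$ of Example~\ref{ex:E2_a}, where $\ctr\,=\,\cp^*\,\subset\,\con\,=\,\co$; since its element labelled ``$0$'' is not a zero, I would adjoin a fresh zero $z$ to form $S^0$. As recorded in the proof of the preceding theorem, $\con^{S^0}=\con^S\cup\{(z,z)\}$, and I would establish the analogous statement $\ctr^{S^0}=\ctr^S\cup\{(z,z)\}$; then $\ctr^{S^0}$ has classes $\{z\},\{0\},\{1,2\},\{3,4,5\}$ and $\con^{S^0}$ has classes $\{z\},\{0,1,2\},\{3,4,5\}$, giving $\ctr\,\subset\,\con$ strictly. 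The one point requiring genuine care --- the main obstacle --- is this last invariance claim: since $S$ has no zero, any product equal to $z$ must have a factor equal to $z$, hence equals $z$ on both sides, so no new $\cp$-pair among elements of $S$ arises and no element of $S$ becomes $\cp$-related to $z$; as the pseudo-inverses of elements of $S$ are unchanged in $S^0$, the formula $a\ctr b\iff a''\cp b''$ then transfers $\ctr$ verbatim from $S$ and adds only the reflexive pair $(z,z)$.
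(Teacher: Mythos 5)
Your proposal is correct, and it reaches the theorem by a genuinely different route than the paper. The paper's own proof exhibits three fresh four-element semigroups with zero (\texttt{SmallSemigroup(4,22)}, \texttt{SmallSemigroup(4,113)}, and \texttt{SmallSemigroup(4,56)} from \emph{Smallsemi}), one per condition, and simply states their $\ctr$- and $\con$-partitions, leaving the verification to (machine) computation. You instead recycle structures the paper has already analyzed: Example~\ref{ex:E2_b} for (a), the five-element monoid from the proof of the preceding theorem for (c), and Example~\ref{ex:E2_a} with a zero adjoined for (b), in each case extracting $\ctr$ from the characterization $a\ctr b\iff a''\cp b''$ of Theorem~\ref{thm:trace}(6). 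Your calculations check out: in (c) the pseudo-inverses $1''=1$, $3''=3$, $2''=4''=0$ give $\ctr$-classes $\{0,2,4\},\{1\},\{3\}$, which are indeed incomparable with the $\con$-classes $\{0\},\{1,3\},\{2\},\{4\}$; and in (b) your key invariance claim $\ctr^{S^0}=\ctr^S\cup\{(z,z)\}$ is correctly justified --- a product equal to $z$ must have a factor equal to $z$, so $\cp$ acquires no new pairs on $S$ and no pair joining $S$ to $z$, while pseudo-inverses of elements of $S$ persist unchanged in $S^0$ by their uniqueness, so Theorem~\ref{thm:trace}(6) transfers $\ctr$ verbatim. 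This is exactly the $\ctr$-analogue of the equalities $\cp^{S^0}=\cp^S\cup\{(0,0)\}$ and $\con^{S^0}=\con^S\cup\{(0,0)\}$ that the paper records when proving the corresponding theorem for $\cp$ versus $\con$. As for the trade-off: the paper's route is shorter and its examples smaller, but the stated partitions are unexplained, computer-verified facts; your route needs no computation beyond what the paper already certifies, every class list is derivable by hand via Theorem~\ref{thm:trace}(6), and your zero-adjunction lemma for $\ctr$ is a small reusable observation in its own right.
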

\begin{proof}
The following semigroup (\texttt{SmallSemigroup(4,22)} of \cite{Smallsemi}) satisfies condition (a):
\[
\begin{array}{r|cccccc}
\cdot & 0 & 1 & 2 & 3 \\
\hline
    0 & 0 & 0 & 0 & 0   \\
    1 & 0 & 0 & 0 & 1   \\
    2 & 0 & 0 & 0 & 1  \\
    3 & 0 & 1 & 1 & 3
\end{array}
\] $\ctr\ =\{\{0,1,2\},\{3\}\}$ and $\con\ =\{\{0\},\{1,2\},\{3\}\}$.

The following semigroup (\texttt{SmallSemigroup(4,113)} of \cite{Smallsemi}) satisfies condition (b):
\[
\begin{array}{r|cccccc}
\cdot & 0 & 1 & 2 & 3 \\
\hline
    0 & 0 & 0 & 0 & 0   \\
    1 & 0 & 1 & 1 & 1   \\
    2 & 0 & 1 & 2 & 1  \\
    3 & 0 & 3 & 3 & 3
\end{array}
\] $\ctr\ =\{\{0\},\{2\},\{1,3\}\}$ and $\con\ =\{\{0\},\{1,2,3\}\}$.

Finally, the following semigroup (\texttt{SmallSemigroup(4,56)} of \cite{Smallsemi}) satisfies condition (c):
\[
\begin{array}{r|cccccc}
\cdot & 0 & 1 & 2 & 3 \\
\hline
    0 & 0 & 0 & 0 & 0   \\
    1 & 0 & 0 & 0 & 0   \\
    2 & 0 & 0 & 2 & 2  \\
    3 & 0 & 0 & 2 & 3
\end{array}
\] $\ctr\ =\{\{0,1\},\{2\},\{3\}\}$ and $\con\ =\{\{0\},\{1\},\{2,3\}\}$.
\end{proof}

Our next result separates $c$-conjugacy and $o$-conjugacy.
As already mentioned  $\co$ is the universal relation in any semigroup with
zero and  $\con\,\,=\,\,\co$ in any semigroup without zero. Therefore, a trivial
way of separating $\con$ and $\co$ is to consider any semigroup without zero and
then adjoin a zero to that semigroup.

Less trivially, we can separate $\con$ and $\co$ in semigroups with proper zero
divisors. The next theorem shows that the two notions might be different in such a
semigroup in the most extreme way.

\begin{theorem}
\label{tach}
In a semilattice $S$ that is an anti-chain with $0$ and $1$, $\co$ is
universal, while $\con$ is the identity.
\end{theorem}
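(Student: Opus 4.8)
The two assertions are essentially independent and split along the zero. The claim that $\co$ is universal is immediate: since $S$ has a zero, the general fact recorded in the introduction (that $\co$ is the universal relation in any semigroup with zero) applies directly. Concretely, for any $a,b\in S$ one takes $g=h=0$, whence $ag=0=gb$ and $bh=0=ha$, so $a\co b$. Nothing further is needed for this half.

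For the claim that $\con$ is the identity, I would first fix notation and record the one structural fact that drives everything. Write $S=A\cup\{0,1\}$, where $A=S\sm\{0,1\}$ is the antichain of ``middle'' elements; the hypothesis that $S$ has proper zero divisors means $|A|\geq 2$. Since the semilattice product is the meet, distinct $a,b\in A$ have only $0$ as a common lower bound, so $ab=0$, while $aa=a$, $1x=x$, and $0x=0$. Using this I would compute the conjugator sets $\pp^1$. For a middle element $a\in A$ one has $S^1a=\{0,a\}$, so $S^1a\sm\{0\}=\{a\}$ and therefore $\pp(a)=\{g:ag\neq 0\}=\{1,a\}$, giving $\pp^1(a)=\{1,a\}$. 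For the identity, $S^1\cdot 1=S$, so the defining condition for $g\in\pp(1)$ is that $mg\neq 0$ for \emph{every} nonzero $m$; choosing $m\in A$ distinct from any prospective middle conjugator $g$ (possible precisely because $|A|\geq 2$) shows that no middle element lies in $\pp(1)$, whence $\pp(1)=\{1\}$ and $\pp^1(1)=\{1\}$.

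With these sets in hand, the conclusion follows from a short case analysis on a pair of distinct elements $a,b$ with $a\con b$. Since the $\con$-class of $0$ is $\{0\}$ (stated in the introduction, from \cite[Lemma~2.3]{AKM14}), I may assume $a,b$ are nonzero, so each lies in $A\cup\{1\}$. If $a,b\in A$ are distinct, then every $g\in\pp^1(a)=\{1,a\}$ already violates $ag=gb$: for $g=1$ this equation reads $a=b$, and for $g=a$ it reads $a=ab=0$. If instead one of them is the identity, say $b=1$ and $a\in A$, then although the $g$-equation $ag=g$ can be met (by $g=a$), the only admissible $h\in\pp^1(1)=\{1\}$ fails the companion equation $bh=ha$, since it reads $1=a$; the symmetric case $a=1$ is handled identically (or by symmetry of the equivalence relation $\con$). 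In each case the defining equations for $\con$ are unsatisfiable, forcing $a=b$, so $\con$ is the identity. The one genuinely load-bearing step—and thus the main obstacle—is the computation $\pp^1(1)=\{1\}$: it is exactly where the antichain hypothesis (the presence of a second middle element, hence proper zero divisors) is used, and it is what keeps the identity from being $c$-conjugate to any middle element even though $\co$ relates it to everything.
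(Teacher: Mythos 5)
Your proof is correct and takes essentially the same route as the paper's: the paper's entire argument consists of observing that $\pp^1(1)=\{1\}$ and $\pp^1(x)=\{x,1\}$ for all $x\in S\sm\{0,1\}$ (precisely the computations you carry out) and then concluding that $\con$ is the identity while $\co$ is universal. You simply make explicit the case analysis and the point where the antichain hypothesis (at least two middle elements) is used to force $\pp^1(1)=\{1\}$, details the paper leaves to the reader.
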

\begin{proof}
\begin{figure}[ht]
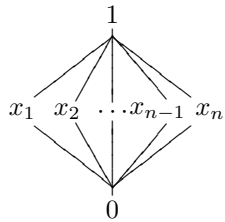

\[
\xy
(20,-3)*{0};
(20,23)*{1};
(20,0)*{}="0";
(20,20)*{}="1";
(11,13)*{}="at";
(29,13)*{}="zt";
(16,13)*{}="bt";
(26,13)*{}="wt";
(20,13)*{}="ct";
(11,7)*{}="ab";
(29,7)*{}="zb";
(16,7)*{}="bb";
(26,7)*{}="wb";
(20,7)*{}="cb";
(8,10)*{x_1}="a";
(33,10)*{x_n}="z";
(14,10)*{x_2}="b";
(26,10)*{x_{n-1}}="w";
(20,10)*{\cdots}="c";
"0";"ab" **\crv{} ?>* \dir{-};
"0";"bb" **\crv{} ?>* \dir{-};
"0";"wb" **\crv{} ?>* \dir{-};
"0";"zb" **\crv{} ?>* \dir{-};
"1";"at" **\crv{} ?>* \dir{-};
"1";"bt" **\crv{} ?>* \dir{-};
"1";"wt" **\crv{} ?>* \dir{-};
"1";"zt" **\crv{} ?>* \dir{-};
"1";"ct" **\crv{} ?>* \dir{-};
"0";"cb" **\crv{} ?>* \dir{-};
\endxy
\]
\caption{Bounded anti-chain.}\label{f2}
\end{figure}

Observe that $\pp^1(1)=\{1\}$, $\pp^1(0)=\{0\}$, and $\pp^1(x)=\{x,1\}$ for all
$x\in S\setminus \{0,1\}$. Therefore, in this semigroup $\con$
is the identity, while $\co$ is the universal relation.
\end{proof}

The same result holds for every null semigroup.
Table~\ref{tb1} was produced using the \emph{Smallsemi} package for GAP
\cite{Smallsemi}. It contains data illustrating how common the extreme behavior of $\con$
is in monoids with zero divisors. In Table~\ref{tb1}, $|S|$ is the order of the semigroup; the column
labeled by ``\# of monoids with $0$-divisors'' contains the number of monoids of order $|S|$ that
have a zero and zero divisors;
the column ``$\con$ is the identity'' contains the number of such monoids in
which $\con$ is the identity relation;
the column ``$\con$ is `universal' '' contains the number of such monoids in
which all nonzero elements form a single conjugacy class.

\begin{table}
\begin{center}
\begin{tabular}{ccccc}
\hline
$|S|$& \# of monoids with $0$-divisors&$\con$ is the identity&$\con$ is
`universal' \\ \hline \hline
      3&1 &1 &0\\ \hline
     4&7&3&1 \\ \hline
     5&58&14&7 \\ \hline
     6& 574 &115&74\\ \hline
  7&8742&3016&972\\ \hline
\end{tabular}
\caption{$c$-conjugacy in small monoids with zero divisors}\label{tb1}
\end{center}
\end{table}

For a large proportion of  the monoids from Table~\ref{tb1}, $c$-conjugacy is the identity.
Observe that in groups, conjugacy is the identity relation if and only if the group is
abelian. This is not the case for $c$-conjugacy in monoids, as the following monoid
with proper divisors of zero shows:
\[
\begin{tabular}{r|rrrrr}
$\cdot$ & 0 & 1 & 2 & 3 & 4\\
\hline
    0 & 0 & 0 & 0 & 0 & 0 \\
    1 & 0 & 0 & 0 & 0 & 1 \\
    2 & 0 & 0 & 0 & 0 & 2 \\
    3 & 0 & 0 & 1 & 0 & 3 \\
    4 & 0 & 1 & 2 & 3 & 4
\end{tabular}
\]
Every element in this monoid is only $c$-conjugate to itself, and the monoid is
not commutative.
This monoid is \texttt{SmallSemigroup(5,110)} in the \emph{Smallsemi} package
for GAP
\cite{Smallsemi}

\smallskip

However, the result analogous to group conjugacy holds for $p$-conjugacy.

\begin{theorem}
\label{tpid}
Let $S$ be a semigroup. Then, $\cp$ is the identity relation in $S$ if and only
if $S$ is commutative.
\end{theorem}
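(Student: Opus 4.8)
The plan is to prove both implications directly from the definition of $\cp$ in \eqref{econ2}, as each should follow in essentially a single line once the right substitution is made.

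For the implication that $\cp$ being the identity forces commutativity, I would fix arbitrary $a,b\in S$ and simply instantiate the definition with $u=a$ and $v=b$. This immediately gives $ab=uv$ and $ba=vu$, so $ab\cp ba$ by definition. Since $\cp$ is assumed to be the identity relation, this yields $ab=ba$; as $a$ and $b$ were arbitrary, $S$ is commutative.

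For the converse, I would first recall that $\cp$ is reflexive in any semigroup, so it suffices to check that $\cp$ is contained in the identity. Suppose $a\cp b$, say $a=uv$ and $b=vu$ for some $u,v\in S^1$. The one point worth noting is that commutativity of $S$ extends to $S^1$: the adjoined identity $1$ commutes with every element by construction, so $(S^1,\cdot)$ is commutative whenever $S$ is. Hence $uv=vu$, which gives $a=b$, and therefore $\cp$ coincides with the identity relation.

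There is essentially no obstacle here; the only mild subtlety is the passage from $S$ to $S^1$ in the converse, which is routine once one observes that adjoining an identity preserves commutativity. Note also that the contrast with Theorem~\ref{tach} and the examples preceding this statement is exactly the interest of the result: for $\con$ the analogue of ``identity $\iff$ commutative'' fails, whereas for $\cp$ the group-theoretic pattern persists in full generality.
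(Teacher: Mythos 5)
Your proposal is correct and follows essentially the same argument as the paper: the forward direction instantiates the definition with $u=a$, $v=b$ to get $ab\cp ba$, and the converse uses commutativity to collapse $a=uv$ and $b=vu$ to $a=b$. Your extra remark that commutativity of $S$ passes to $S^1$ is a point the paper leaves implicit, but it is routine and does not change the nature of the proof.
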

\begin{proof}
If $S$ is commutative and if $x=uv$ and $y=vu$, then obviously $x=uv=vu=y$,
and so $\cp$ is the identity relation.
Conversely, suppose each element of $S$ is $p$-conjugate only to itself.
For all $a,b\in S$, $ab\cp ba$, and so $ab=ba$ by the assumption.
\end{proof}

\begin{theorem}
\label{ttrid}
Let $S$ be an epigroup. Then, $\ctr$ is the identity relation in $S$ if and only
if $S$ is a commutative completely regular epigroup.
\end{theorem}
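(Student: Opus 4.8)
The plan is to prove both implications directly from the structural description of $\ctr$ established in Theorem~\ref{thm:steinbergExtensionEpigroups}, the identification of $\ctr$ with $\cp$ on completely regular elements (Corollary~\ref{old4.6}), and the characterization of when $\cp$ is trivial (Theorem~\ref{tpid}). The key observation driving everything is that the two ``generating'' relations for $\ctr$, namely $x\ctr x''$ and $xy\ctr yx$, collapse to the \emph{equalities} $x=x''$ and $xy=yx$ the moment $\ctr$ is assumed to be the identity, and these are exactly complete regularity and commutativity.

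For the forward direction, I would assume $\ctr$ is the identity relation on $S$ and extract complete regularity and commutativity separately. Since $S$ is an epigroup we have $\Epi(S)=S$, so part (2) of Theorem~\ref{thm:steinbergExtensionEpigroups} gives $x\ctr x''$ for every $x\in S$; as $\ctr$ is the identity, this forces $x=x''$. Because $x''$ is always completely regular (equivalently, an element $a$ of an epigroup is completely regular precisely when $a''=a$, as used in the proof of Proposition~\ref{prp:W}), every element of $S$ is completely regular, i.e. $S$ is a completely regular semigroup. For commutativity I would invoke part (3) of the same theorem: for all $x,y\in S$ both products $xy$ and $yx$ lie in $\Epi(S)=S$, so $xy\ctr yx$, and triviality of $\ctr$ yields $xy=yx$. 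Hence $S$ is a commutative completely regular epigroup.

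For the reverse direction, suppose $S$ is a commutative completely regular epigroup. Complete regularity means $\Epi_1(S)=S$, so Corollary~\ref{old4.6} applies on all of $S$ and gives $\ctr\,=\,\cp$. By Theorem~\ref{tpid}, $\cp$ is the identity relation exactly because $S$ is commutative; hence $\ctr\,=\,\cp$ is the identity as well. There is no serious obstacle in either direction; the only points needing care are to note that $S$ being an epigroup guarantees $\Epi(S)=S$, so that parts (2) and (3) of Theorem~\ref{thm:steinbergExtensionEpigroups} apply to every element and to every pair, and that on a completely regular semigroup $\ctr$ coincides with $\cp$, so that Theorem~\ref{tpid} is available.
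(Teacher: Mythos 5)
Your proposal is correct and follows essentially the same route as the paper: complete regularity from $x\ctr x''$ (Theorem~\ref{thm:steinbergExtensionEpigroups}(2)), commutativity from the triviality of $\ctr$, and the converse via $\ctr\,=\,\cp$ on completely regular semigroups together with Theorem~\ref{tpid}. The only cosmetic difference is that for commutativity you invoke Theorem~\ref{thm:steinbergExtensionEpigroups}(3) directly ($xy\ctr yx$), whereas the paper deduces that $\cp$ is the identity from $\cp\,\subseteq\,\ctr$ and then applies Theorem~\ref{tpid}; these are interchangeable one-line arguments resting on the same facts.
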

\begin{proof}
Suppose first that $\ctr$ is the identity relation. Since $\cp\,\subseteq\,\ctr$, it follows that
$\cp$ is also the identity relation, and hence, by Theorem~\ref{tpid}, $S$ is commutative.
In every epigroup, we have $a\ctr a''$ by Theorem~\ref{thm:steinbergExtensionEpigroups}. Since
$\ctr$ coincides with equality, we have $a=a''$ for all $a\in S$. Thus $S$ is a commutative completely
regular epigroup (or, equivalently, a commutative inverse epigroup).

Conversely, if $S$ is a commutative completely regular epigroup, then $\ctr\,=\,\cp$ by
Corollary~\ref{thm:ganna}, and so $\ctr$ is the identity relation by Theorem~\ref{tpid}.
\end{proof}

The corresponding result for $o$-conjugacy is as follows.

\begin{theorem}
\label{thm:o-comm}
Let $S$ be a semigroup. Then:
\begin{enumerate}[label=\textup{(\arabic*)}]
 \item if $S$ is commutative, then $\co$ is the minimum
cancellative congruence on $S$;
 \item $\co$ is the identity relation in $S$ if and only if $S$
is commutative and cancellative.
\end{enumerate}
\end{theorem}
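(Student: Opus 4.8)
The plan is to reduce $\co$ to a tractable form in the commutative setting and then verify directly the three requirements that make $\co$ the minimum cancellative congruence. When $S$ is commutative the two defining conditions of $\co$ collapse into one: since $gb=bg$ and $ha=ah$, the requirement $ag=gb$ becomes $ag=bg$ and the requirement $bh=ha$ becomes $ah=bh$, and these are the same condition up to renaming the witness. Hence I would first record the equivalence
\[
a\co b\iff\exists_{g\in S^1}\ ag=bg,
\]
from which everything else flows.

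For part (1), I would check three things in turn. \textbf{Congruence:} if $ag=bg$, then multiplying on the right by $c$ and using commutativity gives $(ac)g=acg=bcg=(bc)g$, so $a\co b$ implies $ac\co bc$; left compatibility is automatic by commutativity. \textbf{The quotient is cancellative:} if $ac\co bc$, pick $g\in S^1$ with $acg=bcg$; then $g':=cg$ witnesses $ag'=bg'$, so $a\co b$, and thus $[a][c]=[b][c]$ forces $[a]=[b]$ in $S/{\co}$ (by commutativity, right cancellation suffices). \textbf{Minimality:} let $\rho$ be any congruence with $S/\rho$ cancellative, and take $a\co b$ with witness $g$. If $g=1$ then $a=b$; otherwise $ag=bg$ is an equality in $S$, so $[a]_\rho[g]_\rho=[b]_\rho[g]_\rho$, and cancelling $[g]_\rho$ gives $a\mathrel{\rho}b$. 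Hence $\co\,\subseteq\,\rho$, which together with the first two points shows $\co$ is the minimum cancellative congruence.

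For part (2), the reverse implication is immediate: if $S$ is commutative and cancellative, then any witness $g$ for $a\co b$ gives $ag=bg$, whence $a=b$, so $\co$ is the identity. For the forward implication I would extract commutativity first and cancellativity second. Commutativity follows because $ab\cp ba$ always holds (take $u=a$, $v=b$) and $\cp\,\subseteq\,\co$ in every semigroup \cite[Thm.~2.2]{AKM14}; thus $ab\co ba$, and $\co$ being the identity forces $ab=ba$. Now that $S$ is known to be commutative, part (1) applies and tells us $\co$ is the minimum cancellative congruence; since $\co$ equals the identity relation, the identity congruence is cancellative, i.e.\ $S$ itself is cancellative.

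The argument is essentially routine once the commutative reduction of $\co$ is in hand, so there is no serious obstacle. The one point requiring care is the logical order in the forward direction of part (2): commutativity must be established \emph{before} invoking part (1), since part (1) assumes it. A minor bookkeeping issue throughout is the treatment of the adjoined unit $1\in S^1$ (a witness equal to $1$ simply corresponds to $a=b$), which I would dispatch in passing.
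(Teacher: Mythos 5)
Your proposal is correct and follows essentially the same route as the paper: the same reduction $a\co b\iff\exists_{g\in S^1}\,ag=bg$ in the commutative case, the same three checks (congruence, cancellative quotient, minimality) for part (1), and the same structure for part (2), namely commutativity from $xy\co yx$ followed by cancellativity via part (1). The only cosmetic differences are that you verify the reverse implication of (2) directly instead of invoking (1), and you obtain $ab\co ba$ by citing $\cp\,\subseteq\,\co$ rather than exhibiting the witnesses $(ab)a=a(ba)$ and $(ba)b=b(ab)$ as the paper does.
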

\begin{proof}
For (1): Suppose $S$ is commutative. Then for all $a,b\in S$, $a\co b$ if
and only if $ag=bg$ for some $g\in S^1$. Thus, whenever $a\co b$ we have $ca\co
cb$ and $ac\co bc$, for all $c\in S$, which implies that $\co$ is a congruence.
Denote the congruence class of $x\in S$ by $\bar{x}$. Let
$\bar{a},\bar{b},\bar{c}\in S/\!\!\co$ and suppose
$\bar{a}\bar{b}=\bar{a}\bar{c}$. Then $ab\co ac$, and so $(ab)g=(ac)g$ for some
$g\in S^1$. Since
$S$ is commutative, we have $b(ag)=c(ag)$, and so $b\co c$. Hence
$\bar{b}=\bar{c}$, which implies that
$S/\!\!\co$ cancellative.
Now let $\theta$ be any
cancellative congruence on $S$ and suppose $a\co b$, where $a,b\in S$. Then $ag
= bg$ for some $g\in S^1$, so
$ga\,\,\theta\,\,gb$. Since $\theta$ is cancellative, it follows that
$a\,\,\theta\,\,b$. Therefore,
$\co\,\,\subseteq\theta$, which proves $\co$ is the minimum cancellative
congruence on $S$.

For (2): If $S$ is commutative and cancellative, then (1) implies $\co$ must be
the identity relation. For the converse, note that $xy\co yx$ in any semigroup (since
$(xy)x = x(yx)$  and  $(yx)y = y(xy)$). Thus if $\co$ is the identity relation,
then $xy = yx$ for all $x,y\in S$, that is, $S$ is commutative. By (1), $S\cong S/\!\!\co$
is cancellative.
\end{proof}

Observe that in left zero semigroups (those satisfying the identity $xy=x$),
$\co$ is the universal relation, thus a congruence, but the semigroup is not
commutative.

In commutative semigroups, $p$-conjugacy is the identity, and nontrivial
cancellative semigroups cannot have a zero. Thus the following result holds.

\begin{cor}
\label{cide}
Let $S$ be a commutative and cancellative semigroup. Then
$\cp$, $\co$, and $\con$ all coincide, and are equal to the identity relation.
\end{cor}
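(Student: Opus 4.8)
The plan is to reduce everything to the single fact that $\co$ is the identity relation and then exploit the universally valid inclusions among the three conjugacies. First I would invoke Theorem~\ref{thm:o-comm}(2): since $S$ is commutative and cancellative, that result gives immediately that $\co$ is the identity relation on $S$. This is the one substantive input, and it is already available, so no work is needed to establish it.

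Next I would recall the two general inclusions that hold in \emph{every} semigroup, namely $\cp\,\,\subseteq\,\,\co$ (stated in the introduction, from \cite[Thm.~2.2]{AKM14}) and $\con\,\,\subseteq\,\,\co$. Together with the reflexivity of $\cp$ and of $\con$ (the latter being an equivalence relation, the former being reflexive as noted in the introduction), these inclusions squeeze both relations to equality: the diagonal is contained in $\cp$ and in $\con$, while both are contained in $\co$, which is itself the diagonal. Hence $\cp$ and $\con$ both coincide with the identity relation, and so all three relations coincide with it.

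As a cross-check that also makes the proof self-contained for $\cp$, I would note that the conclusion $\cp\,\,=\,\,\mathrm{id}$ follows alternatively and directly from Theorem~\ref{tpid}, since $S$ is commutative. One could also observe, following the remark preceding the corollary, that a nontrivial cancellative semigroup has no zero (a zero $0$ would force $0a=0b$ and hence $a=b$ by cancellation), so that $\con\,\,=\,\,\co$ by the equality of these relations in zero-free semigroups; but the squeeze argument above bypasses this case analysis entirely and treats the trivial one-element semigroup uniformly.

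I do not anticipate a genuine obstacle here: the only conceptual point is recognizing that once $\co$ is pinned down as the identity, the inclusion chain $\mathrm{id}\,\,\subseteq\,\,\cp\,\,\subseteq\,\,\co$ and $\mathrm{id}\,\,\subseteq\,\,\con\,\,\subseteq\,\,\co$ forces collapse, so the corollary is a short deduction from Theorem~\ref{thm:o-comm} rather than a new computation.
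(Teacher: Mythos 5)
Your proof is correct, but your primary argument is organized differently from the paper's. The paper (in the sentence immediately preceding the corollary) deduces the result from two separate facts: Theorem~\ref{tpid}, which gives $\cp\,=\,\mathrm{id}$ from commutativity alone, and the observation that a nontrivial cancellative semigroup has no zero, whence $\con\,=\,\co$ by the zero-free coincidence of these two relations; combined with Theorem~\ref{thm:o-comm}(2) this yields the corollary. You instead pin down $\co\,=\,\mathrm{id}$ via Theorem~\ref{thm:o-comm}(2) and then squeeze: since $\cp$ and $\con$ are reflexive in every semigroup and both are contained in $\co$ in every semigroup, the chains $\mathrm{id}\,\subseteq\,\cp\,\subseteq\,\co\,=\,\mathrm{id}$ and $\mathrm{id}\,\subseteq\,\con\,\subseteq\,\co\,=\,\mathrm{id}$ force both to collapse to the identity. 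Your squeeze is more uniform: it needs only one substantive input, treats the one-element semigroup without any case analysis, and avoids invoking Theorem~\ref{tpid} and the no-zero argument altogether (though you mention both as cross-checks, which is exactly the paper's route). What the paper's decomposition buys in exchange is slightly finer information: commutativity by itself already forces $\cp$ to be trivial, with cancellativity needed only for $\co$ (and, through the absence of a zero, for $\con$). Either argument is complete and both rest on facts already established in the paper.
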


By the definition of the notion of conjugacy, all semigroup conjugacies coincide in a group.
The following result is a sort of converse.
\begin{cor}
\label{cide2}
Let $S$ be an epigroup. Then $\cp$, $\co$, $\ctr$ and $\con$ all coincide
and are equal to the identity relation if and only if $S$ is a commutative group.
\end{cor}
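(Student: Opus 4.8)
The plan is to prove the two implications separately, in each case reducing everything to the characterizations of the individual conjugacies as the identity relation that have already been established in this section.

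For the easy direction, I would suppose $S$ is a commutative group. A group is in particular commutative and cancellative, so Corollary~\ref{cide} immediately gives that $\cp$, $\co$, and $\con$ all coincide with the identity relation. Moreover a group is a commutative completely regular epigroup, so Theorem~\ref{ttrid} gives that $\ctr$ is the identity relation as well. Hence all four notions coincide and equal the identity, which is the desired conclusion.

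For the substantive direction, assume that the four relations coincide and all equal the identity relation. The key point is that we need only exploit $\co$. Since $\co$ is the identity relation, Theorem~\ref{thm:o-comm}(2) tells us that $S$ is commutative and cancellative, so it remains to upgrade ``commutative cancellative epigroup'' to ``commutative group.'' Here I would argue that in any cancellative semigroup an idempotent $e$ is forced to be an identity: from $e(ea)=(ee)a=ea$ and cancellation we get $ea=a$, and dually $ae=a$. Thus $S$ has a (unique) identity $e$, and every idempotent of $S$ equals $e$. Because $S$ is an epigroup, each $a\in S$ gives rise to an idempotent $a^{\omega}$, which must therefore equal $e$. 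Finally, using $a^{\omega}=aa'=a'a$ (see \eqref{eq:epi2} together with the identities recorded after \eqref{etfh}), we obtain $aa'=a'a=e$, so every element is invertible. Hence $S$ is a group, and being commutative, it is a commutative group.

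The main obstacle — indeed the only non-formal point — is the step passing from ``commutative cancellative epigroup'' to ``group'': one must recognize that cancellativity collapses all idempotents (in particular every $a^{\omega}$) to a single identity $e$, after which invertibility of each $a$ follows at once from $aa'=a^{\omega}=e$. Everything else is a direct appeal to the previously proved characterizations (Theorem~\ref{thm:o-comm}, Corollary~\ref{cide}, and Theorem~\ref{ttrid}), so no new machinery is required beyond this short cancellativity argument.
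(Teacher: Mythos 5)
Your proof is correct, but in the substantive direction it takes a genuinely different route from the paper. The paper uses two of the hypotheses: from $\ctr$ being the identity it deduces via Theorem~\ref{ttrid} that $S$ is commutative and completely regular (hence regular), from $\co$ being the identity it deduces via Theorem~\ref{thm:o-comm}(2) that $S$ is cancellative, and then it invokes the well-known fact that a regular cancellative semigroup is a group. You instead use only the triviality of $\co$: Theorem~\ref{thm:o-comm}(2) gives commutativity and cancellativity, and then you prove directly that a cancellative epigroup is a group --- an idempotent $e$ in a cancellative semigroup is forced to be a two-sided identity, so every $a^{\omega}$ equals $e$, and $aa'=a'a=a^{\omega}=e$ exhibits inverses. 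This elementary argument replaces both the appeal to Theorem~\ref{ttrid} and the citation of the regular-cancellative fact, and it buys something the paper's proof does not make explicit: among epigroups, $\co$ being the identity relation already characterizes commutative groups, so the hypotheses on $\cp$, $\ctr$ and $\con$ are not needed in this direction at all (they follow for free once the converse direction is proved). The paper's route is shorter given the facts it is willing to cite, and it displays how the hypotheses on $\ctr$ and on $\co$ each contribute structure; your route is more self-contained and marginally stronger. The easy direction is essentially the same in both: you assemble it from Corollary~\ref{cide} and Theorem~\ref{ttrid}, where the paper simply declares it obvious.
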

\begin{proof}
Regarding the direct implication, observe that if $\ctr$ is the identity, then the semigroup
is completely regular and commutative; in addition $\co$ trivial implies that $S$ is cancellative.
It is well known that a regular cancellative semigroup is a group.

The converse is obvious.
\end{proof}

\medskip

Now we discuss conditions under which our various notions of conjugacy on a
semigroup $S$ coincide with the universal relation $S\times S$. Regarding $o$-conjugacy,
no characterization seems likely, because of what we have noted multiple times already:
$\co$ is universal in any semigroup with a zero.

Thus we pass immediately to trace conjugacy in epigroups. One would guess that in
epigroups with universal trace conjugacy, each subgroup is trivial, and this does indeed
turn out to be the case; see part (2) of the following result. More interestingly, the theorem
shows that the class of epigroups in which trace conjugacy is universal forms a variety.

\begin{theorem}
\label{thm:tr_universal}
Let $S$ be an epigroup. The following are equivalent:
\begin{enumerate}[label=\textup{(\arabic*)}]
    \item $\ctr$ is the universal relation;
    \item $E(S)$ is an antichain and for all $x\in S$, $x'' = x^{\omega}$;
    \item for all $x,y\in S$, $x'yx' = x'$;
    \item for all $x,y\in S$, $x^{\omega} y x^{\omega} = x^{\omega}$;
    \item for all $x\in S$, $e\in E(S)$, $exe=e$.
\end{enumerate}
\end{theorem}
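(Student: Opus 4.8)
The plan is to prove the five conditions equivalent by running the cycle (1)$\Rightarrow$(2)$\Rightarrow$(4)$\Rightarrow$(5)$\Rightarrow$(1), and then folding in (3) through the two quick implications (3)$\Rightarrow$(5) and (5)$\Rightarrow$(3). The first thing I would record is the trivial but useful fact that the idempotents of $S$ are exactly the elements $x^{\omega}$: each $x^{\omega}\in E(S)$, and each $e\in E(S)$ satisfies $e=e^{\omega}$. Consequently (4) and (5) are \emph{literally the same statement}, so it suffices to handle one of them. For the same reason (3)$\Rightarrow$(5) is immediate: specializing (3) to $x=e\in E(S)$ and using $e'=e$ gives $eye=e$ for all $y$.

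Next I would dispatch the structural implications clustered around (5). Assuming (5), every subgroup is trivial: if $G$ has identity $e$ and $g\in G$, then $ege=g$ (as $eg=ge=g$), while (5) gives $ege=e$, so $g=e$; in particular $x'=x''=x^{\omega}$ for all $x$. Also $E(S)$ is an antichain: if $e\le f$ then $fef=f$ by (5), but $fef=(fe)f=ef=e$ since $fe=ef=e$, forcing $e=f$. These two facts give (2), and they also give (3), since $x'yx'=x^{\omega}yx^{\omega}=x^{\omega}=x'$ by (5). To close the cycle I would prove (5)$\Rightarrow$(1): for idempotents $e,f$, (5) yields $efe=e$ and $fef=f$, so with $u=fe$, $v=ef$ one computes $uv=fef=f$ and $vu=efe=e$, whence $e\cp f$; thus all idempotents are pairwise $p$-conjugate. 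Since subgroups are trivial, $a''=a^{\omega}$ and $b''=b^{\omega}$ are idempotents for all $a,b\in S$, and $a\ctr b\iff a''\cp b''$ by Theorem~\ref{thm:trace}; hence $a\ctr b$ always, i.e. $\ctr$ is universal. For (2)$\Rightarrow$(4) I would fix $x,y$ and set $z=x^{\omega}yx^{\omega}$, so $x^{\omega}z=zx^{\omega}=z$. Writing $z^{\omega}=zz'=z'z$, a short computation gives $x^{\omega}z^{\omega}=z^{\omega}x^{\omega}=z^{\omega}$, i.e. $z^{\omega}\le x^{\omega}$, so $z^{\omega}=x^{\omega}$ by the antichain hypothesis. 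Since subgroups are trivial we have $z''=z^{\omega}$, and then $z=x^{\omega}z=z^{\omega}z=zz'z=z''=z^{\omega}=x^{\omega}$, which is exactly (4).

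The main obstacle is the remaining implication (1)$\Rightarrow$(2), and within it the assertion that $x''=x^{\omega}$ for all $x$ — equivalently, that universality of $\ctr$ forces every subgroup to be trivial. The antichain half of (2) is easy: universality gives $e\ctr f$ whenever $e\le f$, and Lemma~\ref{lem:epi-ef} then forces $e=f$. For the subgroup half, set $e=x^{\omega}$ and $g=x''$, an element of the group $H$-class with identity $e$; universality together with Theorem~\ref{thm:trace} gives $g\cp e$, say $g=uv$ and $e=vu$ with $u,v\in S^{1}$. Here a naive use of $p$-conjugacy is deceptive. By Lemma~\ref{lem:xyyx} (identity \eqref{eq:epi4}) we have $(uv)'u=u(vu)'$, i.e. $g^{-1}u=ue$ (using $g'=g^{-1}$ in the group and $e'=e$); multiplying on the right by $v$ gives $uev=g^{-1}uv=g^{-1}g=e$, while $e=vu$ gives $uev=u(vu)v=(uv)(uv)=g^{2}$, so one obtains only $g^{2}=e$.

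The extra leverage needed to pass from $g^{2}=e$ to $g=e$ comes from combining two instances of the defining relations. From $uev=e$, left-multiplying by $v$ yields $v(uev)=ve$; but $v(uev)=(vu)(ev)=e(ev)=ev$, so $ev=ve$. On the other hand $e=vu$ gives $ev=(vu)v=v(uv)=vg$. Hence $ve=ev=vg$, and left-multiplying by $u$ gives $(uv)e=(uv)g$, that is $ge=gg$; since $ge=g$ and $gg=g^{2}=e$, we conclude $g=e$. Therefore $x''=x^{\omega}$, which completes (2) and hence the whole cycle. I expect this final reconciliation — extracting $g=e$ rather than merely $g^{2}=e$ — to be the only genuinely delicate point; every other step is routine bookkeeping with the epigroup identities \eqref{eq:epi1}--\eqref{eq:epi5} and the fact that $x^{\omega}$ is the identity of the group containing $x''$ and $x'$.
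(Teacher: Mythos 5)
Your proposal is correct, and it takes a genuinely different route from the paper's, although both rest on the same three tools: Lemma~\ref{lem:epi-ef} for the antichain, Theorem~\ref{thm:trace}(6) (i.e.\ $a\ctr b\iff a''\cp b''$), and Lemma~\ref{lem:xyyx}/\eqref{eq:epi4}. The paper's implication graph is (1)$\implies$(2)$\implies$(3)$\implies$(1) together with (3)$\iff$(4)$\iff$(5), so (3) is the hub; yours is (1)$\implies$(2)$\implies$(4)$\equiv$(5)$\implies$(1) with (3) attached to (5), using the identification $E(S)=\{x^{\omega}:x\in S\}$ under which (4) and (5) coincide. In the key step (1)$\implies$(2), the paper conjugates $x'$ to an \emph{arbitrary} idempotent $e$ and is done in two lines: $x^{\omega}=(x')^{\omega}=vu(vu)'=v(uv)'u=veu$ by \eqref{eq:epi4}, hence $x'=x'x^{\omega}=v(uv)eu=veu=x^{\omega}$ (the paper's displayed line starts ``$x''=x'x^{\omega}$'', a slip since $x'x^{\omega}=x'$ by \eqref{eq:epi1}; what its computation actually proves is $x'=x^{\omega}$, which suffices). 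You instead conjugate $g=x''$ to the specific idempotent $e=x^{\omega}$ and reach $g=e$ through $g^{2}=e$, then $ev=ve$, then $ge=g^{2}$; this is longer but elementary, and it honestly exposes why a naive use of $\cp$ stalls at $g^{2}=e$. Where the antichain hypothesis is cashed in, the paper manufactures an idempotent above $x'$, namely $x'(yx')'$, whose idempotency must be verified by hand, whereas your $z^{\omega}\leq x^{\omega}$ (for $z=x^{\omega}yx^{\omega}$) is an idempotent for free --- a cleaner instance of the same idea. One pedantic point: Lemma~\ref{lem:xyyx} is stated for $x,y\in S$ while your $u,v$ lie in $S^{1}$; either apply it inside the epigroup $S^{1}$ or observe that $u=1$ or $v=1$ forces $g=e$ outright.
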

\begin{proof}
We prove (1)$\implies$(2)$\implies$(3)$\implies$(1) and (3)$\iff$(4)$\iff$(5).

Assume (1), that is, $\ctr\,=\,S\times S$.  Since $\ctr\,\subseteq\,\co$, it follows that
$\ctr\,=\,\co$. By Proposition \ref{lem:epi-oc}, $E(S)$ is an antichain.
Now fix an idempotent $e$. Since $\ctr$ is universal, $e\,\ctr\,x'$ for all $x\in S$.
Thus by Theorem \ref{thm:trace}(6), there exist $u,v\in S^1$ such that $e = e'' = uv$ and
$x' = (x')'' = vu$. Now
\begin{equation}
\label{eqn:tr_univ_1}
x^{\omega} = (x')^{\omega} = x'x'' = vu(vu)' \by{eq:epi4} v(uv)'u = ve'u = veu\,.
\end{equation}
Thus
\[
x'' = x' x^{\omega} \by{eqn:tr_univ_1} vuveu = veeu = veu = x^{\omega}\,.
\]
This establishes (2).

\smallskip

Assume (2) holds. Note that for all $x\in S$, $x' = x''' = (x^{\omega})' = x^{\omega}$, so $x'$ is
idempotent. We show that for all $x,y\in S$, $x'(yx')'$ is idempotent, freely using \eqref{eq:epi4}
to rewrite this as $(x'y)'x'$:
\[
\underbrace{x'(yx')'}x'(yx')' = (x'y)'\underbrace{x'x'}(yx')'
= \underbrace{(x'y)'x'}(yx')' = x'\underbrace{(yx')'(yx')'} = x'(yx')'\,.
\]
Next we show that $x'\leq x'(yx')'$: $x'\cdot x'(yx')' = x'(yx')'$ and
$x'(yx')'x' = (x'y)'x'x' = (x'y)'x' = x'(yx')'$.

Now since $E(S)$ is an antichain and $x', x'(yx')'\in E(S)$, we conclude that $x'(yx')' = x'$.
Finally, we have $(yx')' = (yx')^{\omega} = y\underbrace{x'(yx')'} = yx'$. Therefore $x'yx' =x'(yx')'= x'$,
which establishes (3).

\smallskip

Assume (3) holds. For $x,y\in S$, set $u = x''y''$ and $v = y''x''$. Then
$x'' = x'' y'' y'' x'' = uv$ and $y'' = y'' x'' x'' y'' = vu$.
Thus $x''\,\cp\,y''$, and so $x\,\ctr\,y$ by Theorem \ref{thm:trace}(6). Thus $\ctr$ is the
universal relation, that is, (1) holds.

\smallskip

Next, once again assume (3) holds.
Taking $y = x''$, we have $x^{\omega} y x^{\omega} = x(x'(yx)x') = xx'  = x^{\omega}$, so that (4) holds.

\smallskip

Assume (4) holds. Taking $y = x$, we obtain $x^{\omega} = x^{\omega} x x^{\omega} = x'' x^{\omega} = x''$.
Thus $xx' = x''$, and so $x' = x'xx' = x'x'' = (x')^{\omega} = x^{\omega}$. Therefore
$x'yx' = x^{\omega}yx^{\omega} = x^{\omega} = x'$. This establishes (3).

\smallskip

Finally, the equivalence of (4) and (5) is obvious.
\end{proof}

\medskip

Next we discuss semigroups in which $p$-conjugacy is the universal relation. Our description is
complete for semigroups with idempotents, and partial for semigroups without idempotents.

First we need a definition. A \emph{rectangular band} is an idempotent semigroup satisfying
the identity $xyx=x$ for all $x,y$. Every rectangular band is completely simple, and in fact,
is isomorphic to the Rees matrix semigroup $I\times G\times\Lambda$ with $G=\{1\}$
\cite[Thm.~1.1.3]{Ho95}.

\begin{theorem}
\label{tpun}
Let $S$ be a semigroup.
\begin{enumerate}[label=\textup{(\arabic*)}]
 \item If $S$ is a rectangular band, then $\cp$ is the universal relation.
 \item If $\cp$ is the universal relation in $S$, then $S$ is simple.
 If, in addition, $S$ contains an idempotent, then $S$ is a rectangular band.
\end{enumerate}
\end{theorem}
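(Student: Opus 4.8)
The plan is to treat the two parts separately: (1) is a one-line verification, while the heart of (2) is a reduction to semigroups with a zero.

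For (1), I would simply exhibit conjugators. Given $a,b$ in a rectangular band, set $u=ab$ and $v=ba$. Using idempotency and the defining identity $xyx=x$, one computes $uv=abba=aba=a$ and $vu=baab=bab=b$, so $a\cp b$ by \eqref{econ2}. Since $a,b$ are arbitrary, $\cp$ is universal.

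For the simplicity assertion in (2), the key is the following lemma: \emph{no semigroup $T$ with a zero and $|T|\geq 2$ can have $\cp$ universal}. Granting it, suppose $\cp$ is universal in $S$ but $S$ has a proper ideal $I$. Since a surjective homomorphism sends $a=uv,\,b=vu$ to $\bar a=\bar u\bar v,\,\bar b=\bar v\bar u$, universality of $\cp$ passes to the Rees quotient $S/I$, which has a zero and at least two elements; this contradicts the lemma, so $S$ is simple. To prove the lemma, assume $\cp$ is universal on $T$. From $x\cp 0$ we get $u,v\in T^1$ with $x=uv$ and $vu=0$, whence $x^2=u(vu)v=0$, so every element squares to $0$. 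Then for any $x,y$, writing $x=uv,\,y=vu$ from $x\cp y$ gives $xy=u\,v^2\,u=0$; hence $T^2=\{0\}$ and $T$ is a null semigroup. But in a null semigroup $\cp$ is the identity relation (if $u,v\in T$ then $uv=vu=0$, and otherwise $x=y$), contradicting $|T|\geq 2$. This step is where universality of $\cp$ is genuinely used, as opposed to a pointwise containment, since in general $\cp\not\subseteq\gj$.

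For the rectangular band claim, suppose in addition $e\in E(S)$. For any $a$, universality gives $u,v\in S^1$ with $a=uv$ and $e=vu$; then $a^2=u(vu)v=uev$ and $a^3=u(vu)ev=u\,e\,e\,v=uev=a^2$, so $S$ satisfies $x^3=x^2$ and is in particular a group-bound epigroup (every $a^2$ is idempotent). A simple group-bound semigroup is completely simple, so I may write $S=\mathcal{M}(G;I,\Lambda;P)$. By Theorem~\ref{pcs}, $\cp\,=\,\co$ on $S$, so $\co$ is universal. Feeding the coordinates $(i,1,\lambda)$ and $(i,b,\lambda)$ into the Rees-coordinate conjugacy equation from the proof of Theorem~\ref{pcs} forces, for $p=p_{\lambda i}$, that every $b\in G$ have the form $p^{-1}c^{-1}pc$ for some $c$, i.e. that the conjugacy class of $p$ be all of $G$; this is impossible unless $G$ is trivial. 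With $G$ trivial every $p_{\lambda j}=1$ and the multiplication \eqref{eree} collapses to $(i,\lambda)(j,\mu)=(i,\mu)$, so $S$ is a rectangular band.

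The main obstacle I anticipate is the simplicity lemma together with the Rees-quotient reduction: this is the only genuinely global use of universality, and care is needed to handle the cases $u=1$ or $v=1$ uniformly. The passage from ``simple and $x^3=x^2$'' to ``completely simple'' is standard but should be cited precisely, and the closing coordinate computation is routine once Theorem~\ref{pcs} is invoked.
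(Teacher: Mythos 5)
Your proof is correct, but in part (2) it takes a genuinely different route from the paper's. Part (1) and the derivation of the identity $x^3=x^2$ from $x\cp e$ are the same in both arguments; everything else diverges. For simplicity, the paper argues directly inside $S$: starting from $a\cp b$ it unfolds the relation $\cp$ three times ($a=u_1v_1$, $u_1=u_2v_2$, $a=u_3v_3$, \dots) to exhibit $b$ explicitly as an element of $SaS$. You instead prove a clean auxiliary lemma --- no semigroup with zero and at least two elements has $\cp$ universal, since $x\cp 0$ forces $x^2=0$, then $T^2=\{0\}$, and $\cp$ is the identity on a null semigroup --- and push universality of $\cp$ through the Rees quotient $S/I$ of a putative proper ideal; this is a more structural argument, and your lemma is a reusable fact that isolates exactly why zeros obstruct universality of $\cp$. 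For the rectangular band conclusion, the paper never leaves the abstract epigroup setting: from $\cp\ \subseteq\ \ctr$ it gets that $\ctr$ is universal, invokes its Theorem~\ref{thm:tr_universal}(2) to make $E(S)$ an antichain (so $S$ is completely simple by definition, having the idempotent $e$), deduces from complete regularity that $x=x^4$, hence $x^2=x$, and finally applies Theorem~\ref{thm:tr_universal}(3) to get $xyx=x$. You instead cite Munn's theorem (a simple group-bound semigroup is completely simple --- do cite it precisely, e.g.\ via \cite[Thm.~3.2.11]{Ho95} together with the passage between simple and $0$-simple), pass to the Rees matrix representation $\mathcal{M}(G;I,\Lambda;P)$, and kill the structure group by the elementary observation that if every $b\in G$ has the form $p^{-1}c^{-1}pc$ then the conjugacy class of $p$ is all of $G$, which forces $G=\{1\}$ since $\{1\}$ is itself a class. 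So your endgame trades the paper's trace-conjugacy machinery for the Rees structure theorem plus a small piece of group theory; the paper's version is self-contained within its own toolkit and coordinate-free, while yours makes the underlying structural reason (trivial maximal subgroups) explicit and only uses Theorem~\ref{pcs}-style coordinate equations rather than anything about $\ctr$.
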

\begin{proof}
(1) Let $S$ be a rectangular band. For $x,y\in S$, set $u = xy$, $v= yx$.
Then $x = xyx = xyyx = uv$ and $y = yxy = yxxy = vu$. Therefore $x\,\cp\,y$
for all $x,y\in S$, that is, $\cp$ is universal.

(2) Let $S$ be a semigroup in which $\cp$ is the universal relation. We first show
that $S$ is simple. For $a\in S$, $S^1a S^1$ is the principal ideal of $S$ generated by $a$.
We want to show that $S^1a S^1=S$. Let $b\in S$. If $b=a$, then clearly $b\in S^1a S^1$
Suppose that $b\ne a$. Since $a\cp b$ and $a\ne b$,
there exist $u_1, v_1\in S$ such that $a = u_1v_1$, $b = v_1u_1$.
Note that $u_1\neq v_1$ (since otherwise $a=b$), so there exist $u_2,v_2\in S$
such that $u_1 = u_2v_2$, $v_1 = v_2u_2$. Now, if $u_2 = a$, then
$b = v_1u_1 = v_2au_1 \in SaS$, so we may assume $u_2 \neq a$. Then, there
exist $u_3,v_3\in S$ such that $a = u_3v_3$, $u_2 = v_3u_3$, and so
\[
b = v_1 u_1 = v_2u_2u_2v_2= v_2v_3u_3v_3u_3v_2 = v_2v_3au_3v_2 \in SaS\,.
\]
Hence $S^1a S^1=S$, and so $S$ is simple.

Now suppose $S$ has an idempotent $e$. We will show that $S$ satisfies
the identity $x^3 = x^2$. Since $x\cp e$, there exist $u,v\in S^1$
with $x = uv$, $e = vu$. Then
\[
xxx = uvuvuv = ueev = uev= uvuv = xx\,.
\]

The identity $x^3 = x^2$ implies that $S$ is an epigroup in $\mathcal{E}_2$ with $x' = x^2$,
that is, $x'xx' = x^5 = x^2 = x'$, $xx'=x'x$ and $x^3 x' = x^2$. Since $\cp\,\subseteq\,\ctr$,
we have that $\ctr$ is the universal relation. By Theorem~\ref{thm:tr_universal}(2),
$E(S)$ is a chain, that is, every idempotent is primitive.

We have now shown that $S$ is completely simple. In particular, $S$ is completely regular
and the epigroup pseudo-inverse $x' = x^2$ is actually an inverse. Thus
$x = xx'x = x^4$. But this together with $x^3 = x^2$ imply $x^2 = x$ for all $x\in S$,
that is, $S$ is an idempotent semigroup. Now using Theorem~\ref{thm:tr_universal}(3),
we conclude that $xyx = x'yx' = x$ for all $x,y\in S$, that is, $S$ is a
rectangular band.
\end{proof}




\begin{example}
By Theorem~\ref{tpun}, if $\cp$ is universal in a semigroup $S$, then $S$ is
simple.
If $S$ does not have an idempotent, then the converse is not necessarily true.
Let $X$ be a countably infinite set. Denote by $\Gamma(X)$ the semigroup of all
injective
mappings from $X$ to $X$. For $\al\in\Gamma(X)$, let $\ima(\al)$ denote the
image of $\al$.
The set $S$ consisting of all $\al\in\Gamma(X)$ such that $X\setminus\ima(\al)$
is infinite
is a subsemigroup of $\Gamma(X)$, called the Baer-Levi semigroup
\cite[\S8.1]{ClPr64}. The Baer-Levi
semigroup is simple without idempotents \cite[Theorem~8.2]{ClPr64}. Partition
the set $X$ as follows:
\[
X=\{x,y\}\cup\{z^1_1,z^1_2,\ldots\}\cup\{z^2_1,z^2_2,\ldots\}\cup\{z^3_1,z^3_2,
\ldots\}\cup\ldots.
\]
Define $\al,\bt\in S$ by:
\[
\al(x)=x,\,\,\al(y)=y,\,\,\al(z^j_i)=z^j_{i+1},\,\,\bt(x)=y,\,\,\bt(y)=x,\,\,
\bt(z^j_i)=z^j_{i+1}.
\]
Then $(\al,\bt)\not\in\,\,\cp$ by \cite[Proposition~4]{KuMa07}, so $\cp$ is not
the universal relation.
\end{example}

Since, for example, every finite semigroup has an idempotent,
Theorem~\ref{tpun} implies an immediate corollary.

\begin{cor}
In a finite semigroup (or more generally, an epigroup) $S$, $\cp$ is the universal relation if and only if
$S$ is a rectangular band.
\end{cor}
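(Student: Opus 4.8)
The plan is to obtain this corollary as an essentially immediate consequence of Theorem~\ref{tpun}, whose two parts already contain all the substantive work. The forward direction requires nothing new: if $S$ is a rectangular band, then $\cp$ is the universal relation by Theorem~\ref{tpun}(1).

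For the converse, suppose $\cp$ is the universal relation on $S$. By Theorem~\ref{tpun}(2), $S$ is simple, and $S$ will be a rectangular band as soon as we exhibit a single idempotent in $S$. But the existence of an idempotent is automatic in both classes named in the statement: for any element $a$ of a finite semigroup or of an epigroup, the element $a^{\omega}$ is defined and is idempotent (in the finite case $a^{\omega}$ is a suitable power of $a$, as recalled in \cite[\S1.2]{Ho95}; in the epigroup case $a^{\omega} = aa'$ is idempotent by the definition of the pseudo-inverse). Hence $S$ contains an idempotent, so the ``in addition'' clause of Theorem~\ref{tpun}(2) applies and forces $S$ to be a rectangular band, which completes the proof.

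I do not anticipate any genuine obstacle in this argument; the entire point is to recognize that finite semigroups and epigroups are precisely the settings in which the idempotent-existence hypothesis needed to invoke the second half of Theorem~\ref{tpun}(2) comes for free. If anything, the only matter requiring a word of care is the trivial edge case of an empty semigroup, which one may either exclude by the usual convention or dismiss directly.
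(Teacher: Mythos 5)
Your proposal is correct and is essentially identical to the paper's own argument: the paper derives the corollary from Theorem~\ref{tpun} by observing that every finite semigroup (and, more generally, every epigroup) contains an idempotent, which is precisely your point that $a^{\omega}$ supplies the idempotent needed to invoke the second clause of Theorem~\ref{tpun}(2). Nothing further is needed.
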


We conclude this section with some results that extend to semigroups familiar
results on conjugacy in groups.

For elements $a_1,a_2,b_1,b_2$ in a group, if $a_1 a_2$ is conjugate to $b_1
b_2$, then
$a_2 a_1$ is conjugate to $b_2 b_1$. This result carries over to semigroups as
follows.
A semigroup $S$ with zero is \emph{categorical at zero} if for all $a,b,c\in
S$,
$ab\ne0$ and $bc\ne0$ imply $abc\ne0$ \cite[vol.~2, p.~73]{ClPr64}.
	
\begin{theorem}
Let $S$ be a semigroup.
\begin{enumerate}[label=\textup{(\arabic*)}]
 \item For all $a_1,a_2,b_1,b_2\in S$,  $a_1a_2\co b_1b_2$
implies $a_2a_1\co b_2b_1$.
 \item If $S$ is categorical at zero and
$a_1a_2,a_2a_1,b_1b_2,b_2b_1\neq 0$, then $a_1a_2\con b_1b_2$ implies
$a_2a_1\con b_2b_1$.
 \item The following statements are equivalent:
	\begin{enumerate}[label=\textup{(\alph*)}]
	\item $\cp$ is transitive in $S$;
	\item For all $a_1,a_2,b_1,b_2\in S$, $a_1a_2\cp b_1b_2$ implies $a_2a_1\cp b_2b_1$.
	\end{enumerate}
 \item For all $a_1,a_2,b_1,b_2\in S$ such that $a_1a_2,b_1b_2,a_2a_1,b_2b_1\in\Epi(S)$,
$a_1a_2\ctr b_1b_2$ implies $a_2a_1\ctr b_2b_1$.
\end{enumerate}
\end{theorem}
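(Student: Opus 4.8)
The plan is to treat all four parts through a single mechanism. Each of $\co$, $\con$, and $\ctr$ is an equivalence relation (and $\cp$ is an equivalence precisely when it is transitive), so in every case the implication ``$a_1a_2\sim b_1b_2\Rightarrow a_2a_1\sim b_2b_1$'' will follow from a \emph{commutation lemma} of the form $xy\sim yx$, together with transitivity, via the chain
\[
a_2a_1\ \sim\ a_1a_2\ \sim\ b_1b_2\ \sim\ b_2b_1 .
\]
Thus for parts (1), (2), and (4) the real content is the commutation lemma, while part (3) additionally requires the reverse implication.

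For part (1), I would note that $xy\co yx$ holds in every semigroup, since $x$ and $y$ act as conjugators: $(xy)x=x(yx)$ and $(yx)y=y(xy)$ (the identity already used in the proof of Theorem~\ref{thm:o-comm}(2)). As $\co$ is an equivalence relation, the displayed chain gives $a_2a_1\co b_2b_1$. For part (4), the commutation lemma $xy\ctr yx$ (for $x,y$ with $xy,yx\in\Epi(S)$) is exactly Theorem~\ref{thm:steinbergExtensionEpigroups}(3), and $\ctr$ is an equivalence on $\Epi(S)$ by Theorem~\ref{thm:steinbergExtensionEpigroups}(1); the hypothesis that all four products lie in $\Epi(S)$ is precisely what makes each link of the chain legitimate, yielding $a_2a_1\ctr b_2b_1$.

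Part (2) is where the work lies, and I expect the $\pp^1$-membership verification to be the main obstacle. The naive conjugators $g=x$ and $h=y$ still satisfy $(xy)x=x(yx)$ and $(yx)y=y(xy)$, but now I must check that they lie in the restricted sets $\pp^1(xy)$ and $\pp^1(yx)$. I would show $x\in\pp(xy)$ directly: given $m\in S^1$ with $m(xy)\neq 0$, I want $(m\,xy)x=mxyx\neq 0$. Applying categoricity at zero to the triple $(mx,\,y,\,x)$, using $mx\cdot y=mxy\neq 0$ and $y\cdot x=yx\neq 0$ (here the hypothesis $yx\neq 0$ enters), gives $mxyx\neq 0$. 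Hence $x\in\pp(xy)\subseteq\pp^1(xy)$, and dually $y\in\pp(yx)\subseteq\pp^1(yx)$, so $xy\con yx$ whenever $xy,yx\neq 0$. Since all four products are nonzero, I apply this to $(x,y)=(a_1,a_2)$ and to $(x,y)=(b_1,b_2)$ and finish with the chain and transitivity of $\con$.

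For part (3), the direction (a)$\Rightarrow$(b) is the same chain argument: $xy\cp yx$ holds trivially (take $u=y$, $v=x$), so when $\cp$ is transitive the chain delivers $a_2a_1\cp b_2b_1$. For (b)$\Rightarrow$(a) I would prove transitivity of $\cp$. Suppose $x\cp y\cp z$; the cases $x=y$ or $y=z$ are immediate, so assume $x\neq y$ and $y\neq z$, which forces the witnessing factors to lie in $S$ rather than merely $S^1$ (a factor equal to $1$ would collapse $x=y$ or $y=z$). Writing $x=pq$, $y=qp=rs$, $z=sr$ with $p,q,r,s\in S$, I would apply (b) to the equal elements $a_1a_2=rs$ and $b_1b_2=qp$ (which are $\cp$-related by reflexivity) to obtain $a_2a_1\cp b_2b_1$, i.e. $sr\cp pq$, that is $z\cp x$; symmetry of $\cp$ then gives $x\cp z$. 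The only delicate point here is the bookkeeping between $S$ and $S^1$, which the degenerate-case reduction disposes of.
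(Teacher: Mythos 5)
Your proposal is correct. For parts (3) and (4) it follows essentially the paper's own route: the paper proves (a)$\implies$(b) by the same chain $a_2a_1\cp a_1a_2\cp b_1b_2\cp b_2b_1$, proves (b)$\implies$(a) by applying (b) to the two factorizations of the middle element (exactly your reflexivity trick), and derives (4) from Theorem~\ref{thm:steinbergExtensionEpigroups}(3) together with transitivity of $\ctr$. In parts (1) and (2), however, you genuinely diverge. The paper never invokes transitivity of $\co$ or $\con$: from the given conjugators $c,d$ (with $a_1a_2c=cb_1b_2$ and $b_1b_2d=da_1a_2$) it builds explicit new conjugators $a_2cb_1$ and $b_2da_1$ for the pair $(a_2a_1,\,b_2b_1)$, and for (2) it checks $a_2cb_1\in\pp^1(a_2a_1)$ and $b_2da_1\in\pp^1(b_2b_1)$ via the observation that in a semigroup categorical at zero one has $x\in\pp^1(y)$ if and only if $yx\ne0$. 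You instead route everything through the commutation lemma $xy\sim yx$ together with transitivity of $\co$ and $\con$ (both are equivalence relations on every semigroup, so this is legitimate); your verification that $x\in\pp(xy)$, by applying categoricity to the triple $(mx,y,x)$, is sound and is in effect a special case of the paper's observation just quoted. The trade-off: the paper's proof is self-contained and exhibits conjugators explicitly, which is informative in its own right; yours is more uniform (one mechanism for all four parts) and, in (3)(b)$\implies$(a), slightly more careful than the paper, since your degenerate-case reduction guarantees the witnessing factors lie in $S$ rather than merely $S^1$ before hypothesis (b) --- which is stated only for elements of $S$ --- is applied.
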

\begin{proof}
Let $a_1a_2\co b_1b_2$. This implies that, for some $c,d\in S$, $a_1a_2c=cb_1b_2$ and $b_1b_2d=da_1a_2$.
Then
\begin{equation}
\label{jan2}
a_2a_1(a_2cb_1)=a_2(a_1a_2c)b_1=a_2(cb_1b_2)b_1\,\,\mbox{ and }\,\,b_2b_1(b_2da_1)=(b_2da_1)a_2a_1.
\end{equation}
Thus $a_2a_1\co b_2b_1$. We have proved (1).

Regarding $\con$, suppose that $S$ is categorical at zero and let $a_1a_2,a_2a_1,b_1b_2,b_2b_1\neq 0$.
Suppose that $a_1a_2\con b_1b_2$.
This implies that  $a_1a_2c=cb_1b_2$ and $b_1b_2d=da_1a_2$ for some
$c\in \pp^1(a_1a_2)$ and $d\in \pp^1(b_1b_2)$. As in the proof of (1), we
obtain equalities \eqref{jan2}.
It remains to prove that $a_2cb_1\in\pp^1(a_2a_1)$ and $b_2da_1\in\pp^1(b_2b_1)$.
First we observe that in any semigroup categorical at zero, $x\in \pp^1(y)$ if
and only if $yx\neq 0$.
Since $c\in \pp^1(a_1a_2)$, it follows that $cb_1b_2=a_1a_2c\neq 0$, and hence
$a_2c\neq 0\neq cb_1$. Thus $a_2cb_1\ne0$
since $S$ is categorical at zero. Similarly, since $a_2a_1\ne0$ and
$a_1a_2\ne0$, we have $a_2a_1a_2\ne0$.
Now $a_2a_1a_2\ne0$ and $a_2cb_1\ne0$ imply $a_2a_1a_2cb_1\ne0$, which implies
that $a_2cb_1\in\pp^1(a_2a_1)$.
Similarly, $b_2da_1\in\pp^1(b_2b_1)$, which concludes the proof of (2).

Regarding $\cp$, we start by proving $\textup{(a)}\implies\textup{(b)}$. Suppose $\cp$
is transitive
and let $a_1a_2\cp b_1b_2$. By the definition of $\cp$, we have $xy \cp yx$ for
all $x,y\in S$. Thus
\[
a_2a_1\cp a_1a_2 \cp b_1b_2 \cp b_2b_1,
\]
which implies $a_2a_1\cp b_2b_1$ since $\cp$ is transitive.

For $\textup{(b)}\implies\textup{(a)}$, assume that
$a_1a_2\cp b_1b_2$ implies $a_2a_1\cp b_2b_1$ for all $a_1,a_2,b_1,b_2\in S$.
Let $a,b,c\in S$ and suppose $a\cp b$ and $b\cp c$.
Then $a=xy$, $b=yx=uv$, and $c=vu$ for some $x,y,u,v\in S^1$.
Thus $yx\cp uv$ (since $xy=uv=b$), and hence $xy\cp vu$ (by the hypothesis),
that is, $a\cp c$. Therefore, $\cp$ is transitive.

Finally, the result for $\ctr$ follows from Theorem~\ref{thm:steinbergExtensionEpigroups}(3).
\end{proof}

In a group, if $a$ and $b$ are conjugate, then $a^k$ and $b^k$ are also
conjugate for all positive integers $k$.
This fact generalizes to the conjugacies $\cp$, $\con$, and $\co$ in semigroups.

\begin{theorem}
\label{thm:powers}
Let $S$ be a semigroup and let $\sim\,\,\in\{\co,\con,\cp\}$. Then for all
$a,b\in S$ and integers $k\geq1$,
$a\sim b$ implies $a^k\sim b^k$.
\end{theorem}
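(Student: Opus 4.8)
The plan is to handle the three relations $\co$, $\con$, $\cp$ separately, but first to isolate a common computational lemma: if $ag=gb$ with $g\in S^1$, then $a^kg=gb^k$ for every $k\ge 1$. This is an immediate induction, since $a^{k+1}g=a(a^kg)=a(gb^k)=(ag)b^k=gb^{k+1}$, and symmetrically $bh=ha$ yields $b^kh=ha^k$. For $\co$ this already closes the case: given $a\co b$, choose $g,h\in S^1$ with $ag=gb$ and $bh=ha$, and the same $g,h$ witness $a^k\co b^k$.

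For $\cp$ the plan is an explicit refactoring. Writing $a=uv$, $b=vu$ with $u,v\in S^1$, I would set $u'=u(vu)^{k-1}$ and $v'=v$, both in $S^1$. Then $u'v'=u(vu)^{k-1}v=(uv)^k=a^k$ and $v'u'=vu(vu)^{k-1}=(vu)^k=b^k$, so $a^k\cp b^k$. The only thing to verify is the telescoping identity $(uv)^k=u(vu)^{k-1}v$, a one-line induction, with the convention $(vu)^0=1$ covering $k=1$.

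The relation $\con$ is where the real work lies, since the conjugators must now be drawn from the restricted sets $\pp^1(a^k)$ and $\pp^1(b^k)$ rather than from all of $S^1$. If $S$ has no zero then $\con\,=\,\co$ and we are done, so I would assume $S$ has a zero. Suppose $a\con b$ via $g\in\pp^1(a)$, $h\in\pp^1(b)$ with $ag=gb$ and $bh=ha$; the lemma gives $a^kg=gb^k$ and $b^kh=ha^k$. If $g=1$ or $h=1$ then $a=b$ and there is nothing to prove, so assume $g\in\pp(a)$ and $h\in\pp(b)$. The plan is to reuse this same $g,h$, for which two facts must be established: that $a^k=0\iff b^k=0$, and that when $a^k,b^k\ne 0$ one has $g\in\pp^1(a^k)$ and $h\in\pp^1(b^k)$.

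The main obstacle, and the crux of the theorem, is this membership check, which I expect to hinge entirely on the containment $S^1a^k\subseteq S^1a$. Concretely, taking any $ma^k\in S^1a^k\sm\{0\}$ and writing $ma^k=(ma^{k-1})a\in S^1a\sm\{0\}$, the hypothesis $g\in\pp(a)$ forces $(ma^k)g\ne 0$, whence $g\in\pp(a^k)\subseteq\pp^1(a^k)$, and symmetrically $h\in\pp^1(b^k)$. For the equivalence $a^k=0\iff b^k=0$, the same idea runs in reverse: if $b^k\ne 0$, then $b^k\in S^1b\sm\{0\}$ together with $h\in\pp(b)$ would give $b^kh\ne 0$, contradicting $b^kh=ha^k=0$; so $a^k=0\Rightarrow b^k=0$, and by symmetry the converse. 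When both powers vanish, $a^k\con b^k$ holds by reflexivity, since the $\con$-class of $0$ is $\{0\}$. I anticipate that the only delicate points are this bookkeeping with $\pp$ and the treatment of the degenerate values $g=1$, $h=1$, and the zero; everything else is routine.
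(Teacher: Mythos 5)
Your proof is correct and follows essentially the same route as the paper's: the same induction lemma ($ag=gb$ implies $a^kg=gb^k$), with the original conjugators reused for $\co$ and $\con$, and the same telescoping refactorization $(uv)^k=\bigl(u(vu)^{k-1}\bigr)v$, $(vu)^k=v\bigl(u(vu)^{k-1}\bigr)$ for $\cp$ (which the paper cites as a lemma of Kudryavtseva). The only difference is that the paper dismisses the $\con$ case with ``the result follows immediately,'' whereas you explicitly verify the facts it leaves implicit, namely $\pp(a)\subseteq\pp(a^k)$ when $a^k\neq 0$, the equivalence $a^k=0\iff b^k=0$, and the degenerate cases $g=1$ or $h=1$ --- a welcome level of care, but not a different approach.
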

\begin{proof}
Let $a,b\in S$ and $c\in S^1$ be such that $ac=cb$. We claim that $a^kc=cb^k$
for all integers $k\geq1$.
We proceed by induction on $k$. The claim is certainly true for $k=1$. Let
$k\geq1$ and suppose that $a^kc=cb^k$.
Then
$a^{k+1}c=a(a^kc)=a(cb^k)=(ac)b^k=cb^{k+1}$. The claim has been proved. The
result follows immediately for $\co$ and $\con$.

For $\cp$, the desired result is \cite[Lem.~2]{ganna}: if, say, $a=cd$ and $b=dc$, then $a^k=((cd)^{k-1}c)d$ while
$b^{k}=d ((cd)^{k-1}c)$.
\end{proof}

The same result is true for trace conjugacy and epigroup elements.

\begin{theorem}
\label{tpow}
Let $S$ be a semigroup. Then for all $a,b\in\Epi(S)$ and integers $k\geq1$,
$a\ctr b$ implies $a^k\ctr b^k$.
\end{theorem}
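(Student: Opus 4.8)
The plan is to reduce trace conjugacy of powers to $p$-conjugacy, where the analogous statement is already available (Theorem~\ref{thm:powers}), by exploiting the characterization $a\ctr b\iff a''\cp b''$ from Theorem~\ref{thm:trace}(6). First I would record that powers of epigroup elements are again epigroup elements: if $a\in\Epi(S)$ has index $n$, then $(a^k)^n=a^{kn}$ lies in the same group $\gh$-class as $a^n$, so $a^k\in\Epi(S)$, and moreover this group has the same identity, giving $(a^k)^\omega=a^\omega$. This is needed merely so that $\ctr$ is defined on $a^k$ and $b^k$.

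The key step is the identity $(a^k)''=(a'')^k$ for every $a\in\Epi(S)$ and $k\geq1$. Using $a^\omega=aa'=a'a$ from \eqref{etfh}, one checks that $a^\omega$ is idempotent and commutes with $a$, since $a\cdot a^\omega=a(a'a)=(aa')a=a^\omega a$; consequently $a''=aa'a=a^\omega a=aa^\omega$. Because $a$ and $a^\omega$ commute and $a^\omega$ is idempotent, $(a'')^k=(aa^\omega)^k=a^k(a^\omega)^k=a^k a^\omega$. On the other hand, by the previous paragraph $(a^k)''=a^k(a^k)^\omega=a^k a^\omega$ as well. Hence $(a^k)''=(a'')^k$, and likewise $(b^k)''=(b'')^k$.

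With these identities in hand the proof concludes quickly. Suppose $a\ctr b$. By Theorem~\ref{thm:trace}, $a''\cp b''$. Applying the $\cp$-case of Theorem~\ref{thm:powers} gives $(a'')^k\cp(b'')^k$, that is, $(a^k)''\cp(b^k)''$. Since $a^k,b^k\in\Epi(S)$, a final appeal to Theorem~\ref{thm:trace} yields $a^k\ctr b^k$, as required.

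The main obstacle is the identity $(a^k)''=(a'')^k$; everything else is a direct invocation of earlier results. The only subtlety there is the bookkeeping with pseudo-inverses—establishing that $a^\omega$ commutes with $a$ and that $(a^k)^\omega=a^\omega$—after which both sides collapse to the common value $a^k a^\omega$. I would expect no further complications, and the argument is uniform in $k$ with no need for a separate induction.
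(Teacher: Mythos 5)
Your proof is correct and takes essentially the same route as the paper's: both reduce $a\ctr b$ to $a''\cp b''$ via Theorem~\ref{thm:trace}(6), apply Theorem~\ref{thm:powers} to obtain $(a'')^k\cp(b'')^k$, and convert back using $(a'')^k=(a^k)''$. The only difference is that you supply a verification of the identity $(a^k)''=(a'')^k$ (via $a''=aa^{\omega}$, the commutation of $a$ with $a^{\omega}$, and $(a^k)^{\omega}=a^{\omega}$), a step the paper uses without proof.
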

\begin{proof}
Suppose that $a\ctr b$. Then $a''\cp b''$ by Theorem~\ref{thm:trace}, and so $(a'')^k\cp (b'')^k$ by Theorem~\ref{thm:powers}.
Since $(a'')^k=(a^k)''$ and $(b'')^k=(b^k)''$, we have $(a^k)''\cp(b^k)''$, and so $a^k\ctr b^k$ by Theorem~\ref{thm:trace}.
\end{proof}

In a group, if $a$ and $b$ are conjugate, then $a\inv$ and $b\inv$ are also conjugate.
This fact generalizes to $o$-conjugacy and $p$-conjugacy in epigroups.
(See Proposition~\ref{pctrainv} for a stronger result for trace conjugacy.)

\begin{theorem}
\label{thm:op-primes}
Let $S$ be an epigroup and let $\sim\,\,\in\{\co,\cp\}$. Then for all $a,b\in
S$, $a\sim b$ implies $a'\sim b'$.
\end{theorem}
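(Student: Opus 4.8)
The plan is to treat the two relations $\cp$ and $\co$ separately, since $p$-conjugacy admits a short direct witness, whereas $o$-conjugacy requires first transporting the idempotent $a^{\omega}$ across the conjugator. In both cases I expect the same conjugators (or their obvious analogues) to work, in analogy with the group fact that $ag=gb$ forces $a\inv g = g b\inv$.

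For $\cp$, suppose $a\cp b$, say $a=uv$ and $b=vu$ with $u,v\in S^1$. I would exhibit explicit witnesses for $a'\cp b'$ by setting $p=a'u$ and $q=va'$. Using $a'aa'=a'$ from \eqref{eq:epi1}, the first product collapses: $pq=a'(uv)a'=a'aa'=a'$. For the second, I would invoke Lemma~\ref{lem:xyyx} in both of its instances, namely $(uv)'u=u(vu)'$ and $(vu)'v=v(uv)'$, that is, $a'u=ub'$ and $b'v=va'$. Then $qp=va'a'u=(b'v)a'u=b'v(a'u)=b'v(ub')=b'(vu)b'=b'bb'=b'$. Hence $a'=pq$ and $b'=qp$ with $p,q\in S^1$, so $a'\cp b'$.

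For $\co$, suppose $ag=gb$ and $bh=ha$ with $g,h\in S^1$. I claim the same conjugators work, i.e. $a'g=gb'$ and $b'h=ha'$. The crucial step is the auxiliary identity
\[
a^{\omega}g=g b^{\omega}\,.
\]
To prove it, let $n$ be a common index for $a$ and $b$; then $a^n g=g b^n$ by the induction carried out in the proof of Theorem~\ref{thm:powers}, and $a^{\omega}=a^n(a')^n$, $b^{\omega}=b^n(b')^n$. Multiplying $a^n g=g b^n$ on the left by $a^{\omega}$ (and using $a^{\omega}a^n=a^n$) gives $g b^n=a^{\omega}g b^n$; post-multiplying by $(b')^n$ yields $g b^{\omega}=a^{\omega}g b^{\omega}$. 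Symmetrically, multiplying on the right by $b^{\omega}$ and then on the left by $(a')^n$ gives $a^{\omega}g b^{\omega}=a^{\omega}g$. Combining the two yields $a^{\omega}g=g b^{\omega}$. With this in hand I would finish as follows: multiply $ag=gb$ on the left by $a'$ to get $a^{\omega}g=a'g b$ (using $a'a=a^{\omega}$); combined with $a^{\omega}g=g b^{\omega}$ this reads $g b^{\omega}=a'g b$, and post-multiplying by $b'$ gives $g b'=a'g b^{\omega}$ (using $b^{\omega}b'=b'$ and $bb'=b^{\omega}$). Finally, replacing $g b^{\omega}$ by $a^{\omega}g$ and using $a'a^{\omega}=a'$ turns the right-hand side into $a'g$, so $a'g=g b'$. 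The identical argument applied to $bh=ha$ (with the roles of $a,b$ swapped, using $b^{\omega}h=h a^{\omega}$) gives $b'h=h a'$, and therefore $a'\co b'$.

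I expect the $\co$ case to be the main obstacle: unlike in a group, the conjugator $g$ is not invertible, so one cannot simply ``move a negative power across $g$,'' and the whole argument hinges on the lemma $a^{\omega}g=g b^{\omega}$, which restores just enough cancellation to push the pseudo-inverse through. I note that if $S$ has a zero the statement for $\co$ is vacuous, since $\co$ is then universal; but the argument above is uniform and requires no case split, and the identities used ($a'a=a^{\omega}$, $a'a^{\omega}=a'$, $a^{\omega}=a^n(a')^n$, and \eqref{eq:epi1}--\eqref{eq:epi2}) hold for every epigroup element.
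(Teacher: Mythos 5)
Your proof is correct, but it takes a genuinely different route from the paper, most notably in the $\co$ case. For $\cp$, the difference is mild: you take witnesses $p=a'u$, $q=va'$ and close the computation with Lemma~\ref{lem:xyyx}, while the paper takes $u=c$, $v=d(cd)'(cd)'$ and uses \eqref{eq:epi1}, \eqref{eq:epi2}, \eqref{eq:epi4} directly; both are short equational verifications with different explicit witnesses. For $\co$, however, the paper does \emph{not} keep the original conjugators: from $ac=cb$ and $da=bd$ it manufactures new ones, $g=aa'cb'$ and $h=bb'da'$, and verifies $a'g=gb'$ (and dually $b'h=ha'$) by a five-step chain of identities, staying entirely inside the equational theory \eqref{eq:epi1}--\eqref{eq:epi5}. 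You instead prove the stronger statement that the \emph{same} pair $(g,h)$ witnessing $a\co b$ also witnesses $a'\co b'$, which is the exact analogue of the group-theoretic fact; the price is the auxiliary identity $a^{\omega}g=gb^{\omega}$, whose proof leaves the purely equational framework and uses the epigroup structure concretely (a common index $n$, the induction $a^ng=gb^n$ from Theorem~\ref{thm:powers}, and $a^{\omega}=a^n(a')^n$). So the paper's argument buys brevity and a conjugator for the primed pair written explicitly as a word in $a,b,a',b',c,d$, while yours buys the sharper ``same conjugators'' conclusion (and an identity of independent interest, in the spirit of \eqref{eqn:ghgh} but under the hypothesis $ag=gb$ alone); both are sound.
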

\begin{proof}
Suppose $a\co b$, so $ac=cb$ and $da=bd$ for some $c,d\in S^1$. Set $g =
aa'cb'$
and $h = bb'da'$.
Then
\[
a'g = \underbrace{a'aa'}cb' \by{eq:epi1} a'c\underbrace{b'} \by{eq:epi1}
a'c\underbrace{b'b}b' \by{eq:epi2} a'\underbrace{cb}b'b'
= \underbrace{a'a}cb'b' \by{eq:epi2} aa'cb'b' = gb'\,,
\]
and an almost identical calculation shows $b'h = ha'$. Thus $a'\co b'$.

Now suppose $a\cp b$. Then $a=cd$ and $b=dc$ for some $c,d\in S^1$. Set $u =
c$,
$v = d(cd)'(cd)'$. Then
$uv = cd(cd)'(cd)' = (cd)'cd(cd)' = (cd)' = a'$, using \eqref{eq:epi2} and
\eqref{eq:epi1},
and
$vu = d(cd)'(cd)'c = (dc)'dc(dc)' = (dc)' = b'$, using \eqref{eq:epi4} twice
followed by \eqref{eq:epi1}.
Thus $a'\cp b'$.
\end{proof}

In a group, if $a$ and $b$  are conjugate and $a^m=a^k$ for some integers
$m,k\geq1$, then $b^m=b^k$.
This result does not hold in general for semigroups,
but we have the following for $\cp$.

\begin{theorem}
\label{thm:finite_exp}
Let $S$ be a semigroup and let $a,b\in S$ such that $b$ is
an epigroup element with $b^t$ ($t\geq 1$) lying in a subgroup of
$S$. If $a\cp b$ and $a^m=a^k$
for some integers $m,k\geq t$,  then $b^m = b^k$.
\end{theorem}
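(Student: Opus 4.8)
The plan is to transport the hypothesis $a^m=a^k$ from powers of $a$ to powers of $b$ using the factorization coming from $a\cp b$, and then to exploit the group structure around $b$ to cancel a single factor.

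First I would record the basic consequence of $a\cp b$. Writing $a=uv$ and $b=vu$ for some $u,v\in S^1$, a direct regrouping gives, for every integer $n\geq 0$,
\[
b^{n+1}=(vu)^{n+1}=v(uv)^n u=v a^n u .
\]
Applying this with $n=m$ and $n=k$ and invoking $a^m=a^k$ immediately yields
\[
b^{m+1}=v a^m u=v a^k u=b^{k+1}.
\]
So the real content is the passage from $b^{m+1}=b^{k+1}$ to $b^m=b^k$, and this is exactly where the epigroup hypothesis on $b$ must enter: in a general semigroup one cannot cancel a trailing $b$.

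Next I would set up the group in which to cancel. Since $b^t$ lies in a subgroup, the index of $b$ is at most $t$, so for every $j\geq t$ the power $b^j$ lies in the maximal subgroup $H$ that is the group $\gh$-class of $b^t$, whose identity is $e=b^{\omega}=bb'=b'b$. In particular $b^m,b^k\in H$ because $m,k\geq t$. The key algebraic identity I would establish is that, for all $j\geq t$,
\[
b^{j+1}=b^j b'' \qquad\text{with } b^j, b'', b^{j+1}\in H .
\]
Indeed $b^j=b^j e$ since $b^j\in H$, and $b''=bb'b=b\,b^{\omega}=be=eb$ is precisely the element $be$ of $H$ used in the definition of the pseudo-inverse; hence $b^j b''=b^j(eb)=(b^j e)b=b^j b=b^{j+1}$.

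Finally I would combine these. From $b^{m+1}=b^{k+1}$ and the identity above we obtain $b^m b''=b^k b''$ entirely inside the group $H$. Since $b''\in H$ is invertible in $H$, right-multiplying by its inverse in $H$ and using $b^m e=b^m$, $b^k e=b^k$ gives $b^m=b^k$, as desired. I expect the cancellation step to be the main obstacle: the whole difficulty is recognizing that, although multiplication by $b$ need not stay inside $H$, the external product $b^j\cdot b$ coincides with the in-group product $b^j\cdot b''$, which replaces the uncooperative factor $b$ by the invertible group element $b''$ and thereby makes right-cancellation legitimate. Everything else is routine.
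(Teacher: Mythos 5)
Your proof is correct and follows essentially the same route as the paper's: the identical regrouping $b^{n+1}=(vu)^{n+1}=v(uv)^nu=va^nu$ transports $a^m=a^k$ to $b^{m+1}=b^{k+1}$, and the cancellation of the trailing factor is exactly the epigroup identity $b^{n+1}b'=b^n$ (for $n\geq t$) that the paper cites directly from \eqref{etfh}. The only difference is presentational: where the paper multiplies by $b'$ and invokes that identity, you re-derive it inside the maximal subgroup $H$ by replacing the factor $b$ with the invertible element $b''=be$ and right-cancelling.
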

\begin{proof}
Since $a\cp b$, $a = cd$ and $b = dc$ for some $c,d\in S^1$. Since $b^t$
is in a subgroup of $S$, we have, by \eqref{etfh}, $b^{n+1}b'=b^n$ for every integer $n\geq t$.
Thus
\[
b^m = b^{m+1} b' = d(cd)^m c b' = d a^m c b' = d a^k c b' = (dc)^{k+1} b' =
b^{k+1} b' = b^k,
\]
which completes the proof.
\end{proof}

\begin{cor}
\label{jan3}
Let $S$ be an epigroup in $\mathcal{W}$. If $a,b\in S$ satisfy $a\cp b$ and
$a^m=a^k$ for some integers $m,k\geq1$,
then $b^m = b^k$.
\end{cor}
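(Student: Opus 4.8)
The plan is to deduce this from Theorem~\ref{thm:finite_exp} by taking the index parameter $t$ as small as possible, namely $t=1$. The only difficulty in a direct application of Theorem~\ref{thm:finite_exp} is that it requires $m,k\ge t$, and a priori an epigroup in $\mathcal{W}$ lies only in $\mathcal{E}_2$ (Proposition~\ref{prp:E1_W_E2}), so an element $b$ could have index $2$; then the theorem would cover only $m,k\ge 2$ and leave the case $m=1$ (or $k=1$) untreated. The key observation that removes this obstacle is that whenever $a\cp b$ with $a\ne b$, both $a$ and $b$ in fact lie in $S^2$ and are therefore completely regular.

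First I would dispose of the trivial case $a=b$, where $a^m=a^k$ gives $b^m=b^k$ immediately. So assume $a\ne b$. Since $a\cp b$, there exist $c,d\in S^1$ with $a=cd$ and $b=dc$. If $c=1$ then $a=d=b$, and likewise $d=1$ forces $a=c=b$; both contradict $a\ne b$. Hence $c,d\in S$, and in particular $b=dc\in S^2$. Because $S$ is in $\mathcal{W}$, the subsemigroup $S^2$ is completely regular by definition, so $b$ is a completely regular element; equivalently $b=b^1$ already lies in a subgroup of $S$, so $b$ is an epigroup element of index $t=1$. (Alternatively one may invoke the identity $(xy)''=xy$ of \eqref{eq:xy''} to see directly that $b''=b$.)

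With $t=1$ in hand, the hypotheses $m,k\ge 1=t$, $a\cp b$, and $a^m=a^k$ are exactly those of Theorem~\ref{thm:finite_exp}, which then yields $b^m=b^k$, completing the argument.

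I expect the main (and essentially the only) obstacle to be the step identifying $b$ as completely regular: one must notice that the nontrivial instances of $\cp$ land inside $S^2$, so that membership in the variety $\mathcal{W}$ forces index $1$ rather than merely index $2$. Once this is observed, the reduction to $t=1$ is immediate and Theorem~\ref{thm:finite_exp} applies verbatim.
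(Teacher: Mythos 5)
Your proof is correct and takes essentially the same route as the paper: show that $b=dc$ is completely regular because $S\in\mathcal{W}$ (the paper does this via the identity $(xy)''=xy$ of \eqref{eq:xy''}, which you note as an alternative), and then apply Theorem~\ref{thm:finite_exp} with $t=1$. If anything, your explicit disposal of the trivial case $a=b$, so that $c,d$ may be taken in $S$ rather than $S^1$, is slightly more careful than the paper's own proof, which applies \eqref{eq:xy''} to $(dc)''$ with $c,d\in S^1$ without comment.
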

\begin{proof}
Since $a\cp b$, we have $a = cd$ and $b = dc$ for some $c,d\in S^1$. Since $b''
= (dc)'' = dc = b$ by \eqref{eq:xy''}, $b$ is completely regular, so
Theorem~\ref{thm:finite_exp} applies with $t=1$.
\end{proof}

Theorem~\ref{thm:finite_exp} fails for $\co$. Indeed, if $S$ has a zero
as its unique idempotent,
then $\co$ is the universal relation, but $0^2=0$ while $a^2\neq a$ for every
nonzero $a\in S$.

\section{Open problems}
\label{spro}
\setcounter{equation}{0}

We conclude this paper with some natural questions related to conjugacy.

In \S\ref{ssym}, we characterized $c$-conjugacy in the symmetric inverse semigroup
$\mi(X)$
for a countable set~$X$. Descriptions of $\cp$ in this semigroup can be
found in \cite{GaKo93}
and \cite{KuMa07}.

\begin{prob}
Characterize the relations $\con$ and $\cp$ in $\mi(X)$ for an uncountable set
$X$.
\end{prob}

A characterization of $c$-conjugacy in the \emph{full transformation semigroup}
$T(X)$ on any set $X$ was obtained in \cite{AKM14}. For a finite set $X$,
$p$-conjugacy in $T(X)$ was described
in \cite{KuMa07}. The \emph{partition semigroup} $\mathcal{P}_{X}$ on a set~$X$
\cite{Eas11,EF12}
has both $T(X)$ and the symmetric inverse semigroup $\mi(X)$ as subsemigroups.

\begin{prob}
Characterize the relations $\con$ and $\cp$ in $\mathcal{P}_{X}$, and $\ctr$ restricted to the epigroup elements.
\end{prob}

We proved in \S\ref{sec:epigroups} that $p$-conjugacy is transitive in
completely regular semigroups
and their variants, but noted that the epigroup variety $\mathcal{W}$ does not
include all epigroups in which
$\cp$ is transitive.

\begin{prob}
Find other classes of semigroups in which $p$-conjugacy is transitive. Describe
the [$E$-unitary] inverse semigroups in which
$p$-conjugacy is transitive. Ultimately, classify the class of semigroups in
which $\cp$ is transitive.
\end{prob}

As already noted, $\cp$ is transitive in free semigroups. Free semigroups are
both cancellative and embeddable in groups.

\begin{prob}
Is $\cp$  transitive in every cancellative semigroup? In every semigroup
embeddable in a group?
\end{prob}

In this paper, we studied conjugacy in the symmetric inverse semigroup
$\mi(X)$,
but
many other transformation semigroups, or endomorphism monoids of some
relational
algebras,
may be considered.

\begin{prob}
For $\con$, $\cp$, and $\ctr$, characterize the conjugacy classes  and
calculate their number  for other
transformation semigroups such as, for example, those appearing in the problem
list of \cite[Section~6]{ak} or those
appearing in the large list of transformation semigroups included in
\cite{vhf}.
 Especially interesting would be a
characterization of the conjugacy classes in the centralizers of idempotents
\cite{arko2,arko1}, or in semigroups
whose group of units has an especially rich structure
\cite{arbemics,arcameron,arcameron2,arsc}.
\end{prob}

The classes described in the preceding problem have linear analogs and hence
can
be extended to the more
general setting of independence algebras.

\begin{prob}
Characterize $\con$, $\cp$, and $\ctr$ in the endomorphism monoid of an independence algebra.
In \cite{abk}, a problem on independence algebras was solved using their
classification theorem; it is reasonable to guess that the same
technique can be used to solve the problem proposed here. (For historical notes
on how a problem on idempotent generated
semigroups \cite{1,2} led to these algebras, see \cite{aeg,arfo}; for
definitions and basic results, see
\cite{araujo29,araujo2,araujo,armi2,aw,cameron,fou1,fou2,gould}.)

Similarly interesting would be the characterisation of the conjugacy classes
for the endomorphism monoids of free objects \cite{armi3}
or for the endomorphisms of algebras admitting some general notion of
independence \cite{aw}. Regarding the latter, we propose the
problem of calculating the conjugacy classes in the endomorphisms  of
$MC$-algebras, $MS$-algebras, $SC$-algebras, and $SC$-ranked
algebras \cite[Chapter 8]{aw}. A first step would be to solve the conjugacy
problem for the endomorphism monoid of an $SC$-ranked
free $M$-act \cite[Chapter 9]{aw}, and for an $SC$-ranked free module over an
$\aleph_1$-Noetherian ring \cite[Chapter 10]{aw}.

Since all varieties of bands are known, especially interesting would be the
description of the conjugacy classes of the
endomorphism monoid of the free objects of each variety of bands (for details
and references, see \cite{bands}).
\end{prob}

The study of the intersection of $\con$ with other conjugacies was
omitted from this paper. This suggests the following problem.

\begin{prob}
Let $\sim\,\,\in\{\co,\cp,\ctr\}$.  Study the notion of conjugacy $\con\cap \sim$.
In particular, describe it in the various
types of transformation semigroups listed in the previous problems.
\end{prob}

We have proved that if a semigroup $S$ has an idempotent, then $\cp$ is the
universal relation in $S$ if and only if
$S$ is a rectangular band. We have also proved that every semigroup in which
$\cp$ is universal is simple, and noted
that there are simple semigroups without idempotents in which $\cp$ is not
universal.

\begin{prob}
Describe the simple semigroups without idempotents in which $p$-conjugacy is
the universal relation.
\end{prob}

We know that $o$-conjugacy is universal in the semigroups with zero.

\begin{prob}
Describe the semigroups without zero in which $o$-conjugacy (and thus
$c$-conjugacy) is the universal relation.
\end{prob}

We will say that a given conjugacy $\sim$ is \emph{partition covering}
if for every set $X$ and for every partition $\tau$ of $X$,
there exists a semigroup $S$ with universe $X$ such that the $\sim$-conjugacy
classes on $S$ form
the same partition as $\tau$.

\begin{prob}
Is it true that $o$-conjugacy [$p$-conjugacy, $\ctr$-conjugacy] is a partition-covering relation?
\end{prob}

We have used the GAP package \emph{Smallsemi} \cite{Smallsemi} to check that
this is true for all
$X=\{1,\ldots,n\}$ where  $1\leq n\leq 6$, and $\co$ or $\cp$.
As \emph{Smallsemi} contains all semigroups up to order $8$, the following
special case of the preceding
problem might take a long time to compute, but it is certainly computationally
feasible.

\begin{prob}
Is it true that $o$-conjugacy [$p$-conjugacy, trace conjugacy] is a partition-covering relation
for all sets of size at most $8$? What about $9$?
\end{prob}

In Theorem \ref{thm:strong-o}, we showed that $o$-conjugacy in epigroups is
equivalent to a stronger
notion of conjugacy. Call elements $a,b$ of a semigroup $S$ \emph{strongly}
$o$-\emph{conjugate},
denoted by $a\sim_{so} b$, if there exist mutually inverse $g,h\in S^1$ such
that $ag = gb$
and $bh = ha$. The relation $\sim_{so}$ is evidently reflexive and symmetric,
and
$\sim_{so}\,\,\subseteq\,\,\sim_{o}$. Theorem \ref{thm:strong-o} can be restated
as saying that in epigroups, $\sim_{so}\,\,=\,\,\co$. This result is not
true
in general.
For example, the transformations $\al$ and $\bt$ defined in the proof of
Theorem~\ref{tgrd}
are $o$-conjugate but not strongly $o$-conjugate in the semigroup $\gx$.

\begin{prob}
Find natural classes of semigroups in which $\sim_{so}\,\,=\,\,\co$.
\end{prob}

Since $\co$ is transitive in arbitrary semigroups, Theorem
\ref{thm:strong-o}
implies that $\sim_{so}$ is
transitive in epigroups. It is also easy to see that $\sim_{so}$ is transitive
in inverse
semigroups. (If $a\sim_{so} b\sim_{so} c$, then $ag=gb$, $bg\inv = g\inv a$,
$bk
= kc$,
$ck\inv = k\inv b$ for some $g,k$. Thus $agk = gbk=gkc$ and $c(gk)\inv = ck\inv
g\inv
= k\inv bg\inv = k\inv g\inv a = (gk)\inv a$.)

\begin{prob}
Is $\sim_{so}$ transitive in arbitrary semigroups? In regular
semigroups?
\end{prob}

The analog of strong $o$-conjugacy for $\con$ is as follows: Call elements
$a,b$ of a
semigroup $S$ \emph{strongly} $c$-\emph{conjugate}, denoted by $a\sim_{sc} b$,
if there exist
$g\in \mathbb{P}^1(a)$, $h\in \mathbb{P}^1(b)$ such that $g,h$ are mutually
inverse and
$ag=gb$, $bh=ha$. Evidently $\sim_{sc}\,\,\subseteq\,\,\con$. Theorem
\ref{thm:strong-c}
can be rephrased as saying that for epigroups in $\mathcal{W}$, $\con\ = \ \sim_{sc}$.

\begin{prob}
Does Theorem \ref{thm:strong-c} generalize to all epigroups?
Does there exist a semigroup with a pair of $c$-conjugate elements which are
not
strongly
$c$-conjugate? A regular such semigroup? An inverse semigroup?
\end{prob}

{
\begin{prob}
 Is it possible to prove a result similar to Theorem~\ref{thm:c0s}, replacing
regular epigroups by epigroups in $\mathcal{W}$?
For semigroups without zero we
have a similar result. Possibly, it is necessary to start by proving that $x\con x''$ for all $x$ such that
$x''\neq 0$. If such result could be proved, then the result would follow as in the case without zero.
\end{prob}
}

{
\begin{prob}
Is there an example of a semigroup $S$ in which $\co$ is a congruence, but
$S/\!\!\!\co$ is not cancellative?
\end{prob}
}

The coordinatization theorem (\cite[Definition A.4.18]{rs}) for rectangular bands is probably the most basic such result involving  two of Green's relations.

\begin{prob}
Find a class of semigroups admitting a coordinatization theorem in terms of $\con$ and $\ctr$ [respectively, $\con$ and $\cp^*$]. In particular, classify the semigroups in which $\con\cap \ctr$ [respectively, $\con\cap\cp^*$] is the identity relation.
\end{prob}

The class ${\mathcal W}$ seems a very interesting generalization of the class of completely regular semigroups. It is likely that many of the results 
for the latter carry over to the former.

\begin{prob}
Generalize for ${\mathcal W}$ the main results on completely regular semigroups. In particular, is it true that $\cp$ is transitive in the variants of $\mathcal{W}$?
\end{prob}

Consider the variety $\mathcal{V}$ of unary semigroups
$(S,\cdot,{}')$ defined by associativity, $x'xx'=x'$, $xx'=x'x$ and
\begin{align}
x'' y &= xy\,,    \label{eqn:nv3} \\
x y'' &= xy\,.    \label{eqn:nv4}
\end{align}

This class also generalizes completely regular semigroups and appears to be as interesting as ${\mathcal W}$.

\begin{prob}
Generalize for ${\mathcal V}$ the main results on completely regular semigroups. In particular, is it true that $\cp$ is transitive in the variants of $\mathcal{V}$?
\end{prob}

In \cite{Boyd_et_al} there are two generalizations of the notion of variants of
semigroups; one appears in Proposition~2.1
and relies on translations, and the other is provided by the concept of
\emph{interassociates}
(for definitions we refer the reader to \cite{Boyd_et_al}).

\begin{prob}
Do the results on variants in this paper carry over to the two generalizations
introduced in~\cite{Boyd_et_al}?
\end{prob}

As seen in Figure \ref{fig0}, $\con$ is not related to $\cp$ or $\ctr$. 

\begin{prob}
Is it possible to find an infinite set of notions of conjugacy for semigroups, first order definable, and that form an anti-chain [infinite chain]?
\end{prob}

The final problem deals with the converse of Example \ref{4.14}.

\begin{prob}
Is it true that if $\cp$ is transitive in all variants of a semigroup, then it
is also transitive in the semigroup itself?
\end{prob}

\section{Acknowledgments}
The authors would like to thank the referee for the excellent suggestions that led to a much improved paper. 


\end{document}